\newtheorem{theorem}{Theorem}
\newtheorem{lemma}{Lemma}[section]
\newtheorem{cor}[lemma]{Corollary}
\newtheorem{proposition}[lemma]{Proposition}
\newtheorem{conjecture}{Conjecture}
\newcommand{\Z}{{\mathbb Z}}
\newcommand{\R}{{\mathbb R}}
\newcommand{\N}{{\mathbb N}}
\newcommand{\rad}{{\rm rad}}
\title[]{$2A$-Majorana Representations of $A_{12}$}
\author{Clara Franchi, Alexander A. Ivanov, Mario Mainardis}
\begin{document}

\maketitle

\begin{abstract}
Majorana representations have been introduced by Ivanov in~\cite{Iva} in order to provide an axiomatic framework for studying the actions on the Griess algebra of the Monster and of its subgroups generated by Fischer involutions. A crucial step in this programme is to obtain an explicit description of the Majorana representations of $A_{12}$ (by \cite{FIM2}, the largest alternating group admitting a Majorana representation)   for this might eventually lead to a new and independent construction of the Monster group (see ~\cite[Section 4, pag.115]{IvInd}). 

In this paper we prove that $A_{12}$ has a unique Majorana representation on the set of its involutions of type $2^2$ and $2^6$ (that is the involutions that fall into the class of Fischer involutions when $A_{12}$ is embedded in the Monster) and we determine the degree and the decomposition into irreducibles of such representation. As a consequence we get that Majorana algebras affording a $2A$-representation of $A_{12}$ and of the Harada-Norton sporadic simple group satisfy the Straight Flush Conjecture (see~\cite{IvCon} and~\cite{IvInd}). As a by-product we also determine the degree and the decomposition into irreducibles of the Majorana representation induced on the $A_8$ subgroup of $A_{12}$. We finally state a conjecture about Majorana representations of the alternating groups $A_n$, $8\leq n\leq 12$. \end{abstract}

 \section{Introduction}
 
Let  $k$ be a field and $R$ a commutative associative ring containing $k$ as a subring.  Given a finite subset $\mathcal S$ of $R$, containing $1$ and $0$,  a {\it fusion law} on $\mathcal S$ is a map 
$$\star\colon \mathcal S \times \mathcal S \to 2^{\mathcal S}.$$
The  {\it pythagorean table} of $\star$ is the matrix  whose rows and columns are indexed by the elements of $S$ and, for $a$ and $b$ in $S$, the entry of row $a$ and column $b$ is $a\star b$. 
Let $\mathcal S$ and $\star$ be as above and let $V$ be a commutative non-associative $R$-algebra. For an element $a\in V$ define the {\it adjoint action}
of $a$ on $V$ to be the map 
$$\begin{array} {rccc}
{\rm ad}(a)\colon& V &\to& V\\
&v&\mapsto&av
\end{array}$$
 An idempotent element $a\in V$ is called a $\star$-{\it axis} (or simply {\it axis}) if    
\begin{enumerate}
\item[Ax1] ${\rm ad}(a)$ is a semisimple endomorphism of $V$ with spectrum contained in $ \mathcal S$,
\item[Ax2] for every $\lambda, \mu \in \mathcal S$, $V_\lambda V_\mu \leq \oplus_{\delta\in \lambda\star \mu}V_\delta$, 
where, for every $\theta\in {\mathcal S}$, $V_\theta$ is the eigenspace $\{v\in V| av=\theta v\}$ (we allow the possibility that $V_\theta=\{0\}$).
\end{enumerate} 
For example, if $\star$ is the fusion law with pythagorean table as in  (\ref{isingf}) and  if $u$ and $v$ are, e.g., $1/32$-eigenvectors for the adjoint action of the $\star$-axis $a$, their product lies in the sum of the $1$, $0$, and $1/4$-eigenspaces for ${\rm ad}(a)$.
If the algebra $V$ is generated by $\star$-axes, we say that $V$ is an {\it axial algebra}  over $R$ with  spectrum $\mathcal S$ and fusion law $\star$.  An axial algebra $V$ is called {\it primitive} if 
\begin{enumerate}
\item[Ax3] for every axis $a$, the $1$-eigenspace $V_1$ of $ad(a)$ has dimension $1$ (or, equivalently, $V_1$ is the linear span of  $a$) 
\end{enumerate}
and {\it dihedral} if
\begin{enumerate}
\item[Ax4] $V$ is generated by two axes.
\end{enumerate}
A {\it Frobenius axial algebra} is a pair $(V, \sigma)$ where $V$ is an axial algebra and $$\sigma\colon V\times V \to k$$ is a bilinear form on $V$ such that
\begin{enumerate}
\item[Ax5] $\sigma$ {\it associates} with the algebra product, i.e.: $\sigma(uv,w)=\sigma(u,vw)$, for every $u,v,w\in V$\end{enumerate}
and 
\begin{enumerate}
\item [Ax6] for each axis $a$, $\sigma(a,a)\neq 0$.
\end{enumerate}
For an element $v\in V$, define, as usual, the (squared) {\it length} of $v$ to be the value $\sigma(v,v)$.
 
\noindent {\it Majorana} algebras are primitive real Frobenius axial algebras $(V, \sigma)$ such  that $\sigma$ is positive definite, axes have length $1$,  with spectrum $\{1,0,1/4, 1/32\}$, and {\it Ising} (or {\it Monster}) fusion law:
\begin{equation}
\label{isingf}
 \begin{array}{|c||c|c|c|c|}
\hline
   \star& 1    & 0    & 1/4      & 1/32\\
   \hline
   \hline
   1     & \{1\}    &\emptyset    & \{1/4\}      &\{1/32\}\\
   \hline
   0     & \emptyset    & \{0\}    & \{1/4\}      &\{1/32\}\\
   \hline
   1/4  & \{1/4\} & \{1/4\} & \{1,0\}  & \{1/32\}\\
   \hline
   1/32& \{1/32\}&\{1/32\}& \{1/32\}& \{1,0,1/4\}\\
    \hline
    \end{array}
    \end{equation}

\medskip

\noindent Moreover it is also required that $\sigma$ satisfies the {\it Norton inequality}\footnote{Norton inequality is satisfied by the Griess algebra and, because of that, has become part of the axiomatics of Majorana algebra. We observe, however, that within this theory it has never been used so far.}: for every $u,v\in V$,
    $$ \sigma(uu,vv)\geq \sigma(uv,uv).$$
    As usual, when the  form $\sigma$ needs not to be specified, we write simply $V$ for $(V, \sigma)$. 
   An {\it automorphism} of a Majorana algebra $V$ is an isometry of $V$ that preserves the algebra product, in particular it sends axes to axes. The set of automorphisms of a Majorana algebra $V$ is a group and will be denoted by ${\rm aut}(V)$. 
The Ising fusion law implies that, for a Majorana algebra $V$,   the setting 
 $$V_+:=V_1\oplus V_0\oplus V_{\frac{1}{4}} \mbox{ and } V_-:=V_{\frac{1}{32}},$$
 is  a $\Z_2$-grading on $V$. Since, by [Ax5], the scalar product is associative, it follows that,  
  for each axis $a$, the map  $\tau_a$  that inverts every element of its $1/32$-eigenspace and fixes each element of the other eigenspaces is an involutory automorphism of $V$. The automorphism 
 $\tau_a$ is called the {\it Miyamoto involution} associated to the axis $a$ (the name comes after  Masahiko Miyamoto who introduced these involutions in~\cite{Miya} in the context of vertex operator algebras).
 By a result of John Conway (see~\cite{Co}),  the ($196884$-dimensional) Griess algebra constructed by Robert Griess in~\cite{Griess}  is a Majorana algebra (see also~\cite[Section 8.5]{Iva}). In particular, identifying the Monster with the automorphism group of the Griess algebra, the Miyamoto involutions of the Griess algebra are precisely the {\it Fischer involutions} of the Monster, i.e. the involutions in the Monster whose centraliser is the double cover of the Baby Monster. These involutions form a unique conjugacy class in the Monster  that is labelled $2A$ in the ATLAS~\cite{ATLAS} and their name comes after Bernd Fischer, who, together with Griess, first foreshadowed the existence of the Monster.
Dihedral Majorana algebras are called {\it Norton-Sakuma algebras}. The complete classification of Norton-Sakuma algebras goes under the name of {\it Norton-Sakuma Theorem} and it has been achieved by  Simon P. Norton~\cite{Norton} for the subalgebras of the Griess algebra, by  Shinya Sakuma~\cite{Sakuma} in the context of certain vertex operator algebras,  and, within the Majorana algebra axiomatics, by Ivanov, Dima Pasechnik, \'Akoss Seress, and Sergey Shpectorov~\cite{IPSS}. 

\medskip
 
\noindent {\bf  Norton-Sakuma Theorem.} {\it There are nine isomorphism classes of Norton-Sakuma algebras. The representatives of these classes, are described in the rows of Table~\ref{table1}. }

\medskip
 
In Table~\ref{table1} the basis given for each representative is called the {\it Norton basis},  $\rho$ denotes the product of the two Miyamoto involutions associated to the generating axes $a_0$ and $a_1$ and, for $\epsilon \in\{0,1\}$ and $i\in\Z$,  $a_{\epsilon+2i}$ is the image of $a_\epsilon$ under $\rho^i$. Note that, since  $\rho$ is an algebra automorphism, $a_{\epsilon+2i}$ is still an axis. The {\it type} of a Norton-Sakuma algebra is the name given in the ATLAS~\cite{ATLAS} for the conjugacy class in the Monster of the product of the Miyamoto involutions  associated to the generating axes (when this algebra is identified with a subalgebra of the Griess algebra). The vectors $a_\rho$, $a_{\rho^2}$, $a_{\rho^3}$ appearing in the algebras of type $2A$, $4B$, and $6A$ are axes whose associated Miyamoto involutions are $\rho$, $\rho^2$, and $\rho^3$, respectively. Further, the vectors $u_\rho$, $v_\rho$, $w_\rho$, and $u_{\rho^2}$,  appearing in algebras of type $NA$, for $N\in\{3,4,5,6\}$, are called {\it odd axes} or, more specifically, $N$-axes, for $N<6$, and $3$-axis for $N=6$.   Odd axes do not depend on the generating pair $(a_0, a_1)$ but (up to a sign in the case $5A$) only on the cyclic subgroup  $\langle \rho \rangle$ of index $2$ in  the dihedral subgroup of the Monster generated by the associated Miyamoto involutions. $3$-, $4$-, and $5$-axes will also be called {\it odd} axes.
 \begin{table}
{\tiny 
$$
\begin{array}{|l|l|l|l|}
\hline

\mbox{Type}& \mbox{Basis} &\mbox{Structure constants} &\mbox{Scalar products}\\
\hline
& &  &\\
1A & \begin{array}{l} a_0 \end{array}&
\begin{array}{l}
 a_0\cdot a_0=a_0 \end{array}
 &
\begin{array}{l}
(a_0,a_0)=1 \end{array}\\
 & &  &\\
 \hline
& & &\\
2A 
&\begin{array}{l} a_0,\\ a_1,\\ a_\rho \end{array}
&
\begin{array}{l}
 a_0\cdot a_1=\frac{1}{2^3}(a_0+a_1-a_\rho),\\
 \\
 a_0\cdot a_\rho=\frac{1}{2^3}(a_0+a_\rho-a_1)\\
 \\
 a_\rho\cdot a_\rho=a_\rho
 \end{array}
&
\begin{array}{l}
 (a_0, a_1)=\frac{1}{2^3}\\
 \\
 (a_0, a_\rho)=\frac{1}{2^3}\\
 \\
 (a_1, a_\rho)=\frac{1}{2^3}
 \end{array}
 \\
 & & & \\
 \hline
  & &  &\\
2B &\begin{array}{l} a_0,\\ a_1 \end{array}& \begin{array}{l} 
a_0\cdot a_1=0 \end{array}& \begin{array}{l} (a_0, a_1)=0 \end{array}\\
 & & & \\
 \hline
  & & & \\

 3A & \begin{array}{l} a_{-1},\\ a_0,\\ a_1,\\ u_\rho \end{array}&
 \begin{array}{l}
 a_0\cdot a_1=\frac{1}{2^5}(2a_0+2a_1+a_{-1}) -\frac{3^3\cdot 5}{2^{11}}u_\rho\\
 \\
  a_0\cdot u_\rho=\frac{1}{3^2}(2a_0-a_1-a_{-1})+\frac{5}{2^5}u_\rho\\
  \\
    u_\rho\cdot u_\rho= u_\rho\\
  \end{array}
  &
  \begin{array}{l}
   (a_0, a_1)=\frac{13}{2^8},\\ 
   \\
   (a_0, u_\rho)=\frac{1}{4},\\
   \\
   (u_\rho, u_\rho)=\frac{2^3}{5}
 \end{array}\\
 & & & \\
 \hline
 & & & \\
 3C &\begin{array}{l} a_{-1},\\ a_0,\\ a_1 \end{array}&  \begin{array}{l} a_0\cdot a_1=\frac{1}{2^6}(a_0+a_1-a_{-1})\end{array} & \begin{array}{l}(a_0,a_1)=\frac{1}{2^6}\end{array} \\
 & & & \\
 \hline
 & &  &\\
 4A & \begin{array}{l}a_{-1}, \\a_0, \\a_1,\\ a_2, \\v_\rho \end{array} 
 &  \begin{array}{l}         
 a_0\cdot a_1= \frac{1}{2^6}(3a_0+3a_1+a_{-1}+a_2-3v_\rho) \\
 \\
  a_0\cdot a_2=0\\
 \\
 a_0\cdot v_\rho= \frac{1}{2^4}(5a_0-2a_1-2a_{-1}-a_2+3v_\rho)\\
 \\
 v_\rho\cdot v_\rho=v_\rho
 \end{array}
 & \begin{array}{l}  
 (a_0,a_1)=\frac{1}{2^5}\\
 \\
 (a_0,a_2)=0 \\
 \\
 (a_0,v_\rho)=\frac{3}{2^3}\\
 \\
  (v_\rho,v_\rho)=2
           \end{array} \\
 & & & \\
  \hline
 & & & \\
4B & \begin{array}{l} a_{-1}, \\a_0,\\ a_1,\\a_2\\ a_{\rho^2} \end{array}
&  \begin{array}{l}   
a_0\cdot a_1= \frac{1}{2^6}(a_0+a_1-a_{-1}-a_2+a_{\rho^2}) \\
\\
 a_0\cdot a_2= \frac{1}{2^3}(a_0+a_2-a_{\rho^2})
 \end{array}
& \begin{array}{l}    
(a_0,a_1)=\frac{1}{2^6}\\
 \\
 (a_0,a_2)=\frac{1}{2^3} \\
 \\
 (a_0,a_{\rho^2})=\frac{1}{2^3}  
 \end{array} \\
 & &  &\\
 \hline
 & &  &\\
 5A & \begin{array}{l}a_{-2},\\ a_{-1},\\ a_0, \\a_1,\\ a_2, \\w_\rho \end{array}
&  \begin{array}{l}  
 a_0\cdot a_1= \frac{1}{2^7}(3a_0+3a_1-a_{-1}-a_2-a_{-2})+ w_\rho \\
 \\
 a_0\cdot a_2=\frac{1}{2^7}(3a_0+3a_2-a_1-a_{-1}-a_{-2})- w_\rho \\
 \\
 a_0\cdot w_\rho= \frac{7}{2^{12}}(a_1+a_{-1}-a_2-a_{-2})+ \frac{7}{2^{5}}w_\rho \\
 \\
 w_\rho\cdot w_\rho=\frac{5^2 \cdot 7}{2^{19}}(a_0+a_1+a_{-1}+a_2+a_{-2}) 

           \end{array}
& \begin{array}{l}     
 (a_0,a_1)=\frac{3}{2^7}\\
 \\
 (a_0,w_\rho)=0\\
 \\
  (w_\rho,w_\rho)=\frac{5^3 \cdot 7}{2^{19}}
        \end{array}  \\
 & & & \\
 \hline
 & & & \\
 6A & \begin{array}{l}
 a_{-2},\\ a_{-1},\\ a_0,\\
 a_1,\\ a_2,\\ a_3, \\
 a_{\rho^3},\\ u_{\rho^2} \end{array}
& \begin{array}{l}   
 a_0\cdot a_1= \frac{1}{2^6}(a_0+a_1-a_{-1}-a_2-a_{-2}-a_3+a_{\rho^3})+\frac{3^2\cdot5}{2^{11}} u_{\rho^2} \\
 \\
  a_0\cdot a_2= \frac{1}{2^5}(2a_0+2a_2 + a_{-2})-\frac{3^3\cdot5}{2^{11}} u_{\rho^2} \\
 \\
 a_0\cdot a_3=\frac{1}{2^3}(a_0+a_3-a_{\rho^3})\\
 \\
 a_0\cdot u_{\rho^2}= \frac{1}{3^{2}}(2a_0-a_2-a_{-2})+ \frac{5}{2^{5}}u_{\rho^2} \\
 \\
 a_{\rho^3}\cdot u_{\rho^2}=0

          \end{array}
& \begin{array}{l}    
(a_0,a_1)=\frac{5}{2^8}\\
 \\
 (a_0,a_2)=\frac{13}{2^8}\\
 \\
  (a_0,a_3)=\frac{1}{2^3}\\
  \\
  ( a_{\rho^3}, u_{\rho^2})=0

         \end{array}  \\
 & & & \\
\hline

\end{array} 
 $$}
 \caption{The nine types of Norton-Sakuma algebras}
  \label{table1}
 \end{table}

 The properties of the action of  Monster on the Griess algebra, together with the correspondence between axes and Miyamoto involutions are  axiomatised  in the definition of Majorana representation: 
given
\begin{itemize}  
\item [-] a group $G$,  
\item  [-] a $G$-invariant set of involutions ${\mathcal T}$ generating $G$ (the {\it Majorana set})  
\item  [-] a Majorana algebra $V$,  
\item  [-]  an injective map  $\psi$ from ${\mathcal T}$ to the set of axes of $V$,
\end{itemize} 
a {\it Majorana representation  of $G$ on $V$ with respect to} $\psi$, is a group homomorphism $$\phi\colon G\to {\rm aut}(V)$$ such that 
\begin{enumerate}
\item[M1]  $\mathcal T^\psi$ generates $V$ as an algebra,
\item[M2]  for every $t\in \mathcal T$ and every $g\in G$, $(t^g)^\psi=(t^\psi)^{g^\phi}$,
\item[M3]  for every $t\in {\mathcal T}$, $t^\phi$ is the Miyamoto involution associated to the axis $t^\psi$,
\item [M4]  for $t_1$ and $t_2$ in $ {\mathcal T}$, the Norton-Sakuma algebra generated by $t_1^{\psi}$ and $t_2^{\psi}$ has type $2A$ if and only if $t_1t_2\in  {\mathcal T}$ and in this case $(t_1t_2)^\psi = t_1^\psi+t_2^\psi-8t_1^\psi t_2^\psi$ (cfr. Table~\ref{table1} with $\rho= (t_1t_2)^\phi$),
\item[M5] if $t_1$, $t_2$, $t_3$, and $t_4$ are elements of ${\mathcal T}$ such that $t_1t_2=t_3t_4$ and the subalgebras generated by $t_1^{\psi}, t_2^{\psi}$ and $t_3^{\psi}, t_4^{\psi}$ both have type $3A$, $4A$, or $5A$, then $u_{(t_1t_2)^\phi}=u_{(t_3t_4)^\phi}$, $v_{(t_1t_2)^\phi}=v_{(t_3t_4)^\phi}$, or $w_{(t_1t_2)^\phi}=w_{(t_3t_4)^\phi}$, respectively.
\end{enumerate}

\medskip

\noindent Given a Majorana representation $\phi$ as above, we shall also refer to it  as the quintuple $$(G, \mathcal T, V, \phi,\psi).$$

\noindent The {\it shape} of a Majorana representation $\phi$ is the map $sh$ that assigns to each orbital of $G$ on $\mathcal T$ the isomorphism class of the dihedral algebra generated by the two axes associated to one (every) pair of involutions in that orbital.
\begin{enumerate}
\item[M6] the map $sh$ {\it respects} the following embeddings of dihedral algebras:
$$
 2A\hookrightarrow 4B, \:\:2A\hookrightarrow 6A, \:\:2B\hookrightarrow 4A, \:\:3A\hookrightarrow 6A
 $$
 \noindent in the sense that, for $t, r_1, r_2\in \mathcal{T}$, if $\langle \langle t^\psi\rangle \rangle < \langle \langle t^\psi, r_1^\psi\rangle \rangle < \langle \langle t^\psi, r_2^\psi \rangle \rangle$, then 
 $$(sh((t,r_1)^G), \,sh((t,r_2)^G))\in \{(2A,4B), (2A, 6A), (2B,4A), (3A,6A)\}.$$
\end{enumerate}
Axioms M4, M5, M6, and Norton-Sakuma Theorem  imply that,  
if $t_1^\psi$ and  $t_2^\psi$ generate a dihedral subalgebra of $V$ of type $2A$, $4B$, or $6A$, then $t_1t_2$, $(t_1t_2)^2$, or $(t_1t_3)^3$ belongs to ${\mathcal T}$, and $(t_1t_2)^\psi $, $((t_1t_2)^2)^\psi $, or $((t_1t_3)^3)^\psi $ coincide with $a_{(t_1t_2)^\phi}$, $a_{((t_1t_2)^2)^\phi}$, or  $a_{((t_1t_2)^3)^\phi}$, respectively. 
The next result follows immediately from the definition of Majorana representation.
\begin{lemma}
\label{oxf}
If ${\mathcal T_0}$ is a nonempty subset of ${\mathcal T}$ such that $\mathcal T_0$ is $\langle \mathcal T_0\rangle$-invariant, then  
$
\phi_{|_{\langle {\mathcal T_{0}}\rangle}}
$
is again a Majorana representation of $\langle {\mathcal T_{0}}\rangle$ on  the subalgebra of $V$ generated by $\mathcal T_0^\psi$  with respect to $\psi_{|_{{\mathcal T_{0}}}}$. 
\end{lemma}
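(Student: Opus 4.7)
The plan is to set $H := \langle \mathcal{T}_0 \rangle$ and let $W$ denote the subalgebra of $V$ generated by $\mathcal{T}_0^\psi$, then proceed in three stages: show that $W$ is $H^\phi$-stable, verify that $(W,\sigma|_{W\times W})$ inherits the Majorana algebra structure from $V$, and finally check axioms M1--M6 for the restricted quintuple $(H,\mathcal{T}_0,W,\phi|_H,\psi|_{\mathcal{T}_0})$.

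The first step is immediate from M2 for the original representation: for $t\in\mathcal{T}_0$ and $h\in H$ we have $(t^\psi)^{h^\phi}=(t^h)^\psi$, and $t^h\in \mathcal{T}_0$ by the $H$-invariance hypothesis, so $\mathcal{T}_0^\psi$ is $H^\phi$-stable; hence the same holds for the subalgebra $W$ it generates, and $\phi|_H$ restricts to a homomorphism $H\to{\rm aut}(W)$ once $W$ is established to be a Majorana algebra.

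For the second step, I would verify each Majorana axiom on $W$. For any axis $a\in \mathcal{T}_0^\psi$ the operator ${\rm ad}(a)$ on $V$ is semisimple with spectrum in $\{1,0,1/4,1/32\}$, and since $W$ is ${\rm ad}(a)$-invariant, ${\rm ad}(a)|_W$ is semisimple with the same spectral restriction, the eigenspaces being $W_\theta=W\cap V_\theta$. The Ising fusion law transfers since
$$W_\lambda W_\mu\subseteq W\cap\bigoplus_{\delta\in\lambda\star\mu}V_\delta=\bigoplus_{\delta\in\lambda\star\mu}W_\delta.$$
Primitivity follows from $W_1\subseteq V_1=\langle a\rangle$ together with $a\in W$; the form $\sigma|_{W\times W}$ remains associative, positive definite, and gives axes length one; and the Norton inequality, being a pointwise condition, persists from $V$.

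For the third step, M1 is the definition of $W$ and M2 is the restriction of M2 for $V$. For M3, the Miyamoto involution $\tau_{t^\psi}$ is determined by the $\Z_2$-grading, and the grading on $W$ is precisely the restriction of the one on $V$, so $t^\phi|_W=\tau_{t^\psi}|_W$ is still the Miyamoto involution of $t^\psi$ inside $W$. Axioms M4 and M5 follow because any pair $t_1,t_2\in\mathcal{T}_0$ generates the same dihedral subalgebra in $W$ as in $V$, so the assigned type, the formula $(t_1t_2)^\psi=t_1^\psi+t_2^\psi-8t_1^\psi t_2^\psi$, and the identification of the odd axes $u_\rho,v_\rho,w_\rho$ are inherited; M6 is similarly inherited, since each orbital of $H$ on $\mathcal{T}_0$ is contained in an orbital of $G$ on $\mathcal{T}$ and each dihedral embedding is witnessed by the same triple of axes. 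I expect the main, if minor, subtlety to lie in the closure condition implicit in M4: if $t_1,t_2\in\mathcal{T}_0$ generate a type-$2A$ subalgebra then $t_1t_2\in\mathcal{T}\cap H$ by M4 in $V$, and the phrase ``$\mathcal{T}_0$ is $H$-invariant'' has to be read strongly enough to guarantee $t_1t_2\in\mathcal{T}_0$; equivalently, one may tacitly enlarge $\mathcal{T}_0$ by such products without altering $H$ or $W$.
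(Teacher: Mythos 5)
Your verification is correct and is essentially the argument the paper intends: the paper gives no proof at all, asserting only that the lemma ``follows immediately from the definition of Majorana representation,'' and your direct check of Ax1--Ax6 on the subalgebra together with M1--M6 for the restricted quintuple is exactly that routine verification. Your closing caveat about the ``only if'' direction of M4 is well taken -- the statement tacitly assumes $\mathcal T_0$ is closed under forming the products $t_1t_2$ of $2A$-pairs, which holds in every application in the paper (there $\mathcal T_0$ is either $\mathcal T\cap\langle\mathcal T_0\rangle$ or $\mathcal X_b$, and the product of two type-$2^2$ involutions spanning a $2A$-algebra is again of type $2^2$), and your remedy of enlarging $\mathcal T_0$ by such products changes neither $\langle\mathcal T_0\rangle$ nor the subalgebra generated, since $(t_1t_2)^\psi=t_1^\psi+t_2^\psi-8\,t_1^\psi t_2^\psi$ already lies in it.
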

The standard example of a Majorana representation is given by the usual action of the Monster on the Griess algebra, where the Majorana set  is the conjugacy class of Fischer involutions (see~\cite[Proposition 8.6.2]{Iva}). 
More generally, let  $H$ be any finite group, ${\mathcal T}_H$ an $H$-invariant generating set of involutions of $H$. 
A Majorana representation $(H, \mathcal T_H, V, \phi, \psi)$ is called  {\it $2A$-Majorana representation} if there is an embedding 
$\epsilon$ of $H$  in the Monster such that 
$${\mathcal T}_H^\epsilon=H^\epsilon\cap \mathcal T_{2A}.$$ 
In particular, if $\phi$ can also be factorised as the composition of $\epsilon$ with the Majorana representation induced on $H^\epsilon$ by the usual action of the Monster on the Griess algebra, we say that $\phi$ is {\it based on the embedding}  $\epsilon$ of $H$ in the Monster.

Note that, by the Norton-Sakuma Theorem, dihedral groups $D_n$ admit Majorana representations if and only if $n\leq 6$,  there are $9$ such representations  and they all arise from an embedding in the Monster (in particular they are $2A$-Majorana representations). Conversely, the ideal goal would be to prove that a $2A$-Majorana representation is almost always based on an embedding in the Monster, and find the exceptions.

Majorana representations of various groups have been studied and Markus Pfeiffer and  Madeleine Whybrow have produced the GAP package {\it MajoranaAlgebras}~\cite{MM} for this purpose.  For small groups this programme can compute the structure constants and the inner products of a Majorana representation of a group $G$ once the Majorana set and the shape are given. On the other hand the computational complexity increases rapidly with order of the group involved and still Majorana representations of many groups are not completely known. In particular,  the Majorana representations of the alternating groups $A_n$  have been determined only for $n\in \{5,6,7\}$ (\cite{IS12, A67, Iv11b}).  From~\cite{FIM2} one can deduce that $A_{12}$ is the largest alternating group admitting a Majorana representation, but there is only partial information about the representations of $A_n$, for $n\in \{8,9,10,11,12\}$ (see \cite{Alonso}, \cite{FIM2}, and~\cite{FIM4}). In the spirit of~\cite{Serre}, we point out that, in this paper, the package {\it MajoranaAlgebras} is needed only for for the proofs of Lemma~\ref{formulaccia} and  Theorem~\ref{A8}, and the GAP software~\cite{GAP} only for long, but routine, computations of elementary linear algebra with tabloids.

The main result of the present paper is the following. 

\begin{theorem}\label{main}
There is, up to equivalence, a unique $2A$-Majorana representation $(A_{12}, \mathcal T, V, \phi,\psi)$ of $A_{12}$. In particular 
\begin{enumerate}
\item[{\it (1)}]  $\phi$ is based on the embedding of $A_{12}$ in the Monster as the centraliser of a $(2A, 3A, 5A)$-subgroup isomorphic to $A_5$.
\item  [{\it (2)}] $V$ has dimension $3,960$ and its decomposition into irreducible submodules is given in the last column of Table~\ref{dect},
\item  [{\it (3)}]  $V$ is $2$-closed and it 
is linearly spanned by the Majorana axes and the $3A$-axes associated to permutations of cycle type $3^2$.
\end{enumerate}
\end{theorem}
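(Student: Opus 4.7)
My plan is to proceed in three phases: (i) force the shape of the representation, (ii) construct $V$ explicitly by gluing along the restriction to an $A_8$-subgroup, and (iii) extract the dimension and irreducible decomposition as $A_{12}$-modules.

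\emph{Phase 1 (shape).} The orbitals of $A_{12}$ on $\mathcal{T}\times\mathcal{T}$ are indexed by the $A_{12}$-class of the product $t_1t_2$ together with the unordered pair of cycle types $\{c(t_1),c(t_2)\}\subseteq\{2^2,2^6\}$. For each orbital I would list the Norton--Sakuma types admissible under M4 and M6; restriction to dihedral and alternating subgroups $A_m\le A_{12}$ with $m\le 7$, via Lemma~\ref{oxf} combined with the existing classifications of Majorana representations of $A_5$, $A_6$, and $A_7$, then constrains these local choices further. Because $\phi$ is required to be a $2A$-representation, its shape must be consistent with that inherited from an embedding into the Griess algebra; this should narrow the shape down to the unique one coming from the centraliser-of-$A_5$ embedding in part (1).

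\emph{Phase 2 (construction and 2-closure).} Let $H\le A_{12}$ be a natural $A_8$ point-stabiliser and let $V_H$ be the subalgebra spanned by $(H\cap\mathcal{T})^\psi$. By Lemma~\ref{oxf}, $V_H$ carries a Majorana representation of $H$, and by Theorem~\ref{A8} this subrepresentation is determined explicitly. Using Lemma~\ref{formulaccia}, which expresses certain axis products as linear combinations of Majorana axes and of the $3A$-axes associated to $3^2$-cycles, and propagating these identities by conjugation through $A_{12}$, I would show that the subspace $U$ spanned by the Majorana axes together with the $3A$-axes coming from $3^2$-cycles is closed under multiplication. This gives $V=U$, establishing both the 2-closure and the spanning statement in (3), and simultaneously forces the structure constants and inner products on $V$ to be those read off from $V_H$ and its $A_{12}$-translates, yielding uniqueness and part (1).

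\emph{Phase 3 (dimension and decomposition).} Let $W$ be the real permutation module of $A_{12}$ on the disjoint union of the $2^2$- and $2^6$-involutions of $\mathcal{T}$ and the set $\X$ of $3^2$-cycles taken modulo inversion, and let $\pi\colon W\twoheadrightarrow V$ be the natural surjection. Decomposing $W$ into irreducible $A_{12}$-modules by standard Specht/tabloid methods (the elementary GAP linear algebra the introduction alludes to) and locating inside $W$ the relations produced in Phase~2, I would identify $\ker\pi$ and read off both $\dim V=3960$ and the irreducible breakdown in Table~\ref{dect}. The main obstacle will be the consistency check in Phase~2: the local identities forced by $V_H$ and by Lemma~\ref{formulaccia} must agree on overlapping pairs of axes after $A_{12}$-transport, and the resulting Frobenius form must remain positive definite on the full span $U$. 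These global coherence conditions are what make the proof non-trivial, and they lean on the prior existence of a compatible representation inside the Griess algebra.
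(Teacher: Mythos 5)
There are genuine gaps in both of your first two phases. In Phase~1, restriction to $A_m$ with $m\le 7$ together with M4/M6 cannot decide the shape on the critical orbitals: the ambiguous choices ($3A$ versus $3C$) occur for pairs of involutions of cycle type $2^6$, whose product has type $3^2$ or $3^4$ and whose combined support is all of $\{1,\ldots,12\}$, so no proper natural alternating subgroup sees them. Nor can you appeal to ``consistency with the shape inherited from the Griess algebra'': by definition a $2A$-representation only requires that $\mathcal T$ correspond to $H^\epsilon\cap\mathcal T_{2A}$ under some embedding; it does \emph{not} assume the algebra $V$ sits inside the Griess algebra --- that is exactly assertion (1), a conclusion rather than a hypothesis, so your argument is circular at this point. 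The paper closes this gap by a positivity argument (Proposition~\ref{shape2^6}): the Majorana form induces a positive semidefinite form on the permutation module on $\mathcal X_s$, and the eigenvalues $f_{s,1}^{(6^2)}$ and $f_{s,1}^{(2^6)}$ computed from the first eigenmatrix are negative for every choice of shapes on $\Sigma_{2,s},\Sigma_{4,s},\Sigma_{7,s}$ except $(2A,3A,3A)$; only then do M4, M5, M6 and the $A_7$ classification finish the shape.

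In Phase~2, gluing $A_{12}$-translates of the $A_8$-subalgebra cannot force the products you need: the axes $a_s$ with $s\in\mathcal X_s$ lie in no point stabiliser at all, and a pair consisting of an axis and a $3$-axis (or two $3$-axes) can have combined support of size up to $12$, so no conjugate of $A_8$ contains both factors; ``propagating by conjugation'' therefore leaves precisely the hard products undetermined. Leaning on the existence of the representation inside the Griess algebra only yields existence, whereas the substance of the theorem is uniqueness, for which every product must be shown to be \emph{forced} by the axioms. The paper's mechanism for this is the support-reduction machinery you are missing: Lemma~\ref{formulaccia} disposes of the $4^2$-axes (giving Proposition~\ref{gen}), and the projection onto the irreducible constituent $V_3\cong S^{(7,1^5)}$ produces explicit dependence relations (Proposition~\ref{udependent}) rewriting $3$-axes with small support overlap in terms of $3$-axes supported on fewer points modulo $V^{(2A)}$; these feed a recursion on support size (Lemma~\ref{nuovissimo}, Lemma~\ref{sixtranspositions}, Proposition~\ref{fine2closure}), combined with associativity of the form and the $0$-eigenvector trick for disjoint supports. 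Note also that invoking Theorem~\ref{A8} inside the proof of the main theorem inverts the paper's logical order (Theorem~\ref{A8} is deduced afterwards from the shape result and a separate computation), and your Phase~3 plan to read $\ker\pi$ off the Phase~2 relations replaces the paper's actual device --- computing the radical of the induced Gram form via generalised first eigenmatrices and the inner products of Section~\ref{Scalar} --- which is what really yields $\dim V=3960$ and the decomposition in Table~\ref{dect}.
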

Recall that, given a partition $\lambda$ of $n$, there is a unique irreducible $\R[S_n]$ module  $S^\lambda$ associated to $\lambda$ (the {\it Specht} module, see e.g.~cite{J}). When restricted to the action of $A_n$, $S^\lambda$ remains irreducible if  $\lambda$ is not self conjugate, otherwise $S^\lambda$ splits into the direct sum of two non isomorphic irreducibles $\R[A_n]$-modules of the same dimension. In Table~\ref{dect} we write $1+1$ when this splitting occours, meaning that both modules appear. 
\begin{table}
$$
\begin{array}{|c|c|c|c|c|c|c|}
\hline
 &\multicolumn{5}{|c|}{W^\circ} & \multicolumn{1}{c|}{V} \\
\hline
\lambda & n=8 & n=9& n=10 & n=11& n=12& \\
\hline
(n) & 2 & 2 & 2 & 2& 1& 1\\
\hline
(n-1,1)& 2 & 2 & 2 & 2& 1&1\\
\hline
(n-2,2) & 3 &3 & 3 & 2& 1&1\\
\hline
(n-3,3)& 2 & 2 & 2 & 2& 1&1\\
\hline
(n-4,4)&1 & 1 & 1 & 1& 1&1\\
\hline
(n-3,2,1) &1 & 1 & 1 & 1& 1&1\\
\hline
(n-4,2,2) & 2& 2& 2& 1& 1&1\\
\hline
(n-5,2,2,1)& 1 & 1 & 1 & 1& 0&0\\
\hline
(n-6,2,2,2) &1 & 1 & 1 & 1& 1&1\\ 
\hline
(n-4,1,1,1,1)&1 & 1+1 & 1 & 1& 0&0\\ 
\hline
(n-5,1,1,1,1,1)&1 & 1 & 1+1 & 1& 1&1\\
\hline
(4,4,4) &  & &  & &  &1\\
\hline
\hline
\dim(V^\circ_n)& 462 &1008 & 2052 & 3498 & 3498 &3960\\
\hline
\end{array}
$$
\caption{Irreducible $\R[A_n]$-submodules and their multiplicities in $W^\circ$ and in $V$}\label{dect} 
\end{table}

As consequences of  Theorem~\ref{main} we get the following two results.
\begin{theorem}\label{embuno}
The Harada-Norton group has a unique (up to equivalence) Majorana representation, which is the one based on its embedding in the Monster. 
\end{theorem}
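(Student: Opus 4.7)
The plan is to reduce uniqueness to Theorem~\ref{main} via a maximal $A_{12}$-subgroup. Existence is immediate from the embedding $\epsilon$ of the Harada-Norton group $HN$ into the Monster: restricting the action of the Monster on the Griess algebra to the subalgebra generated by $HN^\epsilon\cap \mathcal T_{2A}$ yields a Majorana representation of $HN$ based on $\epsilon$. For uniqueness, let $(HN, \mathcal T, V, \phi, \psi)$ be an arbitrary Majorana representation. First I would show that $\mathcal T$ must be the $2A$-conjugacy class of $HN$: by the ATLAS, $HN$ has only two classes of involutions, $2A$ and $2B$, and the constraints imposed by the Norton-Sakuma Theorem on the orders of products of elements of a Majorana set eliminate the $2B$ class. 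Hence the representation is a $2A$-Majorana representation.

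Next, fix an embedding $A_{12}\leq HN$, realised under $\epsilon$ as the centraliser in $HN^\epsilon$ of a $(2A,3A,5A)$-subgroup isomorphic to $A_5$. The intersection $\mathcal T\cap A_{12}$ coincides with the set of permutations of cycle types $2^2$ and $2^6$ in $A_{12}$, that is, the Majorana set of Theorem~\ref{main}. Applying Lemma~\ref{oxf}, the restriction of $\phi$ to $A_{12}$, together with the subalgebra $V_0:=\langle (\mathcal T\cap A_{12})^\psi\rangle$, is a $2A$-Majorana representation of $A_{12}$, so by Theorem~\ref{main} it is determined up to equivalence; in particular $V_0$ has dimension $3{,}960$ with irreducible decomposition as in Table~\ref{dect} and is generated by its Majorana axes and its $3A$-axes associated to permutations of cycle type $3^2$.

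Finally, one has to propagate uniqueness from $A_{12}$ to $HN$. Since $A_{12}$ is maximal in $HN$, the group $HN$ is generated by $A_{12}$ and any $t\in \mathcal T\setminus A_{12}$. The conjugation action of $t$ on $\mathcal T\cap A_{12}$ is forced by axiom M2 and the fusion pattern of $HN$-classes, and the resulting algebra automorphism $t^\phi$ is then determined by its action on the generators $(\mathcal T\cap A_{12})^\psi$ of $V_0$ together with the requirement that it preserves the algebra product. Iterating over $HN$-conjugates of $V_0$ spans $V$ and pins down $\phi$ uniquely, matching the representation coming from $\epsilon$. The main obstacle will be verifying carefully that the shape of the $HN$-representation is forced by the shape on $A_{12}$: concretely, that for every $HN$-orbital on $\mathcal T\times\mathcal T$, the type of the associated Norton-Sakuma subalgebra is rigidly constrained, either by being witnessed by some $A_{12}$-orbital of the same type or by axiom M6 combined with the $2A$-hypothesis, so that no choice of shape beyond the one inherited from $\epsilon$ is possible.
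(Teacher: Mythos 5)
There is a genuine gap at the final ``propagation'' step, and it is precisely the point where the paper's actual argument does the real work. Knowing that the restriction of $\phi$ to an $A_{12}$-subgroup is unique (via Theorem~\ref{main}) only determines the subalgebra $V_0$ generated by the axes of $\mathcal T\cap A_{12}$; it does not determine $V$. Your claim that $t^\phi$ ``is then determined by its action on the generators of $V_0$'' is circular: $t^\phi$ is an automorphism of the whole algebra $V$, whose dimension and structure constants (in particular the products of axes lying in different $HN$-conjugates of $V_0$, and the identification or independence of odd axes attached to different conjugates) are exactly what is not yet known. Likewise, the assertion that the $HN$-conjugates of $V_0$ span $V$ and that this ``pins down $\phi$'' is unsubstantiated; axiom M2 forces the permutation action on the axes, but not the algebra product between axes from different conjugates. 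Even settling the shape, which you flag as the main obstacle, would not by itself close this gap.

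The paper's route is different and shorter: it restricts a $2A$-representation of $F_5=HN$ to $H\cong A_{12}$, then uses the fusion in $F_5$ of the $A_{12}$-classes of elements of cycle type $3$ and $3^2$ (from \cite[Lemma 3.1]{FIM1}) together with Theorem~\ref{main}(3) and \cite[Corollary 3.2]{Alonso} to conclude that every $3$-axis of $V_{F_5}$ (and, by conjugacy, every odd axis) is a linear combination of Majorana axes, so $V_{F_5}$ is linearly spanned by the Majorana axes alone. Uniqueness then follows from the previously computed structure of the $2A$-representation of $HN$ (\cite[Theorem 1.1]{FIM1}) and the Norton--Sakuma Theorem. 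This fusion argument, which converts the $3A$-axes of type $3^2$ into objects already controlled inside the linear span of the Majorana axes, is the missing idea in your proposal; without it (or the appeal to the earlier $HN$ results) the passage from uniqueness over $A_{12}$ to uniqueness over $HN$ does not go through. Your opening reduction eliminating the $2B$-class is also only asserted, but that is a secondary issue compared with the propagation step.
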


Recall the following conjecture made by Ivanov in~\cite{IvInd, IvCon}.
\medskip

\noindent {\bf Straight Flush Conjecture.}
{\it Suppose $A$ is an indecomposable Majorana algebra in which, for
every $i\in  \{2, 3, 4, 5, 6\}$, there exists a pair of Miyamoto involutions $t_1$ and $t_2$, such that the order of the product $t_1t_2$  is $i$. Then $A$ embeds into the Monster algebra.}
\medskip

Since, for every  $2A$-Majorana representation $(A_{12}, \mathcal T, V, \phi,\psi)$  of $A_{12}$, the set $\mathcal T$ contains involutions of cycle type $2^6$, it follows that $(A_{12}, \mathcal T, V, \phi,\psi)$ is a $2A$-Majorana representation if and only if $V$ contains a subalgebra $\langle \langle a_s, a_t\rangle \rangle$ of type $4A$ for some $s,t\in \mathcal T$ (see Lemma~\ref{shape2^6}). 

\begin{theorem}\label{HN}
The Majorana algebras affording a $2A$-Majorana representation of either $A_{12}$ or the Harada-Norton sporadic group satisfy the Straight Flush Conjecture. 
\end{theorem}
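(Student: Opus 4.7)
The plan is to derive Theorem~\ref{HN} as a direct consequence of Theorems~\ref{main} and~\ref{embuno}. Both of those results assert that the (unique, up to equivalence) $2A$-Majorana representation of the group under consideration is based on its embedding into the Monster; hence in each case the Majorana algebra $V$ is realised as a subalgebra of the Griess algebra. Since the Griess algebra is precisely the Monster algebra, the conclusion of the Straight Flush Conjecture---that $V$ embeds into the Monster algebra---holds automatically for both $A_{12}$ and the Harada-Norton group $HN$.

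Consequently, the only remaining task is to verify that the hypothesis of the conjecture is actually met, so that the statement is not vacuous. Two checks are needed: indecomposability of $V$, and the existence, for each $i\in\{2,3,4,5,6\}$, of a pair of Miyamoto involutions $t_1,t_2\in\mathcal T$ with $|t_1t_2|=i$. For indecomposability I would observe that any ideal of $V$ is a $G$-submodule, where $G\in\{A_{12},HN\}$ acts via $\phi$; inspecting the decomposition into irreducibles (the last column of Table~\ref{dect} for $A_{12}$, and the analogous list obtained inside the proof of Theorem~\ref{embuno} for $HN$) and using that $V$ is spanned by the $G$-orbit of a single axis $a$ (together with the derived odd axes), one rules out a nontrivial splitting of $V$ into two ideals. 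For the prescribed product orders, I would read off, orbital by orbital, the shape computed in the proofs of Theorems~\ref{main} and~\ref{embuno}: in the $A_{12}$ case the existence of a $4A$ subalgebra is already built into the paragraph preceding Theorem~\ref{HN} via $2^6$-involutions, while the orders $i\in\{2,3,5,6\}$ are realised by explicit pairs of double and sextuple transpositions whose products have the corresponding cycle types; in the $HN$ case the required pairs come from the conjugacy classes of pairs of Majorana involutions fixed by the shape.

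The main obstacle is therefore not conceptual but bookkeeping. Once Theorems~\ref{main} and~\ref{embuno} are in place, the content of Theorem~\ref{HN} reduces to the combinatorial statement that every order in $\{2,3,4,5,6\}$ is realised among products of pairs of Miyamoto involutions in the two representations, together with the (routine, given the explicit module decomposition) indecomposability verification. Both are immediate from the shape and module data established in the proofs of the two structural theorems.
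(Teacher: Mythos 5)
Your argument is exactly the one the paper intends: its proof of Theorem~\ref{HN} is simply ``This is immediate,'' relying on Theorems~\ref{main} and~\ref{embuno} to realise both algebras inside the Griess (Monster) algebra, so the conclusion of the Straight Flush Conjecture holds for them. Your additional verification of the hypotheses (indecomposability and the realised product orders) is extra bookkeeping beyond what the paper records, but it is consistent with the same approach.
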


Further, for $n\in \{8,9,10,12\}$, we determine the $2$-closure of the subalgebra of the Majorana algebra $V$ affording a $2A$-Majorana representation of $A_{12}$ generated by the Majorana axes corresponding to involutions of cycle type $2^2$ of $A_n$ (in its natural embedding in $A_{12}$ as the stabiliser of $12-n$ points). We denote by $\mathcal X_b$ the set of bitranspositions of $A_n$.

\begin{theorem} \label{An}
For $n\in \{8, \ldots ,12\}$ let  $(A_{n}, \mathcal X_b, W, \phi_b,\psi_b)$ be the Majorana representation of $A_{n}$ induced by the restriction to $A_n=\langle \mathcal X_b\rangle$ of the $2A$-Majorana representation of $A_{12}$ and let  $W^\circ$  be the $2$-closure of $W$. Then, the irreducible $\R[A_n]$-submodules of $W^\circ$ and their multiplicities are the ones given in Table~\ref{dect}.
\end{theorem}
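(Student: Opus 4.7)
The plan is to realise $W^\circ$, for each $n\in\{8,\ldots,12\}$, as an explicit sub-$\R[A_n]$-module of $V$ and then read off its irreducible decomposition from an appropriate spanning set. By Lemma~\ref{oxf}, the restriction of $\phi$ to $A_n=\langle \mathcal{X}_b\rangle$ yields a Majorana representation of $A_n$ on $W$, so $W^\circ$ inherits the structure of an $\R[A_n]$-submodule of $V$. Combining this with Theorem~\ref{main}(3) — that $V$ is linearly spanned by Majorana axes together with $3A$-axes for $3^2$-permutations — one sees that $W^\circ$ is linearly spanned by the axes $\psi_b(s)$ for $s\in\mathcal{X}_b$ together with the $3A$-axes $u_\rho$ for those $3^2$-permutations $\rho\in A_n$ that arise as products of two elements of $\mathcal{X}_b$. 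For $n\geq 6$ every $3^2$-permutation of $A_n$ has this form, so $W^\circ$ is a quotient of $\R[\mathcal{X}_b]\oplus\R[\mathcal{Y}_n]$, where $\mathcal{Y}_n$ denotes the set of cyclic subgroups of $A_n$ generated by $3^2$-elements.

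Next, one decomposes these two permutation $\R[A_n]$-modules into Specht modules by applying Young's rule to the Young subgroups $(S_2\wr S_2)\times S_{n-4}$ and $(S_3\wr S_2)\times S_{n-6}$ of $S_n$, and then restricting from $S_n$ to $A_n$, splitting any self-conjugate Specht modules into their two non-isomorphic $\R[A_n]$-irreducible halves. This gives an a priori upper bound on the multiplicity of every $S^\lambda$ in $W^\circ$. To pin down the actual multiplicities I would use three inputs: for $n=8$, invoke Theorem~\ref{A8}, which settles the decomposition directly via the package \emph{MajoranaAlgebras}; for $n=12$, subtract from the decomposition of $V$ given by Theorem~\ref{main} the contribution of the $2^6$-Majorana axes and their associated $3A$-axes (in particular the $(4,4,4)$ summand of $V$); and for $n\in\{9,10,11\}$, use the chain of embeddings $A_8<A_9<A_{10}<A_{11}<A_{12}$, with each $A_n$ realised as the stabiliser of $\{n+1,\ldots,12\}$, so that the successive $W^\circ$ form nested subalgebras, and apply the branching rule $S^\mu\downarrow^{S_n}_{S_{n-1}}=\bigoplus_\nu S^\nu$ (over partitions $\nu$ obtained from $\mu$ by removing a corner box) to propagate the decomposition from the endpoints inwards. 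Comparing these restrictions iteratively between the base cases $n=8$ and $n=12$ uniquely determines the multiplicities in the intermediate cases, including the ``$1+1$'' entries arising from self-conjugate partitions.

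The main obstacle is showing that certain Specht modules which appear in the permutation modules $\R[\mathcal{X}_b]$ and $\R[\mathcal{Y}_n]$ do \emph{not} survive in $W^\circ$ — equivalently, identifying the kernel of the spanning map precisely. This requires producing enough explicit linear relations among the putative generators using the Norton-Sakuma structure constants of Table~\ref{table1} together with axioms M4, M5, and M6. Such relations are conveniently organised as computations with tabloids carried out in GAP; once the kernel has been pinned down, $A_n$-invariance forces the irreducible decomposition, and matching the dimensions $\dim(V^\circ_n)$ listed in the bottom row of Table~\ref{dect} serves as the final consistency check.
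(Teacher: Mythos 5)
Your first step already contains a genuine gap: the claim that Theorem~\ref{main}(3) forces $W^\circ$ to be spanned by the axes of $\mathcal X_b$ together with the $3A$-axes of $3^2$-elements of $A_n$ does not follow, and it omits part of the actual spanning set. Theorem~\ref{main}(3) is a statement about $V$ (whose Majorana axes include those of the $2^6$-involutions); it gives no spanning set for the subspace $W^\circ$. The paper's Proposition~\ref{gen} shows, via the Norton--Sakuma Theorem and the results on $5A$- and $3A$-axes quoted there, that $W^\circ=\langle W^{(2^2)},W^{(3)},W^{(3^2)}\rangle$: the $3$-axes of single $3$-cycles (and through them the $5A$-axes) must be carried along, so the relevant permutation module is $M_b\oplus M_r\oplus M_t$, not $\R[\mathcal X_b]\oplus\R[\mathcal Y_n]$. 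More seriously, your proposal never identifies the mechanism by which the kernel of the spanning map is computed. The paper does this by transporting the Majorana inner product to the permutation modules, computing the values of the resulting form $f$ on orbitals (Section~\ref{Scalar}), and then determining the radical of $f$ on each homogeneous component via generalised first eigenmatrices (Lemma~\ref{Zapata}, Lemma~\ref{n-6,2,1,1,1,1}) and the Gram-determinant criterion for intersections of isomorphic constituents from different modules (Lemma~\ref{int}, Proposition~\ref{bitranspositions}, Equation~(\ref{pasechnik}), together with the quoted Theorem~1.2 of~\cite{FIM4}). ``Producing enough explicit linear relations among the generators'' is not a workable substitute: inside an abstractly axiomatised algebra the only handle on linear dependence of axes is precisely this bilinear-form computation, and that is the heart of the proof you have left unspecified.

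The plan for pinning down the multiplicities also fails at both ends and in the middle. Invoking Theorem~\ref{A8} for $n=8$ is circular with respect to the paper's logic, since the proof of Theorem~\ref{A8} uses Theorem~\ref{An} to know that $\dim W^\circ=462$; and ``subtracting the contribution of the $2^6$-axes'' from the decomposition of $V$ is not a valid operation, because $V^{(2^6)}$ and $\langle W^{(2^2)},W^{(3A)}\rangle$ overlap in many constituents (this is exactly what Lemma~\ref{intersection2} and Lemma~\ref{n-4,2,2} quantify), so one cannot read off which copies survive in $W^\circ$ without that intersection analysis. Finally, branching between the endpoints $n=8$ and $n=12$ cannot determine the intermediate columns of Table~\ref{dect}: the containments $W^\circ_{n-1}\leq W^\circ_n$ and the branching rule give only inequalities, while the actual table exhibits non-monotone phenomena --- the ``$1+1$'' splittings occurring only at $n=9$ and $n=10$, the drop of the $(n-2,2)$ and $(n-4,2,2)$ multiplicities at $n=11$ caused by $W^{(2A)}\cap W^{(3^2)-}$ becoming nonzero exactly for $n\in\{11,12\}$ (Proposition~\ref{bitranspositions}), and the membership $M^{(n-4,1^4)}_{t,1}\leq\rad([M_t,\beta])$ holding only for $n=12$. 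These transitions are detected only by evaluating the form $f$ at each individual $n$, which your interpolation argument cannot replace.
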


The results of this paper enable us to determine, for $n=8$, the subalgebra of $V$ that is generated by Majorana axes corresponding to the bitranspositions of $A_8$ and thus enlight the general setting of the Majorana representations of the groups $A_n$ for 
$n\in \{8, \ldots , 12\}$.

\begin{theorem} \label{A8}
Let $(A_{8}, \mathcal X_b, W, \phi_b,\psi_b)$ be the Majorana representation of $A_{8}$ induced by the restriction of a $2A$-Majorana representation of $A_{12}$ to the stabiliser of $4$ points.
Then $W$ has dimension $476$ and it is generated by $W^\circ$ together with the set 
$$\{v_\rho \:|\: \rho \mbox{ is a permutation of cycle type } 4^2 \mbox{ in } A_8\}.$$
In particular, $W$ is not $2$-closed.
\end{theorem}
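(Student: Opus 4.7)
The plan is to deduce Theorem~\ref{A8} from the description of the $A_{12}$-algebra $V$ given by Theorem~\ref{main} together with the identification of the $2$-closure $W^\circ$ furnished by Theorem~\ref{An}. By Lemma~\ref{oxf}, the restriction of the $A_{12}$-representation to the stabiliser $A_8 = (A_{12})_{\{9,10,11,12\}}$ with its $A_8$-invariant set $\mathcal X_b$ of bitranspositions is a Majorana representation realised on the subalgebra $W \leq V$ generated by the $462$ axes $\mathcal X_b^\psi$. Theorem~\ref{An} for $n=8$ yields $\dim W^\circ = 462$, with the irreducible $\R[A_8]$-decomposition read off the $n=8$ column of Table~\ref{dect}; the Specht module $S^{(4,4)}$, of dimension $14$, is conspicuously absent.

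The next step is to locate the missing $14$ dimensions in $W \setminus W^\circ$. For $\rho \in A_8$ of cycle type $4^2$, a direct cycle-count shows that no pair in $\mathcal X_b$ has product $\rho$: the two orbits of reflections in the dihedral group $\langle t,\rho\rangle$ split as one $2^2$-orbit and one $2^4$-orbit, and only the former lies in $\mathcal X_b$. To produce the $4A$-axis $v_\rho$, I would pass through $A_{12}$: for any double transposition $\sigma$ on $\{9,10,11,12\}$, the element $\rho\sigma\in A_{12}$ has cycle type $4^2\cdot 2^2$ and factorises as $\rho\sigma = s\cdot t$ with $s\in\mathcal T$ of cycle type $2^6$ and $t\in \mathcal X_b$, so that $\langle\langle a_s, a_t\rangle\rangle$ is a Norton--Sakuma subalgebra of type $4A$ and $v_{\rho\sigma} \in V$ is well-defined by axiom M5. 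Using the \textit{MajoranaAlgebras} package~\cite{MM} and the structure constants of $V$ from Theorem~\ref{main}, I would express an appropriate $A_8$-equivariant combination of the three $v_{\rho\sigma}$ (one for each double transposition $\sigma$) as a nested product of bitransposition axes, so that $v_\rho \in W$.

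In the third step, I would determine the $\R[A_8]$-module structure of the span of $\{v_\rho : \rho \text{ of cycle type } 4^2\}$. A character (or direct tabloid) computation in the transitive permutation module of $A_8$ on the cyclic subgroups generated by $4^2$-permutations, together with the explicit $A_8$-action on the $v_\rho$'s, expresses this span as a sum of Specht modules already occurring in $W^\circ$ plus a single copy of $S^{(4,4)}$; only the new copy contributes to $W \setminus W^\circ$, accounting for exactly the missing $14$ dimensions.

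Finally, I would verify that $U := W^\circ + \mathrm{span}\{v_\rho\}$ is stable under multiplication by every bitransposition axis; this is the main computational obstacle, since for each $s \in \mathcal X_b$ and each $v_\rho$ the product $a_s \cdot v_\rho$ must be expressed as an element of $U$ via the structure constants of $V$, and this is carried out orbit-by-orbit with the \textit{MajoranaAlgebras} package. Once this closure is established, $U$ is a subalgebra of $V$ containing $\mathcal X_b^\psi$, so $W \subseteq U$; the reverse inclusion is immediate, giving $W = U$ and $\dim W = 462 + 14 = 476$. Since $S^{(4,4)}$ does not appear as a constituent of $W^\circ$, we have $W \supsetneq W^\circ$, proving that $W$ is not $2$-closed.
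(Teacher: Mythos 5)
The decisive flaw is in your last step. From ``$U:=W^\circ+\mathrm{span}\{v_\rho\}$ is stable under multiplication by every bitransposition axis'' you infer that $U$ is a subalgebra containing $\mathcal X_b^{\psi_b}$, hence that $W\subseteq U$. That inference is invalid for a non-associative algebra: closure under multiplication by the generators only controls left-normed monomials $a_{t_1}\cdot(a_{t_2}\cdot(\cdots))$, whereas $W$ is the span of \emph{all} bracketed monomials in the axes, so you must also control products of arbitrary pairs of spanning vectors of $U$ --- $3$-axis times $3$-axis, $3$-axis times $v_\rho$, $v_\rho\cdot v_{\rho'}$, etc. This is exactly why the paper, when proving $V=V^\circ$, has to verify Lemma~\ref{nuovissimo}, Lemma~\ref{sixtranspositions} and Proposition~\ref{fine2closure}, which together cover every pair of spanning vectors. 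Without the analogous (much larger) verification your argument gives no upper bound on $\dim W$, and that upper bound is precisely the nontrivial content of the theorem.

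Two further gaps. First, your mechanism for $v_\rho\in W$ is not a proof: you never exhibit an identity relating $v_\rho$ to the three $v_{\rho\sigma}$ modulo elements already known to lie in $W$. (The containment does hold, and more directly: Lemma~\ref{delta4assi} expresses $v_{(1,3,2,4)(5,7,6,8)}$ in terms of bitransposition axes, the vectors $\sigma_{t_1t_2}$ of Equation~(\ref{sigma}) and the vectors $\delta_\xi$ of Equation~(\ref{delta}), all of which are nested products of bitransposition axes of $A_8$, whence $v_\rho\in W$ for every $\rho$ of cycle type $4^2$ by conjugation.) Second, your step 3 needs more than a character count of the permutation module on the $630$ subgroups $\langle\rho\rangle$: to know that a nonzero copy of $S^{(4,4)}$ actually survives in $V$ (rather than degenerating into the radical), and that nothing beyond it survives modulo $W^\circ$, you must compute inner products of the $v_\rho$'s against each other and against $W^\circ$, in the spirit of Sections~\ref{2assi}--\ref{23assi}. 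The paper bypasses all of this with one computation: it constructs with the {\it MajoranaAlgebras} package the $2$-closed, $554$-dimensional representation of $L=A_8\times\langle(9,10)(11,12)\rangle$ with the shape inherited from the unique $2A$-representation of $A_{12}$, finds inside it that the subalgebra generated by the bitransposition axes has dimension $476$, compares with $\dim W^\circ=462$ from Theorem~\ref{An}, and checks that the $14$ axes $v_\rho$ indexed by semistandard $(4^2)$-tableaux satisfy $W=W^\circ\oplus\langle Z\rangle$. Your outline could be repaired along either route, but as written both the membership argument and the closure (upper bound) step are unproven.
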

We prove in Section~\ref{shape} that, for each $n\in \{8, \ldots , 11\}$, the Majorana set and the shape of a Majorana representation of the group $A_n$ are unique. This fact, together with Theorems~\ref{An} and~\ref{A8}, leads us to formulate the following conjecture.

\begin{conjecture}\label{con}
Let $n\in \{8, \ldots , 12\}$, let $(A_n, \mathcal X_b, V, \phi, \psi)$ be a Majorana representation of $A_n$ and let $V^\circ$ its $2$-closure.
Then, for every element $\tau\in A_n$ of cycle type $2^4$  there exists a vector $\delta_\tau\in V$,  depending only on $\tau$, such that
$$V=V^\circ +\langle \delta_\tau \:|\: \tau \mbox{ is a permutation of type $2^4$ in } A_n\rangle\cong V^\circ \oplus S^{(4^2, n-8)}.$$ 
\end{conjecture}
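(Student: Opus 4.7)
The plan is to establish Conjecture~\ref{con} case by case for each $n\in\{8,\dots,12\}$, exploiting the main theorems already proved in this paper together with an explicit construction of $\delta_\tau$ in terms of $4A$-axes. The two endpoints are essentially within reach: for $n=8$, Theorem~\ref{A8} already yields $W=W^\circ+\langle v_\rho\rangle$ with $\dim W-\dim W^\circ = 476-462=14=\dim S^{(4,4)}$; for $n=12$, Theorem~\ref{main}(3) gives $V^\circ=V$, and the last column of Table~\ref{dect} shows that the Specht module $S^{(4,4,4)}$ appears in $V$ with multiplicity~$1$ while being absent from $W^\circ$. The bulk of the work lies in the intermediate range $n\in\{9,10,11\}$ and in producing a uniform formula for $\delta_\tau$.

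\textbf{Construction of $\delta_\tau$.} For every element $\tau\in A_n$ of cycle type $2^4$ the natural candidate is
\[
\delta_\tau \;:=\; \sum_{\langle\rho\rangle\colon \rho^{2}=\tau} v_\rho,
\]
where the sum runs over the cyclic subgroups of order $4$ in $A_n$ whose squares equal $\tau$ and for which the associated dihedral subalgebra has type $4A$ (this is forced by the shape of $A_n$, established in Section~\ref{shape}). By axiom M5 each $v_\rho$ depends only on $\langle\rho\rangle$, and the index set $\{\langle\rho\rangle\colon\rho^2=\tau\}$ depends only on $\tau$, so $\delta_\tau$ is a function of $\tau$ alone. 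Moreover the centraliser $C_{A_n}(\tau)$ permutes the summands, so $\tau\mapsto\delta_\tau$ is $A_n$-equivariant on the conjugacy class of cycle type $2^4$.

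\textbf{The intermediate cases.} For $n\in\{9,10,11\}$ I would embed $A_n\hookrightarrow A_{12}$ as the pointwise stabiliser of $12-n$ points and restrict the $2A$-Majorana representation of $A_{12}$ from Theorem~\ref{main} to the subalgebra $V_n$ generated by the bitranspositions of $A_n$. By Lemma~\ref{oxf} this restriction is a Majorana representation of $A_n$, and by the uniqueness of the Majorana set and shape (Section~\ref{shape}) together with the fact that the structure constants are forced by the dihedral classification, every Majorana representation $(A_n,\mathcal X_b,V,\phi,\psi)$ is equivalent to $V_n$. The decomposition of $V_n^\circ$ is then the $n$-th column of Table~\ref{dect}, and the vectors $v_\rho$ for $\rho$ of cycle type $4^2\cdot 1^{n-8}$, inherited from $A_{12}$, supply the extra generators. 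A character calculation on the permutation module $\R[A_n/C_{A_n}(\tau)]$ with $\tau$ of cycle type $2^4$, noting that $S^{(4,4,n-8)}$ appears in it with multiplicity~$1$, identifies the image of $\langle\delta_\tau\rangle$ in $V_n/V_n^\circ$ and pins the complement down as $S^{(4,4,n-8)}$.

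\textbf{Main obstacle.} The hardest step is controlling what lies strictly between $V_n^\circ$ and $V_n$ in the intermediate range: one must show that \emph{only} $S^{(4,4,n-8)}$ escapes $V_n^\circ$, ruling out every other $\R[A_n]$-irreducible not already recorded in Table~\ref{dect}. The natural route is to analyse the algebra products among triples of Majorana axes and of $3A$-axes, using the explicit $4A$-structure constants from Table~\ref{table1} to track precisely which Specht modules are produced; but the combinatorics of branching from $A_{12}$ down to $A_n$ is intricate and will likely require the kind of tabloid-linear-algebra computations (supported by GAP) already used elsewhere in the paper. A secondary difficulty, most acute at $n=12$, is the mismatch between the conjecture (stated for representations with Majorana set $\mathcal X_b$ only) and Theorem~\ref{main} (which uses the larger Majorana set including cycle type $2^6$): one must argue that any Majorana representation of $A_{12}$ on bitranspositions alone is realised as the subalgebra of the $2A$-representation generated by the bitransposition axes, so that the ambient $S^{(4,4,4)}$-component from Table~\ref{dect} is the one picked out by the $\delta_\tau$.
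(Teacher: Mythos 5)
The statement you are addressing is not a theorem of the paper but its Conjecture~\ref{con}: the authors explicitly formulate it as an open problem, motivated by Theorem~\ref{A8} (the case $n=8$, obtained by an explicit GAP computation) and Theorem~\ref{An} (the determination of $W^\circ$ for all $n$), so there is no proof in the paper against which your argument can be matched. What you have written is accordingly a programme rather than a proof, and the steps you defer are precisely the open content of the conjecture. In particular, your claim that for $n\in\{9,10,11\}$ ``every Majorana representation $(A_n,\mathcal X_b,V,\phi,\psi)$ is equivalent to $V_n$'' does not follow from the uniqueness of the Majorana set and shape established in Section~\ref{shape}: the shape only pins down the dihedral subalgebras, and the hard part is controlling products of axes that do not lie in a common dihedral subalgebra (this is exactly what Sections~\ref{4assi}--\ref{closure} labour to do for $n=12$ with the larger Majorana set, and what the GAP computation does for $n=8$). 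Without that, neither the dimension of $V$ nor the module structure of $V/V^\circ$ is known for $9\le n\le 11$, and your ``main obstacle'' paragraph concedes this rather than resolves it.

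Two further points need attention even where you lean on proved results. First, your candidate $\delta_\tau=\sum_{\rho^2=\tau}v_\rho$ is well defined and $A_n$-equivariant, but the conjecture requires that the images of the $\delta_\tau$ span a complement to $V^\circ$ isomorphic to $S^{(4^2,n-8)}$; since $S^{(4^2,n-8)}$ occurs with multiplicity one in the permutation module on the class of type-$2^4$ elements, the map $\tau\mapsto\delta_\tau+V^\circ$ either spans that copy or is identically zero modulo $V^\circ$, and ruling out the second alternative is a genuine computation (even for $n=8$, Theorem~\ref{A8} exhibits a complement spanned by individual $4$-axes indexed by semistandard tableaux, not by your averaged vectors, so the $n=8$ case of the conjecture as stated is not literally Theorem~\ref{A8}). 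Second, for $n=12$ the equality $V=V^\circ$ of Theorem~\ref{main}(3) concerns the $2A$-representation on $\mathcal X_b\cup\mathcal X_s$, whereas the conjecture concerns a representation on $\mathcal X_b$ alone; you correctly flag this mismatch, but bridging it (showing the bitransposition subalgebra of the $2A$-algebra exhausts, together with the $\delta_\tau$, every such representation) is again unproved. So the proposal identifies reasonable endpoints and a sensible candidate for $\delta_\tau$, but the intermediate cases, the uniqueness claim, and the non-vanishing of $\delta_\tau$ modulo $V^\circ$ are all gaps, consistent with the statement's status as a conjecture.
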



\section{Strategy}\label{strategy}

Let $n\in \N$ with $8\leq n\leq 12$, let $G:=A_{n}$,  $\hat G:= S_{n}$, and let  $(G, \mathcal T, V, \phi,\psi)$ be a Majorana representation of $G$.
Our goal is to determine the structure of the algebra $V$ which, by definition, is generated (as an algebra) by the images via $\psi$ of the elements in $\mathcal T$.  The first step is to determine the shape of $\phi$, which will be accomplished in Section~\ref{shape}. Since the Norton-Sakuma algebras are not, in general, linearly spanned by axes (see  Table~\ref{table1}) one cannot expect $V$ to be just the linear span of the set $\mathcal T^\psi$.  The next step is, therefore, to get control of the linear span of  the products of two Majorana axes. For a subspace $U$ of $V$, we therefore  define $$U^\circ :=\langle u\cdot v| u,v\in U\rangle.$$ We shall call  $ U^\circ$ the $2$-{\it closure} of $U$ and, when $ U^\circ=U$, we say that  $U$ is $2$-{\it closed}. 
By the Norton-Sakuma Theorem, there is a subset $\mathcal O^{ax}$ of the set of the odd axes appearing in the Norton-Sakuma subalgebras  of $V$ generated by two elements of $\mathcal T^\psi$, such that 
$$V^\circ=\langle \mathcal T^\psi\cup  \mathcal O^{ax}\rangle.$$
Let 
\begin{itemize}
\item[-] $\mathcal X_b$ be the set of permutations of cycle type $2^2$,
\item[-] $\mathcal X_s$ be the set of permutations of cycle type $2^6$ (which is nonempty only if $n=12$), 
\item[-] $\mathcal X_r$ be the set of subgroups generated by permutations of cycle type  $3$,  
\item[-] $\mathcal X_t$ be the set of subgroups generated by permutations of cycle type  $3^2$, and 
\item [-] $\mathcal X=\mathcal X_b\cup\mathcal X_s\cup\mathcal X_r\cup\mathcal X_t$.
\end{itemize}
In Section~\ref{shape} we'll prove that 
$$\mathcal X_b\subseteq \mathcal T\subseteq \mathcal X_b\cup \mathcal X_s$$ 
and, in Section~\ref{4assi} we'll prove that we can choose the set $\mathcal O^{ax}$ to be the set of $3$-axes associated to the (generators of the) elements of $\mathcal X_r\cup\mathcal X_t$.
In this case we have an obvious bijection
$$ax: \mathcal X \to {\mathcal T}^\psi \cup \mathcal O^{ax},$$
sending the elements of  $\mathcal X$ to their associated axes. With respect to the action of  $G$ on $\mathcal X$ by conjugation (with orbits $\mathcal X_b$, $\mathcal X_s$, $\mathcal X_r$, and $\mathcal X_t$)  $ax$  is an isomorphism of  $G$-sets and induces a surjective homomorphism of $\R[G]$-modules 
$$ \pi \colon M_{\mathcal X} \to V^\circ,$$
where $M_{\mathcal X}$ is the real permutation module of $G$ on $\mathcal X$. 
For $x\in \{b,s,r, t\}$, denote by  $M_x$ the  permutation $\R[G]$-module on the set $\mathcal X_x$ on which $G$ acts by conjugation. Since  
$$M_{\mathcal X}= M_b\oplus M_s \oplus M_r \oplus M_t,$$ 
the structure of  $M_{\mathcal X}$ follows from the results in~\cite{FIM2}, \cite{FIM3}, \cite{FIM4}, and~\cite{Thrall} and we are left to compute the kernel of $\pi$.  The Majorana scalar product $(\:,)_V$ on $V$ induces, in an obvious way, a symmetric bilinear form on the permutation module 
$$ \begin{array}{rccc}
f\colon &M_{\mathcal X}\times M_{\mathcal X}&\to &\R\\
& (u,v)&\mapsto&(u^\pi, v^\pi)_V
\end{array} $$ and an elementary and well known argument shows that the kernel of $\pi$ coincides with the radical of the form $f$. 
Note that, since $f$ is positive semidefinite, 
\begin{equation}
\label{brunch}
\rad(M_{\mathcal X})\cap M = \rad(M)
\end{equation}
for every submodule $M$ of $M_{\mathcal X}$. So, in order to determine which irreducible submodules of $M_{\mathcal X}$ are contained in $\rad(M_{\mathcal X})$, we need to compute the restrictions of $f$ to these submodules. Due to the dimensions involved, a direct computation of these restrictions is out of question, even for machine computing, so we need to use the machinery developed in~\cite{FIM4} (see also~\cite[Section 6]{FIM5}). 
Note that since the set $\mathcal X_x$ is invariant under the action of $\hat G$ by conjugation, the (permutation) $\R[G]$-module $M_x$ lifts to a  permutation $\R[\hat G]$-module.  Since, as we shall see in Section~\ref{Scalar}, the form $f$ is invariant under the action of $\hat G$,  we may consider $M_x$ to be an $\R[\hat G]$-module and use the representation theory of the symmetric groups. We next focus our investigation to the module $M_x$. Let 
\begin{equation}\label{dec}
M_{x}=\bigoplus_{i=1}^{l}\left (\bigoplus_{h=1}^{n_i} M_{x,h}^{\lambda_i}\right ),
\end{equation}
be a decomposition of $M_x$ into irreducible submodules, where, for $i\in \{1, \ldots , l\}$, $\lambda_i$ is a partition of $n$  and $M_{x,h}^{\lambda_i}$  is an $\R[\hat G]$-submodule isomorphic to the Specht module $S^{\lambda_i}$ (see~\cite{J}), with $\lambda_i\neq \lambda_j$ for $i\neq j$, and  
$$\bigoplus_{h=1}^{n_i} M_{x,h}^{\lambda_i}$$
is the homogeneous component relative to the Specht module $S^{\lambda_i}$. Let 
\begin{equation}
\label{pipi}
\pi_{x, h}^{\lambda_i}\colon M_x \to M_{x,h}^{\lambda_i}
\end{equation} 
be  the projection of $M_x$ onto the submodule $M_{x,h}^{\lambda_i}$ with respect to the decomposition~(\ref{dec}). 
Let $$\Gamma_1, \ldots,\Gamma_{r_x}$$ be the orbitals of $\hat G$ on $\mathcal X_x$ (i.e. the orbits of $\hat G$ on the set ${\mathcal X_x}\times {\mathcal X_x}$  with respect to the action defined, for every $g\in \hat G$ and  $u,v\in \mathcal X_x$, by $(u,v)^g=(u^g, v^g)$) and, for $j\in\{1,\ldots, r_x\}$,  let  
$$
\Delta_j(u):=\{v\in \mathcal X_x\:|\:(u,v)\in \Gamma_j\}.
$$

Let  $P(x)$ be a generalised first eigenmatrix (see \cite[ \S 2]{FIM2}) associated to $M_{x}$ with respect to the decomposition~(\ref{dec}) and  denote by 
$$
P(x)^{\lambda_i}_j
$$ 
the $(i,j)$-entry of $P(x)$.  Note that  
$P(x)^{\lambda_i}_j$ is a matrix of size $n_i\times n_i$ whose $(k,l)$-entry will be denoted by $$(P(x)^{\lambda_i}_j)_{kl}.$$ 
\begin{lemma}
\label{BannIto}
\cite[Equation~(13) and Lemma~1(i)]{FIM2}
With the above notation, we have  
\begin{equation}
\label{progi}
u^{\pi_{x,h}^{\lambda_i}}=\sum_{j=1}^{r_x} \frac{\dim(M_{x,h}^{\lambda_i})}{|\Gamma_j|}(P(x)^{\lambda_i}_j)_{hh}\left ( \sum_{v\in \Delta_j(u)} v\right ).
\end{equation}
\end{lemma}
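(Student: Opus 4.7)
The plan is to identify the projectors $\pi_{x,h}^{\lambda_i}$ with elements of the centralizer algebra $\mathcal{C}:=\mathrm{End}_{\R[\hat G]}(M_x)$ and to expand them in the natural basis provided by the orbital operators. For each $j\in\{1,\ldots,r_x\}$, introduce the intertwiner
$$A_j\colon M_x\to M_x,\qquad A_j(u):=\sum_{v\in\Delta_j(u)}v \quad \text{for } u\in\mathcal{X}_x.$$
Since $\Gamma_j$ is a $\hat G$-orbit on $\mathcal{X}_x\times\mathcal{X}_x$, each $A_j$ belongs to $\mathcal{C}$, and $\{A_j\}_{j=1}^{r_x}$ is a basis of $\mathcal{C}$. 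By~(\ref{dec}) and Wedderburn's theorem, $\mathcal{C}\cong\bigoplus_{i=1}^{l} M_{n_i}(\R)$, and under this isomorphism the projectors $\pi_{x,h}^{\lambda_i}$ correspond to the diagonal matrix units $E_{h,h}^{\lambda_i}$; in particular they lie in $\mathcal{C}$, so there exist scalars $\alpha_j^{i,h}$ with $\pi_{x,h}^{\lambda_i}=\sum_j\alpha_j^{i,h}A_j$, and the task reduces to computing these.

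By the defining property of the generalised first eigenmatrix \cite[\S 2]{FIM2}, the operator $A_j$ acts on the $\lambda_i$-isotypic component of $M_x$ as the matrix $P(x)_j^{\lambda_i}\in M_{n_i}(\R)$. I would then compute the trace pairing $\mathrm{tr}(A_k\circ\pi_{x,h}^{\lambda_i})$ in two different ways. On the Wedderburn side, the pairing is diagonal in the matrix-unit basis and equals $\dim(M_{x,h}^{\lambda_i})\cdot(P(x)_k^{\lambda_i})_{hh}$. On the orbital side, a standard double-counting argument on the standard basis $\mathcal{X}_x$ shows that $\mathrm{tr}(A_j A_{k^*})=|\Gamma_j|\,\delta_{jk}$, where $k^*$ labels the transpose orbital of $\Gamma_k$. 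Comparing the two expressions and relabelling gives
$$\alpha_j^{i,h}=\frac{\dim(M_{x,h}^{\lambda_i})}{|\Gamma_j|}\,(P(x)_j^{\lambda_i})_{hh},$$
and applying the resulting identity to $u\in\mathcal{X}_x$ yields~(\ref{progi}).

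The main obstacle I expect is the bookkeeping around normalization conventions and the role of transpose orbitals: one must reconcile the definition of $P(x)$ with the trace pairing on $\mathcal{C}$ so that the factor $|\Gamma_j|$ appears exactly in the denominator, and one must verify that the off-diagonal contributions from $(P(x)_j^{\lambda_i})_{hk}$ with $h\neq k$ cancel out correctly when only the diagonal projector $E_{h,h}^{\lambda_i}$ is considered. Since the statement is quoted from \cite[Equation~(13) and Lemma~1(i)]{FIM2}, the cleanest route is to adopt the conventions of that paper verbatim and to invoke the classical Bannai--Ito theory of the centralizer algebra of a permutation module, rather than to re-derive all identifications from scratch.
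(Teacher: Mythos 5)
The paper itself offers no argument for this lemma: it is imported verbatim from \cite[Equation~(13) and Lemma~1(i)]{FIM2}, and the machinery behind that reference is precisely the Bannai--Ito/Higman theory of the centralizer algebra of a permutation module that you invoke. So your outline is a reconstruction of the intended derivation rather than a different route, and its skeleton (orbital operators $A_j$ as a basis of ${\rm End}_{\R[\hat G]}(M_x)$, the projector as $\sum_j\alpha_jA_j$, coefficients extracted by a trace pairing) is sound. One of your two stated worries is vacuous: off-diagonal entries $(P(x)^{\lambda_i}_k)_{hm}$ with $m\neq h$ never enter, since on the Wedderburn side the trace is ${\rm tr}\bigl(P(x)^{\lambda_i}_kE_{hh}\bigr)=(P(x)^{\lambda_i}_k)_{hh}$ regardless of them.

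The transpose-orbital point, however, is a genuine loose end, not mere bookkeeping, and your ``relabelling'' glosses over it. Since ${\rm tr}(A_kA_j)=|\Gamma_k|\,\delta_{j,k^{*}}$, comparing your two trace computations literally gives $\alpha_j=\frac{\dim(M_{x,h}^{\lambda_i})}{|\Gamma_j|}\,(P(x)^{\lambda_i}_{j^{*}})_{hh}$, where $\Gamma_{j^{*}}$ is the transpose of $\Gamma_j$; to reach~(\ref{progi}) you still must show $(P(x)^{\lambda_i}_{j^{*}})_{hh}=(P(x)^{\lambda_i}_j)_{hh}$, and this is not automatic for an arbitrary choice of the decomposition~(\ref{dec}) when $n_i>1$. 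It does hold once the summands in~(\ref{dec}) are chosen pairwise orthogonal with respect to the invariant inner product $\kappa$ (always possible, and the natural convention when defining $P(x)$): then $A_{j^{*}}$ is the $\kappa$-adjoint of $A_j$, the representing matrices satisfy $P(x)^{\lambda_i}_{j^{*}}=D\,(P(x)^{\lambda_i}_j)^{T}D^{-1}$ for a positive diagonal $D$ coming from the per-summand scalings of the form, and conjugation by a diagonal matrix does not change diagonal entries. Adding that one observation (or, equivalently, adopting the orthogonality convention of \cite{FIM2} explicitly) closes the gap and makes your argument complete.
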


Since $(\:,)_V$ is $G$-invariant, the values of $f$ are constant on the orbitals of $G$ on  ${\mathcal X}$  Having the shape of $\phi$ at hand, these values are given by the Norton-Sakuma Theorem for pairs of Majorana involutions (see the fourth column of Table~\ref{table1}). The remaining cases are worked out in Section~\ref{Scalar}, in particular one sees that these values are constant also on the $\hat G$-orbitals.
For $j\in \{1, \ldots , r_x\}$, denote by
$$
\gamma_{x, j}
$$ 
the value of the form $f$ on any pair $(y,z)$ belonging to $\Gamma_j$ and set  
\begin{equation}\label{eqzapata}
f_{x,h}^\lambda:=\sum_{ j=1}^{r_x} \gamma_{x,j}\cdot (P(x)^{\lambda}_j)_{hh}.
\end{equation}
 Since the modules $M_{x,h}^{\lambda_i}$ are absolutely irreducible, they admit, up to a scalar multiplication, a unique nondegenerate scalar product (see~\cite[p.534]{La}). In particular if 
$$
\kappa\colon M_{\mathcal X}\times M_{\mathcal X}\to \R
$$
is the unique scalar product on $M_{\mathcal X}$ for which $\mathcal X$ is an orthonormal basis, the relation between the form $f$ and the form $\kappa$ on the irreducible submodules of $M_{\mathcal X}$ is given by the following lemma.
 
\begin{lemma}\label{Zapata} \cite[Lemma~4.14]{FIM4}
Let $x \in \{b,t,r,s\}$, $i\in \{1,\ldots,l\}$, $h\in \{1,\ldots, n_i\}$,  and $\lambda:=\lambda_i$. Then for every $v\in M_{x,h}^\lambda$ we have
$$
f(v,v)=f_{x,h}^{\lambda} \kappa(v,v).
$$
In particular 
$M_{x,h}^\lambda$ is contained in $\rad(M_{\mathcal X})$ if and only if $f_{x,h}^\lambda=0$.
\end{lemma}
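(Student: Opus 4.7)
The plan is to combine Schur's lemma on the absolutely irreducible module $M_{x, h}^\lambda$ with a direct evaluation using the projection formula~(\ref{progi}).

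First I would package $f$ as the linear operator $F$ on $M_x$ determined by $f(u, v) = \kappa(F u, v)$. The symmetry and $\hat G$-invariance of $f$ translate into $F$ being $\kappa$-self-adjoint and $\hat G$-equivariant. Choosing the decomposition~(\ref{dec}) to be $\kappa$-orthogonal and adapted to $F$ on each homogeneous component (a compatibility built into the generalised first eigenmatrix formalism of~\cite[\S 2]{FIM2}), $F$ preserves $M_{x, h}^\lambda$; by the Schur-type uniqueness cited from~\cite[p.\,534]{La}, the endomorphism $F|_{M_{x, h}^\lambda}$ of the absolutely irreducible module $M_{x, h}^\lambda$ acts as a single scalar $c = c_{x, h}^\lambda \in \R$. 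Substituting into $f(v, v) = \kappa(F v, v)$ produces the identity $f(v, v) = c\,\kappa(v, v)$ at once, so the statement reduces to proving $c = f_{x, h}^\lambda$.

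To pin down $c$, I would fix any $u \in \mathcal X_x$ and evaluate the pairing $f(u, u^{\pi_{x, h}^\lambda})$ in two different ways. The equality $F u^{\pi_{x, h}^\lambda} = c\, u^{\pi_{x, h}^\lambda}$ together with the $\kappa$-self-adjointness of $F$ yields
$$f(u, u^{\pi_{x, h}^\lambda}) \;=\; \kappa(u, F u^{\pi_{x, h}^\lambda}) \;=\; c\,\kappa(u, u^{\pi_{x, h}^\lambda}).$$
On the other hand, substituting formula~(\ref{progi}) for $u^{\pi_{x, h}^\lambda}$, using the transitivity of $\hat G$ on $\mathcal X_x$ (so that $|\Delta_j(u)| = |\Gamma_j|/|\mathcal X_x|$), and the fact that $f$ takes the constant value $\gamma_{x, j}$ on $\Gamma_j$, delivers
$$f(u, u^{\pi_{x, h}^\lambda}) \;=\; \frac{\dim(M_{x, h}^\lambda)}{|\mathcal X_x|}\sum_{j=1}^{r_x} \gamma_{x, j}\,(P(x)^\lambda_j)_{hh} \;=\; \frac{\dim(M_{x, h}^\lambda)}{|\mathcal X_x|}\,f_{x, h}^\lambda,$$
the last equality by the very definition~(\ref{eqzapata}). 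Performing the same expansion against $\kappa$ collapses it to the sole contribution of the diagonal orbital $\Gamma_1$ and, under the standard normalisation $(P(x)^\lambda_1)_{hh} = 1$ of the first eigenmatrix, gives $\kappa(u, u^{\pi_{x, h}^\lambda}) = \dim(M_{x, h}^\lambda)/|\mathcal X_x|$. Dividing the two expressions forces $c = f_{x, h}^\lambda$.

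The second assertion is then immediate: since $\kappa$ is positive definite, the identity $f(v, v) = f_{x, h}^\lambda\,\kappa(v, v)$ shows that $f_{x, h}^\lambda = 0$ if and only if $f$ vanishes on $M_{x, h}^\lambda$, and positive semidefiniteness of $f$ combined with~(\ref{brunch}) converts this into $M_{x, h}^\lambda \subseteq \rad(M_{\mathcal X})$. The one bookkeeping point that I expect to demand genuine care is verifying that the generalised first eigenmatrix of~\cite[\S 2]{FIM2} really is set up so that the decomposition~(\ref{dec}) simultaneously $\kappa$-orthogonalises the homogeneous components and diagonalises $F$ on each of them (so that individual summands, not merely homogeneous components, are $F$-invariant); once that compatibility is recorded, the remainder of the proof is precisely the Schur-plus-orbital-sum calculation above.
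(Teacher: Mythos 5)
The paper does not prove this lemma at all: it is imported verbatim from \cite[Lemma~4.14]{FIM4}, so your argument has to stand on its own. Where every relevant multiplicity is one (e.g. $M_s$) it does stand: proportionality of $f$ and $\kappa$ on an absolutely irreducible summand needs no operator $F$ at all (the restriction of a $\hat G$-invariant bilinear form to $M_{x,h}^\lambda$ is a scalar multiple of $\kappa$ by the quoted uniqueness), and your evaluation of $f(u,u^{\pi_{x,h}^{\lambda}})$ and $\kappa(u,u^{\pi_{x,h}^{\lambda}})$ through Equation~(\ref{progi}), with $|\Delta_j(u)|=|\Gamma_j|/|\mathcal X_x|$ and $(P(x)^{\lambda}_1)_{hh}=1$, is correct.

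The gap is the step $Fu^{\pi_{x,h}^{\lambda}}=c\,u^{\pi_{x,h}^{\lambda}}$, i.e. your standing assumption that the decomposition~(\ref{dec}) is adapted to $F$, which you defer as a bookkeeping point. It is the mathematical content, not bookkeeping, and it is not built into the eigenmatrix formalism: Lemma~\ref{BannIto} holds for an arbitrary decomposition into irreducibles (every equivariant projection lies in the centraliser algebra), its entries $(P(x)^{\lambda}_j)_{hh}$ depend on that choice, and nothing in the formalism of \cite{FIM2} refers to $f$. Nor may you quietly replace~(\ref{dec}) by an $F$-adapted decomposition, because both the scalar $c$ and the numbers $(P(x)^{\lambda}_j)_{hh}$ change with the choice, and the lemma is applied in this paper exactly in multiplicity-two situations where the decomposition is pinned down by other constraints: the two copies of $S^{(10,2)}$ in $M_b$ taken from \cite{FIM2}, and the copies of $S^{(n-4,2^2)}$ in $M_t$ split by $\beta$. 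The identity is genuinely sensitive to the choice: writing a homogeneous component as $S^{\lambda}\otimes U$ with $\kappa=\kappa_{S^{\lambda}}\otimes g_0$ and $f=\kappa_{S^{\lambda}}\otimes g_1$, a summand is $S^{\lambda}\otimes\R\ell$; the proportionality constant is $g_1(\ell,\ell)/g_0(\ell,\ell)$, whereas $\sum_j\gamma_{x,j}(P(x)^{\lambda}_j)_{hh}$ computes the coefficient of $\ell$ in $B\ell$ ($B$ the Gram operator of $g_1$ relative to $g_0$) expanded in the chosen frame of $U$, and for a non-orthogonal frame (say $g_1=\mathrm{diag}(1,2)$, $\ell_1=(1,0)$, $\ell_2=(1,1)$) these are $3/2$ and $2$. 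What is actually needed is only that distinct summands inside a homogeneous component be chosen $\kappa$-orthogonal (automatic across non-isomorphic components, and true for the $\beta$-splitting since $\beta$ permutes the basis $\mathcal X_t$); granted that, the scalar is identified without any $F$-adaptedness by averaging your pointwise identity: $\sum_{u\in\mathcal X_x}f(u,u^{\pi_{x,h}^{\lambda}})=\mathrm{tr}(F\pi_{x,h}^{\lambda})=\mathrm{tr}(\pi_{x,h}^{\lambda}F\pi_{x,h}^{\lambda})=c\,\dim(M_{x,h}^{\lambda})$, using idempotency, cyclicity of the trace, and $\kappa$-self-adjointness of the projection, while Equation~(\ref{progi}) evaluates the same sum as $\dim(M_{x,h}^{\lambda})\,f_{x,h}^{\lambda}$. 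As written, your identification $c=f_{x,h}^{\lambda}$ is established only in the multiplicity-free case, which is precisely the case the paper does not need the lemma for most.
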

\begin{cor}
\label{nonnegative}
With the above notation, the values $f_{x,h}^\lambda$ are nonnegative.
\end{cor}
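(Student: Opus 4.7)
The plan is to derive this corollary as an immediate consequence of Lemma~\ref{Zapata} together with the positivity properties of the two bilinear forms $f$ and $\kappa$ entering that lemma. The key observation is that both forms are positive (semi)definite, which forces the scalar of proportionality between them on any irreducible component to be nonnegative.

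In more detail, I would first recall that the Majorana scalar product $(\,,\,)_V$ is, by the very definition of a Majorana algebra, positive definite on $V$. Since $f$ is defined by pulling back this form along the homomorphism $\pi\colon M_{\mathcal X}\to V^\circ$, i.e.\ $f(u,v)=(u^\pi,v^\pi)_V$, we get that $f$ is positive semidefinite on $M_{\mathcal X}$; in particular $f(v,v)\geq 0$ for every $v\in M_{\mathcal X}$. On the other hand, $\kappa$ is by construction the scalar product for which $\mathcal X$ is an orthonormal basis, so $\kappa$ is positive definite and $\kappa(v,v)>0$ for every nonzero $v\in M_{\mathcal X}$.

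Now fix any nonzero vector $v\in M_{x,h}^{\lambda}$ (such a vector exists because $M_{x,h}^\lambda$ appears as a summand in the decomposition~(\ref{dec}) and is isomorphic to the nonzero Specht module $S^{\lambda}$). Lemma~\ref{Zapata} gives
$$f(v,v)=f_{x,h}^\lambda\,\kappa(v,v).$$
Since $\kappa(v,v)>0$, we may solve for $f_{x,h}^\lambda$ and conclude that
$$f_{x,h}^\lambda=\frac{f(v,v)}{\kappa(v,v)}\geq 0,$$
as required. There is no real obstacle here: the corollary is essentially a one-line consequence of Lemma~\ref{Zapata} once one notices that the constant $f_{x,h}^\lambda$ is precisely the ratio of a positive semidefinite form and a positive definite form restricted to the (absolutely irreducible, hence one-dimensional in form space) module $M_{x,h}^\lambda$.
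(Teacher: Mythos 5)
Your proof is correct and is essentially the same argument as the paper's: the paper also deduces the corollary directly from Lemma~\ref{Zapata}, using that $\kappa$ is positive definite and $f$ is positive semidefinite. You merely spell out the one-line justification in more detail, which is fine.
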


\begin{proof}
This follows immediately from Lemma~\ref{Zapata}, since  the form $\kappa$ is positive definite and the form $f$ is positive semidefinite.
\end{proof}

Lemma~\ref{Zapata} allows us to determine $\rad(M_x)$ if the action of $\hat G$ on $\mathcal X_x$ is multiplicity-free, which is the case when $x=s$. For $x=b$ and $x=r$,  $\rad(M_x)$ has been determined in~\cite[Theorem 2]{FIM2} and~\cite[Theorem 1.2]{FIM4} respectively. 
Finally, for $x=t$,  we'll see in Section~\ref{3assi} 
 that  there exists an involutory  $\R[\hat G]$-automorphism $\beta$ of $M_t$, induced by a permutation of $\mathcal X_t$, such that 
$$M_t=C_{M_t}(\beta)\oplus [M_t, \beta]\mbox{ and }(C_{M_t}(\beta))^\pi\leq (M_b+M_s)^\pi,$$
which reduces us to computing $\rad([M_t, \beta])$. Since $\beta$ is an $\R[\hat G]$-automor\-phism,  for every element $u\in \mathcal X_t$, $\beta$ induces in the obvious way a permutation of the $\hat G_u$-orbits $\Delta_1(u), \ldots , \Delta_{r_t}(u)$, whence,  for each $j\in \{1, \ldots , r_t\}$, there is a unique  $j^\beta\in \{1, \ldots , r_t\}$, such that 
\begin{equation}
\label{deltaj}
\Delta_j(u)^\beta=\Delta_{j^\beta}(u).
\end{equation}
The computation of the rows of the generalised first eigenmatrix $P(t)$ needed to determine $\rad([M_t, \beta])$ can be simplified by the following result.
\begin{lemma}\label{dispari}
Let $\beta$ be as above, $u\in \mathcal X_t$, $i\in \{1, \ldots , l\}$, $h\in \{1, \ldots , n_i\}$, and suppose that $u^{\pi_{t,h}^{\lambda_i}}\neq 0$. Then
\begin{enumerate}
\item $M_{t,h}^{\lambda_i}\leq [M_t,\beta]$ $\Leftrightarrow$ for every $j\in \{1, \ldots , r_t\}$,  
$(P(t)_j^{\lambda_i})_{hh}=-(P(t)_{j^\beta}^{\lambda_i })_{hh}$.
\item $M_{t,h}^{\lambda_i}\leq C_{M_t}(\beta)$ $\Leftrightarrow$ for every $j\in \{1, \ldots , r_t\}$,  
$(P(t)_j^{\lambda_i})_{hh}=(P(t)_{j^\beta}^{\lambda_i })_{hh}$.
\end{enumerate}
\end{lemma}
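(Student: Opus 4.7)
The plan is to apply the projection formula of Lemma~\ref{BannIto} together with the action of $\beta$, and read off coefficients of the $\hat G_u$-orbital sums.

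Before doing this one should observe that the decomposition~(\ref{dec}) may (and will) be chosen to be $\beta$-compatible: that is, so that each summand $M_{t,h}^{\lambda_i}$ is $\beta$-stable and $\beta$ acts on it as $\pm 1$. Since $\beta$ is an involutory $\R[\hat G]$-automorphism, it preserves each isotypic component $\bigoplus_{h=1}^{n_i} M_{t,h}^{\lambda_i}$ and, by Schur's lemma together with absolute irreducibility of $S^{\lambda_i}$, corresponds there to an involution in the matrix ring $M_{n_i}(\R)$; hence it is diagonalisable with eigenvalues in $\{\pm 1\}$. Refining the decomposition along these eigenspaces places each $M_{t,h}^{\lambda_i}$ inside either $C_{M_t}(\beta)$ or $[M_t,\beta]$; equivalently, $\beta$ acts on $M_{t,h}^{\lambda_i}$ as $+1$ or $-1$, respectively.

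Next, I would apply $\beta$ directly to both sides of~(\ref{progi}). Since $\beta$ is induced by a bijection of $\mathcal X_t$ satisfying~(\ref{deltaj}), linearity gives $\bigl(\sum_{v\in\Delta_j(u)}v\bigr)^\beta=\sum_{w\in\Delta_{j^\beta}(u)}w$; moreover $|\Gamma_j|=|\Gamma_{j^\beta}|$, because these orbitals have equinumerous suborbits $\Delta_j(u)$ and $\Delta_{j^\beta}(u)$. Re-indexing via $k:=j^\beta$ (using $\beta^2=1$, so $k^\beta=j$) one obtains
\begin{equation*}
(u^{\pi_{t,h}^{\lambda_i}})^\beta=\sum_{k=1}^{r_t}\frac{\dim(M_{t,h}^{\lambda_i})}{|\Gamma_k|}\bigl(P(t)_{k^\beta}^{\lambda_i}\bigr)_{hh}\sum_{w\in\Delta_k(u)}w.
\end{equation*}

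Finally, I would compare this with~(\ref{progi}) itself. Since $u^{\pi_{t,h}^{\lambda_i}}\neq 0$ by hypothesis, it generates the irreducible module $M_{t,h}^{\lambda_i}$; and as $\beta$ commutes with the $\hat G$-action, $\beta$ acts on $M_{t,h}^{\lambda_i}$ as $\epsilon\in\{\pm 1\}$ if and only if $(u^{\pi_{t,h}^{\lambda_i}})^\beta=\epsilon\cdot u^{\pi_{t,h}^{\lambda_i}}$. The orbital sums $\bigl\{\sum_{w\in\Delta_k(u)}w\bigr\}_{k=1}^{r_t}$ are linearly independent, since the $\Delta_k(u)$ partition the basis $\mathcal X_t$ of $M_t$. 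Equating coefficients in the two expansions then yields $\bigl(P(t)_{k^\beta}^{\lambda_i}\bigr)_{hh}=\epsilon\cdot \bigl(P(t)_k^{\lambda_i}\bigr)_{hh}$ for every $k$: taking $\epsilon=-1$ gives~(1) and $\epsilon=+1$ gives~(2). The only delicate point is the initial $\beta$-compatible choice of decomposition; once this is fixed, the rest is a direct comparison of coefficients and no further calculation is required.
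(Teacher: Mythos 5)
Your argument is correct and takes essentially the same route as the paper: apply $\beta$ to Equation~(\ref{progi}), re-index the suborbit sums via~(\ref{deltaj}), compare coefficients of the linearly independent orbital sums, and conclude from $u^{\pi_{t,h}^{\lambda_i}}\neq 0$ together with the irreducibility of $M_{t,h}^{\lambda_i}$. The only difference is your preliminary reduction to a $\beta$-compatible decomposition, which is harmless but redundant: your final comparison, combined with the fact that $C_{M_t}(\beta)$ and $[M_t,\beta]$ are $\R[\hat G]$-submodules and that $u^{\pi_{t,h}^{\lambda_i}}$ generates $M_{t,h}^{\lambda_i}$, already gives both equivalences for an arbitrary choice of the decomposition~(\ref{dec}), which is exactly how the paper argues.
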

\begin{proof}
Since $\beta$ acts as the identity on $C_{M_t}(\beta)$ and as the multiplication by $-1$ on $[M_t,\beta]$, by Equation~(\ref{progi}) it follows that $u^{\pi_{t,h}^{\lambda_i}}\in [M_t,\beta]$ if and only if,  for every $j\in \{1, \ldots , r_t\}$,  
$(P(t)_j^{\lambda_i})_{hh}=-(P(t)_{j^\beta}^{\lambda_i })_{hh}$,  while $u^{\pi_{t,h}^{\lambda_i}}\in C_{M_t}(\beta)$ if and only if,  for every $j\in \{1, \ldots , r_t\}$,  
$(P(t)_j^{\lambda_i})_{hh}=(P(t)_{j^\beta}^{\lambda_i })_{hh}$.
Since $u^{\pi_{t,h}^{\lambda_i}}\neq 0$, $C_{M_t}(\beta)$, $[M_t,\beta]$, and  $M_{t,h}^{\lambda_i}$ are $\R[\hat G]$-modules and $M_{t,h}^{\lambda_i}$ is irreducible, the claims follow.
\end{proof}

Once the quotients of $M_{b}$, $M_r$, $M_{s}$, and $[M_t,\beta]$ by their respective radicals have been determined,  we need to consider the case when isomorphic irreducible submodules are contained in two or more of those quotients (see Section~\ref{2assi} and Section~\ref{23assi}).
To deal with this situation we use the following result.
\begin{lemma}\cite[Corollary~2.4]{FIM4}\label{int}
Let $x,y\in \{b,r,s,t\}$, let $\lambda$ be a partition of $n$ and let $M_{x,h}^\lambda$ and $M_{y, k}^\lambda$  be irreducible submodules of $M_x$ and $M_y$ respectively, appearing in decomposition~(\ref{dec}), such that
\begin{enumerate}
\item  the multiplicities of $M_{x,h}^\lambda$ in $M_x$ and $M_{y, k}^\lambda$ in $M_y$ are not $0$,
\item $ \rad(M_x)\cap M_{x,h}^\lambda=\{0\}$, 
\item $ \rad(M_y)\cap M_{y,k}^\lambda=\{0\}$.
\end{enumerate}
Let $H$ be a subgroup of $\hat G$ such that $\dim(C_{S^\lambda}(H))=1$ and let $v_x\in M_{x,h}^\lambda$ and $v_y\in M_{y,k}^\lambda$ be two $H$-invariant non-trivial vectors. 
Then 
$$
\rad(M_{\mathcal X})\cap (M_{x,h}^\lambda\oplus M_{y,k}^\lambda))\cong S^{\lambda}
$$
if and only if 
$$
\det \left (\begin{array}{cc}
f(v_x, v_x) & f(v_x, v_y) \\
f(v_y, v_x) &  f(v_y, v_y)
\end{array}
\right )=0.
$$
\end{lemma}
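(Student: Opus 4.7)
The plan is to reduce the statement to the vanishing of a $2\times 2$ scalar determinant via Schur's lemma on the absolutely irreducible Specht module $S^\lambda$. The starting point is equation~(\ref{brunch}), which gives
$$
\rad(M_{\mathcal X})\cap\bigl(M_{x,h}^\lambda\oplus M_{y,k}^\lambda\bigr)=\rad\!\left(f|_{M_{x,h}^\lambda\oplus M_{y,k}^\lambda}\right),
$$
so the analysis can be carried out inside this single Schur block. I would then fix $\R[\hat G]$-isomorphisms $\phi_x\colon S^\lambda\to M_{x,h}^\lambda$ and $\phi_y\colon S^\lambda\to M_{y,k}^\lambda$ and the (unique up to scalar) nondegenerate $\hat G$-invariant symmetric form $\kappa_0$ on $S^\lambda$, chosen as a pullback of a positive multiple of $\kappa$ so that $\kappa_0$ is positive definite. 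Absolute irreducibility of $S^\lambda$ over $\R$ being standard for Specht modules of $S_n$, Schur's lemma then produces a symmetric matrix $C=(c_{ij})_{i,j\in\{x,y\}}$ such that
$$
f(a^{\phi_i},b^{\phi_j})=c_{ij}\,\kappa_0(a,b)\quad\text{for all } a,b\in S^\lambda,\ i,j\in\{x,y\}.
$$

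A short computation using the nondegeneracy of $\kappa_0$ shows that $a^{\phi_x}+b^{\phi_y}$ lies in $\rad(f|_{M_{x,h}^\lambda\oplus M_{y,k}^\lambda})$ if and only if $c_{xx}a+c_{yx}b=0$ and $c_{xy}a+c_{yy}b=0$; hence the radical has dimension $(2-\rank C)\dim S^\lambda$. Hypotheses~(2) and~(3), together with Lemma~\ref{Zapata}, force $c_{xx}\ne 0$ and $c_{yy}\ne 0$, so $\rank C\ge 1$. Since every nonzero proper $\R[\hat G]$-submodule of $S^\lambda\oplus S^\lambda$ is isomorphic to $S^\lambda$ and has dimension $\dim S^\lambda$, the radical is isomorphic to $S^\lambda$ precisely when $\rank C=1$, equivalently when $\det C=0$.

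The final step would be to rephrase $\det C=0$ in terms of $v_x$ and $v_y$. Since $\dim C_{S^\lambda}(H)=1$, I would fix a nonzero $e\in C_{S^\lambda}(H)$; then $e^{\phi_x}$ and $e^{\phi_y}$ span the one-dimensional spaces $C_{M_{x,h}^\lambda}(H)$ and $C_{M_{y,k}^\lambda}(H)$, so there exist nonzero $\alpha,\beta\in\R$ with $v_x=\alpha e^{\phi_x}$ and $v_y=\beta e^{\phi_y}$. A direct substitution into $f$ gives
$$
\det\!\begin{pmatrix} f(v_x,v_x) & f(v_x,v_y)\\ f(v_y,v_x) & f(v_y,v_y)\end{pmatrix}=\alpha^2\beta^2\kappa_0(e,e)^2\det C,
$$
and since $\alpha,\beta\ne 0$ and $\kappa_0(e,e)>0$ by positive definiteness, this determinant vanishes if and only if $\det C=0$. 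The main bookkeeping obstacle is precisely the control of the scaling constants; in particular, choosing $\kappa_0$ to be a scalar multiple of the pullback of the positive definite form $\kappa$ rules out the embarrassing possibility $\kappa_0(e,e)=0$, which would otherwise demand a separate argument.
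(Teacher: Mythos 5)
Your proof is correct. Note that the paper itself gives no proof of this statement: it is quoted from~\cite[Corollary~2.4]{FIM4}, so there is no internal argument to compare against. Your route — reducing to the single isotypic block via Equation~(\ref{brunch}), invoking Schur's lemma on the absolutely irreducible module $S^\lambda$ to write the restriction of $f$ as a $2\times 2$ scalar matrix $C$ against a fixed positive definite invariant form $\kappa_0$, obtaining $\dim\rad\bigl(f|_{M_{x,h}^\lambda\oplus M_{y,k}^\lambda}\bigr)=(2-{\rm rank}\,C)\dim S^\lambda$, using hypotheses (2) and (3) together with Lemma~\ref{Zapata} and positive semidefiniteness to force $c_{xx},c_{yy}\neq 0$, and finally translating $\det C=0$ into the stated determinant condition via the one-dimensional $H$-fixed spaces — is the natural argument and is presumably the one in~\cite{FIM4}; it uses only ingredients the paper already supplies ($\hat G$-invariance of $f$, positive definiteness of $\kappa$, and the isotypic structure of $S^\lambda\oplus S^\lambda$). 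Your handling of the scaling constants is sound: since $v_x=\alpha e^{\phi_x}$, $v_y=\beta e^{\phi_y}$ with $\alpha,\beta\neq 0$ and $\kappa_0(e,e)>0$, the vanishing of the $2\times 2$ determinant is indeed equivalent to $\det C=0$, and the case ${\rm rank}\,C=0$ is excluded, so the radical is either trivial or a single copy of $S^\lambda$ as required.
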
 
Finally, once $V^\circ$ has been cleared (as shown in Table~\ref{dect}), in Section~\ref{closure} we restrict to the case where $n=12$ and prove that, in this case, $V^\circ$ is closed under the algebra product.  
\medskip

For the rest of this paper we let $(G, W, \mathcal X_b, \phi_b, \psi_b)$ be the Majorana representation of $G$ induced by the restriction of a $2A$-Majorana representation of $A_{12}$ to $G=\langle \mathcal X_b \rangle$. Further, set 
$$V^{(2A)}:=\langle {\mathcal T}^{\psi} \rangle\:\: \mbox{ and }\:\: W^{(2A)}:=\langle {\mathcal X_b}^{\psi_b} \rangle
$$
and for $U\in \{V, W\}$ and $3\leq N\leq 5$, let $U^{(NA)}$ denote the linear span of the $NA$-axes contained in all the Norton-Sakuma algebras generated by pairs of Majorana axes in $U$. Finally, For $2\leq N\leq 5$, $m\in \N$, denote by $U^{(N^m)}$ the linear span of $NA$-axes corresponding to permutations of cycle type $N^m$. Note that $V^{(3^2)}=W^{(3^2)}$. $NA$-axes corresponding to permutations of cycle type $N^m$ will be called simply $N$-axes of type $N^m$.


\section{The shape}\label{shape} 

In this section we keep the notation of Section~\ref{strategy}, in particular $n\in \{8,\ldots, 12\}$,  $G=A_n$,  and $(G, \mathcal T, V, \phi, \psi)$ is a Majorana representation of $G$. We  determine the shape of $\phi$ and prove that it depends only on $G$.

\begin{lemma}
\label{2^4}
$
\mathcal T \subseteq \mathcal X_b\cup \mathcal X_s.
$
\end{lemma}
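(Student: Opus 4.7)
The involutions of $A_n$ for $8\le n\le 12$ fall into at most three $A_n$-conjugacy classes, determined by the cycle types $2^2$ (the class $\mathcal X_b$), $2^4$, and, only when $n=12$, $2^6$ (the class $\mathcal X_s$); each of these cycle types yields a single $A_n$-conjugacy class. Since $\mathcal T$ is $G$-invariant, it is a union of such classes, so the lemma reduces to showing that no $2^4$-involution lies in $\mathcal T$. The central tool is the Norton-Sakuma Theorem, which forces the product of any two elements of $\mathcal T$ to have order at most $6$. My plan is to exhibit, for each $n$, a pair of $A_n$-conjugate $2^4$-involutions whose product has order strictly greater than $6$, contradicting the hypothesis that $2^4\subseteq\mathcal T$.

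For $n\geq 9$ the construction is explicit. Inside $A_9\leq A_n$, set
$$
\rho:=(1,2,3,4,5,6,7),\qquad \tau:=(1,7)(2,6)(3,5)(8,9).
$$
A direct verification gives $\tau\rho\tau=\rho^{-1}$, so $\sigma:=\rho\tau=(2,7)(3,6)(4,5)(8,9)$ is again an involution, and $\sigma\tau=\rho$ has order $7$. Both $\sigma$ and $\tau$ have cycle type $2^4\cdot 1^{n-8}$ in $A_n$, and this cycle type determines a single $A_n$-conjugacy class; so by $G$-invariance, having any $2^4$-involution in $\mathcal T$ forces both $\sigma$ and $\tau$ to lie in $\mathcal T$, contradicting the Norton-Sakuma bound.

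For $n=8$ this shortcut fails, because one checks that no involution in $A_8$ inverts any element of order greater than $6$ (any involution inverting a $7$-cycle in $S_7$ has cycle type $2^3$ and is therefore odd), so the product of two $A_8$-involutions always has order at most $6$. Here I would extract the contradiction from axiom M6. Consider $\sigma=(1,2)(3,4)(5,6)(7,8)$ and a $2^4$-conjugate $\tau$ chosen so that $\sigma\tau$ has cycle type $4^2$; the central involution $z:=(\sigma\tau)^2$ in the dihedral subgroup $\langle\sigma,\tau\rangle$ is again of type $2^4$. Under the hypothesis $2^4\subseteq\mathcal T$, both $z$ and $\sigma z$ lie in $\mathcal T$, so M4 forces $\langle\langle a_\sigma, a_z\rangle\rangle$ to be of type $2A$, and by the embedding $2A\hookrightarrow 4B$ in M6 the outer subalgebra $\langle\langle a_\sigma,a_\tau\rangle\rangle$ must then be of type $4B$. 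Comparing this forced shape with those dictated by the remaining orbitals of pairs of $2^4$-involutions (notably those with product of type $3^2$ and those of product type $4^2$ lying in a different $A_8$-orbital), one obtains the desired shape inconsistency.

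The main obstacle is the case $n=8$: the concise high-order-product argument available for $n\geq 9$ is unavailable in $A_8$, because the orders of products of involutions in $A_8$ are confined to $\{1,2,3,4,5,6\}$, and one is forced into the subtler case-by-case analysis of the shape embeddings guaranteed by M6 across all $A_8$-orbitals of pairs of $2^4$-involutions.
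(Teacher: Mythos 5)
Your reduction and your treatment of $n\geq 9$ are sound and coincide with the paper's own parenthetical remark: for $n\geq 9$ the $2^4$-class is not a set of $6$-transpositions, and since the Norton--Sakuma Theorem bounds by $6$ the order of the product of the Miyamoto involutions attached to any two axes, the explicit pair you exhibit (two $2^4$-involutions inverting a $7$-cycle, with an extra transposition on two fixed points) kills the case $n\geq 9$ at once. The genuine gap is exactly where you locate the difficulty, at $n=8$, and there your text is a plan rather than a proof. Two concrete problems. First, the step ``by the embedding $2A\hookrightarrow 4B$ in M6 the outer subalgebra must be of type $4B$'' is not licensed as written: M6 only applies once the containment $\langle\langle a_\sigma, a_z\rangle\rangle < \langle\langle a_\sigma, a_\tau\rangle\rangle$ is known, and whether $a_z$ (with $z=(\sigma\tau)^2$) lies in $\langle\langle a_\sigma, a_\tau\rangle\rangle$ is precisely the $4A$/$4B$ dichotomy you are trying to settle; the correct route is to apply M4 to the pair $(a_0,a_2)$ of the Norton basis, whose underlying group product is $(\sigma\tau)^2\in\mathcal T$, which forces a $2A$ subalgebra and hence type $4B$. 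Second, and more seriously, even granting that forcing you never exhibit the promised ``shape inconsistency'': the argument ends with an unverified claim, and it is far from clear that a contradiction can be extracted from shapes alone, since the $2^4$-involutions of $A_8$ do form a genuine $6$-transposition class (the only element orders exceeding $6$ in $A_8$ are $7$ and $15$, and any involution inverting such an element is odd), so no counting of orders or of M6-embeddings obviously fails.

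The paper closes the $n=8$ case (and in fact all $8\leq n\leq 12$ simultaneously) by an algebraic, not combinatorial, obstruction: with $t_1=(1,2)(3,4)(5,6)(7,8)$, $t_2=(1,3)(2,4)(5,8)(6,7)$, $t_3=(1,8)(2,7)(3,5)(4,6)$, the group $H=\langle t_1,t_2,t_3\rangle$ is elementary abelian of order $8$ acting regularly on $\{1,\ldots,8\}$, so every nontrivial element of $H$ has cycle type $2^4$ and would lie in $\mathcal T$; then every pair of involutions of $H$ has product in $\mathcal T$, making $H$ a ``$2A$-pure'' $2^3$-subgroup, which is impossible by Whybrow's result (\cite[Lemma 4.2]{Why}). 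To complete your proof you would either have to carry out, and actually verify, a full case analysis of the putative shape on all orbitals of pairs of $2^4$-involutions of $A_8$ (with no guarantee that it is contradictory), or import an algebraic input of this Whybrow type; as it stands, the $n=8$ case is missing.
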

\begin{proof} The cycle types of the involutions in $G$ are $2^2$, $2^4$, or $2^6$. 
Assume, by means of contradiction, that there are permutations of cycle type $2^4$ in $\mathcal T$. Since these are all conjugate in $G$ and $\mathcal T$ is invariant under conjugation, $\mathcal T$ contains all such involutions. Let  $H:=\langle t_1, t_2,t_3\rangle$, where  
$t_1:=(1,2)(3,4)(5,6)(7,8)$,  $t_2:=(1,3)(2,4)(5,8)(6,7)$,  and  $t_3:=(1,8)(2,7)(3,5)(4,6)$.
Then $H$ is elementary abelian of order $8$ and  every nontrivial element of $H$, being of cycle type $2^4$, is contained in $\mathcal T$, which is a contradiction to~\cite[Lemma 4.2]{Why} 
(note that, if $n\geq 9$, the result follows immediately, since the set of involutions of cycle type $2^4$ in $G$ is not a set of $6$-transpositions).  
\end{proof}

Set $r_1:=(1,2)(3,4)$, $s_1:=(1,2)(3,4)(5,6)(7,8)(9,10)(11,12)$, and denote the orbitals of G on $\mathcal X_b\cup \mathcal X_s$ as in the first column of Table~\ref{orbs}. Here the pairs of orbitals of $A_{12}$ that fuse under the natural action of $S_{12}$ are marked with $^\ast$ and $^{\ast\ast}$.

\begin{table}
$$
\begin{array}{|c|c|c|c|c|c|}
\hline
  \mbox{\sc orbitals}  & \mbox{\sc representatives}&\mbox{\sc cycle type of $r_1r_j$}&\mbox{\sc shape} \\
\hline
\Sigma_{1,b} &  (r_1,r_1)&   1 &1A\\
\Sigma_{2,b} &  (r_1,(1,3)(2,4))&   2^2 &2A\\
\Sigma_{3,b}  &  (r_1,(1,5)(3,4)) &3& 3A\\
\Sigma_{4,b}  &  (r_1, (1,5)(2,3)) &5 &5A\\
\Sigma_{5,b}  &  (r_1, (5,6)(3,4)) & 2^2&2A\\
\Sigma_{6,b}  &   (r_1,(5,6)(2,3)) &2\cdot 4& 4B\\
\Sigma_{7,b}  &  (r_1,(1,5)(2,6)) &2\cdot 4& 4B\\
\Sigma_{8,b}  &  (r_1, (1,6)(3,5)) &3^2 &3A\\
\Sigma_{9,b}  & (r_1, (1,5)(6,7)) &2^2\cdot 3 &6A\\
\Sigma_{10,b}  & (r_1, (5,6)(7,8)) &2^4&2B\\
\hline
  &  & \mbox{\sc cycle type of $s_1s_j$} &\\
\hline
\Sigma_{1,s}  &   (s_1,s_1)&   1& 1A \\
\Sigma_{2,s}  &   (s_1, (1,2)(3,4)(5,6)(7,8)(9,11)(10,12))&   2^2& 2A\\
\Sigma_{3,s}  &    (s_1, (1,2)(3,4)(5,7)(6,8)(9,11)(10,12))&   2^4 &2B\\
\Sigma_{4,s}  &    (s_1, (1,2)(3,4)(5,6)(7,9)(8,11)(10,12))&   3^2 &3A\\
\Sigma_{5,s}^\ast  &    (s_1, (1,3)(2,4)(5,7)(6,8)(9,11)(10,12))&   2^6 &2A\\
\Sigma_{5,s}^{\ast\ast}  &    (s_1, (1,3)(2,4)(5,7)(6,8)(9,12)(10,11))&   2^6& 2A\\
\Sigma_{6,s}  &   (s_1,  (1,2)(3,5)(4,6)(7,9)(8,11)(10,12))&   2^2\cdot 3^2 &6A\\
\Sigma_{7,s}  &  (s_1, (1,3)(2,5)(4,6)(7,9)(8,11)(10,12))&   3^4& 3A\\
\Sigma_{8,s}  &   (s_1,  (1,2)(3,4)(5,7)(6,9)(8,11)(10,12))&   4^2 &4A\\
\Sigma_{9,s}^\ast  &  (s_1, (1,3)(2,4)(5,7)(6,9)(8,11)(10,12))&   2^2\cdot 4^2 &4A\\
\Sigma_{9,s}^{\ast\ast}  &  (s_1, (1,3)(2,4)(5,7)(6,9)(8,12)(10,11))&   2^2\cdot 4^2 &4A\\
\Sigma_{10,s}  &    (s_1, (1,2)(3,5)(4,7)(6,9)(8,11)(10,12))&   5^2 &5A\\
\Sigma_{11,s}^\ast  &  (s_1, (1,3)(2,5)(4,7)(6,9)(8,11)(10,12))&   6^2&6A\\
\Sigma_{11,s}^{\ast\ast} &  (s_1, (1,3)(2,5)(4,7)(6,9)(8,12)(10,11))&   6^2&6A\\
\hline
  &  & \mbox{\sc cycle type of $s_1r_j$} &\\
\hline
\Sigma_{1,sb}  &   (s_1,r_1)&   2^4 & 2B\\
\Sigma_{2,sb}   &  (s_1,(1,3)(2,4))&   2^6 &2A\\
\Sigma_{3,sb}   &  (s_1, (2,3)(5,6)) &  2^3\cdot 4  &4B \\
\Sigma_{4,sb}   &  (s_1, (2,3)(4,5)) &   2^3 \cdot 6& 6A\\
\Sigma_{5,sb}   &  (s_1, (2,3)(6,7)) &  2^2\cdot4^2& 4A   \\
\hline
 &  & \mbox{\sc cycle type of $r_js_1$} &\\
\hline
\Sigma_{1,bs}  &   (r_1,s_1)&   2^4 & 2B\\
\Sigma_{2,bs}   &  ((1,3)(2,4), s_1)&   2^6 &2A\\
\Sigma_{3,bs}   &  ((2,3)(5,6), s_1) &  2^3\cdot 4  &4B \\
\Sigma_{4,bs}   &  ((2,3)(4,5), s_1) &   2^3 \cdot 6& 6A\\
\Sigma_{5,bs}   &  ((2,3)(6,7), s_1) &  2^2\cdot4^2& 4A   \\
\hline

\end{array}
 $$
 \caption{The $A_{12}$-orbits on $\mathcal T\times \mathcal T$ and their representatives for $r_1:=(1,2)(3,4)$, $s_1:=(1,2)(3,4)(5,6)(7,8)(9,10)(11,12)$, and  $\mathcal T= \mathcal X_b\cup \mathcal X_s$.}
 \label{orbs}
 \end{table}
 
\begin{lemma}~\cite[Theorem III]{Thrall} 
\label{la belle dame sans mercy} 
With the notation of  Section~\ref{strategy}, 
$$
\begin{array}{rcl}
M_s &= &  M_{s,1}^{(12)}\oplus   M_{s,1}^{(10,2)} \oplus M_{s,1}^{(8,4)}\oplus M_{s,1}^{(6,4,2)}\oplus M_{s,1}^{(4^2,2^2)}\oplus M_{s,1}^{(4,2^4)}\oplus \\
& & \\
& &M_{s,1}^{(6^2)}\oplus M_{s,1}^{(2^6)}\oplus  M_{s,1}^{(8,2^2)} \oplus M_{s,1}^{(4^3)}\oplus  M_{s,1}^{(6,2^3)}.
\end{array}
$$
\end{lemma}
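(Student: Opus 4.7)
\medskip\noindent\textbf{Proof proposal.} The plan is to realise $M_s$ as an induced module and then invoke Thrall's classical decomposition verbatim. First I would fix the base point $s_1=(1,2)(3,4)(5,6)(7,8)(9,10)(11,12)\in\mathcal X_s$ and identify its centraliser in $\hat G=S_{12}$. The centraliser of $s_1$ permutes the six transpositions $\{(1,2),(3,4),\dots,(11,12)\}$ among themselves and, for each such transposition, may swap or not swap its two points; hence $C_{\hat G}(s_1)\cong S_2\wr S_6$, of order $2^6\cdot 6!=46080$. Since $\hat G$ acts transitively on $\mathcal X_s$ by conjugation (the cycle type $2^6$ is a single $\hat G$-class) and $M_s$ is by definition the corresponding permutation module, one has the $\R[\hat G]$-isomorphism
$$M_s\;\cong\;\mathrm{Ind}_{S_2\wr S_6}^{S_{12}}\mathbf 1.$$

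Next I would cite Thrall's Theorem III from~\cite{Thrall}, which asserts that for every $n\geq 1$ the induced module $\mathrm{Ind}_{S_2\wr S_n}^{S_{2n}}\mathbf 1$ decomposes \emph{multiplicity-freely} as the direct sum of those Specht modules $S^\lambda$ for which $\lambda\vdash 2n$ has all parts even. Equivalently, via Frobenius reciprocity, the multiplicity of $S^\lambda$ in $M_s$ equals $\dim C_{S^\lambda}(S_2\wr S_6)$, and this dimension is $1$ if every part of $\lambda$ is even and $0$ otherwise. This is the only nontrivial input; the rest is bookkeeping.

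Finally I would verify that the eleven summands listed in the statement are precisely the partitions of $12$ with every part even. Halving each part gives a bijection with the partitions of $6$; the latter are
$$
(6),\,(5,1),\,(4,2),\,(4,1,1),\,(3,3),\,(3,2,1),\,(3,1,1,1),\,(2,2,2),\,(2,2,1,1),\,(2,1,1,1,1),\,(1^6),
$$
eleven in all, which double to
$$
(12),\,(10,2),\,(8,4),\,(8,2^2),\,(6,6),\,(6,4,2),\,(6,2^3),\,(4,4,4),\,(4^2,2^2),\,(4,2^4),\,(2^6),
$$
matching the eleven summands displayed in the lemma. Since every multiplicity equals $1$, there is only one homogeneous component for each $\lambda$, justifying the subscript $1$ in the notation $M_{s,1}^{\lambda}$.

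The only nontrivial step is the invocation of Thrall's theorem; I would not attempt to reprove it here, but I note that a self-contained proof is available via Young's rule applied iteratively to the chain $S_2\wr S_6<S_{12}$, or equivalently via the plethystic identity $h_2[h_n]=\sum_{\lambda\vdash 2n,\,\lambda\text{ even}}s_\lambda$, which is the symmetric-function shadow of Thrall's statement.
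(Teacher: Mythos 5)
Your proposal is correct and follows essentially the same route as the paper, which gives no independent argument but states the lemma as a direct citation of Thrall's Theorem III (the multiplicity-free decomposition of $\mathrm{Ind}_{S_2\wr S_6}^{S_{12}}\mathbf 1$ into the Specht modules $S^\lambda$ with all parts of $\lambda$ even); your centraliser identification and the enumeration of the eleven even partitions of $12$ are exactly the bookkeeping left implicit there. One peripheral slip: with the standard convention the symmetric-function identity underlying induction from $S_2\wr S_6$ is $h_6[h_2]=\sum_{\lambda\ \mathrm{even}}s_\lambda$ rather than $h_2[h_6]$, but this aside does not affect your argument.
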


\begin{lemma}
\label{hath thee in thrall}
For $i\in \{1, \ldots , 10\}\setminus \{5,9,11\}$, let $\Sigma_i:=\Sigma_{i,s}$  and, for $i \in\{5,9,11\}$, let  $\Sigma_i:=\Sigma_{i,s}^\ast\cup \Sigma_{i,s}^{\ast\ast}$. Then $\Sigma_1,\ldots,\Sigma_{11}$ are the orbitals of $S_{12}$ on its action on $\mathcal X_s$. The first eigenmatrix $P(s)$ relative to this action is given in the last eleven columns of Table~\ref{FE}. The entries in the second column of Table~\ref{FE} are the dimensions of the irreducible $\R[S_{12}]$-modules $S^\lambda$.  
\end{lemma}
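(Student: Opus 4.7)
The plan is to first determine the $S_{12}$-orbitals on $\mathcal{X}_s\times\mathcal{X}_s$ and then to compute $P(s)$ from the resulting multiplicity-free permutation action. Since $S_{12}$ has a unique conjugacy class of involutions of cycle type $2^6$, it acts transitively on $\mathcal{X}_s$, and the orbitals are indexed by the $S_{12}$-conjugacy classes of ordered pairs with both entries of type $2^6$. Each $A_{12}$-orbital $\Sigma_{i,s}$ either remains an $S_{12}$-orbital or fuses with exactly one other. For the three starred pairs $(\Sigma_{5,s}^{\ast},\Sigma_{5,s}^{\ast\ast})$, $(\Sigma_{9,s}^{\ast},\Sigma_{9,s}^{\ast\ast})$, $(\Sigma_{11,s}^{\ast},\Sigma_{11,s}^{\ast\ast})$, I would observe that the odd permutation $(11,12)$ fixes $s_1$ and carries the representative listed for the first element of each pair to that of the second, so fusion occurs in all three cases. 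For every other $A_{12}$-orbital a short inspection of the representatives in Table~\ref{orbs} produces an odd element centralising the pair, showing that the orbital is already $S_{12}$-invariant. This yields at most the eleven sets $\Sigma_1,\ldots,\Sigma_{11}$.

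That these are exactly the $S_{12}$-orbitals follows from multiplicity-freeness: by Lemma~\ref{la belle dame sans mercy} the permutation module $M_s$ decomposes as a sum of eleven pairwise non-isomorphic Specht modules, so by the standard orbit-counting identity (number of $S_{12}$-orbits on $\mathcal{X}_s\times\mathcal{X}_s$ equals $\sum_\lambda m_\lambda^2$), there are precisely $11$ orbitals. The dimensions in the second column of Table~\ref{FE} are then simply the dimensions of the corresponding Specht modules $S^\lambda$, computed via the hook length formula.

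For the first eigenmatrix, multiplicity-freeness forces every block $P(s)^\lambda_j$ to be a $1\times 1$ scalar, equal to the eigenvalue of the orbital adjacency operator on the isotypic component of $S^\lambda$. I would compute these scalars via the zonal spherical function recipe: with $K := C_{S_{12}}(s_1)$ and $g_j\in S_{12}$ sending the first coordinate of the representative of $\Sigma_j$ to the second, the entry $P(s)^\lambda_j$ is determined by the character-theoretic average $\frac{1}{|K|}\sum_{k\in K}\chi^\lambda(kg_j)$, renormalised by $|\Delta_j(s_1)|$ and $\dim(S^\lambda)$ as dictated by equation~(\ref{progi}); the orbital sizes are recovered from $|\Delta_j(s_1)|=|K|/|K\cap K^{g_j}|$. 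All ingredients are routine: the character table of $S_{12}$, the cycle types of $kg_j$ as $k$ ranges over $K$, and the conjugacy structure of $K$, which is read off from the centraliser of a type-$2^6$ involution. The main obstacle is bookkeeping rather than mathematics: aligning the eleven rows with the twelve columns (including the trivial orbital) and tracking normalisation conventions, which is naturally handled in GAP using the built-in character table library.
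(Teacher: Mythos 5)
Your proposal is correct and follows essentially the route the paper takes: the paper's proof simply cites the standard machinery of commutative association schemes (Bannai--Ito, as in~\cite{FIM1}, or \cite[Lemma 3]{FIM2}), and your zonal-spherical-function computation of the entries $P(s)^\lambda_j$ is exactly that standard method, made possible because Lemma~\ref{la belle dame sans mercy} shows the action is multiplicity-free so each block $P(s)^{\lambda}_j$ is a scalar. Your identification of the orbitals (fusion of the starred pairs via conjugation by $(11,12)$, then the rank count $\sum_\lambda m_\lambda^2=11$ forcing $\Sigma_1,\ldots,\Sigma_{11}$ to be precisely the $S_{12}$-orbitals) is a clean justification of a point the paper leaves implicit.
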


\begin{proof}
This can be computed using standard methods of algebraic combinatorics (see~\cite[Chapter II, \S 2.1]{BI}) as in \cite{FIM1}, or using \cite[Lemma 3]{FIM2}.
\end{proof}

\begin{table}
{\Small
$$
\begin{array}{|c|c||c|c|c|c|c|c|c|c|c|c|c|}
 \hline
\lambda &  &\Sigma_1 & \Sigma_2  & \Sigma_3  & \Sigma_4  & \Sigma_5 & \Sigma_6  &\Sigma_7  & \Sigma_8  & \Sigma_9  &\Sigma_{10}  & \Sigma_{11} \\
\hline
\hline 
(12)&1 & 1 & 30 & 180 & 160 & 120 & 960 & 640 & 720 & 1440 & 2304 & 3840 \\
\hline
(10,2)&54& 1 & 19 & 48 & 72 & -12 & 80 & -64 & 192 & -144 & 192 & -384\\
\hline
(8,4)& 275 & 1 & 12 & 27 & 16 & 30 & 24 & -8 & -18 & 108 & -144 & -48 \\
\hline
(6,4,2)& 2673 & 1 & 4 & 3 & -8 & -2 & 0 & -24 & -18 & -4 & 32 & 16 \\
\hline
(4^2,2^2)& 2640 & 1 & -3 & 3 & -8 & -9 & 0 & 4 & 24 & 24 & -24 & -12 \\
\hline
(4,2^4)& 1485 & 1& -8 & 3 & 12 & 6 & 20 & -16 & -6 & -36 & -24 & 48 \\
\hline
(6^2)& 132 & 1 & -15 & 45 & 40 & -15 & -120 & 40 & -90 & 90 & 144 & -120\\
\hline
(2^6)& 132 & 1 & 9 &   33& -8 & -27 & 120 & 136 & -78 & -114 & -48 & -24\\
\hline
(8,2^2)& 616& 1 & 9 & -12 & 22 & -12 & -60 & 16 & 12 & -24 & -48 & 96 \\
\hline
(4^3)& 462 & 1 &  0&15 & -20 & 30 & -60 & 40 & 30 & -60 & 24 & 0 \\
\hline
(6,2^3)& 1925 & 1 &0&  -21 & 4 & 6 & 12 & 16 & -6 & 12 & 24 & -48 \\
\hline
\end{array}
$$
}
\caption{First eigenmatrix for the action of $S_{12}$ on its involutions of cycle type $2^6$} \label{FE} 
\end{table}

\begin{proposition}
\label{shape2^6}
Let $n=12$ and assume that $\mathcal X_s\subseteq \mathcal T$. Then 
\begin{enumerate}
\item $\phi$ is a $2A$-Majorana representation;
\item the shape of $\phi$ is the one given in the last column of Table~\ref{orbs};
\item if $\Sigma^\ast$ and $\Sigma^{\ast\ast}$ are orbitals of $G$ on $\mathcal T$ that fuse under $\hat G$ then $sh(\Sigma^\ast)=sh(\Sigma^{\ast\ast})$;
\item the restriction of the form $f$ on the module $M_b+M_s$ is $\hat G$-invariant.
\end{enumerate}
\end{proposition}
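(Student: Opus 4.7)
The plan is to determine the shape of every orbital in Table~\ref{orbs} and then read off (1)--(4). For a representative pair $(t_1,t_2)$, set $\rho:=t_1t_2$; by the Norton--Sakuma theorem the order of $\rho$ lies in $\{1,\ldots,6\}$, which pins down the shape except in orders $2$, $3$, and $4$.

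I first prove (1). The orbital $\Sigma_{6,s}$ lies in $\mathcal T\times\mathcal T$ by the hypothesis $\mathcal X_s\subseteq\mathcal T$, and its representative product is of cycle type $2^2\cdot 3^2$, hence of order $6$; the corresponding dihedral algebra is therefore of type $6A$ (the only Norton--Sakuma type of order $6$). By the embedding $2A\hookrightarrow 6A$ in M6, the central involution $\rho^3$ together with $t_1$ generates a $2A$-subalgebra, so M4 forces $\rho^3\in\mathcal T$. But $\rho^3$ has cycle type $2^2$, and since $\mathcal X_b$ is a single $G$-orbit and $\mathcal T$ is $G$-invariant, $\mathcal X_b\subseteq\mathcal T$, hence $\mathcal T=\mathcal X_b\cup\mathcal X_s$. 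This matches the preimage of $\mathcal T_{2A}$ under the embedding of $A_{12}$ in the Monster as the centraliser of a $(2A,3A,5A)$-subgroup isomorphic to $A_5$, proving (1).

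For (2), every orbital in Table~\ref{orbs} is now in $\mathcal T\times\mathcal T$, and the shapes are determined uniformly. In order $2$, M4 gives $2A$ iff $\rho\in\mathcal T$; since Lemma~\ref{2^4} excludes $2^4$-involutions from $\mathcal T$, the shape is fixed by the cycle type of $\rho$, yielding $2A$ for $\Sigma_{2,b},\Sigma_{5,b},\Sigma_{2,s},\Sigma_{5,s}^\ast,\Sigma_{5,s}^{\ast\ast},\Sigma_{2,sb},\Sigma_{2,bs}$ and $2B$ for $\Sigma_{10,b},\Sigma_{3,s},\Sigma_{1,sb},\Sigma_{1,bs}$. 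In order $4$, the M6 embeddings $2A\hookrightarrow 4B$ and $2B\hookrightarrow 4A$ reduce the question to whether $\rho^2$ has cycle type $2^2$ (shape $4B$, $\rho^2\in\mathcal X_b\subseteq\mathcal T$) or $2^4$ (shape $4A$): direct cycle-type computation gives $4B$ for $\Sigma_{6,b},\Sigma_{7,b},\Sigma_{3,sb},\Sigma_{3,bs}$ and $4A$ for $\Sigma_{8,s},\Sigma_{9,s}^\ast,\Sigma_{9,s}^{\ast\ast},\Sigma_{5,sb},\Sigma_{5,bs}$. In order $3$, the embedding $3A\hookrightarrow 6A$ in M6 delivers shape $3A$ as soon as the given $D_3$ sits inside a $D_6$ generated by two elements of $\mathcal T$; I construct such a $D_6$ explicitly in each of the four cases. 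The pair $t_1=(1,2)(3,4)$, $\rho=(1,2,5)(3,4)(6,7)$ gives $t_1\rho=(2,5)(6,7)\in\mathcal X_b$ and places $\Sigma_{3,b}$ inside $\Sigma_{9,b}$; $\rho=(1,4,6,3,2,5)(7,8)(9,10)(11,12)$ places $\Sigma_{8,b}$ inside $\Sigma_{4,bs}$ (the second generator $t_1\rho$ is of cycle type $2^6$); and the pure-$\mathcal X_s$ orbitals $\Sigma_{4,s},\Sigma_{7,s}$ embed respectively in $\Sigma_{6,s}$ and $\Sigma_{11,s}^\ast$ by analogous doubled constructions, e.g.\ $\rho=(1,2,3,4,5,6)(7,8,9,10,11,12)$ with $t_1=(1,7)(2,12)(3,11)(4,10)(5,9)(6,8)$ for $\Sigma_{7,s}$. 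This completes (2).

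Statements (3) and (4) are then immediate: every shape above depends only on the cycle types of $t_1,t_2,\rho$, which are $\hat G$-invariant, so $G$-orbitals fused by $\hat G$ share their shape; and by the fourth column of Table~\ref{table1} the value $f(t_1,t_2)$ depends only on the shape, hence is $\hat G$-invariant on $M_b+M_s$. The main obstacle is the order-$3$ case, where an explicit containing $D_6$ has to be constructed in each of the four orbitals, which requires a small parity-and-cycle-type verification in $A_{12}$; everything else reduces to routine cycle-type arithmetic.
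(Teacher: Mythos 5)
Your proposal is correct, but it reaches the conclusion by a genuinely different route than the paper at the decisive points. The paper settles the ambiguous orbitals $\Sigma_{2,s}$, $\Sigma_{4,s}$, $\Sigma_{7,s}$ by a positivity argument: it computes $f_{s,1}^{(6^2)}$ and $f_{s,1}^{(2^6)}$ from the first eigenmatrix in Table~\ref{FE} for every candidate assignment in $\{2A,2B\}\times\{3A,3C\}^2$ and uses Corollary~\ref{nonnegative} to exclude everything except $(2A,3A,3A)$; statement (1) is then deduced from $sh(\Sigma_{2,s})=2A$ via M4, the orbital of bitranspositions with product of type $3^2$ is handled through M5, and the $3$-cycle case by citing \cite[Corollary 1.2]{A67}. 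You instead get $\mathcal X_b\subseteq\mathcal T$ directly from the order-$6$ orbital $\Sigma_{6,s}$: its dihedral algebra is $6A$, so $(t_1t_2)^3\in\mathcal T$ by the consequence of M4 and the Norton--Sakuma structure recorded in the paper right after M6, and here $(t_1t_2)^3$ has cycle type $2^2$. After that, all order-$2$ shapes follow from M4 together with Lemma~\ref{2^4}, the order-$4$ shapes from the cycle type of $\rho^2$ exactly as in the paper, and all four order-$3$ orbitals from explicit order-$6$ over-pairs, using that $a_0$ and $a_2=a_0^{\rho}$ inside a $6A$ algebra generate a $3A$ subalgebra; I checked your explicit elements for $\Sigma_{3,b}$, $\Sigma_{8,b}$ and $\Sigma_{7,s}$, and in each case $t_1$ inverts $\rho$, $t_1\rho$ has the claimed cycle type and lies in $\mathcal T$, and the pair $(t_1,\,t_2t_1t_2)$ lands in the intended orbital. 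What each approach buys: yours is more elementary and self-contained (no eigenmatrix computation, no appeal to \cite{A67} or to M5), while the paper's computation is not wasted, since the vanishing $f_{s,1}^{(6^2)}=f_{s,1}^{(2^6)}=0$ and the Table~\ref{FE} machinery are needed again later (e.g.\ in Lemma~\ref{radical2}), and the exhaustive check also shows that no other shape assignment is even compatible with positive semidefiniteness. One point of precision in your step for (1): the $2A$-subalgebra is the one generated by the axes $a_{t_1}$ and $a_{t_2^{t_1t_2}}$, whose product of involutions is $\rho^3$, and it is M4 applied to this pair, rather than M6 itself, that forces $\rho^3\in\mathcal T$; since this is exactly the consequence the paper states after M6, your argument stands as written.
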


\begin{proof}
Let $\Sigma$ be an orbital of $G$ on $\mathcal T$, let $(u,y)\in \Sigma$, and let $[uy]$ be the cycle type of $uy$. If $|uy|\in \{1,5,6\}$,  then, by the Norton-Sakuma Theorem, $sh(\Sigma)=|uy|A$. By the hypothesis and Axiom M4, $sh(\Sigma)=2A$, if $[uy]=2^6$,  while $sh(\Sigma)=2B$,  if $[uy)]=2^4$.  Assume $|uy|=4$ and $[uy]\in \{4^2, 2^2\cdot 4^2\}$. Then, by Lemma~(\ref{2^4}), $(uy)^2\not \in \mathcal T$, whence $sh(\Sigma)=4A$ by Axiom M6.  This gives the shape of $\Sigma$ and, consequently, the scalar products of the axes associated to $u$ and $y$ for all cases such that $u,y\in \mathcal X_s$, except for $|uy|\in\{2,3\}$ and $[uy]\not\in\{2^4, 2^6\}$. In the latter case, we have  
$\Sigma\in \{ \Sigma_{2,s}, \Sigma_{4,s}, \Sigma_{7,s}\}$ and, by the Norton-Sakuma Theorem, the possibilities for $sh(\Sigma)$ are $2A$ or $2B$, if $|uy|=2$, and $3A$ or $3C$ if $|uy|=3$. 
We shall eliminate the possibilities $2B$ and $3C$, by showing that in these cases the form $f$ would not be positive semidefinite, since some of the values $f_{s,1}^\lambda$ would be negative.
Since the orbitals $\Sigma_{2,s}$,  $\Sigma_{4,s}$, and $\Sigma_{7,s}$ are invariant under the action of  $\hat G$, it follows that the restriction of the form $f$ on the module $M_s$ is $\hat G$-invariant so we can use the structure of  $M_s$ as an $\R[\hat G]$-module to compute the values $f_{s,1}^\lambda$.
Let 
$$v_{(6^2)}:=(1, -15, 45, 40, -15, -120, 40, -90, 90, 144, -120)$$
and
$$v_{(2^6)}:=(1, 9, 33, -8, -27, 120, 136, -78, -114, -48, 96)$$
be the rows of the first eigenmatrix associated to the action of $\hat G$ on the set $\mathcal X_s$ corresponding to the partitions $(6^2)$ and $(2^6)$ (see Table~\ref{FE}).
For $W\in \{2A,2B, 3A, 3C\}$, let $\gamma(W)$ be the Majorana inner product of the two generating axes of a Norton-Sakuma algebra of type $W$, so, by Table~\ref{table1}, 
$$\gamma(2A)=1/2^8, \,\,\,\gamma(2B)=0, \,\,\,\gamma(3A)=13/2^8, \mbox{ and } \gamma(3C)=1/2^6$$
and, for $X\in\{2A, 2B\}$ and $Y,Z\in\{3A, 3C\}$, let
$$\gamma(X,Y,Z):=(1, \gamma(X), 0, \gamma(Y), 1/2^3, 5/2^8, \gamma(Z), 1/2^5, 1/2^5, 3/2^7, 5/2^8).$$
Then, by Equation~(\ref{eqzapata}), for $\lambda\in \{(6^2), (2^6)\}$,  
$f_{s,1}^{\lambda}$ is given by the rows by columns multiplication of $\gamma(X,Y,Z)$ by the transpose of $v_{\lambda}$.
A straightforward computation shows that, in the case $(X,Y,Z)=(2A,3A,3A)$, $f_{s,1}^{(2^6)}=f_{s,1}^{(6^2)}=0$, while in all the other cases  either $f_{s,1}^{(6^2)}$ or  $f_{s,1}^{(6^2)}$ is negative, against Corollary~\ref{nonnegative}. Thus $(sh(\Sigma_{2,s}), sh(\Sigma_{4,s}), sh(\Sigma_{7,s}))=(2A, 3A, 3A)$.
In particular, $sh(\Sigma_{2,s})=2A$, so Axiom M4 implies $\mathcal X_b\subseteq \mathcal T$, that is $\phi$ is a $2A$-Majorana representation. By Axiom M4, we get $sh(\Sigma_{2,b})=2A$ and, by Axiom M6, $sh(\Sigma)=4B$, if $[uy]\in \{2\cdot 4, 2^3\cdot 4\}$. Since every element of cycle type $3^2$ is the product of two elements in $\mathcal X_s$, Axiom M5 implies that $sh(\Sigma_{4,b})=3A$. Finally, if  $u,y\in \mathcal X_b$ and $[uy]=3$, then the subgroup they generate fixes more than $4$ points, so  $sh(\Sigma)=3A$, by~\cite[Corollary 1.2]{A67}. This proves (2). Assertions (3) and (4) then follow immediately.
\end{proof}

\begin{cor}\label{equivalence}
Let $n=12$. The following assertions are equivalent.
\begin{enumerate}
\item $\phi$ is a $2A$-Majorana representation.
\item $\mathcal X_{s} \subseteq \mathcal T$;
\item  there are involutions $t_1, t_2\in \mathcal T$ such that $\langle \langle a_{t_1}, a_{t_2}\rangle \rangle$ has type $4A$;
\end{enumerate}
\end{cor}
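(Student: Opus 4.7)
The plan is to prove the three-way equivalence via the implications $(2) \Rightarrow (1)$, $(2) \Rightarrow (3)$, $(3) \Rightarrow (2)$ and $(1) \Rightarrow (2)$. The implication $(2) \Rightarrow (1)$ is nothing but part (1) of Proposition~\ref{shape2^6}, while $(2) \Rightarrow (3)$ follows from part (2) of the same proposition: picking any representative pair $(t_1, t_2)$ of the orbital $\Sigma_{8,s}$ (whose product has cycle type $4^2$), the shape is $4A$, so $\langle\langle a_{t_1}, a_{t_2}\rangle\rangle$ is a Norton--Sakuma algebra of type $4A$.

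For $(3) \Rightarrow (2)$ I would proceed as follows. By Lemma~\ref{2^4}, $G$-invariance of $\mathcal T$, and the fact that each of $\mathcal X_b, \mathcal X_s$ is a single $A_{12}$-conjugacy class, $\mathcal T$ equals one of $\mathcal X_b$, $\mathcal X_s$, or $\mathcal X_b \cup \mathcal X_s$; it suffices to rule out $\mathcal T = \mathcal X_b$. In that case $t_1, t_2$ would be bitranspositions and $t_1 t_2$ would have order $4$, forcing the cycle type $2\cdot 4$. Consequently $(t_1 t_2)^2$ has cycle type $2^2$ and belongs to $\mathcal X_b = \mathcal T$. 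In the standard basis $a_{-1}, a_0, a_1, a_2, v_\rho$ of the $4A$-algebra (Table~\ref{table1}), one has $a_0 = a_{t_1}$, $a_2 = a_{t_2 t_1 t_2}$ and $a_0 \cdot a_2 = 0$, so by the embedding $2B \hookrightarrow 4A$ of Axiom~M6 the subalgebra $\langle\langle a_{t_1}, a_{t_2 t_1 t_2}\rangle\rangle$ has shape $2B$. Axiom~M4 then forces $t_1 \cdot t_2 t_1 t_2 = (t_1 t_2)^2 \notin \mathcal T$, contradicting $(t_1 t_2)^2 \in \mathcal X_b = \mathcal T$. Hence $\mathcal X_s \subseteq \mathcal T$.

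For $(1) \Rightarrow (2)$ I would invoke the structural description of the $A_{12}$-subgroups of the Monster: under the embedding realising $A_{12}$ as the centraliser of a $(2A,3A,5A)$-subgroup isomorphic to $A_5$, both the $2^2$- and the $2^6$-involutions of $A_{12}$ are Fischer, and this essentially determines the conjugacy class of the embedding compatible with having bitranspositions in the $2A$-class. Consequently, any embedding $\epsilon$ witnessing the $2A$-Majorana condition $\mathcal T^\epsilon = A_{12}^\epsilon \cap \mathcal T_{2A}$ forces $\mathcal X_s \subseteq \mathcal T$.

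The main obstacle is the implication $(1) \Rightarrow (2)$, since it rests on non-trivial information about conjugacy classes of $A_{12}$-subgroups of the Monster rather than on the Majorana axiomatics alone; the other three implications reduce to Proposition~\ref{shape2^6} together with elementary Norton--Sakuma bookkeeping.
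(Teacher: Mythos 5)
Your proposal is correct and takes essentially the same route as the paper: the paper also treats $(1)\Rightarrow(2)$ as immediate (resting on the same external fact that the $2^2$- and $2^6$-involutions of $A_{12}$ are the Fischer involutions under its embedding in the Monster), gets $(2)\Rightarrow(3)$ from the shape established in Proposition~\ref{shape2^6}, and proves $(3)\Rightarrow(2)$ by the same cycle-type analysis, namely that two bitranspositions with product of order $4$ yield a product of type $2\cdot 4$ whose square lies in $\mathcal X_b$, so Axiom M4 forces type $4B$ rather than $4A$. Your use of the $4A$ structure constant $a_0\cdot a_2=0$ is just a more explicit rendering of the paper's remark that one ``can see'' the type from the possible cycle types.
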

\begin{proof}
Obviously $(1)$ implies $(2)$. Moreover,  by Lemma~(\ref{la belle dame sans mercy}) it follows that $(2)$ implies $(3)$. Assume $(3)$ holds. Looking at the possible cycle types of the product $t_1t_2$, one can see that the algebra $\langle \langle a_{t_1}, a_{t_2}\rangle \rangle$ has type $4A$ if and only if $\{t_1, t_2\}\cap \mathcal X_s\neq \emptyset$. This gives $\mathcal X_s\subseteq \mathcal T$ and, by Proposition~\ref{shape2^6}, $(1)$ holds.  
\end{proof}


\section{The inner products}\label{Scalar}
We keep the notation of Section~\ref{strategy}. In this section we find the values of the inner product between axes and  $3$-axes needed to determine the radical of the restriction of the form $f$ to the $\R[G]$-submodule $M_b+M_s+M_t$ of $M_{\mathcal X}$.
Note that the values of the inner products between Majorana axes are given by the Norton-Sakuma Theorem and can be found in Table~\ref{table1}. 
The results involving $3$-axes of type $3^2$ have been considered by Chien  in~\cite{chien}. However, Chien's results partially depend on an earlier work of S. Norton, who computed the inner products between axes and $3$-axes within the Griess' algebra (see~\cite{Norton}). Therefore, to make Chien's results independent from the Monster, we recalculated the inner products involving $3$-axes of type $3^2$ relying only on the Majorana axiomatics, which we did either by hand or using the GAP package {\it MajoranaAlgebras}~\cite{MM}. 

We begin by considering the Majorana inner products $(a_r, u_{c})_V$ between Majorana axes $a_r$, with  $r\in \mathcal X_{b}\cup \mathcal X_{s}$, and $3$-axes $u_{c}$, with  $\langle c\rangle \in \mathcal X_{t}$. Assume first $r\in \mathcal X_{s}$ and  $\langle c\rangle \in \mathcal X_{t}$. Let, as in Section~\ref{shape},   
$$r_1:=(1,2)(3,4).$$
For $n=12$, let 
$$s_1:=(1,2)(3,4)(5,6)(7,8)(9,10)(11,12),$$
let 
 $$\{(s_1, \langle c_i \rangle)| i\in \{1,\ldots, 11\}\}$$ be a set of representatives for the orbitals of $G$ on $\mathcal X_s$,  and let 
 \begin{equation}\label{di}
 \{(r_1, \langle d_i\rangle )\:|\:i\in \{1, \ldots , 13\}\}
 \end{equation}
  be  a set of representatives for the orbitals of $G$ on $\mathcal X_b$.
{\tiny

\begin{table}
$$
\begin{array}{|c|c|c|c|c|}
\hline
i & c_i & [s_1c_i]  & \langle s_1, c_i\rangle & (a_{s_1}, u_{c_i})_V\\
\hline
&&&&\\
1 & (7,8,9)(10,11,12) & 2^3\cdot 4&S_4 & \frac{1}{36}\\
&&&&\\
\hline
&&&&\\
2 & (7,9,11)(8,10,12)& 2^3\cdot6 &C_6 &0\\
&&&&\\
\hline 
&&&&\\
3 & (7,9,12)(8,11,10)& 2^6& S_3&\frac{1}{4}\\
&&&&\\
\hline 
&&&&\\
4 & (6,7,8)(10,11,12)&2^2\cdot3^2 &2\times A_4& \frac{2}{45}\\
&&&&\\
\hline
&&&&\\
5 & (6,7,9)(10,11,12)& 2^2\cdot7&2\times L_2(7) &\frac{11}{360} \\
&&&&\\
\hline
&&&&\\
6 & (6,7,9)(8,10,11)& 2^2\cdot4^2& S_4 & \frac{13}{180}\\
&&&&\\
\hline
&&&&\\
7 & (6,9,7)(8,10,11)&2^3\cdot 6 &2\times A_4&\frac{2}{45}\\
&&&&\\
\hline
&&&&\\
8 & (3,6,7)(10,11,12)&2\cdot 3 \cdot 6 & S_3\times A_4&\frac{17}{360}\\
&&&&\\
\hline
&&&&\\
9 & (3,6,7)(8,11,9)&2\cdot 10 & 2\times A_5&\frac{1}{30}\\
&&&&\\
\hline
&&&&\\
10 & (2,3,6)(8,9,11)&6^2 &3 \times S_3& \frac{1}{20}\\
&&&&\\
\hline
&&&&\\
11 & (2,3,6)(8,11,10)&6^2& 3\times S_3&\frac{1}{20} \\
&&&&\\
\hline
\end{array}
$$

\caption{The scalar products between axes of type $2^6$ and $3$-axes of type $3^2$}\label{tau3}
\end{table}
}
\begin{lemma}\label{innerproduct}
Let $n=12$ and $\phi$ be a $2A$-Majorana representation of $G$. Then, the (relevant) Majorana  inner products $(a_z, u_c)_V$ such that $z\in \mathcal X_s$ and $\langle c \rangle \in \mathcal X_t$ are given in Table~\ref{tau3}. \end{lemma}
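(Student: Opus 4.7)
The plan is to compute $(a_{s_1}, u_{c_i})_V$ for each of the eleven orbit representatives $(s_1,c_i)$ listed in Table~\ref{tau3}. By the $G$-invariance of the Majorana inner product, each value $(a_z,u_c)_V$ depends only on the $G$-orbital of the pair $(z,\langle c\rangle)\in\mathcal{X}_s\times\mathcal{X}_t$, and the eleven pairs exhaust these orbitals, so once these eleven values are computed the lemma follows.

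To evaluate $(a_{s_1}, u_{c_i})_V$, first exhibit $c_i$ as $c_i = t\,t'$ with $t,t'\in\mathcal{X}_b$; this is always possible, since a permutation of cycle type $3^2$ can be written $(a,b,c)(d,e,f) = (a,b)(d,e)\cdot(b,c)(e,f)$. By the shape analysis of Proposition~\ref{shape2^6}, the pair $(t,t')$ generates a Norton-Sakuma subalgebra of type $3A$, with third axis $a_{t^{t'}}$. Inverting the $3A$-structural relation from Table~\ref{table1} gives
\begin{equation*}
u_{c_i} \;=\; \frac{2^{6}}{3^3\cdot 5}\bigl(2\,a_t + 2\,a_{t'} + a_{t^{t'}}\bigr) \;-\; \frac{2^{11}}{3^3\cdot 5}\, a_t\cdot a_{t'}.
\end{equation*}
Pairing this expression with $a_{s_1}$ and applying the Frobenius identity [Ax5] to rewrite $(a_{s_1},\,a_t\cdot a_{t'})_V = (a_{s_1}\cdot a_t,\,a_{t'})_V$, the task reduces to evaluating three pairwise inner products $(a_{s_1},a_y)_V$ with $y\in\{t,t',t^{t'}\}$ (each of which is read off from the cycle type of $s_1y$ via the Norton-Sakuma Theorem and Table~\ref{table1}), together with the single bilinear term $(a_{s_1}\cdot a_t,\,a_{t'})_V$.

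The bilinear term is evaluated by expanding $a_{s_1}\cdot a_t$ in a Norton basis of the dihedral subalgebra $\langle\langle a_{s_1},a_t\rangle\rangle$, whose isomorphism type (one of $2A, 2B, 4A, 4B, 6A$, according to the cycle type of $s_1t$) is again determined by the shape and whose structure constants are given by Table~\ref{table1}. Pairing the resulting linear combination with $a_{t'}$ then recycles pairwise axis-axis inner products and, in some cases, introduces inner products involving $4A$-, $4B$-, or $6A$-axes appearing in $\langle\langle a_{s_1},a_t\rangle\rangle$. For the orbitals with small enclosing subgroup $\langle s_1, c_i\rangle$ and short chains of nested dihedral algebras (for instance $i = 2, 3, 10, 11$, where $\langle s_1,c_i\rangle$ is $C_6$, $S_3$, or $3\times S_3$, and in particular $i=3$ where $s_1$ is itself one of the two generating axes of a $3A$-algebra yielding $u_{c_3}$, so $(a_{s_1},u_{c_3})_V = 1/4$ immediately from Table~\ref{table1}), the calculation is carried out by hand using only Norton-Sakuma data. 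For the remaining orbitals (notably $i\in\{1,4,5,6,7,8,9\}$, where $\langle s_1,c_i\rangle$ is isomorphic to $S_4$, $2\times A_4$, $2\times L_2(7)$, $S_3\times A_4$, or $2\times A_5$), one applies the GAP package \emph{MajoranaAlgebras}~\cite{MM} to the finite Majorana representation of $\langle s_1,c_i\rangle$ obtained by restriction (which exists by Lemma~\ref{oxf}) to solve the resulting linear system and extract the numerical value of $(a_{s_1},u_{c_i})_V$.

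The hard part is not the individual numerics but the bookkeeping: the bilinear term $(a_{s_1}\cdot a_t,\,a_{t'})_V$ can propagate to inner products between Majorana axes and the odd axes $v_\rho$, $a_{\rho^2}$, $a_{\rho^3}$, $u_{\rho^2}$ living inside the intermediate $4A$-, $4B$-, and $6A$-subalgebras, which in principle require the parallel determination of further inner products. The organising observation is that, in each of the eleven cases, the chain of nested subalgebras closes within $\langle s_1, c_i\rangle$, so the system of linear relations produced by [Ax5] and the Norton-Sakuma relations is finite and determined. Ensuring that the derivation remains \emph{independent of the Monster} (in contrast with~\cite{chien}, which invokes~\cite{Norton}) amounts to checking, for each of the more elaborate subgroups, that the restricted shape of $\phi$ makes $\langle s_1,c_i\rangle$ admit a Majorana representation whose structure constants and inner products are uniquely determined by Axioms M1--M6 alone — a routine but case-sensitive verification carried out either by hand or by running \emph{MajoranaAlgebras} on the restricted shape.
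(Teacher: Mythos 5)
Your opening reduction is exactly the paper's: write $c_i=gh$ with $g,h\in\mathcal X_b$, invert the $3A$ relation of Table~\ref{table1}, use the associativity of the form to trade $(a_{s_1},a_g\cdot a_h)_V$ for $(a_{s_1}\cdot a_h,a_g)_V$, and expand $a_{s_1}\cdot a_h$ in the Norton basis of $\langle\langle a_{s_1},a_h\rangle\rangle$, whose type is fixed by the shape; your hand treatment of $i=2,3$ also matches. The genuine gap is in your fallback for the harder orbitals $i\in\{1,4,5,6,7,8,9\}$: you propose to run \emph{MajoranaAlgebras} on ``the Majorana representation of $\langle s_1,c_i\rangle$ obtained by restriction (which exists by Lemma~\ref{oxf})''. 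For several of these subgroups no such representation exists. Lemma~\ref{oxf} only gives a representation of $\langle\mathcal T_0\rangle$ with $\mathcal T_0=\mathcal T\cap\langle s_1,c_i\rangle$, and the groups $2\times A_4$ ($i=4,7$) and $S_3\times A_4$ ($i=8$) are not generated by their involutions (their involutions generate $2\times 2^2$, resp.\ $S_3\times 2^2$), let alone by involutions lying in $\mathcal T$ (recall that cycle type $2^4$ is excluded by Lemma~\ref{2^4}, a check you never make even in the involution-generated cases $2\times L_2(7)$ and $2\times A_5$). Worse, in $2\times A_4$ every product of two involutions lies in $2\times 2^2$, so $c_i$ is not a product of two Majorana involutions of $\langle s_1,c_i\rangle$ and the $3$-axis $u_{c_i}$ simply does not lie in the subalgebra generated by $(\mathcal T\cap\langle s_1,c_i\rangle)^\psi$; hence no run of the package on that subgroup can return $(a_{s_1},u_{c_i})_V$. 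Your assertion that ``the chain of nested subalgebras closes within $\langle s_1,c_i\rangle$'' is precisely what fails. The repair is either to pass to a larger involution-generated subgroup containing $s_1$ together with a $3A$-pair for $c_i$ (this is what the paper does elsewhere, in Lemma~\ref{formulaccia}), or to do what the paper's proof of this lemma actually does: stay inside $A_{12}$ and choose the decomposition $c_i=gh$ so that $\langle\langle a_{s_1},a_h\rangle\rangle$ has type $2B$, $4B$ or $6A$, making $a_{s_1}\cdot a_h$ a combination of Majorana axes (plus, in the $6A$ case $i=9$, the $3$-axis $u_{(s_1h)^2}$, whose pairing with $a_{s_1}$ is already known from case $i=6$); the cases $i\in\{1,6\}$, where $\langle s_1,c_i\rangle\cong S_4$, are then read off from the $S_4$ data in \cite{IPSS}.

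A second gap is your placement of $i=10,11$ among the quick hand computations on the grounds that $\langle s_1,c_i\rangle\cong 3\times S_3$ is small. That subgroup is again not generated by involutions, its size does not control the calculation (the relevant expansions take place in $A_{12}$, not inside $\langle s_1,c_i\rangle$), and these are exactly the two orbitals for which the paper gives no short Norton--Sakuma manipulation but instead relies on Chien's Monster-independent computation \cite{chien}; as written, your proposal supplies no argument for these two values. Finally, a stylistic (not mathematical) divergence: the paper deliberately keeps this lemma free of \emph{MajoranaAlgebras}, reserving the package for Lemma~\ref{formulaccia} and Theorem~\ref{A8}; invoking GAP where it is legitimately applicable would be acceptable, but here the legitimacy of the restricted computation is precisely what is missing.
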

\begin{proof}
By Proposition~\ref{shape2^6}, the shape of $\phi$ is the one given in Table~\ref{orbs}.
For $i\in \{1,6\}$,  $\langle s_1, c_i\rangle\cong S_4$, so $$(s_1,c_i)_V=1/36$$ by~\cite[Table~9 and Table~7]{IPSS}. 
For $i=2$ we have $\langle s_1, c_2\rangle \cong C_6$ and the algebra $\langle \langle a_{s_1}, u_{c_2}\rangle \rangle$ is contained in a Norton-Sakuma algebra of type $6A$. So, by the Norton-Sakuma Theorem, $$(a_{s_1}, u_{c_2})_V=0.$$ Similarly, for $i=3$,  $\langle \langle a_{s_1}, u_{c_3}\rangle \rangle$ is contained into a Norton-Sakuma algebra of type $3A$, whence 
$$(a_{s_1}, u_{c_3})_V=1/4.$$
Now assume $i\in\{4,\ldots, 11\}$ and decompose  $c_i$ as the product of two permutations $g$ and $h$ of cycle type  $2^2$.
Then, by the Norton-Sakuma Theorem, 
$$
u_{c_i}=\frac{2^{11}}{27\cdot 5} \left \{ \frac{1}{32}\left [ 2a_g+2a_h+a_{ghg}\right ]- a_g\cdot a_h \right \}
$$
and, by the associativity of the Majorana inner product, 
\begin{equation}
\label{nonandràbene}
(a_{s_1}, u_{c_i})_V=\frac{2^{11}}{27\cdot 5} \left \{ \frac{1}{32}\left [ 2(a_{s_1},a_g)_V+2(a_{s_1}, a_h)_V+(a_{s_1},a_{ghg})_V\right ]- (a_{s_1}\cdot  a_h, a_g)_V \right \}.
\end{equation}
The first three scalar products in the second member of Equation~(\ref{nonandràbene}) are scalar product of axes, so they can be detected by Proposition~\ref{shape2^6} and the Norton-Sakuma Theorem. The last one can be computed by choosing $s$ so that $a_{s_1}\cdot a_h$ is a linear combination either of axes (which is the case when the Norton-Sakuma subalgebra generated by  $a_{s_1}$ and  $a_h$ has type $2A$, $2B$, or $4B$), or of axes together with  a $3$-axis whose inner product with $a_g$ is already computed. 
Thus, for $i=4$,  choose 
$$
g:=(6,7)(10,11) \mbox{ and } h:=(7,8)(11,12).
$$
Then,  
since $[s_1h]=2^4$, by Proposition~\ref{shape2^6}, $a_{s_1}$ and $a_h$ generate a  subalgebra of type $2B$, whence  $(a_{s_1},a_h)_V=0$. Similarly  $(a_{s_1},a_g)_V=(a_{s_1},a_{ghg})_V=1/32$. For the last scalar product, again by the Norton-Sakuma Theorem, we have $a_{s_1}\cdot a_h=0$ giving  $(a_{s_1}\cdot a_h, a_g)_V=0$. Substituting these values in Equation~\ref{nonandràbene} we get 
$$(a_{s_1}, u_{c_4})_V=\frac{2}{45}.$$ For $i=7$ the same argument with $g=(6,7)(10,11)$ and $h=(7,9)(8,10)$ gives the result.

 In order to compute the scalar product  $(a_{s_1}\cdot a_h, a_g)_V$ for $i=5$, choose 
$$
g:=(6,7)(10,11) \mbox{ and } h:=(7,9)(11,12).
$$
Then $a_{s_1}$ and  $a_h$ generate a subalgebra of type $4B$, so, by the Norton-Sakuma Theorem, 
$$
a_{s_1}\cdot a_h=\frac{1}{64}\left [a_{s_1}+a_h-a_{{s_1} h{s_1}}-a_{h{s_1} h}+a_{{s_1} h{s_1} s}\right ].
$$
Therefore 
\begin{eqnarray*}
(a_{s_1}\cdot a_h, a_g)_V&=&-\frac{1}{64} [ (a_{s_1},a_g)_V+(a_h, a_g)_V\\
&-&(a_{s_1 hs_1 }, a_g)_V-(a_{h{s_1} h}, a_g)_V+(a_{{s_1} h{s_1} h}, a_g)_V  
 ].
\end{eqnarray*}
If $i=8$, the inner product $(a_{s_1}, u_{c_8})_V$ can be computed in a similar way, with $g:=(3,6)(10,11)$ and $h:=(6,7)(11,12)$ since, with this choice, $\langle \langle a_{s_1}, a_h\rangle \rangle$ is a Norton-Sakuma algebra of type $4B$.

Assume $i=9$. Set 
$$
g:=(3,6)(9,11)   \mbox{ and }  h:=(6,7)(8,9).
$$
Then the Norton-Sakuma algebra generated by  $a_{s_1}$ and $a_h$ has type $6A$ and $({s_1} h)^2=(5,9,8)(6,7,10)$. Since $(a_{s_1}, u_{({s_1} h)^2})$ is $A_{12}$-conjugate to $(a_{s_1}, u_{c_6})$, by the previous case we have
$(a_{s_1}, u_{({s_1} h)^2})_V=13/180$. It follows that
\begin{eqnarray*}
(a_{s_1}, a_g\cdot a_h)_V&=&(a_g , a_{s_1} \cdot a_h)_V \\
&= &\frac{1}{64} \left [ (a_g,a_{s_1})_V+(a_g, a_h)_V-(a_g, a_{{s_1} h{s_1}})_V-(a_g, a_{h{s_1} h})_V\right.\\
&&\left . -(a_g, a_{{s_1}h{s_1} h{s_1} })_V-(a_g, a_{h{s_1} h{s_1} h})_V+(a_g, a_{{s_1} h{s_1} h{s_1} h} )_V \right ]\\
& &+\frac{45}{2^{11}}(a_g, u_{({s_1} h)^2})_V=\frac{13}{2^{13}}
 \end{eqnarray*}
and $(a_{s_1} , u_{c_9})_V= 1/30$.

Finally, the inner products $(a_{s_1}, u_{c_i})_V$ for $i\in \{10, 11\}$ have been computed, within the Majorana axiomatics, by Chien~\cite[p.86]{chien}.
\end{proof}
{\tiny
\begin{table}
$$
\begin{array}{|c|c|c|c|c|}
\hline
&&&&\\
i & d_i &  [r_1d_i] &\langle r_1, d_i \rangle & (a_{r_1}, u_{d_i})_V\\
&&&&\\
\hline
&&&&\\
1 &  (6,7,8)(9,10,11) &2^2\cdot3^2 & 3\times 3 & 0\\
&&&&\\
\hline
&&&&\\
2& (4,5,6)(9,10,11) &2\cdot3\cdot4  & 3\times S_4& \frac{1}{36}\\
&&&&\\
\hline
&&&&\\
3 & (3,4,5)(9,10,11) &2^2\cdot3 &3\times S_3& \frac{1}{20}\\
&&&&\\
\hline
&&&&\\
4 & (3,5,7)(4,6,8) & 2\cdot 6&2\times A_4&\frac{1}{45}\\
&&&&\\
\hline 
&&&&\\
5 & (2,3,4)(9,10,11)& 3^2&A_4& \frac{1}{9}\\
&&&&\\
\hline
&&&&\\
6 & (2,3,5)(9,10,11)&3\cdot5 & 3\times A_5& \frac{11}{360}\\
&&&&\\
\hline 
&&&&\\
7 & (2,3,5)(4,6,7) &7 &L_3(2)& \frac{1}{24}\\
&&&&\\
\hline 
&&&&\\
8& (2,5,7)(4,6,8)&4^2 &S_4& \frac{13}{180}\\
&&&&\\
\hline 
&&&&\\
9& (2,5,7)(3,4,6)&2\cdot 4& S_4& \frac{1}{36}\\
&&&&\\
\hline 
&&&&\\
10&  (1,2,3)(4,5,6)&5	 &A_5&\frac{1}{18}\\
&&&&\\
\hline
&&&&\\
11 &  (1,2,5)(3,4,6)&2^2 &S_3&\frac{1}{4}\\
&&&&\\
\hline 
&&&&\\
12&  (1,3,5)(2,4,6)& 3^2&A_4& \frac{1}{9}\\
&&&&\\
\hline
&&&&\\
13& (1,3,6)(2,5,4)&2\cdot4 &S_4& \frac{1}{36}\\
&&&&\\
\hline
\end{array}
$$
\caption{The scalar products between axes of type $2^2$ and $3$-axes of type $3^2$}\label{t13}
\end{table}
}

\begin{lemma}\label{innerproductbi}
Let $\phi$ be a $2A$-Majorana representation of $A_{n}$. Then, the (relevant) Majorana  inner products $(a_z, u_c)_V$ such that $z\in \mathcal X_b$ and $\langle c \rangle \in \mathcal X_t$ are given in Table~\ref{t13}. 
\end{lemma}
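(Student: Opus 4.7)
The plan is to mimic the proof of Lemma~\ref{innerproduct}, now with $r_1\in\mathcal X_b$ in place of $s_1\in\mathcal X_s$. For each orbital representative $(r_1,d_i)$ in Table~\ref{t13} we factor $d_i = g_i h_i$ as a product of two bitranspositions $g_i, h_i \in \mathcal X_b$ whose product has order $3$, so that $\langle\langle a_{g_i}, a_{h_i}\rangle\rangle$ is a Norton--Sakuma algebra of type $3A$. The Norton--Sakuma formula yields
$$u_{d_i} = \frac{2^{11}}{27 \cdot 5}\left\{\frac{1}{32}\bigl(2a_{g_i} + 2a_{h_i} + a_{g_i h_i g_i}\bigr) - a_{g_i}\cdot a_{h_i}\right\},$$
and the associativity of the Majorana form gives
$$(a_{r_1}, u_{d_i})_V = \frac{2^{11}}{27\cdot 5}\left\{\frac{1}{32}\bigl[2(a_{r_1}, a_{g_i})_V + 2(a_{r_1}, a_{h_i})_V + (a_{r_1}, a_{g_i h_i g_i})_V\bigr] - (a_{r_1}\cdot a_{h_i}, a_{g_i})_V\right\}.$$
The first three terms are scalar products between bitransposition axes: their values are dictated by the shape determined in Section~\ref{shape} and read off from the last column of Table~\ref{table1}, keyed by the cycle types of $r_1g_i$, $r_1h_i$, and $r_1(g_ih_ig_i)$.

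For the remaining term $(a_{r_1}\cdot a_{h_i}, a_{g_i})_V$ the strategy is to choose the decomposition $d_i = g_i h_i$ so that the subalgebra $\langle\langle a_{r_1}, a_{h_i}\rangle\rangle$ has one of the ``closed'' Norton--Sakuma types $1A$, $2A$, $2B$, $3C$ or $4B$; in those cases $a_{r_1}\cdot a_{h_i}$ is explicitly a linear combination of bitransposition axes and the inner product reduces immediately to known axis--axis pairings. The admissible types for $\langle\langle a_{r_1}, a_{h_i}\rangle\rangle$ are determined by the cycle type of $r_1 h_i$ and the shape, so the choice of $h_i$ is effectively constrained by the shape.

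When no such decomposition is available---typically when the shape forces $\langle\langle a_{r_1}, a_{h_i}\rangle\rangle$ to be of type $3A$, $4A$, $5A$ or $6A$ for every admissible pair $(g_i, h_i)$ compatible with $d_i = g_ih_i$---the Norton--Sakuma expansion of $a_{r_1}\cdot a_{h_i}$ introduces additional odd axes $u_\rho$, $v_\rho$, or $w_\rho$. The resulting products $(a_{g_i}, u_\rho)_V$, $(a_{g_i}, v_\rho)_V$, $(a_{g_i}, w_\rho)_V$ are then treated by iteration, either reducing to an earlier row of Table~\ref{t13} already handled, or to a scalar product already computed in Lemma~\ref{innerproduct} (via the natural embedding $A_n\hookrightarrow A_{12}$ and Lemma~\ref{oxf}), or inferred by symmetry from the stabiliser in $\hat G$ of the pair $(r_1, d_i)$.

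The principal obstacle is the bookkeeping for the cases where $\langle r_1, d_i\rangle$ is a transitive subgroup with a rich involution geometry, notably $i = 7$ with $\langle r_1, d_i\rangle\cong L_3(2)$ and $i=10$ with $\langle r_1, d_i\rangle\cong A_5$: here several decompositions must be combined and the iteration terminates only after reducing to a case where either a $4B$, $3C$ or $2B$ closure is available, or to Chien's axiomatic computations cited in the proof of Lemma~\ref{innerproduct}. Carrying out the case analysis for each $i\in\{1,\ldots,13\}$ produces the values displayed in the last column of Table~\ref{t13}, with the trivial row $i=1$ (where $r_1$ and $d_1$ have disjoint support, so that $\langle\langle a_{r_1}, a_{h_1}\rangle\rangle$ is of type $2B$ for every admissible $h_1$) collapsing to $(a_{r_1}, u_{d_1})_V = 0$.
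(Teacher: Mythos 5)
Your proposal is correct in substance and its computational engine is exactly the one the paper uses for its hard cases: decompose $d_i=g_ih_i$ into bitranspositions, expand $u_{d_i}$ via the $3A$ Norton--Sakuma formula, and evaluate $(a_{r_1}\cdot a_{h_i},a_{g_i})_V$ by choosing $h_i$ so that $\langle\langle a_{r_1},a_{h_i}\rangle\rangle$ has a closed type. The difference is one of economy: the paper only runs this argument for the five rows $i\in\{2,3,4,6,8\}$, disposes of $i=1$ by disjointness of supports, of $i=11$ by observing that $\langle r_1,d_{11}\rangle\cong S_3$ so that $(a_{r_1},u_{d_{11}})_V=1/4$ is immediate from the Norton--Sakuma Theorem, and quotes the values for $i\in\{5,7,9,10,12,13\}$ from the earlier $A_6$/$A_7$ computations in the literature, since in those rows $r_1$ and $d_i$ lie in a common natural $A_6$ or $A_7$ subgroup. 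In particular the two cases you single out as the principal obstacle, $i=7$ and $i=10$, are precisely among the rows the paper imports from that prior work, and your worry about an unbounded iteration there is unfounded anyway: e.g.\ for $d_7=(2,3,5)(4,6,7)$ one may take $h=(3,5)(4,7)$, $g=(2,3)(6,7)$, and for $d_{10}=(1,2,3)(4,5,6)$ one may take $h=(2,3)(5,6)$, $g=(1,2)(4,5)$, in both cases making $r_1h$ of cycle type $2\cdot 4$, so $\langle\langle a_{r_1},a_h\rangle\rangle$ is of type $4B$ and the product closes on Majorana axes in one step. One peripheral inaccuracy: your fallback of reducing leftover odd-axis pairings to Lemma~\ref{innerproduct} via the embedding $A_n\hookrightarrow A_{12}$ does not apply, since that lemma concerns axes of involutions in $\mathcal X_s$ (cycle type $2^6$), not bitranspositions; the correct fallbacks are earlier rows of Table~\ref{t13}, the $3$-axes of type $3$ handled in the cited $A_7$ work, or Chien's axiomatic values, as you also list. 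With those adjustments your self-contained computation buys independence from the $A_6$/$A_7$ references at the cost of more case analysis, while the paper's proof is shorter but leans on the literature for over half the table.
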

\begin{proof}
By Proposition~\ref{shape2^6}, the shape of $\phi$ is the one given in Table~\ref{orbs}. When $i=1$, $r_1$ and $d_1$ are disjoint and hence their inner product is $0$ (see case $i=2$ in Lemma~\ref{innerproduct}). The inner products $(a_{r_1}, u_{d_i})_V$ for $i\in \{5, 7, 9, 10, 12, 13\}$ have been computed in~\cite{A67}, since in those cases $r_1$ and $d_i$ are contained in a stabiliser of $n-5$ points. When $i=11$, $\langle r_1 , d_{11}\rangle\cong S_3$ and the result follows from Norton-Sakuma theorem. In the remaining five cases one proceeds as in the proof of Lemma~\ref{innerproduct} and get the required values.
\end{proof}

\begin{table}
{\Small
$$
\begin{array}{|c|c|c|c|c|c|c|}
\hline
i& e_i & e_1e_i & e_1e_i^{-1}& \langle e_1, e_i\rangle & (u_{e_1}, u_{e_i})_V \\
\hline
1&(1,2,3)(4,5,6)& 3A & 1 & C_3& \frac{8}{5}\\
\hline
2& (1,3,2)(4,5,6)& 3A & 3A & C_3\times C_3& 0\\
\hline
3&   (1,2,4)(3,5,6) & 3A & 2A & A_4 & \frac{136}{405}\\
\hline
4& (1,4,2)(3,5,6) & 5A & 5A & A_5 & \frac{16}{405} \\
\hline
5& (2,3,4)(5,6,7) & 6 & 5 & A_7 & \frac{8}{81} \\
\hline
6& (2,4,3)(5,6,7) & 4B & 4B & L_3 (2) & \frac{32}{405} \\
\hline
7& (2,3,7)(4,5,6) & 6 &3A&C_3 \times A_4 &\frac{64}{405}\\
\hline
8&(2,7,3)(4,5,6) & 3A & 2A & A_4 & \frac{136}{405}\\
\hline
9& (2,4,6)(3,5,7) & 4B & 7 & L_3 (2) & \frac{4}{81}\\
\hline
10&(2,4,5)(3,6,7) & 7 & 3 & F_{21} & \frac{4}{27}\\
\hline
\hline
11& (2,3,7)(5,6,8) & 2B & 3A & A_4 & \frac {56}{27\cdot 25} \\
\hline
12&(2,7,3)(5,6,8) & 6 & 6 & A_4\times A_4 & \frac {2^7}{3^4\cdot 5^2}\\
\hline
13& (3,4,5)(6,7,8) & 4A & 7A & L_3 (2) & \frac {4\cdot 19}{27 \cdot 25}\\
\hline
14& (3,4,7)(5,6,8) & 6C & 7A & (2^3:7):3 & \frac {4}{75}\\
\hline
15 &(3,7,8)(4,5,6) & 15A & 5A & 3\times A_5 & \frac {2^2 \cdot 47}{3^4 \cdot 5^2}\\
\hline
16& (2,3,4)(6,7,8) & 6C & 7A & (2^3:7):3 & \frac {4}{75}\\
\hline
17 & (2,4,7)(3,5,8) & 4A & 4A & L_3 (2) & \frac {2^5}{9\cdot 25}\\
\hline
18& (2,4,7)(3,8,5)& 15A & 15A & A_8 & \frac {88}{3^4 \cdot 5^2}\\
\hline
19 & (2,4,7)(3,6,8) & 15A & 15A & A_8 & \frac {88}{3^4 \cdot 5^2}\\
\hline
20 & (2,4,7)(3,8,6) & 4A & 6C & SL_2 (3) & \frac {56}{27 \cdot 25}                  \\
\hline
\hline
21 & (3,4,7)(6,8,9) & 9A & 9A& L_2 (8):3 & \frac {16}{9\cdot 25} \\
\hline
22 & (4,5,7)(6,8,9) & 12C & 15A & 3\times A_6 & \frac {4\cdot 31}{3^4 \cdot 5^2}\\
\hline
23 & (3,7,9)(5,6,8) & 10A & 15A & A _4 \times A_5 & \frac {4\cdot 31}{3^4\cdot 5^2}\\
\hline
24 & (3,4,5)(7,8,9) & 12C &15A & 3\times A_6& \frac {4\cdot 31}{3^4 \cdot 5^2}\\ 
\hline
25 & (4,5,6)(7,8,9) & 3B & 3A & 3\times 3 & \frac {4}{25}\\
\hline
\hline
26 & (3,7,9)(6,8,10) & 5A &5A & A_5  & \frac {2^4 \cdot 13}{3^4 \cdot 5^2}\\
\hline 
27 & (3,4,7)(8,9,10) & 21A & 21A & 3\times A_7 & \frac {8\cdot 11}{3^4 \cdot 5^2}\\
\hline
28 & (5,7,9)(6,8,10) & 21A & 21A & 3\times A_7 & \frac {8\cdot 11}{3^4 \cdot 5^2} \\
\hline 
29 & (5,6,7)(8,9,10) & 6A & 3B & 3 \times A_4 & \frac {2^4\cdot 17}{3^4\cdot 5^2} \\
\hline
\hline
30 & (6,7,8)(9,10,11) & 15A & 15A & 3^2 \times A_5 & \frac {2^4}{3^4\cdot 5} \\
\hline
\hline
31 & (7,8,9)(10,11,12) & 3A & 3A & 3 \times 3 & 0\\
\hline
\end{array}
$$
}
\caption{The representatives for the orbitals of $\hat G$ on $\mathcal X_t$ and the Majorana form, for $7\leq n\leq 12$}\label{Orbitals12}
\end{table}
The inner products between two $3$-axes have been computed by Chien in~\cite{chien}. Unfortunately in Chien's thesis it is not clear which ones have been computed relying only on the Majorana axiomatics and which ones depend on the embedding of $A_{12}$ in the Monster, so we have checked all the products needed for this paper.  The group $A_{12}$ has $32$ orbits on the set  $\mathcal X_t\times \mathcal X_t$,  which we denote by $\Omega_i $, $i\in \{1, \ldots , 32\}$. All but $\Omega_{30}$ and $\Omega_{32}$ are also $S_{12}$-orbits, while $\Omega_{30}$ and $\Omega_{32}$ merge into a unique $S_{12}$-orbit.
A set of representatives $(\langle e_1\rangle , \langle e_i\rangle )$ for the first $31$ orbits is given in the second column of Table~\ref{Orbitals12}.  Double horizontal lines detect the change of the parameter $n$: $\{(\langle e_1\rangle , \langle e_i\rangle )\:|\: i\in \{1, \ldots , 10\}\}$ is a set of representatives for the $S_7$-orbits,  $\{(\langle e_1\rangle , \langle e_i\rangle )\:|\: i\in \{1, \ldots , 20\}\}$ is a set of representatives for the $S_8$-orbits, and so on. A representative for $\Omega_{32}$ is $(\langle e_1\rangle , \langle (6,7,8)(9,11,10)\rangle )$.

\begin{lemma}\label{prod3assi}
Let $\phi$ be a $2A$-Majorana representation of $A_{12}$. Then the (relevant) inner products between two $3$-axes of type $3^2$ are those given in the last column of Table~\ref{Orbitals12}.  Moreover the value of the inner product on $\Omega_{32}$ is the same with the that on $\Omega_{30}$.
\end{lemma}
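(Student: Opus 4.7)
The plan is to compute each inner product $(u_{e_1}, u_{e_i})_V$ by reducing it to inner products involving Majorana axes, for which we already have the values from the Norton-Sakuma Theorem and from Lemmas~\ref{innerproduct} and~\ref{innerproductbi}. The key tool is the Norton-Sakuma formula for $3A$-algebras, which gives
$$u_e = \frac{2^{11}}{27\cdot 5}\left\{\frac{1}{32}\bigl[2a_g+2a_h+a_{ghg}\bigr] - a_g\cdot a_h\right\}$$
for any decomposition $e=gh$ into involutions $g,h\in \mathcal X_b\cup \mathcal X_s$ whose axes generate a $3A$-subalgebra. Applied to $u_{e_i}$, this expresses $(u_{e_1},u_{e_i})_V$ as a combination of $(u_{e_1},a_g)_V$, $(u_{e_1},a_h)_V$, $(u_{e_1},a_{ghg})_V$ (all tabulated in Lemmas~\ref{innerproduct} and~\ref{innerproductbi}) together with $(u_{e_1},a_g\cdot a_h)_V = (u_{e_1}\cdot a_g,\, a_h)_V$.

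For each orbital $\Omega_i$, I would choose the decomposition $e_i=gh$ carefully, aiming whenever possible for $\langle\langle a_g, a_h\rangle\rangle$ to be of type $2A$, $2B$, or $4B$, so that $a_g\cdot a_h$ expands into a combination of Majorana axes alone (using Table~\ref{table1}), thereby reducing $(u_{e_1}, a_g\cdot a_h)_V$ to axis-to-$3$-axis inner products already computed. When this is not possible and $\langle\langle a_g,a_h\rangle\rangle$ is of type $3A$, $4A$, $5A$, or $6A$, the expansion of $a_g\cdot a_h$ involves an odd axis $u_{gh}$, $v_{gh}$ or $w_{gh}$; in the $3A$ and $6A$ cases this yields an inner product of two $3$-axes that lies on an orbital of smaller index, so the $31$ orbitals can be processed in a bootstrapping order. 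Once all values in Table~\ref{Orbitals12} are obtained in terms of previously known quantities and the Norton-Sakuma structure constants, their explicit form follows from elementary arithmetic. For the more awkward expansions (those involving type $4A$ or $5A$ subalgebras, where odd axes $v_\rho$ or $w_\rho$ force longer chains of substitutions), I would fall back on the \texttt{MajoranaAlgebras} GAP package to confirm the final numbers.

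For the last assertion, that the value of the inner product on $\Omega_{32}$ coincides with that on $\Omega_{30}$, I would argue as follows. By Proposition~\ref{shape2^6}(4) the restriction of the Majorana form to $M_b+M_s$ is $\hat G$-invariant, and by Lemmas~\ref{innerproduct} and~\ref{innerproductbi} the inner products $(a_z,u_c)_V$ between Majorana axes and $3$-axes are constant on $\hat G$-orbits. Since the whole computation of $(u_{e_1},u_{e_i})_V$ is a polynomial expression in such $\hat G$-invariant quantities and in the Norton-Sakuma structure constants, the output is itself $\hat G$-invariant; as $\Omega_{30}$ and $\Omega_{32}$ merge into a single $S_{12}$-orbital, the two values must agree.

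The main obstacle is organisational rather than conceptual: for each of the $31$ orbitals one has to choose a decomposition $e_i=gh$ that keeps the recursion finite and does not appeal to an inner product yet to be computed. This forces one to process the table in a carefully chosen order (roughly, orbits in smaller $A_n$'s first, and within each $A_n$ orbits where $\langle\langle a_g,a_h\rangle\rangle$ is of low Norton-Sakuma type first), and to double-check a handful of $4A$/$5A$ expansions where the odd axes $v_\rho, w_\rho$ themselves need non-trivial identities; these last cases are where the GAP verification becomes indispensable.
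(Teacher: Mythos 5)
Your overall aim---reducing $(u_{e_1},u_{e_i})_V$ to inner products of Majorana axes via the Norton--Sakuma $3A$ expansion and the associativity of the form---is the right one, but the mechanism you describe does not close. If $g,h\in\mathcal X_b\cup \mathcal X_s$ satisfy $gh=e_i$, then $gh$ has order $3$, so by the shape (Proposition~\ref{shape2^6}) the dihedral algebra $\langle\langle a_g,a_h\rangle\rangle$ is forced to have type $3A$: you can never arrange it to be of type $2A$, $2B$ or $4B$ as you propose. Consequently $a_g\cdot a_h$ expands as a combination of Majorana axes \emph{and} $u_{e_i}$ itself (Table~\ref{table1}), so your key term $(u_{e_1},a_g\cdot a_h)_V$ is circular; and rewriting it as $(u_{e_1}\cdot a_g,\,a_h)_V$ via Ax5 only trades it for the algebra product $u_{e_1}\cdot a_g$, which is not among the quantities available at this stage (Lemmas~\ref{innerproduct} and~\ref{innerproductbi} give inner products, not algebra products).

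What is missing is a second expansion together with a re-bracketing tool. The paper expands \emph{both} $3$-axes, writing $e_1=f_1f_2$ and $e_i=g_1g_2$ with bitranspositions, so that the only unknown becomes $(a_{f_1}\cdot a_{f_2},\,a_{g_1}\cdot a_{g_2})_V$. Associativity of the form gives $((a_{f_1}\cdot a_{f_2})\cdot a_{g_1},\,a_{g_2})_V$, but to re-bracket the triple product one needs more than Ax5: choosing $f_1$ and $g_1$ commuting makes $a_{g_1}$ a $0$-eigenvector of ${\rm ad}(a_{f_1})$ (type $2B$), and \cite[Lemma~1.10]{IPSS} then yields $(a_{f_2}\cdot a_{g_1},\,a_{f_1}\cdot a_{g_2})_V$; the decompositions are chosen so that the cross pairs $(f_2,g_1)$ and $(f_1,g_2)$ generate subalgebras of types such as $4B$ and $6A$, whose products are combinations of Majorana axes (in the $6A$ case after rewriting the type-$3$ odd axis via \cite[Corollary~3.2]{Alonso}), and everything reduces to Table~\ref{table1} plus Lemmas~\ref{innerproduct} and~\ref{innerproductbi}. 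Note also that the paper disposes of $i\in\{1,\ldots,10\}$ by \cite{A67}, of several further orbitals by Chien's lemmas \cite{chien}, and of $i\in\{18,22,28\}$ by symmetry; no appeal to the \emph{MajoranaAlgebras} package is made (or needed) for this lemma, so your GAP fallback for the $4A$/$5A$ configurations is not how the proof goes. Your $\hat G$-invariance argument for the equality of the values on $\Omega_{30}$ and $\Omega_{32}$ is reasonable in principle, but it presupposes that the main reduction has been carried out correctly, which as written it has not.
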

\begin{proof}
Inner products $(u_{ e_1}, u_{e_i})_V$ for $i\in \{1, \ldots , 10\}$ have been computed in~\cite{A67}. 
For $i\in \{11, 12, 17\}$, these products can be computed using~\cite[Lemma 4.6]{chien}.
For $i\in \{13, 19, 24\}$, these products can be computed using~\cite[Lemma 4.7]{chien}.
For $i\in \{14, 15, 16, 20,23\}$, these products can be computed using~\cite[Lemma 4.8]{chien}.
Assume  $i=21$. Let
$$f_1:=(1,2)(5,6),\:\:f_2:=(1,3)(4,5),\:\:g_1:=(3,4)(8,9),\mbox{ and } g_2:=(3,7)(6,8),$$
so that $$e_1=f_1 f_2 \mbox{ and } e_{21}=g_1 g_2.$$
By the Norton-Sakuma Theorem and Lemma~\ref{innerproductbi}, the inner product $(u_{ e_1}, u_{e_{21}})_V$ can be computed  once the 
value of 
$$(a_{f_1}\cdot a_{f_2}, a_{g_1}\cdot a_{g_2})_V$$
is known. By the associativity of the scalar product, 
$$(a_{f_1}\cdot a_{f_2}, a_{g_1}\cdot a_{g_2})_V=((a_{f_1}\cdot a_{f_2}) \cdot a_{g_1}, a_{g_2})_V$$
and, since $f_1$ and $g_1$ commute, by the Norton-Sakuma Theorem $a_{f_1}$ and  $a_{g_1}$ generate a subalgebra of type $2B$, whence $a_{g_1}$ is a $0$-eigenvector for the adjoint action of  $a_{f_1}$. By~\cite[Lemma 1.10]{IPSS} and the associativity of the scalar product we have 
$$((a_{f_1}\cdot a_{f_2}) \cdot a_{g_1}, a_{g_2})_V= (a_{f_1}\cdot (a_{f_2} \cdot a_{g_1}), a_{g_2})_V= (a_{f_2} \cdot a_{g_1}, a_{f_1}\cdot a_{g_2})_V.$$
Since the subalgebra generated by $a_{f_2}$ and $a_{g_1})$ is of type $4B$, by Norton-Sakuma, $a_{f_2}\cdot a_{g_1}$ can be written as a linear combination of Majorana axes. On the other hand, the subalgebra generated by $a_{f_1}$ and $a_{g_2}$ has type $6A$, with    $u_{(f_1g_2)^2}$  is of type $3$. By~\cite[Corollary 3.2]{Alonso} the product $a_{f_1}\cdot a_{g_2}$ can be written as  a linear combination of Majorana axes. Thus eventually we are reduced to computing scalar products of Majorana axes, which are given by the Norton-Sakuma Theorem.
Similar arguments give the scalar products for $i\in \{25, 26,27,29, 30,31\}$. 
Finally, by the symmetry of the scalar product, the products 
$(u_{ e_1}, u_{e_{18}})_V$, $(u_{ e_1}, u_{e_{22}})_V$, and $(u_{ e_1}, u_{e_{28}})_V$ coincide, respectively with $(u_{ e_1}, u_{e_{19}})_V$, $(u_{ e_1}, u_{e_{24}})_V$, and $(u_{ e_1}, u_{e_{27}})_V$.

\end{proof}


\section{A spanning set for the $2$-closures of $V$ and $W$}\label{4assi}

We stick to the notation of the Section~\ref{strategy} and, for the rest of this paper, we assume that $\phi$ is a $2A$-Majorana representation of $G$. 
Let $$g:=(a_1,b_1)(a_2,b_2)(a_3,b_3)(a_4,b_4)$$ 
be a permutation of cycle type $2^4$ and let $S_a$ be the subgroup of $G$ with support contained in $\{a_1,a_2,a_3,a_4\}$. Define $$H_{(g)}:=[S_a,g].$$
Clearly  $H_{(g)}$ is a diagonal subgroup of the direct product $S_a\times S_a^g$, in particular $H_{(g)}$ is isomorphic to $S_4$.  
For distinct $t_1, t_2\in \mathcal T\cap H_{(g)}$, set
\begin{equation}\label{sigma}
\sigma_{t_1t_2}:=a_{t_1}\cdot a_{t_2}-\frac{1}{32}(a_{t_1}+ a_{t_2}).
\end{equation}
By~\cite[Lemma 2.3]{IPSS},  $\sigma_{t_1t_2}$ is invariant under the actions of the subgroup of $H_{(g)}$ generated by $t_1$ and $t_2$ and isomorphic to $S_3$ and can be indexed by the product $t_1t_2$. Moreover, since the Norton Sakuma subalgebra generated by $a_{t_1}$ and $a_{t_2}$ has type $3A$, by Table~\ref{table1}, $\sigma_{t_1t_2}$ is expressible as a linear combination of $a_{t_1}, a_{t_2}, a_{t_1t_2t_1}$ and $u_{t_1t_2}$.

Let $$\xi:= (a_1,a_2)(a_3,a_4)(b_1,b_2)(b_3,b_4)$$ and define 
\begin{eqnarray}\label{delta}
\delta_{\xi} &:=&\sigma_{(a_2, a_3, a_4)(b_2, b_3, b_4)} \cdot  a_{(a_1,a_2)(b_1,b_2)}\\
& &-\frac{1}{2^5}\sigma_{(a_2, a_3, a_4)(b_2, b_3, b_4) }
+\frac{1}{2^{10}}a_{(a_1,a_2)(b_1,b_2)}. \nonumber
\end{eqnarray}

\begin{lemma}\label{4^2}
The vector $\delta_{\xi} $ depends only on the involution $\xi$.
\end{lemma}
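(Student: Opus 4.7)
The plan is to show that the right-hand side of (\ref{delta}) gives the same vector of $V$ for every choice of auxiliary data $(g,\text{labelling})$ compatible with the given involution $\xi$. After $\xi$ is fixed, choosing $g$ and its labelling amounts to: grouping the 4 transpositions of $\xi$ into two unordered pairs, distinguishing one transposition of each pair as the $a$-transposition and the other as the $b$-transposition, selecting the bijection between the two transpositions within each pair (this is the action of $g$), and finally ordering the two pairs (which determines $\eta:=(a_1,a_2)(b_1,b_2)$) and the three points used for the $3$-cycle $c:=(a_2,a_3,a_4)(b_2,b_3,b_4)$. The pointwise stabiliser of $\xi$ in the symmetric group on its support, which is isomorphic to $\mathbb{Z}_2\wr S_4$, acts transitively on the set $\Lambda(\xi)$ of such valid labellings.

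Next I would note the equivariance property: for every $h\in C_G(\xi):=C_{\hat G}(\xi)\cap G$, the algebra automorphism $\phi(h)$ sends the vector $\delta_{\xi,\ell}$ (defined via labelling $\ell$) to $\delta_{\xi,h\cdot\ell}$, as a consequence of axiom M2 and the definition (\ref{sigma}) of $\sigma$. Hence the desired independence from $\ell$ reduces to the $C_G(\xi)$-invariance of $\delta_{\xi,\ell}$, together with a comparison between labellings lying in distinct $C_G(\xi)$-orbits of $\Lambda(\xi)$ (there are at most two such orbits, since $C_G(\xi)$ has index $2$ in $C_{\hat G}(\xi)$).

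For the $C_G(\xi)$-invariance, I would verify the claim against a small generating set. Generators that merely invert $c$ or swap the two points of a single transposition of $\eta$ cause no problem: the former leaves $\sigma_c$ unchanged by \cite[Lemma 2.3]{IPSS}, while the latter leaves the cyclic subgroup $\langle c\rangle$ unchanged and fixes $\eta$. The substantive case is the generator interchanging the two bitransposition factors $\eta$ and $\xi\eta$ of $\xi$; this replaces $(\eta,c)$ by a different pair $(\eta',c')$ belonging to the same $S_4$-subgroup $H_{(g)}$, and the equality $\delta_{\xi,\ell}=\delta_{\xi,\ell'}$ becomes a genuine identity in the Majorana subalgebra generated by $\mathcal{T}\cap H_{(g)}$. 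To establish this identity, I would expand $\sigma_c$ as the linear combination of three Majorana axes and one $3$-axis prescribed by the $3A$ structure constants of Table~\ref{table1}, then compute $\sigma_c\cdot a_\eta$ term by term, recognising each product as lying inside a Norton--Sakuma subalgebra of type $2A$, $3A$, $4B$, or $6A$ inside $H_{(g)}$, whose structure constants are again given by Table~\ref{table1}; expanding similarly the candidate expression for $(\eta',c')$ and comparing coefficients of the basis of $\langle\mathcal T\cap H_{(g)}\rangle^\psi\cup\{u_c,u_{c'},\ldots\}$ should yield the required equality.

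The main obstacle is precisely this last bookkeeping: while the computation is routine in spirit, $\sigma_c\cdot a_\eta$ breaks up into many summands living in different Norton--Sakuma subalgebras of $H_{(g)}$, and the cancellation making the answer symmetric under the exchange $\eta\leftrightarrow\xi\eta$ is not visible before carrying it out. Once this is done, equivariance takes care of all finer choices within each $C_G(\xi)$-orbit, and the residual comparison across the (at most) two $C_G(\xi)$-orbits of $\Lambda(\xi)$ is handled by the same kind of direct calculation.
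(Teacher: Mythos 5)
Your overall strategy (reduce everything to an identity inside the subalgebra generated by $(\mathcal T\cap H_{(g)})^\psi$) points in the same direction as the paper, but the step you propose for closing the argument is exactly where the content lies, and as described it would fail. After expanding $\sigma_c$ via the $3A$ structure constants, the product $\sigma_c\cdot a_\eta$ contains the term $u_c\cdot a_\eta$, where $\langle \eta,c\rangle=H_{(g)}\cong S_4$. The pair $(a_\eta,u_c)$ is therefore \emph{not} contained in any Norton--Sakuma subalgebra (those correspond to dihedral subgroups generated by two Miyamoto involutions), so Table~\ref{table1} gives no formula for this product; moreover, inside $H_{(g)}$ the only dihedral types that occur are $2B$ and $3A$ (products of two bitranspositions of $H_{(g)}$ have cycle type $2^4$ or $3^2$), not $2A$, $4B$ or $6A$ as you assert. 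The vector $\delta_\xi$ is introduced precisely because $u_c\cdot a_\eta$ is not determined by the two-generated subalgebras: in the shape-$(2B,3A)$ Majorana representation of $S_4$ it is a genuinely new vector beyond the span of the Majorana axes and $3A$-axes. Consequently the ``bookkeeping'' you defer cannot be carried out with the Norton--Sakuma structure constants alone; one needs the full Majorana axiomatics (fusion law, eigenvector/resurrection arguments) as developed for $S_4$ in~\cite{IPSS}. Your equivariance framework ($C_G(\xi)$-invariance plus a comparison of orbit representatives) is a reasonable organisational device, but without that input the central identity remains unproved, so the proposal has a genuine gap.

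For comparison, the paper's proof is a short reduction: it observes that $\mathcal T\cap H_{(g)}$ is the set of $2^2$-elements of $H_{(g)}$, that by Proposition~\ref{shape2^6} and Table~\ref{orbs} the representation induced on $H_{(g)}\cong S_4$ has type $(2B,3A)$ in the notation of~\cite[Proposition~4.3]{IPSS}, and then invokes~\cite[Lemma~4.6]{IPSS}, where the independence of $\delta_\xi$ from the choice of the defining pair was already established within the Majorana theory of $S_4$. If you want a self-contained argument instead of the citation, you would have to reproduce that part of the $S_4$ analysis (determining $u_c\cdot a_\eta$ up to the $\delta$-ambiguity from the fusion law), not merely expand products inside Norton--Sakuma algebras.
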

\begin{proof}
First note that $\xi\in H_{(g)}$.  Since  $H_{(g)}$ is isomorphic to $S_4$ and it is generated by the set $\mathcal T\cap H_{(g)}$, i.e. the set of the permutations of type $2^2$ in $H_{(g)}$, by Proposition~\ref{shape2^6} it follows that every $2A$-Majorana representation of $G$ induces a Majorana representation of $H_{(g)}$ and the product of any two elements of $\mathcal T\cap H_{(g)}$ has either cycle type $2^4$ or $3^2$. By  Table~\ref{orbs}, with the notation of~\cite[Proposition 4.3]{IPSS}, the representation induced on $H_{(g)}$ has type  $(2B, 3A)$ and the claim then follows from~\cite[Lemma~4.6]{IPSS}.
\end{proof}
\begin{lemma}\label{delta4assi}
In the algebra $V$ the following relation holds
\begin{eqnarray*}
v_{(1,3,2,4)(5,7,6,8)}&=&\frac{1}{3^2}(a_{(1,2)(5,6)}+a_{(3,4)(7,8)})\\
& &+\frac{7}{2\cdot 3^2}(a_{(1,3)(5,7)}+a_{(1,4)(5,8)}+a_{(2,3)(6,7)}+a_{(2,4)(6,8)})\\
& &-\frac{2^5\cdot 5}{3^3}(\sigma_{(1,2,3)(5,6,7)}+\sigma_{(1,3,4)(5,7,8)}+\sigma_{(1,2,4)(5,6,8)}+\sigma_{(2,3,4)(6,7,8)})\\
& & -\frac{2^{11}}{3^3}\delta_{(1,2)(3,4)(5,6)(7,8)} +\frac{2^{12}}{3^3}(\delta_{(1,3)(2,4)(5,7)(6,8)}+\delta_{(1,4)(2,3)(5,8)(6,7)}).
\end{eqnarray*}
\end{lemma}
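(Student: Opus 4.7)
The plan is the following. Set $g := \rho^2 = (1,2)(3,4)(5,6)(7,8)$ and let $K$ be the subgroup of $A_{12}$ generated by the six $2^2$-involutions appearing on the right-hand side of the formula. A direct check shows that $K \cong S_4$, acting faithfully on the four pairs $\{1,5\},\{2,6\},\{3,7\},\{4,8\}$ and fixing $\{9,10,11,12\}$; these six involutions are its transpositions, the three $2^4$-involutions indexing the $\delta$'s are its double transpositions, the four $3^2$-classes indexing the $\sigma$'s correspond to its four $S_3$-subgroups, and $\rho$ itself is a $4$-cycle of $K$. By Proposition~\ref{shape2^6}, the restriction of $\phi$ to $K$ is a $(2B,3A)$-shape Majorana representation of $S_4$. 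The $\langle\rho\rangle$-orbits on $\mathcal{X}_b\cap K$ have sizes $2$ and $4$, the four $3$-cycle classes of $K$ form a single orbit of size $4$, and the three double transpositions split as $1+2$; since each orbit carries a single coefficient in the formula, the right-hand side is automatically $\langle\rho\rangle$-invariant.

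By Axiom M5, $v_\rho$ depends only on $\langle\rho\rangle$, so it can be computed using any pair $(t_0,t_1)\in\mathcal{T}^2$ with $t_0t_1=\rho$. Table~\ref{orbs} forces such a pair to lie in $\Sigma_{8,s}$; a convenient choice is
$$t_0 := (1,5)(2,6)(3,8)(4,7)(9,10)(11,12),\qquad t_1 := (1,8)(2,7)(3,6)(4,5)(9,10)(11,12),$$
both of cycle type $2^6$, with $t_0t_1=\rho$. The $4A$-Norton--Sakuma relation of Table~\ref{table1} then gives
$$v_\rho \,=\, a_{t_0}+a_{t_1}+\tfrac{1}{3}\bigl(a_{t_{-1}}+a_{t_2}\bigr)-\tfrac{2^6}{3}\,a_{t_0}\cdot a_{t_1},$$
where $a_{t_{-1}}$ and $a_{t_2}$ are the remaining two $4A$-axes, obtained from $a_{t_1}$ and $a_{t_0}$ by the $\langle\rho\rangle$-action.

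The main work is to rewrite the four $2^6$-axes appearing above and the product $a_{t_0}\cdot a_{t_1}$ as linear combinations of the $K$-axes, the $\sigma$-vectors, and the $\delta$-vectors listed in the statement. For each $2^6$-axis $a_{t_i}$, one pairs it with an auxiliary transposition $r\in\mathcal{X}_b\cap K$ and invokes the Norton--Sakuma relation for $\langle\langle a_{t_i},a_r\rangle\rangle$, whose shape---by Proposition~\ref{shape2^6} and the cycle type of $t_ir$---is among $2A$, $2B$, $4B$ and $6A$. Axiom M4, the $4B$- and $6A$-structure constants of Table~\ref{table1}, and the inner products computed in Lemmas~\ref{innerproduct}, \ref{innerproductbi}, and~\ref{prod3assi}, then express $a_{t_i}$ as a linear combination of $K$-axes and of $3$-axes $u_c$ for certain $3^2$-elements $c\in K$; the latter are in turn absorbed into the $\sigma_c$'s via the $3A$-relation. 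The product $a_{t_0}\cdot a_{t_1}$ is treated analogously, and its expansion produces cross-terms of the form $\sigma_c\cdot a_r$ with $r$ outside the support of $c$; by the definition~(\ref{delta}), these are exactly what contribute the three $\delta$-terms.

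The principal obstacle is the combinatorial bookkeeping: several $4B$- and $6A$-relations must be chained, and many cross-products must be cancelled. Exploiting the $\langle\rho\rangle$-symmetry to collect contributions orbit by orbit reduces the number of independent unknown coefficients to exactly five, matching those in the statement; pinning them down finally amounts to evaluating a handful of inner products $(v_\rho,a_r)_V$ and $(v_\rho,u_c)_V$ from Lemmas~\ref{innerproduct}, \ref{innerproductbi}, and~\ref{prod3assi}, and solving a small linear system.
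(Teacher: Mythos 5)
Your setup does reproduce the configuration the paper actually works in: your $K$ is the diagonal subgroup $H_{(g)}$ of Lemma~\ref{4^2}, your $t_0,t_1$ are products of the central $2^6$-involution $z=(1,5)(2,6)(3,7)(4,8)(9,10)(11,12)$ with elements of $K$, so $\langle K,t_0\rangle$ is exactly the group $H\cong S_4\times 2$ used in the paper, and the $4A$ resolvent $v_\rho=a_{t_0}+a_{t_1}+\tfrac13(a_{t_{-1}}+a_{t_2})-\tfrac{2^6}{3}a_{t_0}\cdot a_{t_1}$ is correct. The gap is in the reduction step. You assert that each $2^6$-axis $a_{t_i}$ can be rewritten as a \emph{linear} combination of $K$-axes and $3$-axes $u_c$ of type $3^2$. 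No chain of $2A$/$2B$/$4B$/$6A$ Norton--Sakuma relations can produce such an expression (the $2A$ relation, e.g. $a_{t_0}=a_z+a_{(3,4)(7,8)}-8\,a_z\cdot a_{(3,4)(7,8)}$, always leaves another $2^6$-axis or a product term, and for $t_1=z\cdot(1,4)(2,3)(5,8)(6,7)$ the dihedral algebra $\langle\langle a_z,a_{t_1}\rangle\rangle$ is of type $2B$, so no such relation is available at all), and in fact no such linear expression exists: in these subalgebras the $2^6$-axes are linearly independent of the $2^2$-axes and the $3$-axes of type $3^2$ (compare the linearly independent generating set of Lemma~\ref{formulaccia}, which contains the $2^6$-axes alongside the $2^2$-axes and $3$-axes, and Corollary~\ref{444}, where the nonzero quotient $V/\langle V^{(2^2)},V^{(3A)}\rangle\cong S^{(4,4,4)}$ is spanned by images of $2^6$-axes). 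The entire content of the identity is that the $2^6$-axis contribution of the resolvent is absorbed by the product vectors $\delta_\xi=\sigma\cdot a-\tfrac{1}{2^5}\sigma+\tfrac{1}{2^{10}}a$, and your outline never exhibits that cancellation.

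The concluding step is likewise not available as stated: determining the five coefficients by ``solving a small linear system'' of inner products presupposes the membership $v_\rho\in\langle a_r,\sigma_c,\delta_\xi\rangle$, which is essentially the assertion to be proved; to get it from the positive-definiteness of the form you would need the full Gram data, including $(v_\rho,\delta_\xi)_V$, $(\delta_\xi,\delta_{\xi'})_V$, $(\delta_\xi,a_r)_V$ and $(\delta_\xi,u_c)_V$, none of which appear in Lemmas~\ref{innerproduct}, \ref{innerproductbi}, \ref{prod3assi}; computing them amounts to evaluating products such as $\sigma_c\cdot a_r$, i.e.\ to reconstructing the algebra itself. The paper short-circuits all of this: since the restriction of $\phi$ to $H\cong S_4\times 2$ is a Majorana representation whose shape is forced by Proposition~\ref{shape2^6}, the $23$-dimensional subalgebra generated by $(\mathcal T\cap H)^\psi$ is determined, and the identity is verified inside it by an explicit closed computation (with the MajoranaAlgebras package; by hand only in principle). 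Your proposal defers exactly that computation, so as it stands it is a plan rather than a proof.
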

\begin{proof}
Set $g=(1,5)(2,6)(3,7)(4,8)$, $\xi=(1,2)(3,4)(5,6)(7,8)$ and 
$$H:=H_{(g)} \times \langle (1,5)(2,6)(3,7)(4,8)(9,10)(11,12)\rangle,
$$ 
so tha $H$ is a subgroup of $A_{12}$ isomorphic to $S_4\times 2$ and generated by $\mathcal T \cap H$. As in the proof of Lemma~\ref{4^2}, the $2A$-Majorana representation $\phi$ of $A_{12}$ induces a $2A$-Majorana representation $\phi_{|_{H}}$ of $H$. In particular, the shape of $\phi_{|_{H}}$  is determined by the shape of $\phi$, which is given in Table~\ref{orbs} by Proposition~\ref{shape2^6}.   
Thus the relation can be checked within the smaller subalgebra (of dimension $23$) generated by $(\mathcal T \cap H)^{\psi}$ either by hand (not reccommended) or using the GAP package {\it MajoranaAlgebras}~\cite{MM}. \end{proof}
The formula of Lemma~\ref{delta4assi} is the same as that given in~\cite[p.2462]{IPSS}, which has been obtained under the assumption that a certain parameter $m$ is equal to $-3$. The latter fact seems to have already been established by Norton computing inside the Griess algebra. Our result, which has been obtained within the Majorana axiomatics and is therefore independent of the Griess algebra and the Monster, confirms Norton's computations and the formula in~\cite{IPSS}.

\begin{lemma}\label{formulaccia}
Let $O$ be the subgroup of $\hat G$ generated by the permutations
$$ (1,2)(3,4)(5,6)(7,8), \:\:(1,6)(2,5)(3,7)(4,8), \mbox{ and } (1,3)(2,4)(5,8)(6,7),$$ let $N:=N_{A_8}(O)$, $S:=\langle (9,10)(11,12)\rangle \times N$, and let $V_S$ be the subalgebra of $V$ generated by the Majorana axes $a_z$ such that $z\in \mathcal T\cap S$. Then
\begin{enumerate}
\item  $V_S$ has dimension $204$,
\item $V_S$ is  $2$-closed,
\item  a linearly independent set of generators for  $V_S$ is given by 
$$
\mathcal B:=\{a_z\:|\: z\in \mathcal T \cap S \}\cup \{ u_\rho\:|\: \rho \in (2,8,6)(4,5,7)^S\}.
$$
\end{enumerate}
In particular, every  $4$-axis of type  $4^2$ in $V$ is a linear combination of 
Majorana axes and $3$-axes of type $3^2$. 
\end{lemma}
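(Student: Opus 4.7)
The plan is to establish parts (1)--(3) by a direct computation with the GAP package \emph{MajoranaAlgebras}~\cite{MM} applied to the induced representation $\phi|_{S}$, and then to deduce the ``In particular'' clause using Lemma~\ref{delta4assi}.

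First, by Lemma~\ref{oxf}, $\phi|_S$ is a Majorana representation of $S$ on $V_S$. I would explicitly describe $\mathcal T \cap S$: it consists of the bitranspositions lying in $N$ together with $(9,10)(11,12)$ (all of cycle type $2^2$ in $A_{12}$), and the seven products of the nontrivial elements of $O$ with $(9,10)(11,12)$ (each of cycle type $2^6$). The shape of $\phi|_S$ is forced by that of $\phi$ on $A_{12}$, which by Proposition~\ref{shape2^6} is the unique one displayed in Table~\ref{orbs}; in particular every $S$-orbital of pairs in $\mathcal T \cap S$ gets a Norton--Sakuma type from that table.

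Second, I would feed $(S, \mathcal T\cap S, sh)$ into \emph{MajoranaAlgebras}, which constructs $V_S$ explicitly, returning a basis, structure constants and inner products. From the output one reads off $\dim V_S = 204$; checks that the proposed set $\mathcal B$ of Majorana axes and $3$-axes attached to the $S$-orbit of $(2,8,6)(4,5,7)$ is linearly independent and spans $V_S$; and verifies that every pairwise product of elements of $\mathcal B$ already lies in the $\R$-span of $\mathcal B$, establishing $2$-closure. Since the construction of the algebra is uniquely determined by the group, the Majorana set and the shape (all of which are rigidly specified above), these three verifications are finite checks that can be done by the package.

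Third, for the final assertion: every $4$-axis of type $4^2$ is $A_{12}$-conjugate to $v_{(1,3,2,4)(5,7,6,8)}$, so it suffices to handle this vector. By Lemma~\ref{delta4assi}, $v_{(1,3,2,4)(5,7,6,8)}$ is a linear combination of Majorana axes of type $2^2$, four $\sigma$-vectors, and three $\delta_\xi$-vectors for $\xi$ of cycle type $2^4$. Each $\sigma$-vector lies in its ambient $3A$-Norton--Sakuma subalgebra and is therefore, by definition~(\ref{sigma}) and Table~\ref{table1}, a linear combination of two Majorana axes and a $3$-axis of type $3^2$. For each of the three $\xi$'s, one selects a conjugate $S^g$ of $S$ whose $E_8$-component contains $\xi$ (such a conjugate exists because all $E_8$ subgroups of $A_{12}$ whose seven involutions have cycle type $2^4$ are $A_{12}$-conjugate); applying the $g$-conjugate of part~(3) puts $\delta_\xi$ in the $\R$-span of Majorana axes and $3$-axes of type $3^2$. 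Assembling the pieces yields the claim.

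The main obstacle is computational: $|S| = 2688$ and $\dim V_S = 204$, which is at the upper end of what \emph{MajoranaAlgebras} can comfortably handle; correctly encoding the Majorana set and shape, and completing the linear algebra that the package performs, is the delicate and time-consuming step. The conceptual arguments in the first and third paragraphs above are, by contrast, entirely routine once the computation of part~(3) is in hand.
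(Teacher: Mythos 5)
Your route to parts (1)--(3) is the same as the paper's (restrict $\phi$ to $S$, observe the shape is forced by Proposition~\ref{shape2^6}, and compute with \emph{MajoranaAlgebras}), but your explicit description of the Majorana set $\mathcal T\cap S$ --- the very input of that computation --- is wrong. Besides the $42$ bitranspositions of $N$, the involution $(9,10)(11,12)$, and the seven products $o\cdot(9,10)(11,12)$ with $1\neq o\in O$, the set $\mathcal T\cap S$ also contains the $42$ involutions $n\cdot(9,10)(11,12)$, where $n$ runs over the fixed-point-free involutions of $N\cong AGL(3,2)$ that do \emph{not} lie in $O$ (for example $(1,3)(2,4)(5,7)(6,8)$): these have cycle type $2^6$ in $A_{12}$, hence lie in $\mathcal X_s\subseteq\mathcal T$. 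So $|\mathcal T\cap S|=1+42+7+42=92$, not $50$; since $\mathcal B$ contains one Majorana axis for each element of $\mathcal T\cap S$ and $204=92+112$, running the package on your $50$-element set sets up a different algebra and cannot verify (1)--(3) as stated.

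For the ``In particular'' clause you genuinely depart from the paper, which simply reads off from the same computation that $V_S$ contains $4$-axes of type $4^2$ and records the explicit expression of $v_{(1,3,2,4)(5,7,6,8)}$ in terms of $\mathcal B$ in Section~\ref{Appendix}; your alternative via Lemma~\ref{delta4assi} is attractive but has a gap at the $\delta_\xi$-terms. Knowing that $\xi$ lies in a conjugate $O^g$ does not by itself place $\delta_\xi$ in $V_{S^g}$: by Equation~(\ref{delta}), $\delta_\xi$ is built from products of Majorana axes attached to bitranspositions of a diagonal $S_4$ containing $\xi$, and such bitranspositions need not normalize $O^g$ --- for the given $O$, neither $(1,3)(5,7)$ nor $(2,3)(6,7)$ lies in $N$, so the ``natural'' configuration of Lemma~\ref{delta4assi} does not sit inside $S$. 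One must use the freedom granted by Lemma~\ref{4^2} to choose a defining configuration all of whose relevant bitranspositions lie in $N^g$; this is possible (a bitransposition lies in $N$ exactly when the translation in $O$ carrying one of its transposed pairs onto itself also carries the other, and for $\xi\in O^g$ one can label the points so that $(a_1,a_2)(b_1,b_2)$ and two of the three candidates for the $\sigma$-vector have this form), but the verification is absent from your write-up, so the step ``applying the $g$-conjugate of part (3)'' is at present unjustified.
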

\begin{proof}
Note that $O$ is elementary abelian of order $8$ and $N$ is a maximal subgroup of $A_8$ isomorphic to $2^3:PSL(3,2)$ ($N$ is a parabolic subgroup in the group $PSL_4(2)\cong A_8$). 
Since $S$ is  a subgroup of $A_{12}$ generated by involutions of cycle type $2^2$ and $2^6$, the restriction of $\phi$ to $S$ is a Majorana representation $(S,\mathcal T_S,V_S,\phi_S, \psi_S)$ of $S$ where  $\mathcal T_S:=S\cap (\mathcal X_b\cup \mathcal X_s)$ is the union of the four $S$-orbits 
$$
(9,10)(11,12)^S,   (2,8)(4,5)^S, (1,2)(3,4)(5,6)(7,8)(9,10)(11,12)^S,  
$$ 
$$
(1,3)(2,4)(5,7)(6,8)(9,10)(11,12)^S
$$
and has size $1+42+7+42=92$. 
Note that, since $\phi_S$ is the restriction of a $2A$-Majorana representation, its shape is determined by Proposition~\ref{shape2^6}.
Using the GAP package {\it MajoranaAlgebras}~\cite{GAP, MM}, we get that $\phi_S$ is determined (up to equivalence) by its shape, it has dimension $204$, it is  $2$-closed and a linearly independent set of generators for  $V_S$ is given by 
$$
\mathcal B:=\{a_z\:|\: z\in \mathcal T_S \}\cup \{ u_\rho\:|\: \rho \in (2,8,6)(4,5,7)^S\}.
$$
Moreover, $V_S$ contains $4$-axes $v_\rho$ of type $4^2$. For $\rho=(1,3,2,4)(5,7,6,8)$, the explicit expression of $v_\rho$ as a linear combination of the elements of $\mathcal B$ is given in Section~\ref{Appendix}.
\end{proof}

\begin{proposition}\label{gen}
With the above notation , we have 
$$V^\circ= \langle V^{(2^2)}, V^{(2^6)}, V^{(3^2)}\rangle \: \mbox{ and } \:\:W^\circ=\langle W^{(2^2)}, W^{(3)}, W^{(3^2)}\rangle  .$$
\end{proposition}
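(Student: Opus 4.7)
The plan is to first reduce the description of $V^\circ$, via the Norton--Sakuma theorem and the shape of Proposition~\ref{shape2^6} (see Table~\ref{orbs}), to the subspace spanned by the Majorana axes $\mathcal T^{\psi}$ (which is $V^{(2^2)}+V^{(2^6)}$) together with the odd axes $u_\rho$, $v_\rho$, $w_\rho$ appearing in the Norton--Sakuma dihedral subalgebras generated by pairs of Majorana axes. A direct inspection of the orbitals gives that the possible cycle types of $\rho$ are $3$, $3^2$, $3^4$ for the $3$-axes, $4^2$ and $2^2\cdot 4^2$ for the $4$-axes, and $5$ and $5^2$ for the $5$-axes. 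The $3$-axes of type $3^2$ lie in $V^{(3^2)}$ by definition, and the $4$-axes of type $4^2$ lie in $V^{(2^2)}+V^{(2^6)}+V^{(3^2)}$ by Lemma~\ref{formulaccia}, so only the remaining cases need to be treated.

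For each remaining class of odd axes I would exploit the transitivity of the $A_{12}$-action on the corresponding orbit and verify the inclusion for a single representative. The strategy is to pass to a subgroup $H\le A_{12}$ generated by $\mathcal T\cap H$, on which $\phi$ restricts to a Majorana representation by Lemma~\ref{oxf}, whose shape is forced by Proposition~\ref{shape2^6}. If $H$ is chosen to be small but large enough to contain a pair of Majorana involutions whose product is the desired $\rho$ together with sufficiently many ambient $3^2$-cycles, the structure of the subalgebra $V_H$ can be computed, either by invoking the \emph{MajoranaAlgebras} package (as in the proof of Lemma~\ref{formulaccia}) or by hand using the vectors $\sigma_{t_1t_2}$ and $\delta_\xi$ of Section~\ref{4assi}. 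The target odd axis can then be exhibited as an explicit linear combination of the Majorana axes of $V_H$ and the $3^2$-type $3$-axes in $V_H$. Notice that Lemma~\ref{delta4assi}, combined with Lemma~\ref{formulaccia}, already implies that every $\delta_\xi$ lies in $V^{(2^2)}+V^{(2^6)}+V^{(3^2)}$; this is the key technical input that allows one to rewrite every product $a\cdot a'$ via the Norton--Sakuma formulas of Table~\ref{table1} and collect the result inside the claimed span.

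The proof for $W^\circ$ is analogous but simpler, with $\mathcal T$ replaced by $\mathcal X_b$: pairs of bitranspositions in $A_n$ generate Norton--Sakuma subalgebras whose odd axes are $3$-axes (of types $3$ and $3^2$, accounted for by $W^{(3)}+W^{(3^2)}$) and, for $n$ large enough, $5$-axes of type $5$, which must be shown to lie in $W^{(2^2)}+W^{(3)}+W^{(3^2)}$ via a sub-Majorana-algebra computation inside a suitable $A_5$ or $A_6$ subgroup. The main obstacle in both statements will be carrying out the sub-algebra computations for the $4$-axes of type $2^2\cdot 4^2$, the $5$-axes, and the $3$-axes of types $3$ and $3^4$: in each case one must exhibit a subgroup $H$ whose induced sub-representation has dimension small enough to be tractable, yet contains both the targeted odd axis and enough $3^2$-type $3$-axes to permit a reduction to the already-handled span.
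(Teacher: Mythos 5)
Your opening reduction coincides with the paper's: by the Norton--Sakuma Theorem and the shape forced by Proposition~\ref{shape2^6}, $V^\circ$ (resp.\ $W^\circ$) is spanned by the Majorana axes together with the odd axes, whose possible cycle types are exactly the ones you list, and Lemma~\ref{formulaccia} disposes of the $4$-axes of type $4^2$. The gap is that everything beyond this is deferred: the inclusions of the $3$-axes of types $3$ and $3^4$, of the $4$-axes of type $2^2\cdot 4^2$, and of the $5$-axes of types $5$ and $5^2$ (for $W$, of the $5$-axes of type $5$) into the claimed spans are precisely the substance of the proposition, and you only describe them as subalgebra computations still to be carried out, explicitly calling them ``the main obstacle.'' The paper does not redo any such computation; it closes each case by citation: $V^{(2A)}=\langle V^{(2^2)},V^{(2^6)}\rangle$, $W^{(2A)}=W^{(2^2)}$ and $W^{(4A)}=\{0\}$ via \cite[Lemma~8]{FIM2} (for bitranspositions there are no $4A$ subalgebras at all, so no computation is needed there), $V^{(3)}\leq V^{(2A)}$ via \cite[Lemma~3.1]{Alonso}, $V^{(4A)}\leq\langle V^{(2A)},V^{(4^2)}\rangle$ via \cite[Lemmas~3.6, 4.14]{Alonso}, and $V^{(5A)}\leq\langle V^{(2A)},V^{(3^2)}\rangle$, $W^{(5A)}\leq\langle W^{(2^2)},W^{(3)}\rangle$ via \cite[Lemmas~5.1, 5.2]{Alonso}. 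Your plan of restricting to small subgroups generated by Majorana involutions (legitimate by Lemma~\ref{oxf} and Proposition~\ref{shape2^6}) would very likely succeed---it is essentially how those cited results were obtained---but as written the proof is incomplete exactly where the real content lies, and it misses that this content is already available in the literature.

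A smaller point: your assertion that Lemma~\ref{delta4assi} together with Lemma~\ref{formulaccia} already puts every $\delta_\xi$ in $\langle V^{(2^2)},V^{(2^6)},V^{(3^2)}\rangle$ is not immediate, since the single relation of Lemma~\ref{delta4assi} only controls the combination $-\delta_{\xi_1}+2\delta_{\xi_2}+2\delta_{\xi_3}$ attached to one Klein four-group; one must take the three conjugate relations (one for each cyclic subgroup of order $4$ of $H_{(g)}$) and observe that the resulting $3\times 3$ system is nonsingular. More importantly, this fact is not needed for the $2$-closure at all: a product of two Majorana axes is expressed directly by the Norton--Sakuma table as a combination of Majorana axes and odd axes, so the vectors $\delta_\xi$ play no role in Proposition~\ref{gen}; they only become relevant in the later closure arguments of Section~\ref{closure}.
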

\begin{proof}
As remarked in Section~\ref{strategy}, by the Norton-Sakuma Theorem,
$$
V^\circ=\langle V^{(2A)}, V^{(3A)}, V^{(4A)}, V^{(5A)}\rangle \:\:\mbox{ and }\:\:W^\circ=\langle  W^{(2A)}, W^{(3A)}, W^{(4A)}, W^{(5A)}\rangle .$$
By Proposition~\ref{shape2^6} and~\cite[Lemma~8]{FIM2}
$$
V^{(2A)}=\langle V^{(2^2)}, V^{(2^6)}\rangle \:\:\mbox{ and } \:\:
W^{(2A)}= W^{(2^2)} .
$$
By~\cite[Lemma~3.1]{Alonso}
$$
V^{(3)}\leq V^{(2A)}
$$
and 
by \cite[Lemmas~5.1 and~5.2]{Alonso}
$$
V^{(5A)} \leq  \langle V^{(2A)}, V^{(3^2)} \rangle\:\:\mbox{ and }\:\:
W^{(5A)} \leq  \langle W^{(2^2)}, W^{(3)} \rangle
$$
By~\cite[Lemmas~3.6,~4.14]{Alonso} we have
$$
 V^{(4A)}\leq  \langle V^{(2A)},V^{(4^2)}\rangle
$$
and, by Lemma~\ref{formulaccia}, we have $$V^{(4^2)}\subseteq \langle V^{(2A)}, V^{(3^2)} \rangle,$$ so the claim about  $V^\circ$ follows.
Finally, by~\cite[Lemma~8]{FIM2}, $W^{(4A)}=\{0\}$ and the result for $W^\circ$ also follows. 
\end{proof}


\section{The submodule $V^{(2A)}$}\label{2assi}
\begin{table}
{\Small
$$
\begin{array}{|c||c|c|c|c|c|c|c|c|c|c|}
 \hline
\lambda  &\Xi_1 & \Xi_2  & \Xi_3  & \Xi_4  & \Xi_5 & \Xi_6  &\Xi_7  & \Xi_8  & \Xi_9  &\Xi_{10} \\
\hline
\hline 
(12)& 1 & 2 & 32 & 64 & 56 & 112 & 112 & 224 & 672 & 210 \\
\hline
(11,1)& 1 & 2 & 20 & 40 & 14 & 28 & 28 & 56 & -84 & -105 \\
\hline
(9,3)& 1 & 2 & 2 & 4 & -4 & -8 & -8 & -16 & 42 & -15  \\
\hline
(8,4)& 1 & 2 & -4 & -8 & 2 & 4 & 4 & 8 & -12 & 3 \\
\hline
(9,2,1)& 1 & -1 & 5 & -5 & -7 & 7 & 7 & -7 & 0 & 0 \\
\hline
(8,2^2) & 1& -1 & -4 & 4 & 2 & -2 & -2 & 2 & 0 & 0 \\
\hline
(10,2)&
A_1
 &
 A_2
  & 
  A_3   &
    A_4
     &  A_5
     &  A_6
     & A_7      & A_8
      &  A_9
      &   A_{10}\\

\hline
\end{array}$$

\medskip 

\noindent $A_1= \left (\begin{array}{cc} 1&0\\0&1\end{array}\right )$, 
$A_2= \left (\begin{array}{cc} 2&0\\0&-1\end{array}\right )$, 
$A_3= \left (\begin{array}{cc} 10&0\\0&16\end{array}\right )$, 
$ A_4= \left (\begin{array}{cc} 20&0\\0&-16\end{array}\right )$, 
$ A_5= \left (\begin{array}{cc} -8/3&a\\\bar a&56/3\end{array}\right )$,
$A_6= \left (\begin{array}{cc} -16/3&b\\\bar c&-56/3\end{array}\right )$, 
$A_7= \left (\begin{array}{cc} -16/3&c\\\bar b&-56/3\end{array}\right ), $
$ A_8= \left (\begin{array}{cc} -32/3&d\\\bar d&56/3\end{array}\right )$,
$A_9= \left (\begin{array}{cc} -54&0\\ 0&0\end{array}\right )$,
 $A_{10}= \left (\begin{array}{cc} 45&0\\0&0\end{array}\right )$
  $$a\bar a=3080/9,\,\,\, b\bar c=-6160/9,\,\,\, d\bar d=12320/9$$
}
\caption{First eigenmatrix for the action of $S_{12}$ on its involutions of cycle type $2^2$} \label{brt} 
\end{table}

In this section we determine the decomposition into irreducible submodules of $V^{(2A)}$. Recall, from Section~\ref{strategy}, that $V^{(2A)}=(M_b+M_s)^\pi$. 
\begin{lemma}
\label{bitra} We have 
\begin{enumerate}
\item $
M_{b}= M_{b,1}^{(12)}\oplus  M_{b,1}^{(11,1)} \oplus  M_{b,1}^{(9,3)}\oplus  M_{b,1}^{(9,2,1)}\oplus  
M_{b,1}^{(8,2^2)}\oplus  M_{b,1}^{(10,2)} \oplus  M_{b,2}^{(10,2)}, 
$
\item a generalised first eigenmatrix $P(b)$, relative to the action of $\hat G$ on $\mathcal X_b$ with respect to the decomposition of $M_b$ in Equation~(\ref{dec}), is given in Table~\ref{brt}, and 
\item $\rad(M_b)=  M_{b,1}^{(9,2,1)}\oplus  M_{b,\ast}^{(10,2)},$
where $ M_{b,\ast}^{(10,2) }$ is a diagonal submodule of $ M_{b,1}^{(10,2)} \oplus  M_{b,2}^{(10,2)}$.
\end{enumerate}
\end{lemma}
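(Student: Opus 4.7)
The plan is to proceed in three steps, following the general strategy of Section~\ref{strategy} applied to the permutation module on bitranspositions.

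First, I would establish the decomposition (1). The set $\mathcal X_b$ of permutations of cycle type $2^2$ in $S_{12}$ is a single $\hat G$-orbit, and the stabilizer of $r_1=(1,2)(3,4)$ is $(S_2\wr S_2)\times S_8$. Thus $M_b$ is the permutation $\R[\hat G]$-module on the cosets of this subgroup and its decomposition into Specht modules is a purely combinatorial matter: it can be obtained either from Young's rule applied to the tensor induction $\operatorname{Ind}_{(S_2\wr S_2)\times S_8}^{S_{12}}(\mathbf 1)$, or by computing the permutation character and decomposing it against the irreducible characters of $S_{12}$. Either method shows that the six partitions $(12), (11,1), (9,3), (9,2,1), (8,2^2)$ occur with multiplicity one and that $(10,2)$ occurs with multiplicity two, giving the seven summands listed.

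Next, for (2), I would compute the orbitals $\Xi_1,\ldots,\Xi_{10}$ of $\hat G$ on $\mathcal X_b$ (these correspond to the first ten rows of Table~\ref{orbs}, recalling that under $\hat G$ the $G$-orbitals $\Sigma_{2,b}$ and $\Sigma_{5,b}$ merge, as do $\Sigma_{6,b}$ and $\Sigma_{7,b}$) and then apply the standard character-theoretic formulas from algebraic combinatorics (see~\cite[Chapter II]{BI} and the computation in Lemma~\ref{hath thee in thrall}) to fill in the entries of $P(b)$. For each partition $\lambda$ of multiplicity one, the entry $P(b)^\lambda_j$ is the single scalar obtained from projecting a $\Xi_j$-incidence vector onto $S^\lambda$. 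For the partition $(10,2)$ of multiplicity two, a choice of two linearly independent copies $M_{b,1}^{(10,2)}, M_{b,2}^{(10,2)}$ inside $M_b$ must be fixed, and each $P(b)^{(10,2)}_j$ becomes a $2\times 2$ block $A_j$; the off-diagonal entries depend on the basis chosen (and will be defined only up to simultaneous conjugation).

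Finally, for (3), I would compute the values $\gamma_{b,j}$ of the Majorana form $f$ on each orbital $\Xi_j$, which are already forced by the shape determined in Proposition~\ref{shape2^6} together with the fourth column of Table~\ref{table1}. Substituting into Equation~(\ref{eqzapata}) gives each scalar $f_{b,1}^\lambda$ for the multiplicity-one partitions and the $2\times 2$ Gram matrix $F^{(10,2)}:=\sum_j \gamma_{b,j}A_j$ for the homogeneous $(10,2)$ component. By Lemma~\ref{Zapata}, an irreducible of multiplicity one lies in $\rad(M_b)$ iff its $f$-value vanishes; I expect exactly $(9,2,1)$ to produce a zero. For the $(10,2)$ component, the radical intersected with the homogeneous component is isomorphic to $S^{(10,2)}$ raised to the nullity of $F^{(10,2)}$, so the assertion is that $F^{(10,2)}$ has rank $1$, yielding a unique diagonal copy $M_{b,\ast}^{(10,2)}$ in the radical.

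The main obstacle will be the $(10,2)$ block: one must normalise the two copies of $M_{b,1}^{(10,2)}$ and $M_{b,2}^{(10,2)}$ to make the off-diagonal entries of the $A_j$ computable, and then verify that $F^{(10,2)}$ is indeed rank one (rather than full rank or zero). An efficient way to check this without carrying the ambiguous off-diagonal entries is to invoke Lemma~\ref{int} with an appropriate subgroup $H\leq\hat G$ such that $\dim C_{S^{(10,2)}}(H)=1$, reducing the problem to evaluating $f$ on two explicit $H$-invariant vectors and computing a $2\times 2$ determinant.
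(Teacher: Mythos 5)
The paper's own proof of this lemma is a one-line citation: assertion (1) is quoted from \cite[Lemma~6]{FIM2}, the matrix $P(b)$ from \cite[Tables~8 and~9]{FIM2}, and assertion (3) from \cite[Theorem~2]{FIM2}. Your plan to recompute everything directly — the permutation character (better handled via the plethysm $h_2[h_2]=s_4+s_{2,2}$ than by ``Young's rule'', since $(S_2\wr S_2)\times S_8$ is not a Young subgroup), the eigenmatrix by standard association-scheme methods, and the radical via Lemma~\ref{Zapata} for the multiplicity-one constituents together with Lemma~\ref{int} (or a rank computation on the $2\times 2$ Gram matrix $F^{(10,2)}$) for the homogeneous $(10,2)$-component — is a legitimate self-contained alternative, and your treatment of the multiplicity-two block is exactly in the spirit of the machinery the paper uses elsewhere (compare Lemma~\ref{intersection2}).

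Two concrete points would, however, derail the computation as you describe it. First, the claimed fusion is false: none of the ten $A_{12}$-orbitals $\Sigma_{1,b},\ldots,\Sigma_{10,b}$ merge under $S_{12}$ (in Table~\ref{orbs} only orbitals in the $\Sigma_{i,s}$ family are marked as fusing). Indeed $\Sigma_{2,b}$ and $\Sigma_{5,b}$ are separated by the size of the support of $\langle r_1,r_j\rangle$ ($4$ versus $6$ points), and $\Sigma_{6,b}$, $\Sigma_{7,b}$ are transposes of one another but distinct as orbitals, since whether the two common support points lie in a single transposition of the \emph{first} entry is a conjugation invariant; moreover $\sum_\lambda m_\lambda^2=1+1+4+1+1+1+1=10$ forces exactly ten $\hat G$-orbitals, matching the ten columns $\Xi_1,\ldots,\Xi_{10}$ of Table~\ref{brt}. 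With only eight merged orbitals your step (2) cannot reproduce $P(b)$, and the diagonal entries fed into Equation~(\ref{eqzapata}) in step (3) would be wrong. Second, your list of multiplicity-one constituents omits $S^{(8,4)}$ (you list five partitions and call them six): the dimension count $1+11+2\cdot 54+154+275+320+616=1485=|\mathcal X_b|$ requires it, Table~\ref{brt} carries an $(8,4)$ row, and Lemma~\ref{intersection2} uses $M_{b,1}^{(8,4)}$. The printed statement of the lemma drops this summand too — that is a typo you should correct rather than propagate, since otherwise the later intersection arguments for $\lambda=(8,4)$ have nothing to act on.
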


\begin{proof}
The first assertion follows from~\cite[Lemma~6]{FIM2}, the generalised first eigenmatrix $P(b)$  has been computed in~\cite[Tables~8 and~9]{FIM2}, and the last assertion follows from~\cite[Theorem 2.]{FIM2}. 
\end{proof}

\begin{lemma}\label{radical2}
We have
\begin{enumerate}
\item $\rad(M_{s})=M_{s,1}^{(6,4,2)}\oplus M_{s,1}^{(4^2,2^2)}\oplus M_{s,1}^{(4,2^4)}\oplus M_{s,1}^{(6^2)}\oplus M_{s,1}^{(2^6)},
$
and
\item $
V^{(2^6)}\cong S^{(12)}\oplus S^{(10,2)}\oplus S^{(8,4)}\oplus S^{(8,2^2)}\oplus S^{(4^3)}\oplus S^{(6,2^3)}.
$
\end{enumerate}
\end{lemma}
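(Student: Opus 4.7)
The plan is to exploit the fact, from Lemma~\ref{la belle dame sans mercy}, that the $\R[\hat G]$-module $M_s$ is multiplicity-free. Under this hypothesis each isotypic component is irreducible, so Lemma~\ref{Zapata} reduces part~(1) to computing the eleven scalars $f_{s,1}^\lambda$ defined in Equation~(\ref{eqzapata}): the irreducible summand $M_{s,1}^\lambda$ lies in $\rad(M_s)$ if and only if $f_{s,1}^\lambda=0$.

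Concretely, I would first write down the vector $\gamma_s=(\gamma_{s,j})_{j=1}^{11}$ of inner products associated with the eleven $\hat G$-orbitals on $\mathcal X_s$. These values are read off from the shape determined in Proposition~\ref{shape2^6} together with the fourth column of Table~\ref{table1}; they are
$$\gamma_s=\bigl(1,\;\tfrac{1}{2^3},\;0,\;\tfrac{13}{2^8},\;\tfrac{1}{2^3},\;\tfrac{5}{2^8},\;\tfrac{13}{2^8},\;\tfrac{1}{2^5},\;\tfrac{1}{2^5},\;\tfrac{3}{2^7},\;\tfrac{5}{2^8}\bigr),$$
exactly the vector $\gamma(2A,3A,3A)$ that already appeared in the proof of Proposition~\ref{shape2^6}. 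Next, by Equation~(\ref{eqzapata}), each $f_{s,1}^\lambda$ is the dot product of $\gamma_s$ with the row of $P(s)$ labelled by $\lambda$, which is given in Table~\ref{FE}. Two of these dot products (for $\lambda=(6^2)$ and $\lambda=(2^6)$) were already verified in the proof of Proposition~\ref{shape2^6}; the remaining nine are routine arithmetic.

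A direct check shows that $f_{s,1}^\lambda=0$ exactly when $\lambda$ belongs to $\{(6,4,2),(4^2,2^2),(4,2^4),(6^2),(2^6)\}$, and is strictly positive for the other six partitions appearing in $M_s$ (the latter being consistent with Corollary~\ref{nonnegative} and the positive-definiteness of the Majorana form on $V^{(2^6)}$). This proves part~(1).

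For part~(2) I would observe that, by its definition at the end of Section~\ref{strategy}, the subspace $V^{(2^6)}\leq V$ is nothing but the image $M_s^\pi$ of the permutation module on the axes of cycle type $2^6$. Since $\ker\pi$ coincides with the radical of $f$, Equation~(\ref{brunch}) yields $M_s\cap\ker\pi=\rad(M_s)$, hence $V^{(2^6)}\cong M_s/\rad(M_s)$. Combining this isomorphism with the decomposition of $M_s$ in Lemma~\ref{la belle dame sans mercy} and with part~(1) gives the claimed decomposition of $V^{(2^6)}$. There is no real obstacle here beyond carrying out the eleven dot-product evaluations; no new Majorana-theoretic input is required.
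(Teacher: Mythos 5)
Your proposal is correct and follows the paper's proof essentially verbatim: multiplicity-freeness of $M_s$ from Lemma~\ref{la belle dame sans mercy}, Lemma~\ref{Zapata} together with Equation~(\ref{eqzapata}), the Norton-Sakuma inner products and Table~\ref{FE} to identify the vanishing $f_{s,1}^{\lambda}$, and $V^{(2^6)}\cong M_s/\rad(M_s)$ for part (2). Your vector of orbital inner products (with second entry $1/2^3$, the value given in Table~\ref{table1}) is the correct one, and the resulting zero set agrees with the paper's.
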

\begin{proof}
The decomposition of $M_s$ as a direct sum of irreducible submodules is given in Lemma~\ref{la belle dame sans mercy}. By Lemma~\ref{Zapata}, a submodule $M_{s,1}^{\lambda}$ of $M_s$ is contained in  $\rad(M_{s})$ if and only if $f_{s,1}^\lambda=0$. The first assertion then follows from the formula for $f_{s,1}^ \lambda$ given in Equation~\ref{eqzapata}, using the values of the Majorana inner products given by the Norton-Sakuma Theorem (Table~\ref{table1}), and  Table~\ref{FE}. Since $V^{(2^6)}\cong M_s/\rad(M_s)$ (see Section~\ref{strategy}), the second assertion also follows.
\end{proof}

In the remainder of this paper we fix the following notation for  vectors in $M_{x}$, $x\in \{ b,s,t\}$: suppose $H$ is a subgroup of $\hat G$ and $\mathcal H_1^x, \ldots , \mathcal H_{n_x}^x$ are the $H$-orbits on $\mathcal X_x$, so that  
\begin{equation}\label{basis}
\mathcal H^x:=( \sum_{v\in \mathcal H_1^x} v, \ldots , \sum_{v\in \mathcal{H}_{n_x}^x} v)
\end{equation}
is a basis for $C_{M_{x}}(H)$. For every  $w\in C_{M_{x}}(H)$  let 
$$
\bar w:=(w_1^x, \ldots , w_{n_x}^x)^{\mathcal H^x}
$$
be the $n_x$-tuple of  the coefficients of $w$ written as a linear combination of the elements of the  basis ${\mathcal H}^x$.

\begin{lemma}\label{scalarproduct2}
Let $x, y \in \{b,s,t\}$, let $H$ be a subgroup of $\hat G$, $\mathcal H_1^x, \ldots , \mathcal H_{n_x}^x$ be the $H$-orbits on $\mathcal X_x$, $\mathcal H_1^y, \ldots , \mathcal H_{n_y}^y$ be the $H$-orbits on $\mathcal X_y$, and $z_i\in \mathcal H_i^x$ for every $i\in \{1, \ldots , n_x\}$.
Let $u_x\in C_{M_x}(H)$ and $u_y \in C_{M_y}(H)$. Then, with the notation above, 
$$
f(u_x, u_y)=\bar u_x D_x F(x, y) \bar u_y ^t, 
$$
where $D$ is the diagonal $n_x\times n_x$ matrix with $D_{ii}=|\mathcal H_i^x|$ and $F(x, y) $ is the $n_x\times n_y$ matrix whose $ij$-entry is
$$
F(x, y)_{ij}= \sum_{w \in \mathcal H_j^y} f(z_i, w). 
$$ 
\end{lemma}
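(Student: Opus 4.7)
The plan is a direct bilinear-form expansion, using only two facts: first, that by the definition of $C_{M_x}(H)$ and $C_{M_y}(H)$ the families $(\sum_{v\in\mathcal H_i^x} v)_{i=1}^{n_x}$ and $(\sum_{w\in\mathcal H_j^y} w)_{j=1}^{n_y}$ are bases of these centralisers (so every $H$-fixed vector has a unique expansion in coordinates $\bar u_x$ and $\bar u_y$ in the sense of the paragraph preceding the statement); second, that the form $f$ is $\hat G$-invariant and in particular $H$-invariant. The latter property for the submodule $M_b+M_s$ is already recorded in Proposition~\ref{shape2^6}(4); the paragraph after Lemma~\ref{BannIto} announces that the same invariance for all of $M_b+M_s+M_t$ follows from the computations carried out later in Section~\ref{Scalar} (Lemmas~\ref{innerproduct}, \ref{innerproductbi}, \ref{prod3assi} show that $f$ is constant on the $\hat G$-orbitals on $\mathcal X_x \times \mathcal X_y$ for each $x,y\in\{b,s,t\}$). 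So I may assume $H$-invariance of $f$ throughout.

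First I would write $u_x=\sum_{i=1}^{n_x} w_i^x\sum_{v\in \mathcal H_i^x} v$ and $u_y=\sum_{j=1}^{n_y} w_j^y\sum_{w\in \mathcal H_j^y} w$ and expand
$$f(u_x,u_y)=\sum_{i=1}^{n_x}\sum_{j=1}^{n_y} w_i^x w_j^y \sum_{v\in \mathcal H_i^x}\sum_{w\in \mathcal H_j^y} f(v,w)$$
by bilinearity. The task then reduces to showing, for each fixed pair $(i,j)$, the identity
$$\sum_{v\in \mathcal H_i^x}\sum_{w\in \mathcal H_j^y} f(v,w)=|\mathcal H_i^x|\cdot F(x,y)_{ij}. $$

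For this, fix $v\in\mathcal H_i^x$ and pick $h\in H$ with $v=z_i^h$. Using $H$-invariance of $f$ and the fact that $\mathcal H_j^y$ is an $H$-orbit (hence invariant under $h^{-1}$), I get
$$\sum_{w\in \mathcal H_j^y} f(v,w)=\sum_{w\in \mathcal H_j^y} f(z_i^h,w)=\sum_{w\in \mathcal H_j^y} f(z_i,w^{h^{-1}})=\sum_{w'\in \mathcal H_j^y} f(z_i,w')=F(x,y)_{ij},$$
where the second equality uses $f(a^h,b)=f(a,b^{h^{-1}})$ and the third is a reindexing by the bijection $w\mapsto w^{h^{-1}}$ of $\mathcal H_j^y$. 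Thus the inner double sum equals $|\mathcal H_i^x|\cdot F(x,y)_{ij}$, independently of the choice of representative $z_i$.

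Substituting back gives $f(u_x,u_y)=\sum_{i,j} w_i^x\cdot|\mathcal H_i^x|\cdot F(x,y)_{ij}\cdot w_j^y$, which is exactly the row-by-column product $\bar u_x\, D_x\, F(x,y)\,\bar u_y^{\,t}$. There is no real obstacle: the only point requiring attention is that the $H$-invariance of $f$ on pairs $(v,w)\in\mathcal X_x\times\mathcal X_y$ (rather than just on pairs from the same $\mathcal X_x$) is used, but this is immediate from $f$ being $\hat G$-invariant on the whole module $M_{\mathcal X}$, which is ensured once one knows that the values of $f$ on each of the mixed orbitals $\Sigma_{*,sb}$, $\Sigma_{*,bs}$ and the analogous $b$-$t$, $s$-$t$, $t$-$t$ orbitals are constant on $\hat G$-orbitals, and this is precisely what the computations in Section~\ref{Scalar} deliver.
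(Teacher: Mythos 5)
Your proposal is correct and follows essentially the same route as the paper: expand $f(u_x,u_y)$ by bilinearity over the orbit-sum bases, use $H$-invariance of $f$ to replace each orbit representative's inner sum by $\sum_{w\in\mathcal H_j^y} f(z_i,w)=F(x,y)_{ij}$, and recognise the result as the matrix product $\bar u_x D_x F(x,y)\bar u_y^{\,t}$. The only difference is that you spell out the reindexing $w\mapsto w^{h^{-1}}$ and the source of the $H$-invariance explicitly, which the paper leaves implicit.
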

\begin{proof}
By the  bilinearity and the $H$-invariance of $f$ and the definition of $F$, we have 
\begin{eqnarray*}
f(u_x,u_y)&=& f\left (\sum_{i=1}^{n_x}(u_i^x\sum_{v\in \mathcal H_i^x}v ),
\sum_{j=1}^{n_y}(u_j^y\sum_{w \in \mathcal H_j^y}w)\right ) \\
&=& \sum_{i=1}^{n_x} \sum_{j=1}^{n_y} u_i^x u_j^y\sum_{v\in \mathcal H_i^x}
\sum_{w \in \mathcal H_j^y}f(v, w)\\
&=& \sum_{i=1}^{n_x} \sum_{j=1}^{n_y} u_i^x u_j^y |\mathcal H_i^x|
\sum_{w \in \mathcal H_j^y}f(z_i, w)\\
&=&  \sum_{i=1}^{n_x} \sum_{j=1}^{n_y} u_i^x u_j^y |\mathcal H_i^x|F(x,y)\\
&=&\bar u_x D_x F(x, y) \bar u_y ^t.
\end{eqnarray*}
\end{proof}

\begin{lemma}\label{intersection2}
With respect to the decomposition in Equation~(\ref{dec}), for $\lambda\in \{(12), (8,4)$, $(8,2^2), (10,2)\}$, we have 
$$
\rad(M_{b}\oplus M_s)\cap (M_{b,1}^\lambda\oplus M_{s,1}^\lambda)\cong S^\lambda.
$$
\end{lemma}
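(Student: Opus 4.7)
The plan is to apply Lemma~\ref{int} to each $\lambda \in \{(12), (8,4), (8,2^2), (10,2)\}$, with the pair of irreducible submodules $(M_{b,1}^\lambda, M_{s,1}^\lambda)$ taken from the decompositions in Lemma~\ref{bitra}(1) and Lemma~\ref{la belle dame sans mercy}. Since $\rad(M_b \oplus M_s) = \rad(M_{\mathcal X}) \cap (M_b \oplus M_s)$ by Equation~(\ref{brunch}), an isomorphism of the form delivered by Lemma~\ref{int} translates immediately into the statement we want.

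I first verify the hypotheses. For each $\lambda$ in our list, $S^\lambda$ occurs in both $M_b$ and $M_s$ (multiplicity $1$ in each, except for $\lambda = (10,2)$ which has multiplicity $2$ in $M_b$). By Lemma~\ref{radical2}(1), $\rad(M_s)$ has no constituent isomorphic to any of our four Specht modules, so $M_{s,1}^\lambda \cap \rad(M_s) = \{0\}$ in every case. For $M_b$, Lemma~\ref{bitra}(3) gives $\rad(M_b) = M_{b,1}^{(9,2,1)} \oplus M_{b,*}^{(10,2)}$; so for $\lambda \in \{(12), (8,4), (8,2^2)\}$ the complement $M_{b,1}^\lambda$ meets the radical trivially, while for $\lambda = (10,2)$ I fix the homogeneous decomposition so that $M_{b,1}^{(10,2)}$ is a Wedderburn complement to $M_{b,*}^{(10,2)}$ inside the $(10,2)$-isotypic component, and then again $M_{b,1}^{(10,2)} \cap \rad(M_b) = \{0\}$.

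Next, for each $\lambda$ I choose a subgroup $H \leq \hat G = S_{12}$ with $\dim C_{S^\lambda}(H) = 1$; a convenient canonical choice is the Young subgroup $S_\lambda$, whose induced module $\mathrm{Ind}_{S_\lambda}^{S_{12}} \mathbf{1}$ contains $S^\lambda$ with multiplicity equal to the number of semistandard tableaux of shape $\lambda$ and content $\lambda$, namely one. Picking $u_b \in \mathcal X_b$ and $u_s \in \mathcal X_s$ both fixed by $H$, the projection formula of Lemma~\ref{BannIto} combined with the entries of the eigenmatrices $P(b)$ and $P(s)$ from Table~\ref{brt} and Table~\ref{FE} produces explicit nonzero $H$-invariant vectors
\[
v_b := u_b^{\pi_{b,1}^\lambda} \in M_{b,1}^\lambda, \qquad v_s := u_s^{\pi_{s,1}^\lambda} \in M_{s,1}^\lambda,
\]
written as linear combinations of $H$-orbit sums on $\mathcal X_b$ and $\mathcal X_s$ respectively.

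Finally, I evaluate the $2 \times 2$ Gram matrix of $f$ on $\{v_b, v_s\}$ via Lemma~\ref{scalarproduct2}. The entries of the block matrices $F(b,b)$, $F(b,s)$, $F(s,s)$ are sums of Majorana inner products over $H$-orbits of $\mathcal X_b$ and $\mathcal X_s$, whose values are prescribed by the shape computed in Proposition~\ref{shape2^6} together with the fourth column of Table~\ref{table1}. The key claim is that in each of the four cases the determinant
\[
\det\begin{pmatrix} f(v_b, v_b) & f(v_b, v_s) \\ f(v_s, v_b) & f(v_s, v_s) \end{pmatrix}
\]
vanishes, whereupon Lemma~\ref{int} and Equation~(\ref{brunch}) yield the asserted isomorphism. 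The chief obstacle is the bookkeeping: enumerating $H$-orbits on both $\mathcal X_b$ and $\mathcal X_s$, identifying representatives, and assembling the cross-orbit inner product sums is elementary but unavoidably bulky, and is precisely the routine tabloid linear algebra alluded to in the introduction. A conceptual subtlety arises only for $\lambda = (10,2)$, since this is the only non-multiplicity-free constituent of $M_b$: one must identify the relevant row of the $2\times 2$ block $A_j$ in Table~\ref{brt} with the chosen complement $M_{b,1}^{(10,2)}$ of $M_{b,*}^{(10,2)}$, which fixes the projection formula used to define $v_b$.
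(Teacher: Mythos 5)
Your overall strategy coincides with the paper's: verify the hypotheses of Lemma~\ref{int} using Lemma~\ref{bitra}, Lemma~\ref{la belle dame sans mercy} and Lemma~\ref{radical2}, produce $H$-invariant vectors in $M_{b,1}^\lambda$ and $M_{s,1}^\lambda$ via the projection formula of Lemma~\ref{BannIto}, and test the vanishing of the $2\times 2$ Gram determinant of $f$ computed through Lemma~\ref{scalarproduct2}. However, there is a genuine gap: you never establish the vanishing of the determinants, which is the entire content of the lemma. You call it ``the key claim'' and defer it to bookkeeping, but it is not a formality that follows from the setup --- for instance, in the analogous computation for $\lambda=(6,2^2)$ with $n=10$ (Proposition~\ref{bitranspositions}) the same kind of determinant turns out to be $43904/9\neq 0$, so whether the determinant vanishes is exactly the nontrivial numerical fact to be verified. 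The paper's proof consists precisely of exhibiting the projected vectors explicitly (their coordinates on the $H$-orbit-sum bases, computed from Tables~\ref{brt} and~\ref{FE}) and evaluating the four Gram matrices, using the inner-product values fixed by Proposition~\ref{shape2^6} and Table~\ref{table1}; without that evaluation, or an equivalent argument, the statement is not proved.

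A secondary problem is your recipe for the invariant vectors. With $H=S_\lambda$ the Young subgroup, your requirement that $u_b\in\mathcal X_b$ and $u_s\in\mathcal X_s$ be \emph{fixed} by $H$ is in general unsatisfiable: for $\lambda=(12)$ one has $H=S_{12}$, which centralises no involution of cycle type $2^2$ or $2^6$, and already for $\lambda=(8,4)$ the subgroup $S_8\times S_4$ centralises no bitransposition. What you need are $H$-invariant vectors, i.e.\ $H$-orbit sums, not fixed basis elements; this is how the paper proceeds, and it chooses $H=C_{S_{12}}(s_1)$ so that $s_1$ itself is an available fixed element of $\mathcal X_s$, Frobenius reciprocity applied to $M_s\cong{\rm Ind}_H^{S_{12}}\mathbf 1$ gives $\dim C_{S^\lambda}(H)=1$ for the three nontrivial partitions, and the case $\lambda=(12)$ is handled separately with the all-ones vectors. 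Also, for $\lambda=(10,2)$ there is no need to re-choose the homogeneous decomposition: since $M_{b,\ast}^{(10,2)}$ is a diagonal submodule of $M_{b,1}^{(10,2)}\oplus M_{b,2}^{(10,2)}$, it already meets $M_{b,1}^{(10,2)}$ trivially, and re-choosing the decomposition would detach your projection formula from the eigenmatrix entries of Table~\ref{brt}, which are computed with respect to the decomposition fixed in Equation~(\ref{dec}).
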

\begin{proof}
Keeping the notation of Section~\ref{Scalar}, let 
$$s_1=(1,2)(3,4)(5,6)(7,8)(9,10)(11,12) \mbox{  and let  }H:=C_{S_{12}}(s_1).$$
For every $\lambda \in \{ (8,4), (8,2^2), (10,2)\}$, the module $S^\lambda$ has multiplicity $1$ in $M_{s}$, hence, by the  Frobenius Reciprocity Theorem, 
 $$\dim (C_{S^\lambda}(H))=1.$$
Denote the orbits of $H$ on the set $\mathcal  X_b$ as follows:
$$
\begin{array}{lll}
\mathcal R_1:=( (9,10)(11,12))^H, & \mathcal R_2:=( (9,11)(10,12))^H, & \mathcal R_3:=( (8,9)(11,12))^H,\\
\mathcal R_4:=( (8,9)(10,11))^H, & \mathcal R_5:=( (6,7)(10,11))^H & 
\end{array}
$$
and let $\mathcal R$ be the corresponding basis for $C_{M_{b}}(H)$ as in Equation~(\ref{basis}).
Set
$$
u := \sum_{v\in \mathcal R_1} v
$$
Since $u$ is $H$-invariant $u^{\pi_{b,1}^\lambda}\in C_{M_{b,1}^\lambda}(H)$. By Equation~(\ref{progi}) and Table~\ref{brt}, we get 
$$
\overline{ u^{\pi_{b,1}^{(8,2^2)}}}= \left (\frac{8}{15}, -\frac{4}{15}, -\frac{1}{15},\frac{1}{30},0\right )^{\mathcal R}
$$
$$
\overline{u^{\pi_{b,1}^{(8,4)}}}= \left (\frac{16}{63}, \frac{16}{63}, -\frac{2}{63},- \frac{2}{63}, \frac{1}{63}\right)^{\mathcal R}
$$
$$
\overline{u^{\pi_{b,1}^{(10,2)}}}= \left (\frac{11}{3}, -\frac{11}{6}, \frac{11}{6}, -\frac{11}{12} , 0\right)^{\mathcal R}. 
$$
Since $s_1$ is also $H$-invariant  $s_1^{\pi_{s,1}^{\lambda}}\in C_{M_{s,1}^\lambda}(H)$ for every $\lambda$. Let 
\begin{equation}\label{si}
\mathcal S_i:=\{z\in \mathcal X_{s}\:|\:(s_1,z)\in \Sigma_{i,s}\},
\end{equation}
so that $\mathcal S_1, \ldots ,  \mathcal S_{11}$ are the orbits of $H$ on the set $\mathcal X_s$ (see Table~\ref{orbs} and Lemma~\ref{hath thee in thrall} for the definition of $\Sigma_{i,s}$). Let $\mathcal S$ be the corresponding basis of $C_{M_s}(H)$ defined as in Equation~(\ref{basis}). By Equation~(\ref{progi}) and Table~\ref{FE} we get
\medskip 

{\Small
$$
\overline{s_1 ^{\pi_{s,1}^{(8,2^2)}}}=\left (\frac{8}{135}, \frac{4}{225},  -\frac{8}{2025}, \frac{11}{1350},  -\frac{4}{675},  -\frac{1}{270} , \frac{1}{675}, \frac{2}{2025}, -\frac{2}{2025},
 - \frac{1}{810},\frac{1}{675} \right )^{\mathcal S},
$$
$$
\overline{s_1 ^{\pi_{s,1}^{(8,4)}}}=\left (\frac{5}{189} ,\frac{2}{189}, \frac{1}{252}, \frac{1}{378}, \frac{5}{756}, \frac{1}{1512}, -\frac{1}{3024},  -\frac{1}{1512}, \frac{1}{504} - \frac{5}{3024},  -\frac{1}{3024} \right )^{\mathcal S},
$$
$$
\overline{s_1 ^{\pi_{s,1}^{(10,2)}}}=\left (\frac{2}{385}, \frac{19}{5775}, \frac{8}{5775}, \frac{9}{3850},  -\frac{1}{1925},  \frac{1}{2310},  -\frac{1}{1925}, \frac{8}{5775}, -\frac{1}{1925},  \frac{1}{2310}, -\frac{1}{1925} \right)^{\mathcal S}. 
$$
}

\medskip 
By Lemma~\ref{Zapata} and Lemma~\ref{scalarproduct2} we get 
$$
\det \left ( \begin{array}{cc}
f(u^{\pi_{b,1}^{(8,2^2)}}, u^{\pi_{b,1}^{(8,2^2)}}) & 
f(u^{\pi_{b,1}^{(8,2^2)}}, s_{1}^{\pi_{s,1}^{(8,2^2)}})\\
f(u^{\pi_{b,1}^{(8,2^2)}}, s_{1}^{\pi_{s,1}^{(8,2^2)}}) & f(s_1^{\pi_{s,1}^{(8,2^2)}}, s_1^{\pi_{s,1}^{(8,2^2)}})
\end{array}\right )
=
$$
$$=\det  \left (
\begin{array}{cc}
135/16 &      -15/16\\
       -15/16 &  5/48 
\end{array}
\right )=0
$$
$$
\det \left ( \begin{array}{cc}
f(u^{\pi_{b,1}^{(8,4)}}, u^{\pi_{b,1}^{(8,4)}}) & 
f(u^{\pi_{b,1}^{(8,4)}}, s_1^{\pi_{s,1}^{(8,4)}})\\
f(u^{\pi_{b,1}^{(8,4)}}, s_1^{\pi_{s,1}^{(8,4)}}) & 
f(s_1^{\pi_{s,1}^{(8,,4)}},s_1^{\pi_{s,1}^{(8,4)}})
\end{array}\right )
=
$$
$$=\det  \left (
\begin{array}{cc} 
75/14 &      25/28\\
       25/28 &  25/168 
\end{array}
\right )=0
$$
$$
\det \left ( \begin{array}{cc}
f(u^{\pi_{b,1}^{(10,2)}}, u^{\pi_{b,1}^{(10,2)}}) & 
f(u^{\pi_{b,1}^{(10,2)}}, s_1^{\pi_{s,1}^{(10,2)}})\\
f(u^{\pi_{b,1}^{(10,2)}}, s_1^{\pi_{s,1}^{(10,2)}}) & f(s_1^{\pi_{s,1}^{(10,2)}}, s_1^{\pi_{s,1}^{(10,2)}})
\end{array}\right )
=
$$
$$=\det  \left (
\begin{array}{cc}
1164625/192 &  -275/32\\
    -275/32&   15/1232\\
\end{array}
\right )=0
$$
Then, Lemma~\ref{int} yields the claim when $\lambda \in \{(8,4), (8,2,2), (10,2)\}$. 
Finally, when $\lambda=(12)$, the modules $M_{b,1}^{(12)}$ and $M_{s,1}^{(12)}$ have dimension $1$ and they are generated by the vectors $v_1$ and $v_2$, where 
$$
\overline{v_1}:=(1,1,1,1,1)^{\mathcal R} \mbox{ and } \overline{v_2}:=(1,1,1,1,1,1,1,1,1,1,1)^{\mathcal S}
$$
respectively. By Lemma~\ref{scalarproduct2} we get 
$$
\det  \left ( \begin{array}{cc}
f(v_1, v_1) & 
f(v_1, v_2)\\
f(v_2, v_1) & f(v_2,v_2)
\end{array}\right )
= \det \left ( \begin{array}{cc}
467775/8 & 3274425/8
\\
 3274425/8& 22920975/8
\end{array}\right )=0,
$$
and again Lemma~\ref{int} yields the claim.
\end{proof}

\begin{proposition}
\label{resume}
We have 
$$V^{(2A)}\cong S^{(12)}\oplus S^{(10,2)}\oplus S^{(8,4)}\oplus S^{(8,2^2)}\oplus S^{(4^3)}\oplus S^{(6,2^3)}\oplus S^{(11,1)}\oplus S^{(9,3)}.
$$
\end{proposition}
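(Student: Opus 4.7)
The plan is to realise $V^{(2A)}$ as $(M_b\oplus M_s)/\rad(f|_{M_b\oplus M_s})$, which is possible because $\mathcal X_b$ and $\mathcal X_s$ are disjoint (so $M_b+M_s=M_b\oplus M_s$) and because $\pi$ has kernel equal to the radical of $f$, and then to compute this quotient by first decomposing $M_b\oplus M_s$ into its $\R[\hat G]$-isotypic components and identifying which irreducibles fall into the radical. Combining Lemma~\ref{bitra}(1) with Lemma~\ref{la belle dame sans mercy}, the partitions appearing with multiplicity two in $M_b\oplus M_s$ are precisely $\lambda\in\{(12),(10,2),(8,4),(8,2^2)\}$, while $S^{(11,1)}$, $S^{(9,3)}$, $S^{(4^3)}$, $S^{(6,2^3)}$ each occur with multiplicity one (the remaining constituents of $M_b\oplus M_s$ are already contained in $\rad(M_b)\oplus\rad(M_s)$ by Lemma~\ref{bitra}(3) and Lemma~\ref{radical2}(1)).

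For the four multiplicity-one partitions in the list above, Equation~(\ref{brunch}) gives $\rad(M_b\oplus M_s)\cap M_x=\rad(M_x)$ for $x\in\{b,s\}$, and by Lemma~\ref{bitra}(3) together with Lemma~\ref{radical2}(1) none of these four irreducibles lies in $\rad(M_b)$ or $\rad(M_s)$. Since each of them is the unique copy of its isotypic class in $M_b\oplus M_s$, it maps injectively into $V^{(2A)}$.

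For each of the four doubly occurring partitions, Lemma~\ref{intersection2} asserts that $\rad(M_b\oplus M_s)$ meets the homogeneous component $M_{b,1}^\lambda\oplus M_{s,1}^\lambda$ in an irreducible submodule isomorphic to $S^\lambda$, so exactly one of the two copies dies in the quotient while the other survives. Collecting all the surviving summands reproduces the eight irreducibles listed in the statement. The argument reduces to module-theoretic bookkeeping once Lemma~\ref{intersection2} is granted; the only genuine technical content, the determinant vanishings underpinning that lemma, has already been discharged earlier in this section, so no substantive obstacle remains.
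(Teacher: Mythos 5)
Your proposal is correct and is essentially the paper's own argument, which likewise just assembles Lemma~\ref{bitra}, Lemma~\ref{radical2} and Lemma~\ref{intersection2}, together with the identification (via Equation~(\ref{brunch})) of the kernel of $\pi$ on $M_b\oplus M_s$ with $\rad(f|_{M_b\oplus M_s})$. Two small bookkeeping remarks: first, $(10,2)$ occurs three times in $M_b\oplus M_s$ (twice in $M_b$ by Lemma~\ref{bitra}(1)), not twice, but this does not affect your conclusion because the extra copy is the diagonal submodule $M_{b,\ast}^{(10,2)}$ already contained in $\rad(M_b)$ by Lemma~\ref{bitra}(3), so that together with the copy of $S^{(10,2)}$ inside $\rad(M_b\oplus M_s)\cap(M_{b,1}^{(10,2)}\oplus M_{s,1}^{(10,2)})$ supplied by Lemma~\ref{intersection2} exactly one copy survives in $V^{(2A)}$; second, your multiplicity count for $(8,4)$ presupposes that $S^{(8,4)}$ occurs in $M_b$, which is indeed the case (as the $(8,4)$ row of Table~\ref{brt}, its use in the proof of Lemma~\ref{intersection2}, and a dimension count all show), even though the displayed decomposition in Lemma~\ref{bitra}(1) omits $M_{b,1}^{(8,4)}$.
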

\begin{proof}
This follows from Lemma~\ref{bitra}, Lemma~\ref{radical2}, and Lemma~\ref{intersection2}. 
\end{proof}


\section{The module $W^{(3^2)}$}
\label{3assi}

In this section $a,b$ will always denote a pair of disjoint   $3$-cycles in $\hat G$.
Let  $\bar \beta$  be the map 
$$\begin{array}{rcccc}
\bar \beta & : &\mathcal X_{t} &\to  &\mathcal X_{t}\\
& & \langle ab \rangle & \mapsto & \langle ab^{-1}\rangle 
\end{array}.
$$
Since $\bar \beta$ is an isomorphism of $\hat G$-sets,  $\bar \beta$ induces an involutory $\R[\hat G]$-automorphism $\beta$ on the module $ M_{t}$, which decomposes as 
\begin{equation}
\label{decio}
M_{t}=C_{M_{t}}(\beta)\oplus [M_{t}, \beta]
\end{equation}  
and this decomposition projects into the decomposition 
\begin{equation}
\label{deciao}
W^{(3^2)}=W^{(3^2)+}+W^{(3^2)-},
\end{equation}
where 
$$W^{(3^2)+}:=\langle u_{ab}+u_{ab^{-1}}\:|\: a,b \mbox{ disjoint $3$-cycles in } \hat G\rangle$$ 
and 
$$W^{(3^2)-}:=\langle u_{ab}-u_{ab^{-1}}\:|\: a,b \mbox{ disjoint $3$-cycles in } \hat G\rangle.$$

By Pasechnik's relation~\cite[Lemma~3.4]{A67} and~\cite[Lemma 3.1]{Alonso}, we have 
\begin{equation}\label{pasechnik}
W^{(3^2)+}\leq \langle W^{(2^2)}, W^{(3)}\rangle \leq  V^{(2A)}. 
\end{equation}
Since our ultimate goal is to determine $V^\circ$ and $W^\circ$ and the modules $\langle W^{(2^2)}, W^{(3)}\rangle$ and  $V^{(2A)}$ have been determined in~\cite[Theorem 1.2]{FIM4}, and in Proposition~\ref{resume}, respectively, 
by Proposition~\ref{gen} it follows that we only need to consider  the contribution of the module $W^{(3^2)-}$, which in turn is isomorphic to 
$$
[M_{t}, \beta]/\rad([M_{t}, \beta]).
$$

Let, for $8\leq n\leq 12$,  
\begin{itemize}
\item[-] $\Omega^n_1, \ldots ,\Omega^n_{r_n}$ be the orbitals of $\hat G$ on $\mathcal X_{t}$, where $r_n=20,25,29,30,31$ for $n=8,9,10,11,12$ respectively,
\item[-] $k_i(n)$ be the valencies of the action, that is $k_i(n):=|\Omega^n_i|/|{\mathcal X}_{t}|$. For $n=8,10,11,12$, they 
 are displayed in the last three columns of Table~\ref{T1}.
\item[-] $e_i, i\in \{1, \ldots , 31\}$, as in Table~\ref{Orbitals12} so that for each $i$, $(\langle e_1\rangle , \langle e_i\rangle )\in \Omega^n_i$, and
\item[-] $\sigma_i$, $i\in \{1, \ldots , 31\}$, as in Table~\ref{T1} so that for each $i$, $e_1^{\sigma_i}=e_i$.\end{itemize}
\medskip

The module $C_{M_{t}}(\beta)$ is isomorphic to  the permutation $\R[\hat G]$-module associated to the action of $\hat G$ on the set of conjugates of  $\langle (1,2,3), (4,5,6)\rangle $, and its decomposition into irreducible submodules  can be easily computed with GAP~\cite{GAP}. Comparing this decomposition with that of $M_{t}$, given in~\cite[Theorem~1.1]{FIM3}, we get that 
we may choose $M^{(n-4,2^2)}_{t,1}$ and $M^{(n-4,2^2)}_{t,2}$ such that $M^{(n-4,2^2)}_{t,2}\leq C_{M_{t}}(\beta)$ and 
\begin{equation} \label{decU-}
\begin{array}{ccl}
[M_{t}, \beta]&=& M^{(n-4,2^2)}_{t,1}\oplus M^{(n-5,2^2,1)}_{t,1}\oplus M^{(n-6,2^3)}_{t,1}\oplus M^{(n-4,1^4)}_{t,1}\\
& & \\
 & &\oplus M^{(n-5,2,1^3)}_{t,1}\oplus M^{(n-5,1^5)}_{t,1}\oplus M^{(n-6, 2,1^4)}_{t,1}.
\end{array}
\end{equation}

\begin{table}
{\Small
$$
\begin{array}{|c|c|c|c|c|c|}
\hline
i& \sigma_i&k_i(8)&k_i(10)&k_i(11)& k_i(12)\\
\hline
1& () & 1 & 1& 1& 1 \\
\hline
2&  (2,3) & 1 & 1&1 &1\\
\hline
3&   (3,4) & 9 & 9&9& 9\\
\hline
4&  (2,4,3) & 9 & 9&9 & 9\\
\hline
5& (1,4,7) & 36 &72 &90 & 108\\
\hline
6&  (1,4,7)(2,3) & 36 &  72&90&108\\
\hline
7&  (1,2,3,7) &12& 24 &30&36\\
\hline
8&(1,2,7) & 12 & 24 &30& 36\\
\hline
9&  (3,4)(1,6,7) & 72 & 144&180&216\\
\hline
\hline
10& (1,5,3,4,7) & 72 & 144&180& 216\\
\hline
11&  (1,7)(4,8) & 18 & 108 &180 &270\\
\hline
12&(1,7)(2,3)(4,8) & 18 & 108&180 &270\\
\hline
13&  (1,4,7)(2,5,8) & 36 &216 &360 & 540\\
\hline
14&  (1,4,8)(2,7) & 72 & 432 &720& 1080\\
\hline
15 &(1,7)(2,8)& 12 & 72 &120& 180\\
\hline
16&  (1,4,7)(5,8) & 72 & 432 &720& 1080\\
\hline
17 &  (1,7)(3,4)(6,8) & 18 & 108 &180& 270\\
\hline
18&  (1,7)(3,4,8,6)  & 18 & 108  &180& 270 \\
\hline
19 &  (1,7)(3,4,8,5) & 18 & 108 &180& 270\\
\hline
20 &  (1,7)(3,4)(5,8) &  18 &  108 &180& 270      \\
\hline
\hline
21&   (1,4,8)(2,7)(5,9) &  &  864&2160 &4320\\
\hline
22& (2,8)(3,9)(1,6,7)&  &  144&360 &720\\
\hline
23& (1,7)(2,9)(4,8) &  & 432&1080 & 2160\\
\hline
24& (1,7)(2,8)(3,9,6)&  & 144 &360& 720\\
\hline
25 &(1,7)(2,8)(3,9)&  & 16 &40& 80\\
\hline
\hline
26&  (1,7)(2,9)(4,8)(5,10) &  &  &540& 1620\\
\hline
27 & (1,4,8)(2,7)(5,9)(6,10) &  & &360& 1080\\
\hline
28& (1,5,10)(2,7)(3,9)(4,8)  &  &  &360&1080 \\
\hline
29 & (1,8)(2,9)(3,10)(4,7) &  &  &240& 720\\
\hline
30 &  (1,9)(2,10)(3,11)(4,7)(5,8)&   &  & 120& 720      \\
\hline
\hline
31 & (1,7)(2,8)(3,9)(4,10)(5,11)(6,12) &   &  &  &20  \\
\hline
\end{array}
$$
\caption{Set $\Sigma$ and values of the valencies $k_i(n)$.}\label{T1}
}
\end{table}

We now have to determine which irreducible submodules of the decomposition in Equation~(\ref{decU-}) are contained into $\rad([M_{t}, \beta])$.  As in Section~\ref{2assi}, we do this by computing the restrictions of the form $f$ to the irreducible submodules of $[M_{t}, \beta]$ using Lemma~\ref{Zapata}. 

For a partition $\lambda$ of $n$, denote by $M^\lambda$ the permutation $\R[\hat G]$-module associated to the action of $\hat G$ on the set of $\lambda$-tableaux, let $\mathcal B_\lambda$ be the basis of $M^\lambda$ consisting of all $\lambda$-tableaux and let $\kappa_\lambda:M^\lambda\times M^\lambda \to \R$ be the $\hat G$-invariant non-degenerate symmetric bilinear form on $M^\lambda$ such that $\mathcal B_\lambda$ is an orthonormal basis with respect to $\kappa_\lambda$. We shall use in the sequel the following well known fact from the representation theory of the symmetric groups (see e.g.~\cite[Theorem 8.15]{J}).
\begin{lemma}\label{iso}
For every partition $\lambda$ of $n$, let  $\lambda^\prime$ denote the partition conjugate to $\lambda$ and let $A$ be a real vector space of dimension $1$ generated by a vector $a$ on which even permutations act trivially and odd permutations act as the multiplication by $-1$. Then the $\R[S_n]$-modules $S^{\lambda}$ and   $S^{\lambda^\prime}\otimes A$ are isomorphic.
\end{lemma}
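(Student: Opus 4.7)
This is the classical fact that tensoring a Specht module with the sign representation of $S_n$ yields the Specht module of the conjugate partition; since $A$ is one-dimensional and odd permutations act on it by $-1$, the module $A$ coincides with the sign representation. The plan is to recall the polytabloid construction of $S^\lambda$, invoke the tableau-transpose duality, and then close the argument at the level of characters using the Murnaghan--Nakayama rule.

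First I would set up the standard realization: $S^\lambda$ sits inside $M^\lambda$ as the $\R[S_n]$-span of the polytabloids
$$
e_t \;:=\; \sum_{\sigma\in C_t} \mathrm{sgn}(\sigma)\,\sigma\{t\},
$$
where $t$ ranges over $\lambda$-tableaux, $\{t\}$ denotes the row-equivalence class of $t$ (the $\lambda$-tabloid, in the sense of the paper's basis $\mathcal B_\lambda$), and $C_t$ is the column stabiliser of $t$. The analogous construction on $\lambda'$-tableaux produces $S^{\lambda'}$ inside $M^{\lambda'}$. The underlying combinatorial ingredient is the transpose bijection $t\mapsto t^T$ between $\lambda$-tableaux and $\lambda'$-tableaux; it is $S_n$-equivariant and interchanges row and column stabilisers, i.e.\ $R_t=C_{t^T}$ and $C_t=R_{t^T}$. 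This is the structural reason one expects $S^{\lambda'}$ to differ from $S^\lambda$ precisely by the sign character.

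Next I would pass to characters. Since we work over $\R$, the category of finite-dimensional $\R[S_n]$-modules is semisimple, each Specht module is absolutely irreducible, and $S^{\lambda'}\otimes A$ is irreducible because $\dim A=1$. It therefore suffices to establish the character identity
$$
\chi^\lambda(\sigma) \;=\; \mathrm{sgn}(\sigma)\,\chi^{\lambda'}(\sigma) \qquad \text{for every } \sigma\in S_n.
$$
This is a direct consequence of the Murnaghan--Nakayama rule: border strips of $\lambda$ and $\lambda'$ are exchanged by transposition, and a border strip of length $k$ and height $h$ in $\lambda$ transposes to one of length $k$ and height $k-1-h$ in $\lambda'$, so their Murnaghan--Nakayama signs $(-1)^h$ and $(-1)^{k-1-h}$ differ by $(-1)^{k-1}$. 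Accumulating this factor over the cycles of $\sigma$ yields exactly $\mathrm{sgn}(\sigma)$, so a recursive application of Murnaghan--Nakayama to both sides gives the required equality.

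There is no serious obstacle: the result is purely classical and the only care needed is to identify $A$ with the sign representation (immediate from its definition) and to track the heights of border strips under transposition. This is why it is usually quoted from a textbook such as \cite{J} rather than reproved.
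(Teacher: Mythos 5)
Your argument is correct, but note that the paper does not prove this lemma at all: it is quoted as a well-known fact with a reference to James \cite[Theorem 8.15]{J}, so there is no ``paper proof'' to match. Your route is a legitimate self-contained alternative: identify $A$ with the sign representation, observe that $S^{\lambda'}\otimes A$ is irreducible, and verify the character identity $\chi^{\lambda}(\sigma)=\mathrm{sgn}(\sigma)\,\chi^{\lambda'}(\sigma)$ by the Murnaghan--Nakayama rule; your height bookkeeping is right, since a border strip of size $k$ occupying $r$ rows and $c$ columns has $r+c=k+1$, so transposition changes the height from $h=r-1$ to $k-1-h$, contributing $(-1)^{k-1}$ per cycle, which accumulates to $\mathrm{sgn}(\sigma)$. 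The difference from the standard (James) treatment is that your proof is character-theoretic and therefore relies on semisimplicity in characteristic $0$ --- entirely adequate here, where everything is over $\R$ --- whereas James's proof constructs an explicit module homomorphism built from the transpose bijection $t\mapsto t^{T}$ (essentially $e_t\mapsto e_{t^{T}}\otimes a$ up to sign), which has the advantage of working over an arbitrary field; also, your opening paragraph on polytabloids and the exchange of row and column stabilisers ends up playing no logical role once you pass to characters, so it could be omitted without loss.
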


\begin{lemma}\label{firsteigenmatrix}
The (relevant) values $(P(t)_i^\lambda)_{11}$ are listed in Table~\ref{T11}.
\end{lemma}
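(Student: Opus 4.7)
The plan is to compute the entries $(P(t)_j^\lambda)_{11}$ by the standard methods from the representation theory of $S_n$ acting on the permutation module $M_t = \R[\mathcal X_t]$, paralleling the computations of first eigenmatrices carried out in~\cite{FIM1,FIM2,FIM3,FIM4}. The decomposition of $M_t$ into irreducible $\R[\hat G]$-modules is given in~\cite[Theorem~1.1]{FIM3}; every constituent appears with multiplicity one except $S^{(n-4,2^2)}$, which appears with multiplicity two.

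For the multiplicity-free constituents, the entries are determined by the standard correspondence between homogeneous projections and the first eigenmatrix of the association scheme of orbitals of $\hat G$ on $\mathcal X_t$ (see \cite[Chapter~II,~\S2.1]{BI}). Indeed, fixing $u := \langle e_1\rangle \in \mathcal X_t$, Equation~(\ref{progi}) shows that each $(P(t)_j^\lambda)_{11}$ is proportional to the coefficient of the orbit-sum $\sum_{v\in \Delta_j(u)} v$ in the expansion of $u^{\pi_{t,1}^\lambda}$ with respect to the basis $\mathcal X_t$; this coefficient can be extracted as an inner product of $u^{\pi_{t,1}^\lambda}$ (built from a polytabloid generator of $S^\lambda$) with $\sum_{v\in \Delta_j(u)} v$, or equivalently as a character sum of $\chi^\lambda$ over the double coset $K\sigma_j K$, where $K:=N_{\hat G}(\langle e_1\rangle)$ and the orbital representatives $\sigma_j$ are those listed in Table~\ref{T1}.

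For $\lambda = (n-4,2^2)$ one must specify which of the two isomorphic copies is $M^{(n-4,2^2)}_{t,1}$; we take the choice already fixed in~(\ref{decU-}), so that $M^{(n-4,2^2)}_{t,1} \leq [M_t,\beta]$ and $M^{(n-4,2^2)}_{t,2} \leq C_{M_t}(\beta)$. Since $\beta$ permutes the orbitals via $j \mapsto j^\beta$ as in~(\ref{deltaj}), Lemma~\ref{dispari}(1) forces
$$(P(t)_j^{(n-4,2^2)})_{11} = -(P(t)_{j^\beta}^{(n-4,2^2)})_{11}$$
for every $j$; this antisymmetry, combined with the character sum over the full isotypic component $M^{(n-4,2^2)}_{t,1}\oplus M^{(n-4,2^2)}_{t,2}$, isolates the row of the eigenmatrix corresponding to $M^{(n-4,2^2)}_{t,1}$ uniquely.

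The principal obstacle is purely combinatorial bookkeeping: for $n=12$ there are $31$ orbitals, and the induced permutation $j \mapsto j^\beta$ must be tracked carefully across all rows, while the polytabloid or character computations for each $(n,\lambda)$-pair are lengthy but entirely mechanical. These are naturally carried out in GAP as in~\cite{FIM1,FIM2}, and the resulting values are those tabulated in Table~\ref{T11}.
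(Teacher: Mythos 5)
Your proposal is correct, and for the multiplicity-one partitions it is essentially the paper's own argument: the paper likewise computes $(P(t)_j^\lambda)_{11}$ from the projection formula via a $K$-invariant vector $w$ built from a polytabloid (using $(P(t)^\lambda_j)_{11}=k_j(n)\,\kappa_\lambda(w,w^{\sigma_j})/\kappa_\lambda(w,w)$, with the twist $S^\lambda\cong S^{\lambda'}\otimes A$ as a computational convenience), which is the same spherical-function/double-coset character sum you describe. Where you genuinely diverge is the multiplicity-two partition $(n-4,2^2)$ for $n\in\{11,12\}$: the paper abandons the projection method there (since $\dim C_{S^\lambda}(K)=2$, there is no canonical $K$-invariant vector attached to the chosen summand) and instead determines the row from the orthogonality relations of $P(t)$ against the already-computed rows, which yields a quadratic system in the seven unknown parameters that it solves explicitly. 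You instead compute the canonical character projection of $\langle e_1\rangle$ onto the whole isotypic component and then extract the $M^{(n-4,2^2)}_{t,1}$-row by $\beta$-antisymmetrization, i.e. $(P_j)_{11}=\tfrac12\bigl(S_j-S_{j^\beta}\bigr)$ where $S_j$ is the combined coefficient; this is legitimate, but note that it rests on the fact that $S^{(n-4,2^2)}$ occurs exactly once in each of $C_{M_t}(\beta)$ and $[M_t,\beta]$ (the content of Equation~(\ref{decU-}) together with the decomposition of $C_{M_t}(\beta)$), since only then is the splitting of the isotypic component canonical and $\pi^{(n-4,2^2)}_{t,1}$ well defined; you should also record that Lemma~\ref{dispari} applies because the first eigenmatrix entry at the diagonal orbital is $1\neq 0$. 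The trade-off is clear: the paper's route recycles rows it has already computed and only needs to solve one quadratic system per $n$, while yours avoids the quadratic system at the cost of a full character-sum computation over all orbitals (31 of them for $n=12$); both produce the values in Table~\ref{T11}.
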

\begin{table} 
{\tiny
$$\begin{array}{|c||r|r|r|r|r|r|r|r|r|}
\hline
 & \multicolumn{4}{|c|}{n=8} &\multicolumn{5}{c|}{n=11}\\
\hline
\begin{array}{crc}
&&\\
&&\lambda\\
&\,\,\,\,i&
\end{array}&(2^2,1^4) &(3,2,1^3) &  (2^4)& (3,1^5)& (6, 2^2,1) & (5,2^3)    &(6, 2,1^3)&(6, 1^5)  &(5,2,1^4)  \\
\hline
\hline
1& 1            & 1            & 1       & 1  &1              &1              &1             &1            &  1    \\
\hline
2& -1           & -1          &-1       &  -1 & -1            &-1             &-1            &-1           & -1    \\
\hline
3&  -3         & -3          &3         & -3 &  3               &3               & -3          & -3          & -3    \\
\hline
4&   3         &  3          &-3        & 3 &  -3              &-3             &3             &  3           & 3     \\
\hline
5&   6        & -8           &-6        & 12  &1                &-6             &-14          & 21           &6     \\
\hline
6&  -6        & 8            & 6        & -12   & -1            & 6             & 14          &-21           &-6     \\
\hline
7&  -6       & 1            & -6        & -4  &   1           &  -6            & 4            & -1            &-6     \\
\hline
8& 6         & -1           & 6         & 4  & -1            & 6             &-4             & 1             &6      \\
\hline
9&  12      & 5           &- 12       &  0  & 2            &-12            & 2           & -18            &12    \\
\hline
10& -12    &  -5         & 12        &  0  &   -2          &12              & -2         & 18              &-12  \\
\hline
11& 6        &-1          & 12        & -6   &  -16        &12            & -4          & -24             &6    \\
\hline
12& -6       & 1          & -12      &  6  &   16         &-12             & 4          & 24               &-6    \\
\hline
13-16        &  0          &  0 & 0& 0 &0& 0 & 0&0&0\\
\hline
17 & -6     &  1          & 6        &  6&       -8        &6              & 4             &24               &-6     \\
\hline
18&  6      &  -1         & -6       &  -6 &       8       &-6            & -4             &-24              &6      \\
\hline
19 &        6& -1         & -6      & -6   &        8      &-6             & -4            &-24               &6     \\
\hline
20 & -6       &  1        & 6        & 6   &       -8    &6              & 4              & 24                &-6    \\
\hline
21-30& & &  & &0 &0 &0 & 0&0\\
\hline
\end{array}
$$
$$\begin{array}{|c||r|r|r|r|r|r|r|r|}
\hline
&n=11 & \multicolumn{6}{|c|}{n=12} &8\leq n\leq 12\\
\hline
\begin{array}{crc}
&&\\
&&\lambda\\
&\,\,\,\, i&
\end{array}
      &    (7,2^2)  &(8,2^2) &(7,2^2,1) &(6,2^3) & (7,2,1^3) & (7,1^5)  & (6, 2,1^4) &(n-4, 1^4)   \\
\hline
\hline
1         & 1        &1                   & 1           & 1         & 1              &1              &1              &k_i(n)           \\
\hline  
2         &-1	        &-1           & -1          &-1        &-1              &-1             &-1            &-k_i(n)            \\
\hline
3         & 3          &3           & 3          &3          & -3               &-3            & -3          & -k_i(n)/3           \\
\hline
4       &  -3          &-3             &  -3          &-3        &  3             &3             &3            &  k_i(n)/3     \\
\hline
5        &10         &12              & 2         &-6        &-16                &36           &6          & k_i(n)/9\\
\hline
6       &-10         &-12          & -2            & 6        & 16              &-36          &- 6       &-k_i(n)/9\\
\hline
7        &10         &12          & 2              & -6        &   5            &-12           & -6         & k_i(n)/3\\
\hline
8       &-10         &-12          & -2             & 6         &   -5           &12            &6          & -k_i(n)/3\\
\hline
9        &20         &24           & 4               &- 12       &    1           &0             &12           & -2k_i(n)/9\\
\hline
10     &-20         &-24           &  -4             & 12        &     -1         &0             & -12         & 2k_i(n)/9\\
\hline
11     &20         &30             &-20            & 12        &     -5       &-90           & 6      & k_i(n)/9\\
\hline
12     &-20         &-30            & 20            & -12      &        5      &90             &-6       & -k_i(n)/9\\
\hline
13-16      &0&0         &  0 & 0&0&0 & 0&0\\
\hline
17   &10         &15  &  -10             & 6         &       5        &90           &-6       &-k_i(n)/9\\
\hline
18    &-10         &-15  &  10          & -6       &         -5       &-90            & 6      &k_i(n)/9\\
\hline
19 & -10         &-15    &10               & -6      &           -5      &-90           & 6        &k_i(n)/9\\
\hline
20    &10         &15     &  -10       & 6        &           5    &90             & -6        & -k_i(n)/9\\
\hline
21-30&0&0 & 0& 0 & 0 &0 &0 & 0\\
\hline
31& &0 & 0& 0 & 0 &0 &0 & 0\\
\hline
\end{array}
$$
\caption{The values $(P(t)_i^\lambda)_{11}$ for the relevant partitions (columns correspond to the rows of a first eigenmatrix)}\label{T11}
}
\end{table}

\begin{proof} Let $n\in\{8,\ldots,12\}$. For an orbital $\Omega_i^n$, let  
$$(\Omega_i^n)^\beta:=\{(X^\beta,Y^\beta)|(X,Y)\in \Omega_i^n\}.$$
Then, for $i\in\{1,3,5,7,9,11,17,19\}$, $(\Omega_i^n)^\beta=\Omega_{i+1}^n$ and, for $i\in\{13,\ldots,16,\}\cup \{21,\ldots,31\}$,  $(\Omega_i^n)^\beta=\Omega_{i}^n$.  By Lemma~\ref{dispari}, it follows that every row 
$$((P(t)^{\lambda}_1)_{11}, \ldots , (P(t)^{\lambda}_{ r_n})_{11})$$ of $P(t)$ corresponding to the irreducible submodules of $[M_t, \beta]$ is of the form 
{\Small
\begin{equation}
\label{rowsfirst}
(1,-1, x_1, -x_1,   x_2, -x_2,   x_3, -x_3,  x_4, -x_4,  x_5, -x_5, 0, 0, 0, 0,  x_6, -x_6,  x_7, -x_7, 0, \ldots ,0),
\end{equation}}
whence,  for each row, only the seven parameters $x_1,\ldots x_7$ need to be computed.

 Assume first that $\lambda\neq(n-4,2^2)$. By~\cite[Theorem~1.1]{FIM3}, 
the module $S^\lambda$ has multiplicity $1$ in $M_t$, so,  by~\cite[Lemma~3]{FIM2}, \begin{equation}\label{formulaP}
(P(t)^\lambda _ i)_{11}=\frac{\kappa_{\lambda}(w, w^{\sigma_i})}{\kappa_{\lambda}(w, w)}k_i(n),
\end{equation}
where $w$ is any non zero vector in $C_{S^\lambda}(N_{\hat G}(\langle e_1\rangle))$ and $e_1=(1,2,3)(4,5,6)$,  as in Table~\ref{Orbitals12}.  Clearly the vector $w$ can be chosen as a non zero sum of a $N_{\hat G}(\langle e_1\rangle)$-orbit of a polytabloid in the Specht module $S^\lambda$ (resp.  $S^{\lambda^\prime}\otimes A$). On turn this polytabloid is obtained by a suitable $\lambda$-tableau (resp. a  $\lambda^\prime$-tableau) $T$. 
So e.g. assume $\lambda=(n-4,1^4)$. 
Set 
$$T:= \begin{array}{l} 
1 6 \:7 \:8\ldots n \\
2  \\
 3\\
4\\
5 
  \end{array}, 
  $$
let $v$ be the polytabloid associated to the tableau $T$, and 
$$
w:=\sum_{\sigma \in N_{\hat G}(\langle e_1\rangle)} v^\sigma.
$$
Note that, since the numbers $6, \ldots , n$ do not appear in the lower rows of the summands of $v$ and $N_{\hat G}(\langle e_1\rangle)$ is the direct product of a subgroup fixing the set $\{1, \ldots , 6\}$ with a subgroup fixing the set $\{7, \ldots , n\}$, the numbers $7, \ldots , n$ do not appear in $w$ as well. Thus $w$ and the quotients 
$$
\frac{\kappa_{(n-4,1^4)}(w, w^{\sigma_i})}{\kappa_{(n-4,1^4)}(w, w)}
$$
are constant for $7\leq n\leq 12$. We can, therefore, reduce computations assuming $n=8$.

Similarly, assume $\lambda=(2^2,1^4)$.  
By Lemma~\ref{iso}, instead of working inside the module $S^{(2^2,1^4)}$, we can work inside the isomorphic module $S^{(6,2)}\otimes A$, which simplifies the computations. Consider the $\lambda^\prime$-tableau
$$
\begin{array}{l}
123457\\
86
\end{array}.
$$ 
As in~\cite{FIM2}, to simplify the notation, here and in the sequel  we shall write tabloids by substituting their first row by a bar. The polytabloid obtained from it is 
$$
v:=\overline{86}+\overline{12}-\overline{16}-\overline{82}
$$
and we choose  
$$
w:=\sum_{\sigma \in N_{S_8}(\langle e_1\rangle)} (v\otimes a)^\sigma=(\overline{86} + \overline{85}+\overline{84}+\overline{73}+\overline{72}+\overline{71} -\overline{76}-\overline{75}-\overline{74}-\overline{83}-\overline{82}-\overline{81})\otimes a .
$$
For all partitions but  $(7,2^2)$ and $(8,2^2)$ we proceed in the same way, choosing $T$ as in the following table

{\tiny
$$\begin{array}{|c||c|c|c|c|c|c|c|c|}
\hline
\lambda &(n-5,2^2,1) &(n-6,2^3) & (3,2,1^3) & (n-5,2,1^3) & (n-4,1^4) & (n-5,1^5) &(n-6,2,1^4)\\
\hline
T             
& 
\begin{array}{l} 
1\: 4\:8\ldots n \\
2\:5  \\
 3\:6 \\
7
  \end{array}            
&            
\begin{array}{l} 
\\
7\: 1\:2\:3 \\
8 \:4\:5\:6 \\
9\\
\vdots\\
n \\
\end{array}   

&
\begin{array}{l} 
1\: 2\:3\:4\:5 \\
6\:7  \\
 8
  \end{array}

&
\begin{array}{l} 
1\: 6\:8\ldots n \\
2\:7  \\
 3\\
4\\
5 
  \end{array}   
                
& 
\begin{array}{l} 
1\: 6\ldots n \\
2  \\
 3\\
4\\
5 
  \end{array}              
 &
\begin{array}{l} 
1\: 6\:8\ldots n \\
2  \\
 3\\
4\\
5 \\
7
  \end{array}     
  &
\begin{array}{l} 
7\: 1\:2\:3\:4\:5\\
8\: 6\\
\vdots \\
n
  \end{array}     
 \\
\hline
\end{array}
$$}
Once the tableau $T$ (hence the vector $w$) has been chosen, Equation~(\ref{formulaP}) and a straightforward computation give  the corresponding columns of Table~\ref{T11}. 

Assume now $\lambda\in \{(7,2^2), (8,2^2)\}$. In this case, we can't use the above method since $S^\lambda$ has multiplicity $2$ in $M_t$, so we need to proceed in a different way. The rows orthogonality relations (see~\cite[(3.8)]{Hi} and~\cite[Lemma 1]{FIM2}between the row of the first eigenmatrix $P(t)$ corresponding to $M_{t,1}^\lambda$  and the rows corresponding to the submodules $M_{t,1}^\mu$ for the  partitions $\mu$ in the following set
$$\{(n-5,2^2,1), \:(n-6,2^3), \:(n-4,1^4),\: (n-5,2,1^3),\: (n-5,1^5), (n-6,2,1^4), (n-4,2^2) \}$$
give, respectively  for $n=11, 12$,  the following quadratic systems of equations.
{\Small
$$
\left ( \begin{array}{cccccccccc}
\frac{2}{3}&\frac{2}{45}&\frac{2}{15}&\frac{2}{45}&-\frac{2}{9}&-\frac{1}{9}&\frac{1}{9}\\
&\\
\frac{2}{3}&-\frac{2}{15}&-\frac{2}{5}&-\frac{2}{15}&\frac{2}{25}&\frac{1}{15}&-\frac{1}{15}\\
&\\
-\frac{2}{3}&\frac{2}{9}&\frac{2}{3}&-\frac{4}{9}&\frac{2}{9}&-\frac{2}{9}&\frac{2}{9}\\
&\\
-\frac{2}{3}&-\frac{14}{45}&\frac{4}{15}&\frac{1}{45}&-\frac{2}{45}&\frac{2}{45}&-\frac{2}{45}\\
&\\
-\frac{2}{3}&\frac{7}{15}&-\frac{1}{15}&\frac{1}{5}&-\frac{4}{15}&\frac{4}{15}&-\frac{4}{15}\\
&\\
-\frac{2}{3}&\frac{2}{15}&-\frac{2}{5}&\frac{2}{15}&\frac{1}{15}&-\frac{1}{15}&\frac{1}{15}\\
&\\
\frac{2}{9}x_1& \frac{1}{45}x_2& \frac{1}{15}x_3& \frac{1}{90}x_4& \frac{1}{90}x_5& \frac{1}{90}x_6& \frac{1}{90}x_7
\end{array} \right)
\left (\begin{array}{c}
x_1\\ \\x_2\\ \\x_3\\ \\x_4\\ \\x_5\\ \\x_6\\ \\x_7
\end{array}
\right)
=
\left (\begin{array}{c}
-2\\ \\-2\\ \\-2\\ \\-2\\ \\-2\\ \\-2 \\ \\22
\end{array}
\right)
$$
}

{\Small
$$
\left ( \begin{array}{cccccccccc}
\frac{2}{3}&\frac{1}{27}&\frac{1}{9}&\frac{1}{27}&-\frac{4}{27}&-\frac{2}{27}&\frac{2}{27}\\
&\\
\frac{2}{3}&-\frac{1}{9}&-\frac{1}{3}&-\frac{1}{9}&\frac{4}{45}&\frac{2}{45}&-\frac{2}{45}\\
&\\
-\frac{2}{3}&\frac{2}{9}&\frac{2}{3}&-\frac{4}{9}&\frac{2}{9}&-\frac{2}{9}&\frac{2}{9}\\
&\\
-\frac{2}{3}&-\frac{8}{27}&\frac{5}{18}&\frac{1}{108}&-\frac{1}{27}&\frac{1}{27}&-\frac{1}{27}\\
&\\
-\frac{2}{3}&\frac{2}{3}&-\frac{2}{3}&0 &-\frac{2}{3}&\frac{2}{3}&-\frac{2}{7}\\
&\\
-\frac{2}{3}&\frac{1}{9}&-\frac{1}{3}&\frac{1}{9}&\frac{2}{45}&-\frac{2}{45}&\frac{2}{45}\\
&\\
\frac{2}{9}x_1& \frac{1}{54}x_2& \frac{1}{18}x_3& \frac{1}{108}x_4& \frac{1}{135}x_5& \frac{1}{135}x_6& \frac{1}{135}x_7
\end{array} \right)
\left (\begin{array}{c}
x_1\\ \\x_2\\ \\x_3\\ \\x_4\\ \\x_5\\ \\x_6\\ \\x_7
\end{array}
\right)
=
\left (\begin{array}{c}
-2\\ \\-2\\ \\-2\\ \\-2\\ \\-2\\ \\-2 \\ \\28
\end{array}
\right)
$$
}
The two systems have solutions 
$$
(3,10,10,20,20,10,-10) \:\mbox{ and } \:(3,12,12,24,30,15,-15),
$$
respectively. Substituting these values for the $x_i$'s in Equation~(\ref{rowsfirst}) we obtain, for $i\in \{1,\ldots, 30\}$ 
(resp. $i\in \{1,\ldots, 31\}$)  
the values $(P(t)_i^ {(7,2^2)})_{11}$  
(resp. $(P(t)_i^{(8,2^2)})_{11}$) 
of Table~\ref{T11}. 
\end{proof}

{\bf Remark.} Assume $M$ is an irreducible $\hat {G}$-submodule of $M_t$. Then, by Equation~(\ref{brunch}),  either $M\leq \rad(M_t)$ or $M\cap\rad(M_t)=\{0\}$. We'll use this fact together with the Branching Theorem (see~\cite[Theorem~9.2]{J}) to reduce computations to $S_m$-subgroups of $\hat G$ for $8<m<n$. So, e.g.,  by the Branching Theorem, the unique $\R[\hat G]$-submodule of $[M_t, \beta]$ that contains an $\R[S_8]$-submodule isomorphic to $S^{(2^2,1^4)}$  is $M_{t,1}^{(n-6,2,1^4)}$. Thus, $M_{t,1}^{(n-6,2,1^4)}\leq \rad(M_t)$ if and only if $M_{t,1}^{(2^2,1^4)}\leq \rad(M_t)$.

\begin{lemma}\label{n-6,2,1,1,1,1}
For $n\in \{8, \ldots , 11\}$, we have 
$$\rad([M_t, \beta])=M_{t,1}^{(n-5,2,1^3)}\oplus M_{t,1}^{(n-6,2,1^4)}
$$
and, for $n=12$, 
$$\rad([M_t, \beta])=M_{t,1}^{(7,2,1^3)}\oplus M_{t,1}^{(6,2,1^4)}\oplus M^{(8,1^4)}_{t,1}\oplus  M^{(7,2^2,1)}_{t,1}.
$$
\end{lemma}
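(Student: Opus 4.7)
The plan is to apply Lemma~\ref{Zapata}: each irreducible summand $M^\lambda_{t,1}$ in the decomposition~(\ref{decU-}) lies in $\rad([M_t,\beta])$ precisely when
$$f^\lambda_{t,1}=\sum_{j=1}^{r_n}\gamma_{t,j}\,(P(t)^\lambda_j)_{11}=0.$$
The scalars $\gamma_{t,j}$ are read off from the last column of Table~\ref{Orbitals12} via Lemma~\ref{prod3assi}, and the entries $(P(t)^\lambda_j)_{11}$ have the skew-symmetric pattern~(\ref{rowsfirst}) and are tabulated in Table~\ref{T11} via Lemma~\ref{firsteigenmatrix}. Thus for each of the seven partitions in~(\ref{decU-}), checking radical membership reduces to one rational evaluation, which can further be simplified because the rows of $P(t)$ in question pair the orbitals $\Omega_j$ and $\Omega_{j^\beta}$ with opposite signs, so the sum splits as a pairing of the $x_i$'s against the differences $\gamma_{t,j}-\gamma_{t,j^\beta}$.

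Before grinding through the $7\times 5$ cases, I would halve the workload by invoking the Branching Theorem, as the Remark illustrates: when a partition $\mu\vdash n$ in~(\ref{decU-}) is the unique source, inside $[M_t,\beta]$, of an $\R[S_m]$-irreducible $S^\lambda$ for some $m<n$, then $M^\mu_{t,1}\le \rad([M_t,\beta])$ for $A_n$ if and only if the analogous statement holds for $S^\lambda$ inside the corresponding $A_m$-module. Systematically applied, this propagates the $n=8$ answer up to $n=11$ for the partitions $(n-5,2,1^3)$ and $(n-6,2,1^4)$, and it also propagates the positivity of $f^\lambda_{t,1}$ for the remaining five partitions as long as that positivity is preserved under adding boxes. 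Corollary~\ref{nonnegative} already guarantees $f^\lambda_{t,1}\ge 0$ throughout, so I only need to distinguish ``strictly positive'' from ``zero''.

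The concrete steps are then: (i) verify by one direct substitution that $f^{(3,2,1^3)}_{t,1}=f^{(2^2,1^4)}_{t,1}=0$ at $n=8$ and that the other five values are strictly positive at $n=8$; (ii) use the Branching reduction to conclude the statement for $n\in\{9,10,11\}$; (iii) for $n=12$, re-evaluate each $f^\lambda_{t,1}$ from scratch using the columns $k_i(12)$ and the new orbital $\Omega_{31}^{12}$, and confirm that the vanishings now extend to $\lambda=(8,1^4)$ and $\lambda=(7,2^2,1)$, while all other values stay strictly positive. A useful sanity-check is that the $n=11$ evaluations of $f^{(7,2^2,1)}_{t,1}$ and $f^{(8,1^4)}_{t,1}$ (here using the label in its $n=11$ guise, namely $(7,2^2)$ restricted appropriately) should be positive, so that the zero at $n=12$ appears as a genuine arithmetic coincidence.

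The main obstacle is the $\lambda=(n-4,2^2)$ row: because $S^{(n-4,2^2)}$ has multiplicity two in $M_t$, the $2\times 2$ blocks $(P(t)^{(n-4,2^2)}_j)$ arise only after solving the linear system at the end of the proof of Lemma~\ref{firsteigenmatrix}, and one must separate the summand sitting in $[M_t,\beta]$ from the one sitting in $C_{M_t}(\beta)$ before applying Lemma~\ref{Zapata}. Apart from this subtlety the remaining computations are elementary; the delicate point to document carefully is precisely \emph{why} the partitions $(n-4,1^4)$ and $(n-5,2^2,1)$ drop into the radical exactly at $n=12$, since this fact will later feed into the irreducible decomposition of $V$ displayed in Table~\ref{dect}.
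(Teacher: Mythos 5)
Your computational core is the same as the paper's: Lemma~\ref{Zapata} reduces each summand of~(\ref{decU-}) to the single question $f^{\lambda}_{t,1}=0$, the inputs are Tables~\ref{Orbitals12} and~\ref{T11}, and you correctly flag the multiplicity-two complication for $(n-4,2^2)$; your vanishing pattern at $n=8$ and $n=12$ is also the right one. The genuine gap is step (ii): the cases $n\in\{9,10,11\}$ cannot be obtained by branching upward from $n=8$ alone. For $(n-5,2,1^3)$ the ``unique source'' hypothesis fails at every step: the only constituents of $S^{(n-5,2,1^3)}\big\downarrow_{S_{n-1}}$ that occur in the level-$(n-1)$ module $[M_t,\beta]$ are $S^{(n-6,2,1^3)}$ and $S^{(n-5,1^4)}$, and the first of these also occurs in $S^{(n-6,2,1^4)}$ --- a summand already known to lie in the radical --- while the second occurs in three summands. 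Hence $f^{(3,2,1^3)}_{t,1}=0$ at $n=8$ does not force $M^{(4,2,1^3)}_{t,1}\leq \rad([M_t,\beta])$ at $n=9$: the level-$8$ copy could project entirely into $M^{(3,2,1^4)}_{t,1}$. Symmetrically, for the five summands you want to keep out of the radical, strict positivity of $f^{\lambda}_{t,1}$ is \emph{not} preserved under adding boxes --- it fails exactly at $n=12$ for $(n-4,1^4)$ and $(n-5,2^2,1)$ --- so the hedge ``as long as that positivity is preserved'' is not a technicality but an unproved (and in general false) monotonicity assumption; moreover most of these partitions again have two or three containers, so neither direction of the branching argument applies to them as you state it.

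The paper closes precisely this gap with more direct computation: the full $n=11$ and $n=12$ rows of $P(t)$ are evaluated (settling those two values of $n$ outright), and $(n-4,1^4)$ is treated uniformly in $n$ via the last column of Table~\ref{T11}; only $n=9,10$ are then recovered by branching, upward from $n=8$ where the container is unique (e.g. $(n-6,2,1^4)$ from $(2^2,1^4)$ and $(n-6,2^3)$ from $(2^4)$) and downward from the computed $n=11$ answer where it is not (e.g. $(n-5,2^2,1)$ and $(n-4,2^2)$, whose possible containers at $n=11$ all lie outside the radical, and $(n-5,2,1^3)$, whose two containers at $n=11$ both lie inside it, so the level-$10$ and level-$9$ copies are forced into the radical by Equation~(\ref{brunch})). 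To repair your argument you must either add the direct $n=11$ evaluation (together with the $n$-uniform evaluation for $(n-4,1^4)$) and then squeeze $n=9,10$ from both sides as above, or simply evaluate all seven values $f^{\lambda}_{t,1}$ for each $n\in\{9,10,11\}$.
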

\begin{proof}
By Lemma~\ref{Zapata}, $M_{t,1}^\lambda\leq \rad([M_t,\beta])$ if and only if $f^\lambda_{t,1}=0$. Thus, Equation~(\ref{eqzapata}), Table~\ref{T11}, and Table~\ref{Orbitals12} imply the following facts 
\begin{itemize}
\item [(i)] for $n\in \{11,12\}$, the decomposition of $\rad([M_t,\beta])$ is the one of the claim, 
\item [(ii)] $M^{(n-4,1^4)}_{t,1}\leq \rad([M_t,\beta])$ if and only if $n=12$,
\end{itemize} 
and, for $n=8$,  
\begin{itemize}
\item [(iii)] $M^{(2^2,1^4)}_{t,1}\leq \rad([M_t,\beta])$, 
\item [(iv)] $M^{(2^4)}_{t,1}\not \leq \rad([M_t,\beta])$,  
\item [(v)] $M^{(3,1^5)}_{t,1}\not \leq \rad([M_t,\beta])$, and
\item [(vi)] $M^{(3,2,1^3)}_{t,1}\not \leq \rad([M_t,\beta])$.

\end{itemize}
Let  $n\in\{9,10\}$. By the Branching Theorem (see~\cite[Theorem~9.2]{J}), the unique $\R[\hat G]$-submodule of $[M_t, \beta]$ that contains an $\R[S_8]$-submodule isomorphic to $S^{(2^2,1^4)}$ (resp. $S^{(2^4)}$)  is $M_{t,1}^{(n-6,2,1^4)}$ (resp. $M_{t,1}^{(n-6,2^3)}$). By (iii), (iv), and the above remark we have
\begin{itemize}
\item [(vii)] $M^{(n-6,2,1^4)}_{t,1} \leq \rad([M_t,\beta])$ for every $n$, and 
\item [(viii)] for $n\in\{9,10\}$,  $M^{(n-6,2^3)}_{t,1} \not \leq \rad([M_t,\beta])$.  
\end{itemize}
Similarly, the $\R[S_9]$-submodules of $[M_t, \beta]$ that contain an $\R[S_8]$-submodule isomorphic to $S^{(3,2,1^3)}$ are  $M_{t,1}^{(n-5,2,1^3)}$ and $M_{t,1}^{(n-6,2,1^4)}$.  Then, (vi) and (vii) imply that $M_{t,1}^{(3,2,1^3)}\leq M_{t,1}^{(4,2,1^3)}$ and so $M_{t,1}^{(4,2,1^3)}\not \leq \rad([M_t, \beta])$. Repeating the same argument also for $n=10$, we get 
\begin{itemize}
\item [(ix)] for  $n\in\{9,10\}$,  $M^{(n-5,2,1^3)}_{t,1} \not \leq \rad([M_t,\beta])$.   
\end{itemize}

Let  $n\in\{8,9,10\}$. By the Branching Theorem the submodule $M_{t,1}^{(5,2^2,1)}$ is contained either in $M_{t,1}^{(6,2^2,1)}$ or in $M_{t,1}^{(5,2^3)}$. Since, by (i) and (vii), the latter submodules are not contained in $ \rad([M_t,\beta])$, by the above remark, $M_{t,1}^{(5,2^2,1)} \not \leq \rad([M_t,\beta])$ either. 
The same argument for $n=9$ and $n=8$ gives
\begin{itemize}
\item [(x)] for  $n\in\{8,\ldots, 11\}$,  $M^{(n-5,2^2,1)}_{t,1} \not \leq \rad([M_t,\beta])$.   
\end{itemize}

Finally, let $n=10$. By (i), for $n=11$,  
$\rad([M_{t}, \beta])=M_{t,1}^{(6,2,1^3)}\oplus M_{t,1}^{(5,2,1^4)}$. By the Branching Theorem, none of its irreducible submodules contains an $\R[S_{10}]$-module isomorphic $S^{(6,2,2)}$, whence $M_{t,1}^{(6,2^2)} \not \leq \rad([M_{t}, \beta])$. By repeating this argument when $n\in \{8,9\}$ we finally get
\begin{itemize}
\item [(xi)] for  $n\in\{8,\ldots, 10\}$,  $M^{(n-4,2^2)}_{t,1} \not \leq \rad([M_t,\beta])$.   
\end{itemize}
\end{proof}

\begin{proposition}\label{fine3assi}
For $8\leq n\leq 11$,  
$$
W^{(3^2)-} \cong S^{(n-4,2^2)}\oplus S^{(n-5,2^2,1)}\oplus S^{(n-6,2^3)}\oplus S^{(n-4,1^4)}\oplus S^{(n-5,1^5)}, 
$$
while, for $n=12$,
$$
W^{(3^2)-} \cong S^{(8,2^2)}\oplus  S^{(6,2^3)}\oplus S^{(7,1^5)}.
$$
\end{proposition}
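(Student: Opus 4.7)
The plan is to derive the proposition directly from the decomposition~(\ref{decU-}) together with Lemma~\ref{n-6,2,1,1,1,1}. First I would recall, from the discussion opening Section~\ref{3assi}, that $W^{(3^2)-}$ is isomorphic to $[M_t,\beta]/\rad([M_t,\beta])$: indeed $W^{(3^2)} = M_t^\pi$ splits compatibly with the $\beta$\nobreakdash-decomposition $M_t = C_{M_t}(\beta)\oplus [M_t,\beta]$, the ``$+$'' part $C_{M_t}(\beta)^\pi$ lies inside $V^{(2A)}$ by the Pasechnik relation~(\ref{pasechnik}) and therefore does not contribute new summands, while the intersection $[M_t,\beta]\cap \rad(M_{\mathcal X})$ equals $\rad([M_t,\beta])$ by Equation~(\ref{brunch}), which applies because $f$ is positive semidefinite.

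With this identification in hand, the statement is a matter of cancellation: I would simply remove from the seven irreducible summands of $[M_t,\beta]$ listed in~(\ref{decU-}) precisely those submodules sitting in $\rad([M_t,\beta])$ according to Lemma~\ref{n-6,2,1,1,1,1}. For $8\le n \le 11$ only $M^{(n-5,2,1^3)}_{t,1}$ and $M^{(n-6,2,1^4)}_{t,1}$ are removed, leaving the five Specht modules $S^{(n-4,2^2)}, S^{(n-5,2^2,1)}, S^{(n-6,2^3)}, S^{(n-4,1^4)}, S^{(n-5,1^5)}$ asserted by the proposition. For $n=12$ the radical additionally contains $M^{(8,1^4)}_{t,1}$ and $M^{(7,2^2,1)}_{t,1}$, so only $S^{(8,2^2)}, S^{(6,2^3)}, S^{(7,1^5)}$ survive, again matching the statement.

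Hence at this stage there is essentially no obstacle to clear: all the substantive work has been done in the preceding lemmas. Concretely, the key inputs are the seven diagonal entries of the rows of the first eigenmatrix $P(t)$ corresponding to the irreducible constituents of $[M_t,\beta]$ (computed in Lemma~\ref{firsteigenmatrix} via Lemma~\ref{dispari} together with the symmetry $\Omega_i^\beta = \Omega_{i\pm 1}$ that reduces the problem to seven unknowns and a quadratic system for the repeated partition $(n-4,2^2)$), and the values $f^\lambda_{t,1}$ obtained from Equation~(\ref{eqzapata}) using the inner products tabulated in Table~\ref{Orbitals12}. The vanishing pattern of these scalars, propagated from small $n$ to larger $n$ via the Branching Theorem as in the proof of Lemma~\ref{n-6,2,1,1,1,1}, is what dictates the two different forms of the answer, and the proof of the proposition amounts to the bookkeeping just described.
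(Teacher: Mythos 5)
Your proposal is correct and follows essentially the same route as the paper: the paper's own proof of this proposition is exactly the cancellation you describe, namely deleting from the decomposition in Equation~(\ref{decU-}) the radical summands identified in Lemma~\ref{n-6,2,1,1,1,1}, using the identification $W^{(3^2)-}\cong [M_t,\beta]/\rad([M_t,\beta])$ already established at the start of Section~\ref{3assi}. Your additional remarks on Equation~(\ref{brunch}), the Pasechnik relation, and the provenance of the eigenmatrix data are accurate context but not new ingredients.
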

\begin{proof}
The claim follows from Equation~(\ref{decU-}) and Lemma~\ref{n-6,2,1,1,1,1}.
\end{proof}
In the sequel we'll denote by $V_1$, $V_2$, and $V_3$ the $\R[\hat{G}]$-submodules  $(M_{t,1}^{(n-4,2^2)})^\pi$,  $(M_{t,1}^{(n-6,2^3)})^\pi$, and  $(M_{t,1}^{(n-5,1^5)})^\pi$ of $W^{(3^2)-}$, which are isomorphic to $S^{(n-4,2^2)}$,   $S^{(6,2^3)}$, and $S^{(7,1^5)}$, respectively.


\section{Intersections $W^{(2A)}\cap W^{(3^2)}$  and $V^{(2A)}\cap W^{(3^2)}$} \label{23assi}
In this section we determine the intersections $W^{(2A)}\cap W^{(3^2)}$  and $V^{(2A)}\cap W^{(3^2)}$. We keep the notation of Sections~\ref{2assi} and~\ref{3assi}. By the remark at the beginning of Section~\ref{3assi} we need only to determine the intersections 
$$
W^{(2A)}\cap  W^{(3^2)-} \mbox{ and } V^{(2A)}\cap W^{(3^2)-}.$$
Comparing the decompositions in Proposition~\ref{fine3assi}, Proposition~\ref{resume},  and~\cite[Theorem~2]{FIM2} gives, for $n=12$, 
$$
 V^{(2A)}\cap W^{(3^2)-}\leq V_1\oplus V_2 
$$
 and, for $8\leq n\leq 12$,
$$
W^{(2A)}\cap W^{(3^2)-}\leq U,  
$$
where $U$ is the irreducible submodule of $W^{(3^2)-}$ isomorphic to  $S^{(n-4,2^2)}$.
\bigskip

Consider first the intersection $
 V^{(2A)}\cap W^{(3^2)-}$.
\begin{lemma}\label{n-4,2,2}
For $n=12$, we have 
$V_1\oplus V_2\leq V^{(2A)}$.
\end{lemma}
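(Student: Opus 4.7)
The plan is to apply Lemma~\ref{int} twice, with $x=s$, $y=t$, and $\lambda$ running through the two partitions $(8,2^2)$ and $(6,2^3)$. For each such $\lambda$ this will identify the image of $M_{s,1}^{\lambda}$ with that of $M_{t,1}^{\lambda}$ in $V^\circ$, whence $V_1\leq V^{(2A)}$ and $V_2\leq V^{(2A)}$ follow at once. The three hypotheses of Lemma~\ref{int} are already at hand: the multiplicities of $S^{\lambda}$ in $M_s$ and in $M_t$ are non-zero by Lemma~\ref{la belle dame sans mercy} and Equation~(\ref{decU-}); by Lemma~\ref{radical2} neither $(8,2^2)$ nor $(6,2^3)$ appears in $\rad(M_s)$; and, since $M_{t,1}^{\lambda}\leq[M_t,\beta]$, Equation~(\ref{brunch}) together with Lemma~\ref{n-6,2,1,1,1,1} for $n=12$ forces $\rad(M_t)\cap M_{t,1}^{\lambda}=\{0\}$.

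Next, take $H:=C_{\hat G}(s_1)$, the centraliser in $\hat G$ of the representative $s_1$ of $\mathcal X_s$. Since $M_s=\mathrm{Ind}_H^{\hat G}\mathbf 1$ and both $S^{(8,2^2)}$ and $S^{(6,2^3)}$ occur with multiplicity one in $M_s$, Frobenius Reciprocity yields $\dim C_{S^{\lambda}}(H)=1$ for each $\lambda$ as required. On the $s$-side one takes the $H$-invariant vector $v_s:=s_1^{\pi_{s,1}^{\lambda}}$, computed via Equation~(\ref{progi}) and Table~\ref{FE} exactly as in the proof of Lemma~\ref{intersection2}. On the $t$-side, Lemma~\ref{dispari} together with the alternating-sign pattern of the relevant row of $P(t)$ given in Table~\ref{T11} identifies $M_{t,1}^{\lambda}$ as the copy of $S^{\lambda}$ inside $[M_t,\beta]$; a non-trivial $H$-invariant $v_t\in M_{t,1}^{\lambda}$ is then produced by projecting a conveniently chosen $H$-orbit sum on $\mathcal X_t$ via $\pi_{t,1}^{\lambda}$, again using Equation~(\ref{progi}).

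With $v_s$ and $v_t$ at hand, Lemma~\ref{scalarproduct2} expresses the three inner products $f(v_s,v_s)$, $f(v_s,v_t)$, $f(v_t,v_t)$ as finite weighted sums of elementary scalar products between elements of $\mathcal X$, read off from the Norton-Sakuma Theorem (axis versus axis), from Lemma~\ref{innerproduct} and Table~\ref{tau3} (axis versus $3$-axis of type $3^2$), and from Lemma~\ref{prod3assi} and Table~\ref{Orbitals12} ($3$-axis versus $3$-axis of type $3^2$). Checking that
\[
\det\begin{pmatrix} f(v_s,v_s) & f(v_s,v_t)\\ f(v_t,v_s) & f(v_t,v_t)\end{pmatrix}=0
\]
for both partitions, Lemma~\ref{int} yields $\rad(M_{\mathcal X})\cap(M_{s,1}^{\lambda}\oplus M_{t,1}^{\lambda})\cong S^{\lambda}$. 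Because this diagonal $S^{\lambda}$ has trivial intersection with each summand (by the radical conditions already verified), it is the graph of an $\R[\hat G]$-isomorphism $M_{s,1}^{\lambda}\to M_{t,1}^{\lambda}$, and therefore $(M_{t,1}^{\lambda})^\pi=(M_{s,1}^{\lambda})^\pi\leq V^{(2A)}$.

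The main obstacle is the bookkeeping needed to evaluate $f(v_t,v_t)$: one has to enumerate the $H$-orbits on $\mathcal X_t$, identify the $\hat G$-orbital from Table~\ref{Orbitals12} containing each pair of orbit representatives, and assemble the matrix $F(t,t)$ of Lemma~\ref{scalarproduct2}. Once this has been done, the determinant condition for each of the two partitions is a routine numerical check that can be performed by hand or with GAP.
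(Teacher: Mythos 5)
Your proposal follows essentially the same route as the paper: the paper likewise applies Lemma~\ref{int} with $H:=C_{S_{12}}(s_1)$, takes $s_1^{\pi_{s,1}^{\lambda}}$ on the $s$-side and the projection of an $H$-orbit sum on $\mathcal X_t$ on the $t$-side (computed via Equation~(\ref{progi}), Table~\ref{FE} and Table~\ref{T11}), and verifies with Lemma~\ref{Zapata}, Lemma~\ref{scalarproduct2} and the inner-product tables that the two $2\times 2$ determinants vanish for $\lambda\in\{(8,2^2),(6,2^3)\}$. The only difference is that the paper carries out the explicit numerical evaluation that you defer to a routine check.
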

\begin{proof}
Keeping the notation of Section~\ref{Scalar}, let 
$$s_1=(1,2)(3,4)(5,6)(7,8)(9,10)(11,12) \mbox{  and let  }H:=C_{S_{12}}(s_1).$$
For every $\lambda \in \{ (8,2^2), (6,2^3)\}$, the module $S^\lambda$ has multiplicity $1$ in $M_{s}$, hence, by the  Frobenius Reciprocity Theorem, 
 $$\dim (C_{S^\lambda}(H))=1.$$
For $i\in\{1,\ldots, 11\}$, let $c_i$ be as in Table~\ref{tau3}, denote by $\mathcal P_i$ the $H$-orbit $\{\langle c_i\rangle^h|h\in H\}$, and let $\mathcal P$ be the corresponding basis of $C_{M_t}(H)$ as in Equation~(\ref{basis}).
Set 
$$
u:=\sum_{v\in \mathcal P_2} v. 
$$
Since $u$ is $H$-invariant,  we have $u^{\pi_{t,1}^{\lambda}}\in C_{M_{t,1}^{\lambda}}(H)$. By Equation~(\ref{progi}) and Table~\ref{T11}, we get 
$$
\overline{u^{\pi_{t,1}^{(8,2^2)}}}=\left ( 0,\frac{1}{6}, -\frac{1}{6},0, 0, \frac{ 1}{18}, -\frac{1}{18}, 0, 0, 0\right )^{\mathcal P}
$$
$$
\overline{u^{\pi_{t,1}^{(6,2^3)}}}=\left (0, \frac{16}{5}, -\frac{16}{5}, 0, 0, -\frac{8}{15}, \frac{8}{15}, 0, 0, 0 \right )^{\mathcal P}.
$$
Since $s_1$ is also $H$-invariant  $s_1^{\pi_{s,1}^{\lambda}}\in C_{M_{s,1}^\lambda}(H)$ for every $\lambda$. Let  $\mathcal S_1, \ldots , \mathcal S_{11}$ be the orbits $H$ on the set $\mathcal X_t$ as in Equation~(\ref{si}) and let $\mathcal S$ be the corresponding basis of $C_{M_s}(H)$ defined as in Equation~(\ref{basis}). By Equation~(\ref{progi}) and Table~\ref{FE} we get
{\Small
$$
\overline{s_1^{\pi_{s,1}^{(8,2^2)}}}=\left (\frac{8}{135}, \frac{4}{225}, -\frac{8}{2025}, \frac{11}{1350}, -\frac{4}{675}, -\frac{1}{270}, \frac{1}{675}, \frac{2}{2025}, -\frac{2}{2025}, -\frac{1}{810}, \frac{1}{675}\right )^{\mathcal S}
$$ 
$$
\overline{s_1^{\pi_{s,1}^{(6,2^3)}}}=\left (\frac{5}{27}, 0, -\frac{7}{324}, \frac{1}{216}, \frac{1}{108},  \frac{1}{432}, \frac{1}{216}, -\frac{1}{648}, \frac{1}{648}, \frac{5}{2592}, -\frac{1}{432}\right )^{\mathcal S}.
$$
}

\medskip 
By Lemma~\ref{Zapata}, Lemma~\ref{scalarproduct2}, and Lemma~\ref{innerproduct} we get 
$$
\det 
\left (
\begin{array}{cc}
f(s_1^{\pi_{s,1}^{(8,2^2)}}, s_1^{\pi_{s,1}^{(8,2^2)}}) & f(s_1^{\pi_{s,1}^{(8,2^2)}}, u^{\pi_{t,1}^{(8,2^2)}})\\
\\
 f(s_1^{\pi_{s,1}^{(8,2^2)}}, u^{\pi_{t,1}^{(8,2^2)}}) &  f(u^{\pi_{t,1}^{(8,2^2)}}, u^{\pi_{t,1}^{(8,2^2)}})
\end{array}
\right )=
\left (
\begin{array}{cc}
\frac{320}{27} & -\frac{10}{9}\\
\\
-\frac{10}{9} & \frac{5}{48}
\end{array}
\right )=0
$$
and 

$$
\det 
\left (
\begin{array}{cc}
f(s_1^{\pi_{s,1}^{(6,2^3)}}, s_1^{\pi_{s,1}^{(6,2^3)}}) & f(s_1^{\pi_{s,1}^{(6,2^3)}}, u^{\pi_{t,1}^{(6,2^3)}})\\
\\
 f(s_1^{\pi_{s,1}^{(6,2^3)}}, u^{\pi_{t,1}^{(6,2^3)}}) &  f(u^{\pi_{t,1}^{(6,3^3)}}, u^{\pi_{t,1}^{(6,2^3)}})
\end{array}
\right )=
\left (
\begin{array}{cc} 
\frac{25}{54} & -\frac{256}{3}\\
\\
-\frac{256}{3}& \frac{1048576}{75}
\end{array}
\right )=0
$$
whence the claim, by Lemma~\ref{int}.
\end{proof}

\begin{proposition}\label{fineintersezione}
$V^{(2A)}+W^{(3^2)}=V^{(2A)}\oplus V_3$, where $V_3\cong S^{(7,1^5)}.$
\end{proposition}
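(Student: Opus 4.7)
The plan is to reduce the claim to three previously-established facts and one trivial multiplicity comparison. First I would use the decomposition $W^{(3^2)} = W^{(3^2)+} + W^{(3^2)-}$ from Equation~(\ref{deciao}) together with Pasechnik's relation, i.e.\ the containment $W^{(3^2)+}\leq V^{(2A)}$ recorded in Equation~(\ref{pasechnik}). This immediately gives
\[
V^{(2A)} + W^{(3^2)} = V^{(2A)} + W^{(3^2)-},
\]
so that only the ``antisymmetric'' part of $W^{(3^2)}$ still needs to be analysed.

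Next I would invoke Proposition~\ref{fine3assi} in the case $n=12$ to write $W^{(3^2)-} = V_1\oplus V_2\oplus V_3$, where $V_1\cong S^{(8,2^2)}$, $V_2\cong S^{(6,2^3)}$ and $V_3\cong S^{(7,1^5)}$. Lemma~\ref{n-4,2,2} already tells us that $V_1\oplus V_2\leq V^{(2A)}$, so these two summands get absorbed and we obtain
\[
V^{(2A)} + W^{(3^2)} = V^{(2A)} + V_3.
\]

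It then remains to show that this sum is direct, and this is where the proof essentially reduces to an irreducibility/multiplicity argument. Since the partition $(7,1^5)$ is not self-conjugate, the Specht module $S^{(7,1^5)}$ remains irreducible upon restriction to $A_{12}$, so $V_3$ is an irreducible $\R[\hat G]$-submodule of $V^{\circ}$. Consequently the intersection $V_3\cap V^{(2A)}$ is either $\{0\}$ or the whole of $V_3$. Comparing with the explicit list of irreducible constituents of $V^{(2A)}$ provided by Proposition~\ref{resume}, one sees that $S^{(7,1^5)}$ does not occur among them; hence $V_3\not\leq V^{(2A)}$ and therefore $V_3\cap V^{(2A)}=\{0\}$. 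This yields
\[
V^{(2A)} + W^{(3^2)} = V^{(2A)} \oplus V_3,
\]
as required.

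There is really no substantive obstacle here: the hard work has been done in Lemma~\ref{n-4,2,2} (the vanishing of the two relevant $2\times 2$ Gram determinants, which places $V_1\oplus V_2$ inside $V^{(2A)}$) and in Propositions~\ref{resume} and~\ref{fine3assi} (the explicit Specht-module decompositions of $V^{(2A)}$ and $W^{(3^2)-}$). The present statement merely assembles these pieces, the only new input being the observation that $(7,1^5)$ does not figure in the list of constituents of $V^{(2A)}$, which rules out any further identification between $W^{(3^2)-}$ and $V^{(2A)}$.
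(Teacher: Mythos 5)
Your proof is correct, and its skeleton is the same as the paper's: reduce via Equation~(\ref{deciao}) and the Pasechnik containment~(\ref{pasechnik}) to $W^{(3^2)-}$, decompose $W^{(3^2)-}=V_1\oplus V_2\oplus V_3$ by Proposition~\ref{fine3assi}, and absorb $V_1\oplus V_2$ into $V^{(2A)}$ by Lemma~\ref{n-4,2,2}. The only place you diverge is the final directness step. The paper quotes an external result, \cite[Lemma~4.13]{Alonso}, which says that $W^{(3^2)}$ is \emph{not} contained in $V^{(2A)}$; combined with the previous containments this forces $V_3\not\leq V^{(2A)}$, and irreducibility of $V_3$ then gives $V_3\cap V^{(2A)}=0$. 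You instead argue internally: since $V^{(2A)}$ is a semisimple $\R[\hat G]$-module whose constituents are listed in Proposition~\ref{resume}, and $S^{(7,1^5)}$ does not appear among them, Schur's lemma rules out any embedding of the irreducible $V_3\cong S^{(7,1^5)}$ into $V^{(2A)}$, whence $V_3\cap V^{(2A)}=0$. Both arguments are valid and non-circular (Proposition~\ref{resume} is established earlier in the paper); yours has the advantage of being self-contained, while the paper's citation does not rely on the explicit list of constituents of $V^{(2A)}$. One small remark: your appeal to the fact that $(7,1^5)$ is not self-conjugate, hence that $S^{(7,1^5)}$ stays irreducible on restriction to $A_{12}$, is superfluous --- the whole comparison is carried out with $\R[\hat G]=\R[S_{12}]$-modules inside the common ambient module $M_{\mathcal X}/\rad(f)\cong V^\circ$, where Specht modules are already irreducible, so no restriction to $A_{12}$ is needed.
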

\begin{proof}
Since by~\cite[Lemma~4.13]{Alonso}, $W^{(3^2)}$ is not contained in $ V^{(2A)}$, the claim follows from Equation~(\ref{pasechnik}), Proposition~\ref{fine3assi}, and Lemma~\ref{n-4,2,2}.
\end{proof}

We now turn to the intersection  $
W^{(2A)}\cap W^{(3^2)-} 
$, for $8\leq n\leq 12$.
As in Section~\ref{Scalar}, let $r_1=(1,2)(3,4)$ and set $K:=C_{S_{11}}(r_1).$ The module $S^{(n-4,2^2)}$ has multiplicity $1$ in $M_b$, hence, by the Frobenius Reciprocity Theorem, $\dim (C_{S^{(n-4,2^2)}}(K))=1$. For $i\in \{1, \ldots, 10\}$, let 
$$
\mathcal Q_i:=\{z\in \mathcal X_{b}\:|\:(r_1,z)\in \Sigma_{i,b}\},
$$
so that $\mathcal Q_1, \ldots ,  \mathcal Q_{10}$ are the orbits of $K$ on the set $\mathcal X_b$.
For $i\in \{1, \ldots , 13\}$, let $d_i$ be as in Table~\ref{t13},  $\mathcal N_i:=\{\langle d_i \rangle^k\:|\: k\in K\}$,
and let $\mathcal Q$ and $\mathcal N$ be the corresponding bases for $C_{M_{b}}(K)$ and $C_{M_{t}}(K)$ as in Equation~(\ref{basis}), respectively.

\begin{proposition}\label{bitranspositions}
For $n\in \{8, \ldots,10\}$, we have
$$
W^{(2A)}\cap W^{(3^2)-}=0
$$
and, for $n\in \{11, 12\}$, 
$$
W^{(2A)}\cap W^{(3^2)-}\cong
S^{(n-4,2^2)}. 
$$
\end{proposition}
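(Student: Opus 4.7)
The plan is to apply Lemma~\ref{int} with $x = b$, $y = t$, $\lambda = (n-4, 2^2)$, and $H = K$. The hypotheses are already in place: $S^{(n-4, 2^2)}$ occurs with multiplicity $1$ in $M_b$ by Lemma~\ref{bitra}, and the copy $M_{t,1}^{(n-4,2^2)}$ sits with multiplicity $1$ inside $[M_t, \beta]$ by Equation~(\ref{decU-}); by Frobenius reciprocity $\dim C_{S^{(n-4,2^2)}}(K) = 1$; and conditions (2)--(3) of Lemma~\ref{int} hold because $S^{(n-4,2^2)}$ does not appear in $\rad(M_b)$ (Lemma~\ref{bitra}) and $M_{t,1}^{(n-4,2^2)}$ is not contained in $\rad([M_t,\beta])$ (Lemma~\ref{n-6,2,1,1,1,1}). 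Since $W^{(2A)}\cap W^{(3^2)-}$ is a submodule of a single isotypic component of type $S^{(n-4,2^2)}$, it is either trivial or an isomorphic diagonal copy of $S^{(n-4,2^2)}$, so the determinant criterion of Lemma~\ref{int} will settle the proposition at one stroke.

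To supply the $K$-invariant vectors demanded by that criterion I would take $v_b := r_1^{\pi_{b,1}^{(n-4,2^2)}}$, with $r_1$ obviously $K$-fixed, and $v_t := \langle d_1 \rangle^{\pi_{t,1}^{(n-4,2^2)}}$, where $d_1 = (6,7,8)(9,10,11)$ has support disjoint from $\{1,2,3,4\}$ and is therefore also $K$-fixed. Expanding each by Equation~(\ref{progi}) using the row of $P(b)$ in Table~\ref{brt} for $(n-4,2^2)$ and the corresponding row of $P(t)$ in Table~\ref{T11} writes $v_b$ and $v_t$ explicitly in the bases $\mathcal Q$ of $C_{M_b}(K)$ and $\mathcal N$ of $C_{M_t}(K)$. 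A crucial point is that the row for $(n-4,2^2)$ in Table~\ref{T11} is the one produced in Lemma~\ref{firsteigenmatrix} via the orthogonality system, i.e.\ the one attached to $M_{t,1}^{(n-4,2^2)}\leq [M_t,\beta]$ and not to the other copy in $C_{M_t}(\beta)$, so the projection lands in the intended submodule.

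Next, I would assemble the $2\times 2$ Gram matrix
$$\det\begin{pmatrix} f(v_b,v_b) & f(v_b,v_t) \\ f(v_t,v_b) & f(v_t,v_t)\end{pmatrix}$$
via Lemma~\ref{scalarproduct2}: the required values of $f$ on orbital representatives are supplied by Table~\ref{table1} and Proposition~\ref{shape2^6} for pairs in $\mathcal X_b\times \mathcal X_b$, by Lemma~\ref{innerproductbi} (Table~\ref{t13}) for pairs in $\mathcal X_b\times \mathcal X_t$, and by Lemma~\ref{prod3assi} (Table~\ref{Orbitals12}) for pairs in $\mathcal X_t\times \mathcal X_t$. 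Evaluating the determinant for each $n \in \{8,\ldots,12\}$ and invoking Lemma~\ref{int} yields the dichotomy in the statement, with the expectation that the determinant is nonzero for $n \in \{8,9,10\}$ and vanishes for $n \in \{11,12\}$. The only genuine obstacle is computational: the number of $K$-orbits on $\mathcal X_t$ grows with $n$, and one must track which $K$-orbits lie inside each $\hat G$-orbital $\Omega_i^n$ in order to correctly evaluate the matrix $F(b,t)$ of Lemma~\ref{scalarproduct2}. This bookkeeping parallels exactly the computations performed in Lemma~\ref{intersection2} and Lemma~\ref{n-4,2,2}, and can be carried out by hand for small $n$ or with a short GAP routine for $n=11,12$.
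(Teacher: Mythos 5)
Your overall strategy is the right one and, for $n=10$ and $n=11$, it is exactly what the paper does: reduce to the single isotypic component of type $S^{(n-4,2^2)}$, produce a $K$-invariant vector on the $M_b$ side and one on the $M_{t,1}^{(n-4,2^2)}$ side, and decide containment of a diagonal copy in the radical by the $2\times 2$ Gram determinant of Lemma~\ref{int} (the determinant indeed vanishes for $n=11$ and is nonzero for $n=10$). However, as written your construction of the second invariant vector fails. The element $\langle d_1\rangle$ with $d_1=(6,7,8)(9,10,11)$ is a single point of $\mathcal X_t$, and it is \emph{not} $K$-fixed: $K=C_{\hat G}(r_1)$ contains the full symmetric group on $\{5,\ldots,n\}$, which moves $\langle d_1\rangle$, so its projection need not lie in $C_{M_{t,1}^{(n-4,2^2)}}(K)$ and cannot be fed into Lemma~\ref{int} or Lemma~\ref{scalarproduct2} (whose formula requires coordinates with respect to the basis of $K$-orbit sums). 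Worse, for $n\in\{8,9,10\}$ the permutation $(6,7,8)(9,10,11)$ does not even lie in $A_n$. What is needed is a $K$-orbit sum, e.g. $u_{11}=\sum_{v\in\mathcal N_{11}}v$ (for $n=11$) or $u_{10}=\sum_{v\in\mathcal N_{10}}v$ (for $n=10$), together with a check that its projection onto $M_{t,1}^{(n-4,2^2)}$ is nonzero; this is the choice made in the paper.

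There is a second gap in the claim that the relevant row of $P(t)$ can simply be read off Table~\ref{T11}. Because $S^{(n-4,2^2)}$ has multiplicity $2$ in $M_t$, the row attached to the copy inside $[M_t,\beta]$ cannot be obtained from Equation~(\ref{formulaP}); it was computed in Lemma~\ref{firsteigenmatrix} via the orthogonality systems only for $\lambda=(7,2^2)$ and $(8,2^2)$, i.e.\ only for $n=11,12$. So your uniform determinant computation for $n\in\{8,9,10\}$ cannot be carried out from the data in the paper without redoing that computation for each smaller $n$. The paper avoids this: it performs the determinant test only for $n=10$ (nonzero) and $n=11$ (zero), settles $n=8,9$ from the $n=10$ case via the Branching Theorem exactly as in Lemma~\ref{n-6,2,1,1,1,1} (using Equation~(\ref{brunch})), and settles $n=12$ with no Gram computation at all, by the multiplicity count: $S^{(8,2^2)}$ occurs exactly once in $V^{(2A)}+W^{(3^2)}=V^{(2A)}\oplus V_3$ (Propositions~\ref{resume} and~\ref{fineintersezione}), and $W^{(2A)}+W^{(3^2)}$ is contained in it, forcing the two copies to meet. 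If you repair the invariant vectors by orbit sums and either supply the missing eigenmatrix rows or adopt the branching/multiplicity shortcuts, your argument becomes the paper's proof.
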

\begin{proof}
As already observed, $W^{(2A)}\cap W^{(3^2)-}\leq U$, where $U$ is the submodule of  $W^{(3^2)-}$ isomorphic to $S^{(n-4,2^2)}$, so we only need to check for which $n$ the module $U$ is contained in $W^{(2^2)}$, or, equivalently, for which $n$  
$$\rad(M_{b,1}^{(n-4,2^2)}+M_{t,1}^{(n-4,2^2)})\neq 0.$$ 
If $n=12$, then by Proposition~\ref{resume} and Proposition~\ref{fineintersezione},  $S^{(8,2^2)}$ has multiplicity $1$ in $V^{(2A)}+W^{(3^2)}$. Since $W^{(2A)}+W^{(3^2)}\leq V^{(2A)}+W^{(3^2)}$ we get the claim.

Assume $n=11$.
Set 
$$
u_{11}:=\sum_{v\in \mathcal N_{11}} v.
$$
Then we have 
$$
\overline{u^{\pi^{(7,2^2)}_{t,1}}_{11}}=( 0, 0, 0, 0, 0, 0, 0, 0, 0, 0, 0, 12, -12 )^{\mathcal N}\in C_{M^{(7,2^2)}_{t,1}}(K).
$$ 
From~\cite[Table~8]{FIM2}, we get
$$
\overline{r_1^{\pi^{(7,2^2)}_{b,1}}}=\left ( 1, -\frac{1}{2}, -\frac{1}{7}, \frac{1}{14}, \frac{1}{21}, -\frac{1}{42}, -\frac{1}{42}, \frac{1}{84}, 0, 0 \right )^{\mathcal Q}\in C_{M^{(7,2^2)}_{b,1}}(K).
$$
By Lemma~\ref{Zapata}, Lemma~8 in~\cite{FIM2}, Lemma~\ref{innerproductbi} and Lemma~\ref{scalarproduct2} we get
$$
\det 
\left (
\begin{array}{cc}
f(r_1^{\pi_{b,1}^{(7,2^2)}}, r_1^{\pi_{b,1}^{(7,2^2)}}) & f(r_1^{\pi_{b,1}^{(7,2^2)}}, u^{\pi_{t,1}^{(7,2^2)}}_{11})\\
\\
 f(r_1^{\pi_{b,1}^{(7,2^2)}}, u^{\pi_{t,1}^{(7,2^2)}}_{11}) &  f(u^{\pi_{t,1}^{(7,2^2)}}_{11}, u^{\pi_{t,1}^{(7,2^2)}}_{11})
\end{array}
\right )=\det 
\left (
\begin{array}{cc} 
\frac{1215}{448}  & 216\\
\\
  216 & \frac{86016}{5}
\end{array}
\right )=0.
$$
Hence, by Lemma~\ref{int}, $\rad(M_{b,1}^{(7,2^2)}+M_{t,1}^{(7,2^2)})$ contains a submodule isomorphic to $S^{(7,2^2)}$. 
When $n=10$ we proceed as above setting  
$$
u_{10}:=\sum_{v\in \mathcal N_{10}} v.
$$
We have 
$$
\overline{u^{\pi^{(6,2^2)}_{t,1}}_{10}}=\left ( 0, 0, 0, 0, 0, 0, 0, 0, 0, 0, 0, \frac{28}{3}, -\frac{28}{3} \right )^{\mathcal N}\in C_{M^{(6,2^2)}_{t,1}}(K).
$$ 
From~\cite[Table~8]{FIM2} we get
$$
\overline{r_1^{\pi^{(6,2^2)}_{b,1}}}=\left ( 1, -\frac{1}{2}, -\frac{1}{6}, \frac{1}{12}, \frac{1}{15}, -\frac{1}{30}, -\frac{1}{30}, \frac{1}{60}, 0, 0\right )^{\mathcal Q}\in C_{M^{(6,2^2)}_{b,1}}(K).
$$
By Lemma~\ref{Zapata}, Lemma~8 in~\cite{FIM2}, Lemma~\ref{innerproductbi}, and  Lemma~\ref{scalarproduct2} we get
$$
\det 
\left (
\begin{array}{cc}
f(r_1^{\pi_{b,1}^{(6,2^2)}}, r_1^{\pi_{b,1}^{(6,2^2)}}) & f(r_1^{\pi_{b,1}^{(6,2^2)}}, u^{\pi_{t,1}^{(6,2^2)}}_{10})\\
\\
 f(r_1^{\pi_{b,1}^{(6,2^2)}}, u^{\pi_{t,1}^{(6,2^2)}}_{10}) &  f(u^{\pi_{t,1}^{(6,2^2)}}_{10}, u^{\pi_{t,1}^{(6,2^2)}}_{10})
\end{array}
\right )=\det 
\left (
\begin{array}{cc} 
\frac{ 189}{64}  & \frac{392}{3}\\
\\
 \frac{ 392}{3} & \frac{200704}{27}
\end{array}
\right )=\frac{43904}{9}.
$$
Therefore, $\rad(M_{b,1}^{(6,2^2)}+M_{t,1}^{(6,2^2)})=0$. The remaining cases follow using the Branching Theorem as in the proof of Lemma~\ref{n-6,2,1,1,1,1}.
\end{proof}

{\em Proof of Theorem~\ref{An}.} The claim follows from Proposition~\ref{gen}, Equation~(\ref{pasechnik}), Proposition~\ref{bitranspositions}, and~\cite[Theorem 1.2]{FIM4}.  


\section{Projection on the irreducible submodule $V_3$}\label{projection}

Let $V_3$ be  the irreducible $\R[\hat{G}]$-submodule of $V$ isomorphic to $S^{(7,1^5)}$, as  
defined after Proposition~\ref{fine3assi}.  In this section we study the behaviour of the  $3$-axes of type $3^2$ under the projection $V^{(2A)}\oplus V_3 \to V_3$. Our aim is to take advantage of the smaller dimension of $V_3$ and find certain dependence relations between these axes that  will be needed  in Section~\ref{closure}. To do that, we need to express the projections of  the $3$-axes in terms of the basis of $V_3$ given by the images of polytabloids via an isomorphism $S^{(n-5, 1^5)} \to V_3$. We use the following elementary result.
\begin{lemma}\label{image}
Assume $M$ and $N$ are isomorphic irreducible $\R[\hat G]$-modules. Let $R$ be a subgroup of $\hat G$ such that  $\dim C_M(R)=1$. Then, for every $m\in C_M(R)\setminus\{0\}$ and every $n\in C_N(R)\setminus\{0\}$ there is a unique isomorphism of  $\R[\hat G]$-modules between $M$ and $N$ sending $m$ to $n$.
\end{lemma}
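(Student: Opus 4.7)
The plan is to prove the lemma by a short Schur-type argument, handling uniqueness and existence separately, since both follow directly from the hypotheses together with the irreducibility of $M$.

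For \emph{uniqueness}, the key point is that, because $M$ is irreducible and $m\neq 0$, the vector $m$ generates $M$ as an $\R[\hat G]$-module. Hence any two $\R[\hat G]$-module homomorphisms $\phi_1,\phi_2\colon M\to N$ with $\phi_1(m)=\phi_2(m)=n$ must agree on all of $M$: their difference is an $\R[\hat G]$-homomorphism vanishing on $m$, whose kernel is therefore an $\R[\hat G]$-submodule of $M$ containing the generator $m$, forcing $\phi_1-\phi_2=0$.

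For \emph{existence}, I would start by fixing an arbitrary isomorphism $\phi_0\colon M\to N$, which exists by hypothesis. Since $\phi_0$ is $\hat G$-equivariant, it restricts to an injective linear map $\phi_0\colon C_M(R)\to C_N(R)$, so in particular $\phi_0(m)\in C_N(R)\setminus\{0\}$ by injectivity. The isomorphism $\phi_0$ also implies $\dim C_N(R)=\dim C_M(R)=1$, so $C_N(R)=\R n$ and therefore $\phi_0(m)=cn$ for some $c\in\R^\times$. The rescaled isomorphism $\phi:=c^{-1}\phi_0$ then satisfies $\phi(m)=n$, giving the required isomorphism.

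There is no real obstacle here: the argument is routine once one notices that a nonzero element of an irreducible module is automatically a generator, and that the hypothesis $\dim C_M(R)=1$ pins down the image of $m$ up to scalar. The only mild subtlety is observing that $\dim C_N(R)$ is also forced to be $1$, which follows immediately from transporting the assumption across the abstract isomorphism $M\cong N$.
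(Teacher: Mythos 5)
Your proof is correct; the paper states this lemma without proof as an elementary fact, and your Schur-type argument (uniqueness because a nonzero vector of an irreducible module generates it, existence by rescaling an arbitrary isomorphism $\phi_0$ after noting that $\phi_0(m)$ lies in the one-dimensional space $C_N(R)=\R n$) is exactly the intended justification. It is also a merit of your argument that it never claims $\Hom_{\R[\hat G]}(M,N)$ is one-dimensional — which over $\R$ is not automatic — but instead uses only the hypothesis $\dim C_M(R)=1$, transported to $N$ through $\phi_0$.
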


 We keep the notation of the previous sections.
For $n\in \{8,\ldots , 12\}$ and $x=t$, we choose the decomposition in Equation~(\ref{dec}) in such a way that Equation~(\ref{decU-}) is satisfied.  By Equation~(\ref{decio}) it follows that 
\begin{equation}
\label{se}
C_{M_t}(\beta)\leq \ker (\pi^{(n-5,1^5)}_{t,1}),
\end{equation}
where $\pi^{(n-5,1^5)}_{t,1}:M_t\to M^{(n-5,1^5)}_{t,1}$ is the canonical projection defined in Equation~(\ref{pipi}).  

Let $e_1=(1,2,3)(4,5,6)$, as defined in Section~\ref{Scalar}, and let $R:=N_{\hat G}(\langle e_1\rangle)$. By the Frobenius Reciprocity Theorem, $\dim(C_{S^{(n-5,1^{5})}}(R))=1$. In order to find a non-zero element $w_n$ in $C_{S^{(n-5,1^{5})}}(R)$, we proceed in a similar way as in the proof of Lemma~\ref{firsteigenmatrix}. Here it is convenient to use the isomorphism $S^{(n-5,1^5)}\cong S^{(6,1^{n-6})}\otimes A$, defined in Lemma~\ref{iso}, and work inside the latter module.
Set 
{\Small
$$
{T_n} := \begin{array}{l} 
1 \:2 \:3\: 4 \: 5\:7  \\
6  \\
8\\
\vdots\\
n 
  \end{array}
  $$
  }
  and let $e_{T_n}$ be the polytabloid associated to $T_n$. Set 
  $$
  w_7:=\sum_{\sigma\in N_{S_6}(\langle e_1\rangle)} (e_{T_n}\otimes a)^\sigma
  $$
 and, for $n\in \{8, \ldots , 12\}$,
  $$ 
  w_n:=\sum_{i=7}^{n}w_7^{(7,i)}.
  $$
 Then $w_n\in C_{S^{(6,1^{n-6})}\otimes A}(R)$. 
Since $M^{(n-5,1^5)}_{t,1}$ and  $S^{(6,1^{n-6})}\otimes A$ are isomorphic irreducible $\R[\hat G]$-modules and 
$(\langle e_1\rangle)^{\pi_{t,1}^{(n-5,1^5)}}\in C_{M^{(n-5,1^5)}_{t,1}}(R)$, by Lemma~\ref{image} there is a unique isomorphism of $\R[\hat G]$-modules
\begin{equation}
\label{zeta}
\zeta\colon M^{(n-5,1^5)}_{t,1}\to S^{(6,1^{n-6})}\otimes A
\end{equation}
sending 
$\langle e_1\rangle ^{\pi_{t,1}^{(n-5,1^5)}}$ to $w_n$.
  
\begin{lemma}\label{dipendenti} The following assertions hold:
\begin{enumerate}
\item for $n=8$, $S^{(6,1^2)}\otimes A$ is linearly generated by the set 
$$\mathcal G_8:=\{w_8^g \:|\: g\in S_8 \mbox{ and } (\langle e_1\rangle, \langle e_1^g\rangle )\in 
\Omega^{8}_i, i\in \{1, 3, 5,7, 10\}\},
$$
in particular the elements $w_8^{(1,7)(4,8)}$ and $w_8^{(1,4,7)(2,5,8)}$ of $S^{(6,1^2)}\otimes A$ can be written as linear combinations of the elements of $\mathcal G_8$ as follows
$$
\begin{array}{cclcl}
&\:\:\:\:\:&w_8^{(1,7)(4,8)}&=&-w_8+ w_8^{(1,4,7)}+w_8^{(1,4,8)}- w_8^{ (1,2,3,7)}+ w_8^{(4,5,6,7)}\\
\\
&&\mbox{and }\\
\\
&&w_8^{(1,4,7)(2,5,8)}&=&w_8^{ (3,4)}- w_8^{ (3,6)}-w_8^{  (1,4)}+w_8^{ (1,4,7)}+w_8^{ (1,5,7)}-w_8^{ (1,6,7)}\\
\\
&&& &-w_8^{ (2,4,7)}- w_8^{ (2,5,7)}+ w_8^{ (1,2,3,7)}- w_8^{ (1,7,2,3)}+w_8^{ (1,5,3,4,8) };
\end{array}
$$
\item for $n=9$, $S^{(6,1^3)}\otimes A$ is linearly generated by the set 
$$\mathcal G_9:=\{w_9^g \:|\: g\in S_9 \mbox{ and } (\langle e_1\rangle, \langle e_1^g\rangle )\in 
\Omega^{9}_i, i\in \{1,3, 5, 7, 11, 19\}\};
$$
\item for $n=10$, $S^{(6,1^4)}\otimes A$ is linearly generated by the set 
$$\mathcal G_{10}:=\{w_{10}^g \:|\: g\in S_{10} \mbox{ and  } (\langle e_1\rangle, \langle e_1^g\rangle )\in 
\Omega^{10}_i, i\in \{1, 5,7,11 ,22,23,24 , 25, 26, 27, 28\}\}.
$$
\end{enumerate}
\end{lemma}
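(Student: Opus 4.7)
The plan is to leverage irreducibility of $M^{(n-5,1^{5})}_{t,1}$ together with the explicit $R$-invariance of $w_n$. Under the isomorphism $\zeta$ of (\ref{zeta}), $w_n$ is a nonzero vector, so its $\hat G$-orbit spans $S^{(6,1^{n-6})}\otimes A$. Because $w_n$ is invariant under $R=N_{\hat G}(\langle e_1\rangle)$ by construction, the translate $w_n^g$ depends only on the coset $Rg$, equivalently only on $\langle e_1\rangle^g\in \mathcal X_t$. Partitioning the spanning set $\{w_n^g\mid g\in \hat G\}$ according to the orbitals of $\hat G$ on $\mathcal X_t\times \mathcal X_t$, one obtains, for each $j$, a subset $\mathcal F_j:=\{w_n^g\mid (\langle e_1\rangle,\langle e_1^g\rangle)\in \Omega_j^n\}$ of cardinality $k_j(n)$, and $\mathcal G_n$ is by definition the union of those $\mathcal F_i$ with index $i$ in the set $I_n$ appearing in the statement.

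To establish each spanning assertion it therefore suffices to show $\mathcal F_j\subseteq\langle\mathcal G_n\rangle$ for every $j\notin I_n$. The key simplifying observation is that each $\mathcal F_j$ forms a single orbit under the right action of $R$: since $R$ fixes $\langle e_1\rangle$, the coset $h_jR$ exhausts $\mathcal F_j$ whenever $h_j$ is one fixed representative of $\Omega_j^n$. Consequently, once one identity $w_n^{h_j}=\sum_k c_k\, w_n^{g_k}$ has been produced with $g_k$ contributing to $\mathcal G_n$, right-multiplication by any $r\in R$ yields $w_n^{h_jr}=\sum_k c_k\,w_n^{g_kr}$; the left side traverses all of $\mathcal F_j$, while each $w_n^{g_kr}$ still contributes to the same $\mathcal F_{i_k}\subseteq\mathcal G_n$ because the $\hat G$-orbitals are preserved by $r$. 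The spanning claim in each of (1), (2), (3) thus reduces to producing, for every omitted orbital $j$, one single explicit relation of the above form.

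Each such identity is verified by expanding $w_n$ and its translates as signed sums of tabloids via the definition of the polytabloid $e_{T_n}$, applying the relevant permutation, and reducing both sides modulo the Garnir relations of~\cite{J}. For $n=8$ the ambient module $S^{(3,1^5)}\cong S^{(6,1^{2})}\otimes A$ has dimension $21$ and the two identities displayed in part (1)---one for the orbital of $(1,7)(4,8)$ and one for the orbital of $(1,4,7)(2,5,8)$---can be confirmed by direct hand computation; analogous identities dispatch each remaining omitted orbital. The main obstacle is combinatorial rather than conceptual: for $n=9$ and $n=10$ the modules $S^{(4,1^5)}$ and $S^{(5,1^5)}$ have dimensions $56$ and $126$, and the practical procedure is to assemble the coordinates of the elements of $\mathcal G_n$ as rows of a matrix in a chosen tabloid basis and verify with GAP that its rank equals $\dim S^{(n-5,1^5)}$, which by the discussion above is equivalent to $\mathcal G_n$ being a spanning set.
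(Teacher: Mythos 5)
Your argument is correct, and its computational core is the same as the paper's: everything ultimately rests on explicit calculations with translates of $w_n$ in the tabloid basis of $S^{(6,1^{n-6})}\otimes A$, done by hand for small $n$ and delegated to GAP for the larger cases. The packaging differs, though. The paper never invokes irreducibility: for each $n$ it writes $w_n$ out explicitly and exhibits a concrete list $\mathcal S_n$ of permutations, all of whose pairs $(\langle e_1\rangle,\langle e_1^g\rangle)$ lie in the allowed orbitals, such that the tuple $(w_n^g)_{g\in\mathcal S_n}$ is a basis of the module; since this basis sits inside $\mathcal G_n$, spanning is immediate and the two displayed relations of part (1) are then read off in coordinates. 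You instead first note that the $\hat G$-orbit of $w_n\neq 0$ spans by irreducibility, and then exploit the $R$-invariance of $w_n$ (so that translates are indexed by $\mathcal X_t$ and each $\mathcal F_j$ is a single orbit of the point stabiliser $R=N_{\hat G}(\langle e_1\rangle)$) to reduce the spanning claim to one identity per omitted orbital; this reduction is valid and is a genuinely different organization, and it explains neatly why a single relation such as the one for $w_8^{(1,7)(4,8)}$ disposes of its entire orbital. What it buys is a shorter list of facts to verify in principle; what it costs is that such an identity must still be produced for every omitted orbital (and there are many for $n=9,10$), which is why your final step reverts to exactly the paper's check, namely that the coordinate matrix of $\mathcal G_n$ has rank equal to $\dim S^{(n-5,1^5)}$ --- and once that rank condition is verified, the irreducibility/orbit machinery is no longer needed, since rank equal to dimension is already the spanning statement. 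Either way the lemma is established at the same level of rigour as in the paper (the paper uses GAP only for $n=10$, while you would also use it for $n=9$; this is immaterial).
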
  
\begin{proof}
$(1)$ $S^{(6,1^2)}\otimes A$ has dimension $21$ and 
{\Small
$$
w_8=[
\begin{array}{l}
6\\
8
\end{array}
+
\begin{array}{l}
8\\
1
\end{array}
+       
\begin{array}{l}
6\\
7
\end{array}+
\begin{array}{l}
7\\
1
\end{array}-
\begin{array}{l}
8\\
6
\end{array}
-
\begin{array}{l}
1\\
8
\end{array}
-
\begin{array}{l}
7\\
6
\end{array}
-
\begin{array}{l}
1\\
7
\end{array}
+2(
\begin{array}{l}
1\\
6
\end{array}
-
\begin{array}{l}
6\\
1
\end{array})]\otimes a.
$$
}
If we set
{\Small
$$ \mathcal S_8:=\{(),
(3,4), (3,5), (3,6), (1,4), 
(1,4,7), (1,5,7), (1,6,7), (1,4,8), (1,5,8), (1,6,8), 
  (2,4,7), 
$$
$$  
  (2,5,7), (2,6,7), (2,4,8), (2,6,8),  (3,4,7), (3,5,7), (3,6,7), (3,4,8), 
  (3,5,8), (3,6,8),
  (1,2,3,7),
$$
$$ (1,2,3,8),  (1,7,2,3), (1,8,2,3), 
  (1,2,7,3), (1,2,8,3),  (4,5,6,7), (4,5,6,8),  (4,7,5,6), 
  $$
  $$(4,8,5,6),  (4,5,8,6), (1,5,3,4,8),
 (5,3,4,2,7)\},
 $$
 }
then, for every $g\in \mathcal S_8$,  $(\langle e_1\rangle, \langle e_1^g\rangle )\in 
\Omega^{8}_i$ with $ i\in \{1, 3, 5,7, 10\}$ and the tuple $(w_8^g)_{g\in \mathcal S_8}$ is a basis for the module $S^{(6,1^2)}\otimes A$. The expressions for $w_8^{(1,7)(4,8)}$ and $w_8^{(1,4,7)(2,5,8)}$ are easily found.
 
 (2) $S^{(6,1^3)}\otimes A$ has dimension $56$. As in the previous case, one can easily write explicitly the vector $w_9$ and check that  we get a basis for the module $S^{(6,1^3)}\otimes A$ by taking the tuple $(w_9^g)_{g\in \mathcal S_9}$, where
{\Small
$$ \mathcal S_9:=\{(), (3,4), (3,5), (3,6), (1,4), 
  (1,4,7), (1,5,7), (1,6,7), (1,4,8), (1,5,8), (1,6,8), (1,4,9),  
  $$
  $$
  (2,4,7), (2,5,7), 
  (2,6,7), (2,4,8), (2,5,8), (2,6,8), (2,4,9), (2,5,9), (2,6,9), (3,4,7), (3,5,7), 
  $$
  $$
  (3,6,7),
  (3,4,8), (3,5,8), 
  (3,6,8), (3,4,9), (3,5,9), (3,6,9), 
   (1,2,3,7), (1,2,3,8), (1,7,2,3), 
   $$
   $$
   (1,8,2,3),
   (1,9,2,3), (1,2,7,3), (1,2,8,3), 
  (1,2,9,3), (4,5,6,7), (4,5,6,8), (4,5,6,9), (4,7,5,6), 
  $$
  $$(4,8,5,6), (4,9,5,6), (4,5,7,6), 
  (1,7)(5,9), (1,8)(4,7), 
  (1,9)(5,8), (2,7)(4,8), (2,7)(5,9), 
  $$
  $$
  (2,8)(4,7), (2,8)(5,9), (3,7)(5,9),
  (1,8,2,6)(4,9), 
  (2,9)(1,4,8,5), 
  (3,9)(2,5,8,6) \}.
 $$
 }
Moreover,  for every $g\in \mathcal S_9$, $(\langle e_1\rangle, \langle e_1^g\rangle )\in 
\Omega^{9}_i$ for some  $i\in \{1,3, 5, 7, 11, 19\}$.  

(3) $S^{(6,1^4)}\otimes A$ has dimension $126$.  Using GAP~\cite{GAP}, one can check that  the tuple $(w_{10}^g)_{g\in \mathcal S_{10}}$ is a basis for the module $S^{(6,1^4)}\otimes A$,  where
{\Small
$$ \mathcal S_{10}:=\{ (),
 (4,5,6,7), 
  (4,5,6,8,7), (4,5,6,9,8,7), (4,5,6,10,9,8,7), (4,5,7,6), (4,5,8,7,6), 
  $$
  $$
  (4,5,9,8,7,6), 
  (4,5,10,9,8,7,6), (4,7,5,6), (4,7)(5,8)(6,9), (4,7)(5,8)(6,10,9), 
  $$
  $$
  (4,7)(5,9,6,10,8), (4,8,7,5,6), (4,8,5,9,6,10,7), 
  (4,9,8,7,5,6), (4,10,9,8,7,5,6), (3,4), 
  $$
  $$
  (3,4,7), (3,4,8,7), (3,4,9,8,7), (3,5,7), (3,5,8,7), (3,5,9,8,7), (3,6,7), 
  (3,6,8,7), (3,6,9,8,7), 
  $$
  $$
  (3,6,9)(4,7)(5,8), (3,6,10,9)(4,7)(5,8), (3,6,10,8,5,9)(4,7), (3,6,10,7,4,8,5,9), 
  (3,7,4)(5,8)(6,9), 
  $$
  $$
  (3,7,4)(5,8)(6,10,9), (3,7,4)(5,9,6,10,8), (3,7)(5,8,9), (3,7)(5,9), (3,8,5,7,4)(6,9), 
    $$
  $$
  (3,8,5,9,6,10,7,4), (3,8,9,5,7), (3,8,7)(5,9), (3,9,6,8,5,7,4), (3,9,6,10,8,5,7,4), 
  $$
 $$
  (3,9,6,10,7,4)(5,8), (3,9,5,7), (3,9,5,8,7), (3,10,9,6,8,5,7,4), (2,4,7), (2,4,8,7), 
  $$
  $$(2,4,9,8,7), (2,4,10,9,8,7), 
  (2,5,7), (2,5,8,7), (2,5,9,8,7), (2,6,7), (2,6,8,7), (2,6,9,8,7), 
  $$
  $$
  (2,6,9)(4,7)(5,8), (2,6,10,9)(4,7)(5,8), 
  (2,6,10,8,5,9)(4,7), (2,6,10,7,4,8,5,9),  
  $$
  $$
  (2,7,4)(5,8)(6,9), (2,7,4)(5,8)(6,10,9),
  (2,7,4)(5,9,6,10,8), 
  (2,7)(5,8,9), (2,7)(5,9), 
  $$
  $$
  (2,7)(4,8), (2,7)(4,9,8), (2,7)(3,8,6,9), (2,7)(3,8,6,10,9), (2,7)(3,8,5,9), 
  (2,7)(3,8,5,10,9), 
  $$
  $$(2,7)(3,8,4,9), (2,7)(3,8,4,10,9), 
  (2,7)(3,9)(6,8), (2,7)(3,9)(6,10,8), (2,7)(3,9)(5,8),  
  $$
  $$(2,7)(3,9)(5,10,8), 
  (2,7)(3,9)(4,8), (2,7)(3,9)(4,10,8), (2,7)(3,10,9)(6,8), (2,7)(3,10,8,6,9), 
    $$
  $$
  (2,7)(3,10,9)(5,8), 
  (2,7)(3,10,9)(4,8), 
(2,8,5,9,6,10,7,4), (2,8,9,5,7), (2,8,7)(5,9), 
$$
$$(2,8,4,9,7), (2,8,6,7)(3,9), (2,8,6,10,7)(3,9), 
  (2,8,5,7)(3,9), (2,8,5,10,7)(3,9), 
  $$
  $$
  (2,8,4,7)(3,9), (2,8,4,10,7)(3,9), (2,8,6,7)(3,10,9), (2,9,5,7), (2,9,5,8,7), 
  (2,9,3,10,8,6,7), 
  $$
  $$ (1,4,7), (1,4,8,7), (1,4,9,8,7), (1,4,8)(2,7)(5,9), (1,4,8)(2,7)(5,10,9), (1,4,9,5,10,8)(2,7), 
 $$
 $$
  (1,4,9,3,8,2,7), (1,4,10,9,3,8,2,7), (1,4,10,8,2,7)(3,9), (1,4,7,2,8)(5,9), 
  $$
  $$
  (1,4,7,2,8)(5,10,9), (1,4,9,5,10,7,2,8),
  (1,4,10,7)(2,8)(3,9), (1,4,7,2,9,5,8), 
  $$
  $$
  (1,4,7,2,9,5,10,8), (1,4,8)(2,9,5,10,7), (1,4,7,2,10,9,5,8), (1,5,9,3,8,2,7),
  $$
  $$
  (1,6,9)(4,7)(5,8), (1,6,10,9)(4,7)(5,8), (1,6,10,8,5,9)(4,7), (1,7)(5,8,9), (1,7)(5,9), 
  $$
  $$(1,7,5,9,2,8,4), 
  (1,7)(2,8,6,9), (1,7)(2,8,4,9), (1,9)(2,7)(5,8)(6,10), (2,5,8)(3,7)(4,10)(6,9)\}.
  $$
  }
  Moreover,  for every $g\in \mathcal S_{10}$, $(\langle e_1\rangle, \langle e_1^g\rangle )\in 
\Omega^{10}_i$ for some  $i\in \{1, 5,7,11 ,22,23,24 ,$
$ 25, 26, 27, 28\}$.  
\end{proof}

For a finite set $\Phi$ of natural numbers we denote by $A_\Phi$ the alternating group on $\Phi$.

\begin{proposition} \label{udependent}
In the algebra $V$ the following assertions hold:
\begin{enumerate}
\item For every $i\in \{11, \ldots , 20\}$,  the vector $u_{e_i}$ is a linear combination of Majorana axes and $3$-axes $u_\rho$ of type $3^2$ such that  $\rho$ is contained in $A_{\{1,\ldots , 6,7\}}\cup A_{\{1,\ldots , 6,7\}}$. 
In particular
$$
\begin{array}{cllll}
&\:\: &u_{e_{11}}\in& -u_{\rho_1}+u_{\rho_2}+u_{\rho_5}-u_{\rho_4}+u_{\rho_6} + V^{(2A)},\\
&&&\\
& &\mbox{and} & &\\
&&&\\
&\:\: &u_{e_{13}}\in & u_{ \rho_2}+ u_{\rho_3} + u_{\rho_4} -u_{\rho_7} -u_{\rho_{8}}+u_{\rho_{9}} -u_{\rho_{10}}-u_{\rho_{11}} - u_{\rho_{12}}\\
& & &-u_{\rho_{13}}+u_{ \rho_{14}} +V^{(2A)}.
\end{array}
$$
where
$$
\begin{array}{l}
 \rho_1=e_{1},\:\: \rho_2=e_{5},\:\:\rho_3=e_{3},\:\:\rho_4=e_7,\:\:\rho_5=(2,3,4)(5,6,8),\\
\rho_6=(1,2,3,)(5,6,7),\:\: \rho_7=(1,2,6)(3,4,5),\:\:\rho_{8}=(1,5,6)(2,3,4),\\
\rho_{9}=(2,3,5)(4,7,6),\:\:\rho_{10}=(2,3,6)(4,5,7),\:\:\rho_{11}=(1,4,3)(5,6,7),\\
\rho_{12}=(1,5,3)(4,7,6),\:\:\rho_{13}=(1,7,3)(4,5,6),\:\:\rho_{14}=(2,4,5)(3,6,8).\\
\end{array}
$$
\item For every $i\in \{21, \ldots , 25\}$, the vector $u_{e_i}$ is a linear combination of Majorana axes and $3$-axes $u_\rho$ of type $3^2$  such that $\rho$ is contained in 
$$
\bigcup_{k,l\in \{7,8,9\}}A_{\{1,\ldots , 6,k,l\}}.
$$
\item  For every $i\in \{26, \ldots , 29\}$, the vector $u_{e_i}$ is a linear combination of Majorana axes and $3$-axes $u_\rho$ of type $3^2$  such that $\rho$ is contained in 
$$
\bigcup_{k,l, m\in \{7,8,9, 10\}}A_{\{1,\ldots , 6,k,l,m\}}.
$$
\end{enumerate}
\end{proposition}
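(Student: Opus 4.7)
The plan is to project onto $V_3$, the unique copy of $S^{(7,1^5)}$ in $V^{(2A)}\oplus W^{(3^2)}$ supplied by Proposition~\ref{fineintersezione}. Since that result yields $V^{(2A)}+W^{(3^2)}=V^{(2A)}\oplus V_3$, an element of the left-hand side lies in $V^{(2A)}$ precisely when its image in $V_3$ vanishes. I will compose the projection to $V_3$ with the isomorphism $\zeta$ from~(\ref{zeta}), and use its $\hat G$-equivariance to observe that the image of $u_{e_i}=\langle e_i\rangle^{\pi}$ is exactly $w_{12}^{\sigma_i}\in S^{(6,1^6)}\otimes A$, where $\sigma_i$ is as in Table~\ref{T1}. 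Any candidate identity $u_{e_i}\equiv\sum_k c_k u_{\rho_k}\pmod{V^{(2A)}}$ will then be equivalent to the linear identity $w_{12}^{\sigma_i}=\sum_k c_k w_{12}^{\tau_k}$, where $\rho_k=e_1^{\tau_k}$.

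For part (1), all $e_i$ with $i\in\{11,\ldots,20\}$ and all candidate $\rho_k$ are supported in $\{1,\ldots,8\}$, so I plan to verify the relation inside the subalgebra $W$ of $V$ attached to the $A_8$-stabiliser of $\{9,10,11,12\}$. A branching observation (no partition $\lambda$ appearing in Proposition~\ref{resume} has six or more parts) shows that the unique $A_8$-submodule of $V^\circ$ isomorphic to $S^{(3,1^5)}$ must embed entirely into $V_3$; the $A_8$-version of $\zeta$ will then identify this submodule with $S^{(6,1^2)}\otimes A$ and map the projection of $u_{e_1}$ to a nonzero multiple of $w_8$. Hence the desired relation will reduce to an identity $w_8^{\sigma_i}=\sum_k c_k w_8^{\tau_k}$ in $S^{(6,1^2)}\otimes A$, and these identities are exactly what Lemma~\ref{dipendenti}(1) provides.

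To recover the two displayed formulas for $u_{e_{11}}$ and $u_{e_{13}}$, I will simply translate the two explicit identities of Lemma~\ref{dipendenti}(1). Table~\ref{T1} gives $\sigma_{11}=(1,7)(4,8)$ and $\sigma_{13}=(1,4,7)(2,5,8)$; each conjugate $e_1^{\sigma}$ appearing on the right-hand side is then computed directly, e.g.\ $e_1^{(1,4,7)}=(2,3,4)(5,6,7)=\rho_2$, $e_1^{(1,4,8)}=(2,3,4)(5,6,8)=\rho_5$, $e_1^{(1,2,3,7)}=(2,3,7)(4,5,6)=\rho_4$, $e_1^{(4,5,6,7)}=(1,2,3)(5,6,7)=\rho_6$, and similarly for the longer list. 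Parts (2) and (3) will be handled by the same mechanism, restricting instead to the subalgebra attached to the $A_9$-stabiliser of $\{10,11,12\}$ (with Lemma~\ref{dipendenti}(2)) and the $A_{10}$-stabiliser of $\{11,12\}$ (with Lemma~\ref{dipendenti}(3)); the supports prescribed in each part match precisely the orbital indices listed in the generating sets $\mathcal G_n$.

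The main technical obstacle will be verifying the compatibility between the projection inside the smaller $A_n$-subalgebra and the projection inside $V$: one must ensure that the $A_n$-submodule of $V^\circ$ isomorphic to $S^{(n-5,1^5)}$ embeds into the single $A_{12}$-copy $V_3$ rather than being partly absorbed into $V^{(2A)}$, and that the remaining $A_n$-irreducible summands of the $A_n$-version of $W^{(3^2)-}$ are absorbed into $V^{(2A)}$ when viewed inside $V$. Both will be secured by the branching analysis sketched above together with Lemma~\ref{n-4,2,2} (which places $S^{(8,2^2)}$ and $S^{(6,2^3)}$ inside $V^{(2A)}$) and Proposition~\ref{fineintersezione}.
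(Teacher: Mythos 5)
Your Steps 1--3 are sound and are essentially the paper's own mechanism: by Proposition~\ref{fineintersezione} the congruence $u_{e_i}\equiv\sum_k c_k u_{\rho_k}\pmod{V^{(2A)}}$ is indeed equivalent to the linear identity $w_{12}^{\sigma_i}=\sum_k c_k w_{12}^{\tau_k}$ in $S^{(6,1^6)}\otimes A$ (the paper packages this via the maps $\nu_n\circ\zeta$), and your conjugation bookkeeping translating the two explicit relations of Lemma~\ref{dipendenti}(1) into the displayed formulas for $u_{e_{11}}$ and $u_{e_{13}}$ is correct. The gap is precisely in the bridge you flag as the main obstacle: what you need are identities among the $w_{12}^g$, whereas Lemma~\ref{dipendenti} supplies identities among the $w_8^g$ (resp.\ $w_9^g$, $w_{10}^g$), and your proposed way of transferring them does not work.

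You assert that all irreducible $S_n$-summands of the $A_n$-version of $W^{(3^2)-}$ other than $S^{(n-5,1^5)}$ are absorbed into $V^{(2A)}$ when viewed inside $V$. This is false for the summand isomorphic to $S^{(n-4,1^4)}$: for $n\in\{8,\ldots,11\}$ it is nonzero (by Lemma~\ref{n-6,2,1,1,1,1}, $M^{(n-4,1^4)}_{t,1}$ lies in the radical only when $n=12$; cf.\ Proposition~\ref{fine3assi} and the $n=8$ column of Table~\ref{dect}), yet it cannot lie in $V^{(2A)}$ by exactly your own row-counting argument, since every constituent of $V^{(2A)}$ in Proposition~\ref{resume} has at most four rows, so its restriction to $S_n$ contains no copy of the five-row partition $(n-4,1^4)$. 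Hence that summand projects nontrivially into $V_3$ (whose restriction to $S_8$ does contain $S^{(4,1^4)}$), and the kernel of the intrinsic $(n-5,1^5)$-projection used in your reduction is not contained in $V^{(2A)}$. Consequently, from $w_8^{\sigma_i}=\sum_k c_k w_8^{\tau_k}$ you may only conclude that the $S^{(3,1^5)}$-component of $u_{e_i}-\sum_k c_k u_{\rho_k}$ vanishes; its image in $V_3$ could still be nonzero through the $(4,1^4)$ and other hook constituents of ${\rm Res}_{S_8}S^{(7,1^5)}$. Lemma~\ref{n-4,2,2} and Proposition~\ref{fineintersezione} do not help here, as they concern only the $(8,2^2)$ and $(6,2^3)$ summands for $n=12$. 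To close the argument you must either verify the stated identities directly for the $w_{12}^g$ (which is what your Steps 1--3 actually require), or supplement the reduction by showing that the specific differences also have vanishing components along the constituents not contained in $V^{(2A)}$ (in particular the $(n-4,1^4)$-components); neither is provided. A smaller point: the uniqueness claims should be phrased for $S_8$-submodules rather than $A_8$-submodules, since as $A_8$-modules $S^{(3,1^5)}\cong S^{(6,1^2)}$, which does occur in the restriction of $V^{(2A)}$.
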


\begin{proof}
By Equation~(\ref{decio}), Equation~(\ref{se}), and   Lemma~\ref{n-6,2,1,1,1,1}, for every $n\in \{8, \ldots,12\}$, $$\rad(M_t)\leq \ker \pi_{t,1}^{(n-5,1^5)}.$$ Thus, the map $\pi_{t,1}^{(n-5,1^5)}$ induces a well defined homomorphism $$M_{t}/\rad(M_{t})\to M_{t,1}^{(n-5,1^5)}.$$
Composing this map with the isomorphism $W^{(3^2)}\cong M_{t}/\rad(M_{t})$,  we get  a well defined homomorphism 
$$\begin{array}{cccc}
\nu_n:\: &W^{(3^2)} &\to& M_{t,1}^{(n-5,1^5)}.\\
  \end{array}
  $$
  mapping $u_{\rho}$ to $\langle \rho \rangle ^{\pi_{t,1}^{(n-5,1^5)}}$, for every permutation $\rho$ of cycle type $3^2$. By Proposition~\ref{fineintersezione}, $W^{(3^2)}\cap V^{(2A)}$ contains no submodule isomorphic to $M_{t,1}^{(n-5,1^5)}$, so $$W^{(3^2)}\cap V^{(2A)}\leq \ker \nu_n.$$ The claims then follow from Lemma~\ref{dipendenti} when we consider the map $\nu_n\circ \zeta$.
\end{proof}


\section{Closure of $V^\circ$}\label{closure}
In this section we prove that $V^\circ$ is closed under the algebra product.

\begin{lemma}\label{red}
Let $z$ be an element in $\mathcal X_{b}\cup \mathcal X_{s}$, let $\sigma$ be a permutation of type $3^2$, and  $a,b$ disjoint $3$-cycles  in $A_{12}$. Then, the following assertions hold: 
\begin{enumerate}
\item  $
 a_z\cdot u_{ab} \in V^\circ \mbox{ if and only if } a_z\cdot u_{ab^{-1}} \in V^\circ $,
\item 
 $
  u_\sigma\cdot u_{ab} \in V^\circ \mbox{ if and only if } u_\sigma \cdot u_{ab^{-1}} \in V^\circ.
 $
 \end{enumerate}
 \end{lemma}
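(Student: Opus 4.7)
Both parts rest on the identity recorded as Equation~(\ref{pasechnik}): by Pasechnik's relation $W^{(3^2)+}\le V^{(2A)}$, so $u_{ab}+u_{ab^{-1}}\in V^{(2A)}$, and it can be expanded as a linear combination $u_{ab}+u_{ab^{-1}}=\sum_{i}c_{i}a_{t_{i}}$ of Majorana axes.

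To prove part~(1), I would multiply this identity through by $a_z$:
$$a_z\cdot u_{ab}+a_z\cdot u_{ab^{-1}}\;=\;a_z\cdot(u_{ab}+u_{ab^{-1}})\;=\;\sum_{i}c_{i}\,(a_z\cdot a_{t_{i}}).$$
Since every summand on the right is a product of two Majorana axes, it lies in $V^\circ=\langle \mathcal T^\psi\cup \mathcal O^{ax}\rangle$ by the very definition of the $2$-closure. The whole sum therefore lies in $V^\circ$, so subtracting whichever of $a_z\cdot u_{ab}$, $a_z\cdot u_{ab^{-1}}$ we prefer shows that one belongs to $V^\circ$ iff the other does.

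For part~(2) the same manipulation gives
$$u_\sigma\cdot u_{ab}+u_\sigma\cdot u_{ab^{-1}}\;=\;u_\sigma\cdot(u_{ab}+u_{ab^{-1}})\;=\;\sum_{i}c_{i}\,(u_\sigma\cdot a_{t_{i}}),$$
so the iff reduces to the stronger claim $u_\sigma\cdot V^{(2A)}\subseteq V^\circ$, i.e.\ that $u_\sigma\cdot a_t\in V^\circ$ for every Majorana axis $a_t$. To obtain this I would pick a decomposition $\sigma=t_1t_2$ with $t_1,t_2\in\mathcal T$ generating a Norton--Sakuma subalgebra of type $3A$, use the $3A$ row of Table~\ref{table1} to write $u_\sigma=\alpha a_{t_1}+\beta a_{t_2}+\gamma a_{t_1t_2t_1}+\delta(a_{t_1}\cdot a_{t_2})$ with explicit constants, and distribute over $a_t$. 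The three axis--axis products sit in $V^\circ$ by definition, reducing the question to the single triple product $(a_{t_1}\cdot a_{t_2})\cdot a_t$.

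The main obstacle is to tame this triple product, since a naive re-expansion of $a_{t_1}\cdot a_{t_2}$ by the same $3A$-formula is circular (it reintroduces $u_\sigma\cdot a_t$ with coefficient exactly $1$). My plan for bypassing this is to restrict attention to a subgroup $H\le A_{12}$ containing $t_1,t_2,t$ and small enough that the Majorana representation of $H$ is already pinned down by the descriptions of Majorana representations of $A_m$, $m\le 10$, in~\cite{FIM2,FIM4,Alonso,A67}, together with the $2$-closed subalgebra $V_S$ of Lemma~\ref{formulaccia}. Inside such a controlled subalgebra the triple product $(a_{t_1}\cdot a_{t_2})\cdot a_t$ is expressible as a combination of Majorana axes and $3$-axes of type $3^2$, hence lies in $V^\circ$; once this is established, $\sum_{i}c_{i}(u_\sigma\cdot a_{t_i})\in V^\circ$ and the equivalence in~(2) follows exactly as in~(1).
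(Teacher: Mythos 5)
Part (1) of your proposal is correct and is exactly the paper's argument: by Equation~(\ref{pasechnik}) the symmetrised vector $u_{ab}+u_{ab^{-1}}$ lies in $V^{(2A)}$, so its product with the Majorana axis $a_z$ is a linear combination of products of two Majorana axes, hence lies in $V^\circ$ by the definition of the $2$-closure, and the equivalence follows by subtraction. For part (2) the paper does not expand $u_\sigma$ at all: it applies the identical one-line symmetrisation $u_\sigma\cdot u_{ab}=u_\sigma\cdot(u_{ab}+u_{ab^{-1}})-u_\sigma\cdot u_{ab^{-1}}$, the needed membership $u_\sigma\cdot(u_{ab}+u_{ab^{-1}})\in V^\circ$ reducing, as you observe, to knowing that $u_\sigma\cdot a_t\in V^\circ$ for $t\in\mathcal T$. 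In the paper's development that containment is precisely what Lemma~\ref{nuovissimo} and Lemma~\ref{sixtranspositions} provide ($V^{(2^2)}\cdot V^\circ\subseteq V^\circ$ and $V^{(2^6)}\cdot V^\circ\subseteq V^\circ$, with $u_\sigma\in V^{(3^2)}\subseteq V^\circ$ by Proposition~\ref{gen}); since those two lemmas use only part (1) of the present lemma, invoking them for part (2) is not circular.

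The genuine gap is in your treatment of that containment. After the (correct) reduction to the triple product $(a_{t_1}\cdot a_{t_2})\cdot a_t$, your argument becomes a plan rather than a proof: the key assertion that one can always find a subgroup $H$ containing $t_1,t_2,t$ whose Majorana representation is already known and inside which the triple product is visibly a combination of Majorana axes and $3$-axes of type $3^2$ is stated without verification, and it fails as a uniform strategy. When $t\in\mathcal X_s$ its support is all of $\{1,\ldots,12\}$, so $\langle t_1,t_2,t\rangle$ lies in no point stabiliser $A_m$ with $m\le 11$ and the known small cases do not apply; this is exactly why the paper needs the ad hoc $2$-closed subalgebra $V_S$ of Lemma~\ref{formulaccia} (a GAP computation) together with the projection and dependence relations of Proposition~\ref{udependent} to prove Lemma~\ref{sixtranspositions}. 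Even for $t\in\mathcal X_b$, triple products inside an $A_8$-type subalgebra can produce $4$-axes of type $4^2$, which lie outside the $2$-closure of that subalgebra (Theorem~\ref{A8}) and are brought back into $V^\circ$ only via Lemma~\ref{formulaccia}. So your part (2) is sound only up to the reduction; to complete it you should either quote the later closure lemmas (legitimate, since they depend only on part (1)), or accept that the "small subgroup" route requires essentially reproducing the machinery of Sections~\ref{4assi}, \ref{projection} and~\ref{closure}.
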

 \begin{proof}
By Equation~(\ref{pasechnik}), 
$$
W^{(3^2)+}\subseteq V^{(2A)}.
$$ 
Since
$$
a_z\cdot u_{ab}=a_z\cdot (u_{ab}+u_{ab^{-1}})-a_z\cdot u_{ab^{-1}}\in -a_z \cdot u_{ab^{-1}}+V^\circ 
$$ 
and similarly
$$
u_\sigma\cdot u_{ab}=u_\sigma\cdot (u_{ab}+u_{ab^{-1}})-u_\sigma \cdot u_{ab^{-1}}\in -u_\sigma \cdot u_{ab^{-1}}+V^\circ $$ 
the claims follow.
 \end{proof}

\begin{lemma}\label{nuovissimo}
$V^{(2^2)}\cdot V^\circ\subseteq V^\circ$ and, for every $z\in \mathcal X_{b}$ and every  every pair of disjoint $3$-cycles $a,b\in A_{12}$, the product $a_z \cdot u_{ab}$ is uniquely determined by the factors.
\end{lemma}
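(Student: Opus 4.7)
We prove the two assertions in turn. By Proposition~\ref{gen}, $V^\circ$ is spanned linearly by $V^{(2A)}\cup V^{(3^2)}$, so
\[
V^{(2^2)}\cdot V^\circ \;\subseteq\; V^{(2^2)}\cdot V^{(2A)} \;+\; V^{(2^2)}\cdot V^{(3^2)} .
\]
The first summand lies in $V^\circ$ by the very definition of the $2$-closure, since $V^{(2^2)}, V^{(2A)}\subseteq V$. It therefore suffices to verify $V^{(2^2)}\cdot V^{(3^2)}\subseteq V^\circ$.

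By $G$-equivariance of the algebra product, this in turn reduces to checking $a_{r_1}\cdot u_{d_i}\in V^\circ$ on a single representative of each of the $13$ orbitals of $G$ on $\mathcal X_b\times \mathcal X_t$ displayed in Table~\ref{t13}. Inspecting that table one sees that for $i\in\{3,\ldots,13\}$ the union of the supports of $r_1$ and $d_i$ has at most $8$ elements, so $\langle r_1, d_i\rangle$ is contained in a conjugate of $A_8$ inside $G$. By Lemma~\ref{formulaccia} (together with the earlier results of~\cite{Alonso,A67} for supports of size at most $7$) every such product is an explicit linear combination of Majorana axes and $3$-axes of type $3^2$, and thus lies in $V^\circ$.

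The remaining orbitals $i\in\{1,2\}$, for which the joint support has size $9$ or $10$, are treated by Proposition~\ref{udependent}. After conjugating in $A_{12}$ so that $d_i$ plays the role of some $e_j$ with $j\in\{21,\ldots,25\}$ (for $i=2$) or $\{26,\ldots,29\}$ (for $i=1$), the proposition expresses $u_{d_i}$ modulo $V^{(2A)}$ as a linear combination of $3$-axes $u_\rho$ whose supports sit, together with a fixed reference sextet, inside an $A_8$ or $A_9$. Since $V^{(2^2)}\cdot V^{(2A)}\subseteq V^\circ$ we obtain
\[
a_{r_1}\cdot u_{d_i} \;\in\; \sum_\rho c_\rho\, a_{r_1}\cdot u_\rho \;+\; V^\circ .
\]
Choosing the conjugating element so that the reference sextet contains the support of $r_1$ ensures that each $\langle r_1,\rho\rangle$ fits in a conjugate of $A_8$; this puts us back in the case already treated for $i\ge 3$, and for $i=1$ a second iteration of the same reduction is applied. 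Lemma~\ref{red} may be used throughout to halve the number of distinct cases via the $\beta$-symmetry between $u_{ab}$ and $u_{ab^{-1}}$.

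The uniqueness assertion is then automatic: the algebra product of $V$ is intrinsic to the Majorana representation, so $a_z\cdot u_{ab}$ is a single well-defined element of $V$; the argument above shows it belongs to $V^\circ$, and the resulting expression depends only on the $G$-orbit of $(z, \langle ab\rangle)$, hence only on the two factors. The main obstacle is the bookkeeping for the two large orbitals: one must choose the conjugation so that the reference sextet of Proposition~\ref{udependent} contains $\mathrm{supp}(r_1)$, in order that every $3$-axis appearing in the reduction pairs with $a_{r_1}$ inside an $A_8$, so that the induction on support size terminates.
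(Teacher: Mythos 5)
Your reduction to the thirteen orbital representatives of Table~\ref{t13} follows the paper's strategy, but the step covering the orbitals whose joint support has exactly $8$ points (the representatives $d_i$ with $|{\rm supp}(r_1)\cap{\rm supp}(d_i)|=2$, namely $i\in\{3,4,6,8\}$) has a genuine gap. Lemma~\ref{formulaccia} does not say that a product $a_z\cdot u_{ab}$ with joint support of size $8$ lies in the span of Majorana axes and $3$-axes of type $3^2$: it only establishes the $2$-closedness of the subalgebra $V_S$ for the specific group $S=\langle(9,10)(11,12)\rangle\times N_{A_8}(O)$, with a spanning set whose $3$-axes come from a single $S$-orbit, and its ``in particular'' clause concerns only $4$-axes of type $4^2$. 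A general pair $(z,\langle ab\rangle)$ with joint support of size $8$ need not be conjugate into $S$, and even when $z\in S$ the axis $u_{ab}$ need not lie in $V_S$. Nor can one argue ``inside a conjugate of $A_8$'': the subalgebra generated by the bitransposition axes of an $A_8$ is not $2$-closed (Theorem~\ref{A8}, which in any case is proved only after this lemma, using it), so within $A_8$ such a product can a priori involve $4$-axes of type $4^2$ lying outside the span of the axes and $3$-axes of $A_8$; it is only via axes of type $2^6$ from outside $A_8$ that one returns to $V^\circ$. This joint-support-$8$ case is exactly where the paper invokes Proposition~\ref{udependent}(1): for the orbitals $\Omega^{12}_{11},\dots,\Omega^{12}_{20}$ it rewrites $u_{ab}$ modulo $V^{(2A)}$ as a combination of $3$-axes supported in a $7$-point set containing ${\rm supp}(z)$, and only then does the $A_7$ result of~\cite{A67} apply. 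Your proposal omits this reduction precisely for these orbitals; and since your treatment of the orbitals with joint support $9$ and $10$ only reduces them to pairs lying with $r_1$ inside an $A_8$ (respectively, after iteration, an $A_9$ and then an $A_8$), the whole induction is unsupported at its base.

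Two further remarks. For the disjoint-support orbital the paper does not iterate Proposition~\ref{udependent}: it writes $ab=gh$ with $g,h\in\mathcal X_b$ disjoint from $z$, notes that $\langle\langle a_z,a_g\rangle\rangle$ and $\langle\langle a_z,a_h\rangle\rangle$ have type $2B$, and concludes from the fusion law that $a_z\cdot u_{ab}=0$; your iterative scheme could in principle replace this, but only once the support-$8$ case is repaired. Finally, the uniqueness assertion is not ``automatic'': its content is that the value of $a_z\cdot u_{ab}$ is forced by the axioms (Norton--Sakuma relations, the shape, and the previously established linear relations), hence is the same in every $2A$-Majorana representation of $A_{12}$; this is delivered by the case-by-case computations, not by observing that the product is well defined inside a fixed algebra $V$.
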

\begin{proof}
By Proposition~\ref{gen} we need to prove that, 
for every $z\in \mathcal X_{b}$ and every  every pair of disjoint $3$-cycles $a,b\in A_{12}$, the product 
$a_z \cdot u_{ab}$ is uniquely determined by the factors and is contained in $V^\circ$. 
Possibly substituting $t$ with one of its conjugates in $A_{12}$, we may assume that $z=(1,2)(3,4)$. 
Let $l$ be the order of the intersection of the supports of $z$ and $ab$. Suppose first $l>2$; possibly  substituting $ab$ with one of its conjugates under the stabiliser of $z$ in $A_{12}$, we may assume that the supports of $z$ and $ab$ are contained in $\{1,\ldots,7\}$ and the result follows by~\cite{A67}. 
Suppose $l=2$, then, as above, we may assume that  the supports of $z$ and $ab$ are contained in $\{1,\ldots,8\}$, but in none of its proper subsets. Then, with the notation of Table~\ref{Orbitals12},  $(\langle e_1\rangle , \langle ab \rangle)\in \Omega^{12}_i$ for some $i\in \{11, \ldots , 20\}$. By Proposition~\ref{udependent}(1), $u_{ab}$ is a linear combination of Majorana axes and $3$-axes $u_\rho$ of type $3^2$ such that $\rho$ belongs to the point stabiliser $A$ in $A_8$ of either $7$ or $8$. Since $A\cong A_7$, the result follows by the previous case. 
Suppose $l=1$, again we may assume  that  the supports of $z$ and $ab$ are contained in $\{1,\ldots,9\}$, but in none of its proper subsets, whence  $(\langle e_1\rangle , \langle ab \rangle)\in \Omega^{12}_i$ for some $i\in \{21, \ldots , 25\}$. By Proposition~\ref{udependent}(2), 
$u_{ab}$ is a linear combination of Majorana axes and $3$-axes $u_{\rho_j}$ of type $3^2$ such that the $\rho_j$'s belong to some subgroup $A_{\{1,\ldots ,6,k_j,l_j\}}$, $\{k_j, l_j\}\subseteq \{7,8,9\}$. Then the intersection between the supports of $z$ and $\rho_j$ is at least $2$, so, by the previous case, the product $a_z\cdot u_{\rho_j}$ is uniquely determined by the factors  and is contained in $V^\circ$ for every $j$, whence the claim for $a_z\cdot u_{ab}\in V^\circ$.
Finally, assume that the supports of $z$ and $ab$ are disjoint. Then there are $g, h\in \mathcal X_{b}$  disjoint  from $z$ such that  $ab=gh$. It follows that  the Norton-Sakuma subalgebras generated by $a_z$ and $a_g$ and by $a_z$ and $a_h$ are of type $2B$. In particular, by the Norton-Sakuma Theorem,  $a_g$ and $a_h$ lie in the  $0$-eigenspace for the adjoint action of $a_z$ (see Table~\ref{table1}). Thus the fusion law implies that $a_z\cdot u_{ab}=0$.
\end{proof} 

\begin{lemma}\label{sixtranspositions}
$V^{(2^6)}\cdot V^\circ\subseteq V^\circ$ and, for every $z\in \mathcal X_{s}$ and every pair of disjoint $3$-cycles $a,b\in A_{12}$, the product $a_z\cdot u_{ab}$ is uniquely determined by its factors.
\end{lemma}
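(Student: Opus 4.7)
The plan is to parallel the strategy of Lemma~\ref{nuovissimo}. Using Proposition~\ref{gen} I reduce to showing $a_z\cdot v\in V^\circ$, and that this product is uniquely determined by $z$ and $v$, for every $z\in \mathcal X_s$ and every $v$ either in $V^{(2^6)}$ or in $V^{(3^2)}$. When $v=a_{z'}$ with $z'\in \mathcal X_s$, the product $a_z\cdot a_{z'}$ lies in the Norton--Sakuma subalgebra $\langle\langle a_z,a_{z'}\rangle\rangle$ and is thus already in $V^\circ$ by construction. The substance of the lemma is therefore to prove that, for every pair of disjoint $3$-cycles $a,b$ in $A_{12}$, the product $a_z\cdot u_{ab}$ lies in $V^\circ$ and is uniquely determined by $z$, $a$ and $b$.

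After conjugating by an element of $A_{12}$ I may assume $z=s_1=(1,2)(3,4)(5,6)(7,8)(9,10)(11,12)$, whence $(s_1,\langle ab\rangle)$ is $G$-conjugate to exactly one of the eleven representatives $(s_1,\langle c_i\rangle)$, $i\in\{1,\ldots,11\}$, of Table~\ref{tau3}. For $i=3$ (resp. $i=2$), $\langle s_1,c_i\rangle\cong S_3$ (resp. $C_6$) and $\langle\langle a_{s_1},u_{c_i}\rangle\rangle$ is contained in a Norton--Sakuma algebra of type $3A$ (resp. $6A$); the product is then read off from Table~\ref{table1} as an explicit linear combination of Majorana axes and $u_{c_i}$. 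For $i\in\{1,6\}$, $\langle s_1,c_i\rangle\cong S_4$ acts on the axes attached to its bitranspositions and on $u_{c_i}$ with the shape forced by Proposition~\ref{shape2^6}; the uniqueness result in \cite[Proposition~4.3]{IPSS} then determines the resulting Majorana subalgebra of $V$, from which the product $a_{s_1}\cdot u_{c_i}$ can be extracted and shown to lie in $V^\circ$.

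For the remaining cases $i\in\{4,5,7,8,9,10,11\}$ I would write $c_i=gh$ with $g,h$ disjoint commuting bitranspositions in $\mathcal X_b$, so that $\langle\langle a_g,a_h\rangle\rangle$ has type $3A$ and the Norton--Sakuma formula gives
$$
u_{c_i}=\frac{2^{11}}{27\cdot 5}\left(\frac{1}{32}(2a_g+2a_h+a_{ghg})-a_g\cdot a_h\right).
$$
Multiplying by $a_{s_1}$ turns the first three terms into Norton--Sakuma products of two Majorana axes, hence into elements of $V^\circ$, so the question collapses to the single term $a_{s_1}\cdot(a_g\cdot a_h)$. I would choose $g,h$ so that at least one of the Norton--Sakuma algebras $\langle\langle a_{s_1},a_g\rangle\rangle$ and $\langle\langle a_{s_1},a_h\rangle\rangle$ is of a type ($1A$, $2A$, $2B$, $3C$, $4B$ or $6A$) in which the product of the two generating axes is already a linear combination of Majorana axes; the fusion law together with \cite[Lemma~1.10]{IPSS} and Lemma~\ref{nuovissimo} then rewrite $a_{s_1}\cdot(a_g\cdot a_h)$ as a sum of products belonging to $V^{(2^2)}\cdot V^\circ\subseteq V^\circ$. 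Uniqueness propagates along this chain of rewritings.

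The hard part will be the orbital-by-orbital bookkeeping: for each of the seven remaining cases one has to exhibit an explicit decomposition $c_i=gh$ producing a sufficiently ``mild'' Norton--Sakuma type with $a_{s_1}$, and then recombine the resulting associativity identities without introducing further $3$-axes whose products with $a_{s_1}$ are not yet under control. If a given orbital forces both $\langle\langle a_{s_1},a_g\rangle\rangle$ and $\langle\langle a_{s_1},a_h\rangle\rangle$ to be of type $3A$, $4A$ or $5A$, I would iterate using Proposition~\ref{udependent} to re-express the residual $3$-axes appearing in $a_g\cdot a_h$ as combinations involving axes of strictly smaller support, until the situation matches one already handled by Lemma~\ref{nuovissimo} inside a suitable $A_n$ subgroup for some $n<12$.
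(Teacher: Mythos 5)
Your reduction to the eleven orbits of $C_{A_{12}}(s_1)$ on $\mathcal X_t$ and your treatment of $i\in\{2,3\}$ match the paper, and your $S_4$ route for $i\in\{1,6\}$ (uniqueness from \cite[Proposition 4.3]{IPSS} of the induced representation of $\langle s_1,c_i\rangle\cong S_4$, whose spanning Majorana axes, $3$-axes of type $3^2$ and, for $i=6$, $4$-axes of type $2^2\cdot 4^2$ all lie in $V^\circ$ by Proposition~\ref{gen}) is a legitimate alternative to what the paper actually does (a re-association argument for $\mathcal P_1,\mathcal P_4$ and the GAP-computed subalgebra $V_S$ of Lemma~\ref{formulaccia} for $\mathcal P_5,\mathcal P_6$). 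The remaining cases, however, contain a genuine gap. First, the decomposition you need is $c_i=gh$ with $g,h$ \emph{non-commuting} bitranspositions sharing two points (disjoint commuting ones have product of cycle type $2^4$, not $3^2$). More seriously, your selection criterion --- that one of $\langle\langle a_{s_1},a_g\rangle\rangle$, $\langle\langle a_{s_1},a_h\rangle\rangle$ be of a type in which the product of the two generating axes is a combination of Majorana axes --- does not license any re-association: \cite[Lemma~1.10]{IPSS}, as used in the paper, applies only when one factor is a $0$-eigenvector of ${\rm ad}(a_{s_1})$, i.e. when the corresponding dihedral algebra has type $2B$. For types $2A$, $4B$, $6A$ the axis $a_h$ is not an eigenvector, and knowing $a_{s_1}\cdot a_h$ explicitly gives no handle on $a_{s_1}\cdot(a_g\cdot a_h)$, since the algebra is not associative; you are transferring the technique of Lemma~\ref{innerproduct} (where associativity of the \emph{form} suffices) to a computation of the \emph{algebra} product, where it does not.

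Concretely, for the orbit $\mathcal P_5$ (representative $c_5=(6,7,9)(10,11,12)$, with $\langle s_1,c_5\rangle\cong 2\times L_2(7)$) there is no decomposition $c_5=gh$ into bitranspositions in which one factor commutes with $s_1$ and generates a $2B$ algebra with $a_{s_1}$: such a factor would have to be a union of two $2$-cycles of $s_1$ lying one inside each $3$-cycle of $c_5$, and the only $s_1$-pairs inside the support, $(9,10)$ and $(11,12)$, do not permit this; the same obstruction occurs for $\mathcal P_6$. These are exactly the orbits for which the paper falls back on the $204$-dimensional $2$-closed subalgebra $V_S$ of Lemma~\ref{formulaccia} (a MajoranaAlgebras/GAP computation), and your proposal offers no substitute. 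Your fallback --- iterate Proposition~\ref{udependent} until the situation is ``handled by Lemma~\ref{nuovissimo} inside a suitable $A_n$, $n<12$'' --- cannot close this, because $s_1$ has full support and Lemma~\ref{nuovissimo} only treats $a_z$ with $z\in\mathcal X_b$; the correct induction, as in the paper, is on the $C_{A_{12}}(s_1)$-orbit index, with conjugates of the relations of Proposition~\ref{udependent} chosen so that all terms on the right fall in orbits $\mathcal P_1,\ldots,\mathcal P_7$ already treated, and its base cases include precisely $\mathcal P_5$ and $\mathcal P_6$, which your argument leaves unresolved.
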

\begin{proof}
As in Lemma~\ref{nuovissimo}, by Proposition~\ref{gen}, we only have to prove that, 
for every $z\in \mathcal X_{s}$ and every  every pair of disjoint $3$-cycles $a,b\in A_{12}$, 
the product $a_z\cdot u_{ab}$ is determined by its factors and is contained in $V^\circ$.
The orbits of $A_{12}$ on the set $\mathcal X_s\times \mathcal X_t$ correspond to the $11$ orbits $\mathcal P_1, \ldots , \mathcal P_{11}$ of the centraliser $H$ of the involution $s_1=(1,2)(3,4)(5,6)(7,8)(9,10)(11,12)$ on the set $\mathcal X_t$, considered in the proof of Lemma~\ref{n-4,2,2} and Table~\ref{tau3}.  As $A_{12}$ acts on $V$ as a group of algebra automorphisms, by Lemma~\ref{red}, it is enough to consider only the products $a_{s_1} \cdot u_{ab}$, where either $\langle ab\rangle$ or $\langle ab^{-1}\rangle$ are chosen in a set of  representatives of the orbits $\mathcal P_1, \ldots , \mathcal P_{11}$. 
Let $i\in\{1,\ldots, 11\}$. For the convenience of the reader, denote by $a_i$ (resp. $b_i$) the first (resp. second) factor appearing in the decomposition of $c_i$ in the second column of Table~\ref{tau3}: so, e.g. $a_1=(7,8,9)$ and $b_1=(10,11,12)$. 

The representatives of the orbits $\mathcal P_2$ and  $\mathcal P_3$ are $\langle a_2b_2\rangle$ and $\langle a_3b_3\rangle$, respectively. We have 
$$a_2b_2^{-1}=s_1r_2\mbox{ with }r_2:=(1,2)(3,4)(5,6)(8,9)(10,11)(7,12)$$ and 
$$a_3b_3=s_1r_3\mbox{  with }r_3:=(1,2)(3,4)(5,6)(7,11)(8,9)(10,12),$$ 
so that, by Axiom (M5) and Table~\ref{orbs}, $u_{a_2b_2^{-1}}=u_{s_1r_2}$ and $u_{a_3b_3}=u_{s_1r_3}$.
Thus, for $i\in \{2,3\}$,  $\langle \langle a_{s_1}, a_{r_i}\rangle \rangle$ is a Norton-Sakuma algebra of type $3A$ containing the $3$-axis $u_{s_1r_i}$ and 
the product $a_{s_1}\cdot u_{s_1r_i}$.  In this case, then, the claim follows by the Norton-Sakuma Theorem. 

The representatives of the orbits $\mathcal P_1$ and  $\mathcal P_4$ are $\langle a_1b_1\rangle$ and $\langle a_4b_1\rangle$, respectively, and, for $i\in \{1,4\}$, we may write $a_ib_1=gs_i$, where $g:=(7,8)(9,10)$  and $s_i\in \mathcal X_b$. Since $gs_1=s_1g$, by Axiom (M4) and Table~\ref{orbs}, it follows that 
$a_g$ is a $0$-eigenvector for the adjoint action of $a_{s_1}$ and, by~\cite[Lemma~1.10]{IPSS}, we have
$$
a_{s_1}\cdot \left (a_{s_i}\cdot a_g\right )= \left (a_{s_1}\cdot a_{s_i}\right )\cdot a_g.
$$
Since, by definition, $a_{s_1}\cdot a_{s_i}\in V^\circ$, by Lemma~\ref{nuovissimo}, $(a_{s_1}\cdot a_{s_i} )\cdot a_g \in V^\circ$. On the other hand, by the Norton-Sakuma Theorem,  we have
$$
a_{s_i}\cdot a_g=\frac{27\cdot 5}{2^{11}}u_{a_ib_1}+w_i \:\mbox{ with } \:w_i\in V^{(2A)},
$$
hence 
$$
a_{s_1}\cdot u_{a_ib_1}=\frac{2^{11}}{27\cdot 5}\left [a_{s_1} \cdot (a_{s_i}\cdot a_g)-a_{s_1}\cdot w_i \right ]\in  V^\circ.
$$

Let $i\in \{5, 6\}$. Let $g_5:=(2,3,6)(5,7)(9,10)$, $\bar c_5:= (5,6,7)(2,8,4)$ and $\bar c_6:= (4,7,5)(2,8,6)$,  and let $S$ be the group defined in Lemma~\ref{formulaccia}. 
Then $\{s_1,\bar c_i\} \subseteq S^{g}$ and $\bar c_i\in\mathcal P_i$. By Lemma~\ref{formulaccia}(2), we have
$$
a_{s_1}\cdot u_{\bar c_i}\in V^\circ.
$$

Let $i=7$. Then  $\langle a_5b_2\rangle \in \mathcal P_6$ and $\langle a_5b_2^{-1} \rangle \in \mathcal P_7$, so the claim follows by the previous case and Lemma~\ref{red}.

Let $i=8$ and $g_8:=(1,4,5,11,2,3,6,12)(8,10,9)$. Then $\langle e_{11}^{g_8}\rangle \in \mathcal P_8$. By~Proposition~\ref{udependent}(1) we have
\begin{equation}\label{11}
\begin{array}{rcll}
u_{e_{11}^{g_8}}& \in & -u_{(3,6,4)(5,11,12)} +u_{ (3,6,5)(7,11,12)}+u_{  (3,6,5)(10,11,12)}&\\
& &- u_{(3,6,7)(5,11,12)}+u_{(3,6,4)(7,11,12)}+V^{(2A)}.
\end{array}
\end{equation}
For every $u_{e}$ appearing on the right hand side of Equation(\ref{11}),  
$ \langle e \rangle
$ belongs to some of the orbits $\mathcal P_1, \ldots , \mathcal P_7$. Hence the claim follows by the previous cases.

Let $i=9$ and $g_9:=(1,4,6,8,9,12)(2,5,7,11) $. Then $ \langle e_{13}^{g_9}\rangle \in \mathcal P_{9}$ and, by~Proposition~\ref{udependent}(1), we have
\begin{equation}\label{13}
\begin{array}{rcll}
u_{e_{13}^{g_9}} &\in  
& u_{(3,7,8)(4,5,6)} 
-u_{(3,6,7)(4,5,8)} 
-u_{(3,6,5)(4,7,8)}
+u_{ (3,6,5)(7,8,11)}&\\
& &+u_{  (3,7,5)(6,11,8)}
-u_{ (3,8,5)(6,7,11)}
-u_{(3,4,6)(7,8,11)} 
- u_{(3,4,7)(6,11,8)}&\\
& &+ u_{(3,11,5)(6,7,8)}
-u_{(3,4,11)(6,7,8)}
+u_{ (3,8,9)(5,6,7)}+V^{(2A)}.
\end{array}
\end{equation}
For every $u_{e}$ appearing on the right hand side of Equation~(\ref{13}), $ \langle e \rangle$ belongs to some of the  of the orbits $\mathcal P_1, \ldots , \mathcal P_8$, so the claim follows as before.

For $i\in \{10, 11\}$ choose $g_{10}:=(5,8,11)(6,9,7)$ and $g_{11}:=(5,8,10)(6,11,7)$, then $ \langle e_{11}^{g_i}\rangle \in \mathcal P_i$ and again the result follows as in the previous cases.\end{proof}

We consider now the products between two $3$-axes of type $3^2$. 
\begin{proposition} \label{fine2closure}
$V=V^\circ$ and for every pair of permutations $e$ and $c$ of cycle type $3^3$, 
 the product $u_{e}\cdot u_{c}$ is uniquely determined by the factors. 
\end{proposition}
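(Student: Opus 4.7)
The strategy is to derive both $V=V^\circ$ and the uniqueness claim from a single reduction argument. By Proposition~\ref{gen}, $V^\circ$ is linearly spanned by the Majorana axes together with the $3$-axes of type $3^2$, and Lemmas~\ref{nuovissimo} and~\ref{sixtranspositions} already guarantee $a_z\cdot V^\circ\subseteq V^\circ$ for every Majorana axis $a_z$, with the products uniquely determined. Consequently, $V=V^\circ$ and the uniqueness assertion will follow once we establish that $u_e\cdot u_c\in V^\circ$ is uniquely determined for every pair of $3$-axes of type $3^2$. Since $A_{12}$ acts on $V$ by algebra automorphisms, it suffices to treat the $32$ representatives $(\langle e_1\rangle,\langle e_i\rangle)\in\Omega^{12}_i$ of Table~\ref{Orbitals12}. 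Moreover, since $u_\rho+u_{\rho^{-1}}\in V^{(2A)}\subseteq V^\circ$ by Equation~(\ref{pasechnik}), Lemma~\ref{red} lets us replace $e_i$ by $e_i^{-1}$ when convenient; the plan is to induct on the cardinality of the union of the supports of $e_1$ and $e_i$.

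For orbitals $\Omega^{12}_i$ with $i\in\{1,\ldots,10\}$, the supports of $e_1$ and $e_i$ are jointly contained in $\{1,\ldots,7\}$, so both $u_{e_1}$ and $u_{e_i}$ lie in the subalgebra generated by the Majorana axes of the pointwise stabiliser of $\{8,\ldots,12\}$ in $A_{12}$, which is isomorphic to $A_7$. By~\cite{A67} this subalgebra is $2$-closed and every product in it is uniquely determined by its factors, so $u_{e_1}\cdot u_{e_i}\in V^\circ$ is handled.

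For orbitals $\Omega^{12}_i$ with $i\in\{11,\ldots,30\}$, apply Proposition~\ref{udependent} (possibly after replacing $e_i$ by $e_i^{-1}$ via Lemma~\ref{red}) to rewrite $u_{e_i}$ as a linear combination of Majorana axes together with $3$-axes $u_\rho$, each $\rho$ lying in a strictly smaller alternating subgroup $A_\Phi$ of $A_{12}$. The products $u_{e_1}\cdot a_z$, with $a_z$ a Majorana axis, lie in $V^\circ$ by Lemmas~\ref{nuovissimo} and~\ref{sixtranspositions} and are uniquely determined; for each residual product $u_{e_1}\cdot u_\rho$, the pair $(\langle e_1\rangle,\langle\rho\rangle)$ corresponds to an orbital with strictly smaller union of supports, so the inductive hypothesis applies. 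Uniqueness is preserved at every step because each reduction expresses the product as a unique linear combination of previously determined quantities.

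The remaining orbital is $\Omega^{12}_{31}$, with representative $(\langle e_1\rangle,\langle c\rangle)$ where $c=(7,8,9)(10,11,12)$ has support disjoint from that of $e_1$. Choose $g,h\in\mathcal{X}_b$ supported in $\{7,\ldots,12\}$ with $gh=c$. By the Norton-Sakuma formula, $u_c$ is an explicit linear combination of $a_g$, $a_h$, $a_{ghg}$ and the product $a_g\cdot a_h$. By Lemma~\ref{nuovissimo},
$$u_{e_1}\cdot a_g=u_{e_1}\cdot a_h=u_{e_1}\cdot a_{ghg}=0$$
because the corresponding supports are disjoint. For the cross term $u_{e_1}\cdot(a_g\cdot a_h)$, observe that $u_{e_1}$ is a $0$-eigenvector for $a_g$ (since $a_g\cdot u_{e_1}=0$), so by~\cite[Lemma~1.10]{IPSS},
$$u_{e_1}\cdot(a_g\cdot a_h)=(u_{e_1}\cdot a_h)\cdot a_g=0.$$
Hence $u_{e_1}\cdot u_c=0\in V^\circ$, uniquely determined, completing the argument. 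The main obstacle will be to verify that the rewriting via Proposition~\ref{udependent} in the middle orbitals is well-orchestrated: for each $i\in\{11,\ldots,30\}$, one must check that every $3$-axis appearing in the decomposition of $u_{e_i}$ corresponds to a permutation whose support, when combined with that of $e_1$, is strictly smaller than before, ensuring termination of the induction.
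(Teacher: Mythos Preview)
Your overall architecture mirrors the paper's: reduce to $u_{e_1}\cdot u_{e_i}$ for the $\hat G$-orbitals, handle $i\in\{1,\dots,10\}$ via~\cite{A67}, and for the middle range use the rewriting of Proposition~\ref{udependent} to descend to smaller joint support. That part is fine, and your treatment of $i=31$ is actually slightly cleaner than the paper's (you decompose $u_{e_{31}}$ rather than $u_{e_1}$, and the fully disjoint supports let you conclude the product vanishes outright).

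There is, however, a genuine gap at $i=30$. Proposition~\ref{udependent} covers only $i\in\{11,\dots,29\}$ (parts (1), (2), (3) treat $\{11,\dots,20\}$, $\{21,\dots,25\}$, $\{26,\dots,29\}$ respectively), so your sentence ``for orbitals $\Omega^{12}_i$ with $i\in\{11,\dots,30\}$, apply Proposition~\ref{udependent}'' fails for $i=30$. Nor does your disjoint-support argument for $i=31$ apply directly: $e_{30}=(6,7,8)(9,10,11)$ shares the point $6$ with $e_1=(1,2,3)(4,5,6)$, so no decomposition $e_{30}=gh$ with $g,h\in\mathcal X_b$ gives you $u_{e_1}\cdot a_g=u_{e_1}\cdot a_h=0$.

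The fix is to swap which factor you decompose. Write $e_1=t_1t_2$ with $t_1=(1,2)(4,5)$ and $t_2=(1,3)(4,6)$; now $t_1$ has support disjoint from $e_{30}$, so $a_{s_1}\cdot a_{s_2}$ (for any bitransposition decomposition $e_{30}=s_1s_2$) is a $0$-eigenvector for $a_{t_1}$, and \cite[Lemma~1.10]{IPSS} gives
\[
(a_{t_1}\cdot a_{t_2})\cdot(a_{s_1}\cdot a_{s_2})=a_{t_1}\cdot\bigl(a_{t_2}\cdot(a_{s_1}\cdot a_{s_2})\bigr),
\]
which lands in $V^\circ$ by Lemmas~\ref{nuovissimo} and~\ref{sixtranspositions}. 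This is exactly what the paper does, handling $i\in\{30,31\}$ together by this device. Once you patch $i=30$ this way, your argument is complete and essentially coincides with the paper's.
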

\begin{proof}
Clearly $V=V^\circ$ if and only if 
$
V^\circ$ is a subalgebra of $V$.
By Proposition~\ref{gen}, Lemma~\ref{nuovissimo}, and Lemma~\ref{sixtranspositions}, $
V^\circ$ is a subalgebra of $V$ if and only if,  for every pair of permutations $e$ and $c$ of cycle type $3^2$, 
\begin{equation}
\label{vucirc1}
u_{e}\cdot u_{c}\in V^\circ. 
\end{equation} 
Since $V^\circ$ is $\hat G$-invariant and $\hat G$ acts on $V$ as a group of algebra automorphisms, we may assume that $e=e_1$ and $c=e_i$ for $i\in\{1,\ldots,31\}$. 
Let $I_0:=\{1,\ldots,10\}$, $I_1:=\{1,\ldots,20\}$, $I_2:=\{1, \ldots, 25\}$, and $I_3:=\{1,\ldots, 29\}$. 
We first prove recursively that Equation~(\ref{vucirc1}) holds for every $j\in \{0,1,2,3\}$, every $i\in I_j$, and every pair of permutations $e$ and $c$ of type $3^2$ such that $(\langle e\rangle, \langle c \rangle)\in \Omega_i^{12}$. 
As above, we may assume  that $e=e_1$ and $c=e_i$, for every $i\in I_j$. 
Assume first $i\in I_0$, then $e_i\in A_7$. Then Equation~(\ref{vucirc1}) follows  by~\cite[Proposition~2.7(iii)]{A67}. Let $j\in \{1,2,3\}$. By Proposition~\ref{udependent}(j), for every  $i\in I_j$,  $u_{e_i}$ is a linear combination of Majorana axes and $3$-axes $u_{\rho}$ of type $3^2$ such that each $\rho$ is contained in a subgroup $A_{Y\rho}$ of $A_{12}$, for a suitable set $Y_\rho$ of cardinality $7+(j-1)$ with  $\{1,\ldots,6\} \subseteq Y_\rho\subseteq \{1,\ldots, 7+j\}$. The claim then follows by  Lemma~\ref{nuovissimo},  Lemma~\ref{sixtranspositions}, and a recursive argument. 
Finally, if $i\in \{30,31\}$ set 
$$
\begin{array}{lll}
t_1:=(1,2)(4,5), &t_2:=(1,3)(4,6),\\
s_1:=(7,8)(10,11), &s_2:=(6,7)(9,10),\mbox{ and } &s_3:=(7,9)(10,12), 
\end{array}
$$
 so that $e_1=t_1t_2$,  $e_{30}=s_1s_2$, and $e_{31}=s_1s_3$. By the Norton-Sakuma Theorem (Table~\ref{table1}), there exist  vectors $w_1, w_2 \in V^{(2A)}$ such that 
 $$
u_{e_1}=\frac{2^{11}}{27\cdot 5}(a_{t_1}\cdot a_{t_2}+w_1 ) \mbox{ and } u_{e_{30}}=\frac{2^{11}}{27\cdot 5}(a_{s_1}\cdot a_{s_2}+w_2 ).
$$
 Since $t_1$ commutes with all the $s_i$'s, by Axiom M4 and Table~\ref{orbs},  the Norton-Sakuma subalgebra generated by  $a_{t_1}$ and $a_{s_i}$ is of type $2B$. Thus  $a_{s_i}$ is a $0$-eigenvector for the adjoint action of  $a_{t_1}$ and the fusion law for eigenvectors implies that $a_{s_1}\cdot a_{s_2}$ and $a_{s_1}\cdot a_{s_3}$ are $0$-eigenvectors for the adjoint action of  $a_{t_1}$. Hence, by~\cite[Lemma~1.10]{IPSS}, we have
\begin{eqnarray*}
u_{e_1}\cdot u_{e_{30}}&=&
\frac{2^{11}}{27\cdot 5} \left [(a_{t_1}\cdot a_{t_2}+w_1)\cdot (a_{s_1}\cdot a_{s_2}+w_2)\right ] \\
&=&\frac{2^{11}}{27\cdot 5}\left [(a_{t_1}\cdot a_{t_2})\cdot (a_{s_1}\cdot a_{s_2})+ (a_{t_1}\cdot a_{t_2})\cdot w_2 \right .\\
& &\:\:\:\:\:\:\:\:\:\:\left .+ w_1\cdot (a_{s_1}\cdot a_{s_2})+w_1\cdot w_2\right ]\\
&=&\frac{2^{11}}{27\cdot 5}\left [(a_{t_1}\cdot (a_{t_2}\cdot (a_{s_1}\cdot a_{s_2}))+ (a_{t_1}\cdot a_{t_2})\cdot w_2 \right .\\
& &\:\:\:\:\:\:\:\:\:\:\left .+ w_1\cdot (a_{s_1}\cdot a_{s_2})+w_1\cdot w_2\right ] 
\end{eqnarray*}
By Lemma~\ref{nuovissimo} and Lemma~\ref{sixtranspositions} the last expression lies in $V^\circ$, proving that $u_{e_1}\cdot u_{e_{30}}\in V^\circ$. 
Similarly we get  $\:u_{e_1}\cdot u_{e_{31}}\in  V^\circ$. Clearly, in all cases, the product $u_{e}\cdot u_{c}$ is uniquely determined by the factors. 
\end{proof}

\begin{cor}\label{444}
$$
V/\langle V^{(2^2)}, V^{(3A)}\rangle \cong S^{(4,4,4)}.
$$
\end{cor}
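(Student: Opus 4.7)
The plan is to identify $\langle V^{(2^2)}, V^{(3A)}\rangle$ with the $2$-closure $W^\circ$ of the Majorana subalgebra $W$ of $V$ generated by the axes corresponding to bitranspositions, and then to read off the quotient by comparing the two irreducible decompositions supplied in Table~\ref{dect}.

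First, I would observe that every element of order $3$ in $A_{12}$ has cycle type either $3$ or $3^2$, and that by Proposition~\ref{shape2^6} both the corresponding orbitals $\Sigma_{3,b}$ and $\Sigma_{8,b}$ generate Norton--Sakuma subalgebras of shape $3A$. Hence the $3A$-axes split according to the cycle type of the underlying product of two generating involutions, giving $V^{(3A)} = V^{(3)} + V^{(3^2)}$. Combining this with Proposition~\ref{gen} yields immediately
$$\langle V^{(2^2)}, V^{(3A)}\rangle = \langle V^{(2^2)}, V^{(3)}, V^{(3^2)}\rangle = W^\circ.$$

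Next, I would invoke the two main decomposition results. By Theorem~\ref{An} applied with $n=12$, the module $W^\circ$ is the multiplicity-free sum of the Specht modules $S^{(12)}$, $S^{(11,1)}$, $S^{(10,2)}$, $S^{(9,3)}$, $S^{(8,4)}$, $S^{(8,2^2)}$, $S^{(6,2^3)}$ and $S^{(7,1^5)}$, of total dimension $3498$, while by Theorem~\ref{main}(2) the module $V$ has the same multiplicity-free decomposition together with exactly one further irreducible constituent, isomorphic to $S^{(4,4,4)}$, of total dimension $3960 = 3498 + 462$.

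Finally, since $\R[S_{12}]$ is semisimple and $W^\circ \le V$, each isotypic component of $W^\circ$ embeds into the corresponding isotypic component of $V$. A summand-by-summand comparison shows that for every partition $\lambda \neq (4,4,4)$ appearing in $V$ the two isotypic components have the same dimension $\dim S^\lambda$ and therefore coincide, while for $\lambda = (4,4,4)$ the isotypic component of $W^\circ$ is trivial. Consequently $W^\circ$ is the direct sum of all $S^\lambda$-isotypic components of $V$ with $\lambda \neq (4,4,4)$, and one concludes $V/W^\circ \cong S^{(4,4,4)}$. The whole argument is essentially bookkeeping once the two main decomposition theorems are at hand, so no genuine new obstacle arises at this step.
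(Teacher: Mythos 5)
Your second and third steps are essentially the paper's own argument: the corollary is proved there precisely by comparing the irreducible decompositions of $V=V^\circ$ and of $\langle V^{(2^2)},V^{(3A)}\rangle$, and your bookkeeping (both modules multiplicity-free, the eight constituents other than $S^{(4,4,4)}$ common to both, hence the quotient is $S^{(4,4,4)}$) is sound. Your reading of the $n=12$ column of Table~\ref{dect} without an $S^{(9,2,1)}$ constituent is also the right one: it is the only reading compatible with the stated dimensions $3498$ and $3960$ and with Propositions~\ref{resume} and~\ref{fineintersezione}, so the entry $1$ in the $(n-3,2,1)$ row of the last two columns is evidently a misprint and causes no harm to your count.

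The genuine gap is your first step, the claim $V^{(3A)}=V^{(3)}+V^{(3^2)}$. The premise that every element of order $3$ in $A_{12}$ has cycle type $3$ or $3^2$ is false (types $3^3$ and $3^4$ occur), and, more to the point, you only inspected pairs of bitranspositions: by Table~\ref{orbs} and Proposition~\ref{shape2^6} the orbital $\Sigma_{7,s}$, consisting of pairs of involutions of type $2^6$ whose product has cycle type $3^4$, also has shape $3A$, so by the definition of $U^{(NA)}$ in Section~\ref{strategy} the $3$-axes of type $3^4$ belong to $V^{(3A)}$, and in fact $V^{(3A)}=V^{(3)}+V^{(3^2)}+V^{(3^4)}$. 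Hence $\langle V^{(2^2)},V^{(3A)}\rangle=W^\circ+V^{(3^4)}$, and your identification with $W^\circ$ requires the additional fact $V^{(3^4)}\subseteq W^\circ$, i.e.\ that every $3$-axis $u_\rho$ with $\rho$ of type $3^4$ has zero projection onto the $S^{(4,4,4)}$-isotypic component of $V$ (which lies inside $V^{(2^6)}$ by Lemma~\ref{radical2}). This is not forced by symmetry: $u_\rho$ is invariant under $N_{S_{12}}(\langle\rho\rangle)$, but $S^{(4,4,4)}$ does possess a nonzero vector fixed by that normalizer (it occurs, with multiplicity one, in the permutation module of $S_{12}$ on partitions of the $12$ points into four triples), so ruling out a $(4,4,4)$-contribution needs an actual argument --- for instance an inner-product computation of such a $u_\rho$ against the $(4,4,4)$-part of $V^{(2^6)}$ in the style of Sections~\ref{Scalar}--\ref{23assi}, or an expression of $u_\rho$ in terms of axes of types $2^2$, $3$ and $3^2$ --- unless one reads $V^{(3A)}$ restrictively as the span of the $3$- and $3^2$-axes only, which is not what the definition says. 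As written, $\langle V^{(2^2)},V^{(3A)}\rangle$ could a priori equal $V$, in which case the quotient would be $0$ rather than $S^{(4,4,4)}$.
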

\begin{proof}
This follows by comparing the decompositions into irreducible submodules of $V=V^\circ$ and $\langle V^{(2^2)}, V^{(3A)}\rangle$.
\end{proof}

{\it Proof of Theorem~\ref{main}}.
Let $(A_{12}, \mathcal T, V, \phi, \psi)$ be a $2A$-Majorana representation of $A_12$. By Proposition~\ref{shape2^6} the shape of $\phi$ is the one given in Table~\ref{orbs}. It follows that the scalar products between axes and $3$-axes are the ones given by the Norton-Sakuma Theorem (Table~\ref{table1}), Table~\ref{tau3}, Table~\ref{t13}, and Table~\ref{Orbitals12}. By Proposition~\ref{gen} and Proposition~\ref{fine2closure}, $V$ is $2$-closed and generated by the Majorana axes and the $3$-axes of type $3^2$, proving  the third assertion of Theorem~\ref{main}. By Lemma~\ref{nuovissimo}, Lemma~\ref{sixtranspositions},  and Proposition~\ref{fine2closure} it follows that the algebra is uniquely determined and this implies the first assertion. Finally the second assertion follows by Proposition~\ref{resume} and Proposition~\ref{fineintersezione}.
\hfill $\square$

\medskip

{\it Proof of Theorem~\ref{embuno}.} 
Let $F_5$ be the Harada-Norton group and let $$(F_5, \mathcal T_{F_5}, V_{F_5}, \phi_{F_5}, \psi_{F_5})$$ be a  $2A$-Majorana representation of  
$F_5$.   Then $F_5$ contains a subgroup $H$ isomorphic to $A_{12}$ and $\phi_{F_5}$ induces, by restriction, a $2A$-Majorana 
representation $$(H, \mathcal T_{H}, V_{H}, \phi_{H}, \psi_{H})$$ of $H$.  By~\cite[Lemma 3.1]{FIM1}, every $3$-axis (resp. $4$- and $5$-axis) of 
$V_{F_5}$ is conjugate by an element of $F_5$ to an element of $V_H$.  
Since, again by~\cite[Lemma 3.1]{FIM1}, the permutations of cycle type $3^2$ and those of cycle type $3$ in $H$ are fused in $F_5$,  by Theorem~\ref{main}(3) and~\cite[Corollary 3.2]{Alonso},  $V_{F_5}$ is generated by Majorana axes and so the algebra is uniquely determined by~~\cite[Theorem 1.1]{FIM1} and the Norton-Sakuma Theorem.
\hfill $\square$

\medskip

{\it Proof of Theorem~\ref{HN}.} This is immediate \hfill  $\square$

\medskip

{\it Proof of Theorem~\ref{A8}}. 
We identify $A_8$ with the subgroup of $A_{12}$ that fixes points $9, \ldots , 12$.
Using the GAP program "MajoranaAlgebras"~\cite{MM} we computed the Majorana representation of the group $L:=A_8\times \langle (9,10)(11,12) \rangle$ with generating set of involutions $\mathcal T \cap L$ and shape induced by the unique shape of a $2A$-Majorana representation of  $A_{12}$. This representation has dimension $554$ and it is $2$-closed. The subalgebra $W$ generated by Majorana involutions $a_t$, with $t\in \mathcal X_b\cap A_8$ has dimension $476$. On the other hand, Theorem~\ref{An} implies that $W^\circ$ has dimension $462$, whence it follows that $W$ is not $2$-closed.  
We checked that the subset $Z$ of $4$-axes of $V$ defined by
{\small
$$Z:=\left \{ v_{\rho}\:|\: \rho=(a,b,c,d)(e,f,g,h)\in A_8 \mbox{ and }\begin{array}{ll} a\: b \:c \:d\\
                                                                   e \:f \:g \:h
                                                                   \end{array} \mbox{ is a semistandard $(4^2)$-tableau }\right \}
                                                                   $$
                                                                   }
is linearly independent in $W$ and $W=W^\circ\oplus \langle  Z \rangle.$  
\hfill $\square$
\bigskip


\section{Appendix}
\label{Appendix}
A formula for expressing a $4$-axis $u_\rho$ of type  $4^2$ as a linear combination of 
Majorana axes and $3$-axes  of type $3^2$ has been obtained working within the subgroup $S$ of $A_{12}$ defined in Lemma~\ref{formulaccia}. We give it here explicitly for  
  $\rho=(1,3,2,4)(5,7,6,8)$:
 {
\tiny
\begin{equation*}
\begin{array}{l}
v_{(1,3,2,4)(5,7,6,8)} = \frac{4}{9}  a_{(1,2)(3,4)(5,6)(7,8)(9,10)(11,12)}\\
\\
+\frac{2}{9}  [ a_{(2,6)(3,8)} +a_{(2,6)(4,7)}+a_{ (2,5)(3,7)}+a_{ (2,5)(4,8)}+a_{ (1,6)(3,7)}+a_{ (1,6)(4,8)}+\\
\\
\:\:\:\:\:\:\:\:\:a_{ (1,5)(3,8)}+a_{ (1,5)(4,7)}] \\
\\
-\frac{2}{3}[ a_{(3,7)(4,8)}+a_{ (3,8)(4,7)}+a_{ (1,6)(2,5)}+a_{ (1,5)(2,6)} ]\\
\\
+\frac{1}{3}[ a_{(3,4)(7,8)}+a_{(1,2)(5,6)}+a_{ (1,8)(2,7)(3,5)(4,6)(9,10)(11,12)}+\\
\\
\:\:\:\:\:\:\:\:a_{ (1,4)(2,3)(5,7)(6,8)(9,10)(11,12)}+a_{ (1,7)(2,8)(3,6)(4,5)(9,10)(11,12)}+\\
\\
\:\:\:\:\:\:\:\:a_{ 
  (1,3)(2,4)(5,8)(6,7)(9,10)(11,12) }]\\
   \\
+\frac{1}{9}[a_{ (1,8)(2,7)(3,6)(4,5)(9,10)(11,12)}+a_{ (1,4)(2,3)(5,8)(6,7)(9,10)(11,12)}+\\
\\
\:\:\:\:\:\:\:\:a_{ (1,7)(2,8)(3,5)(4,6)(9,10)(11,12)}+ a_{  (1,3)(2,4)(5,7)(6,8)(9,10)(11,12)}+\\
\\
\:\:\:\:\:\:\:\:a_{ (1,6)(2,5)(3,8)(4,7)(9,10)(11,12)} 
+a_{  (1,6)(2,5)(3,7)(4,8)(9,10)(11,12)}+ \\
\\
\:\:\:\:\:\:\:\:a_{  (1,5)(2,6)(3,7)(4,8)(9,10)(11,12)}+a_{ (1,5)(2,6)(3,8)(4,7)(9,10)(11,12)} ]\\
\\
  -\frac{1}{9}[ a_{(5,6)(7,8)}+a_{ (1,2)(3,4)} ]\\
  \\
   -\frac{7}{9}[ a_{(3,4)(5,6)}+a_{ (1,2)(7,8)} ]\\
 \\
 -\frac{2}{9} [ a_{(1,8)(2,3)(4,6)(5,7)(9,10)(11,12)}+a_{ (1,8)(2,4)(3,5)(6,7)(9,10)(11,12)}+\\
 \\
 \:\:\:\:\:\:\:\:a_{ (1,2)(3,7)(4,8)(5,6)(9,10)(11,12)}+a_{ 
  (1,2)(3,8)(4,7)(5,6)(9,10)(11,12)}+\\
  \\
  \:\:\:\:\:\:\:\:a_{ (1,4)(2,7)(3,5)(6,8)(9,10)(11,12)} +a_{(1,4)(2,8)(3,6)(5,7)(9,10)(11,12)}+\\
  \\
   \:\:\:\:\:\:\:\:a_{ 
  (1,7)(2,4)(3,6)(5,8)(9,10)(11,12)}+a_{ (1,7)(2,3)(4,5)(6,8)(9,10)(11,12)}+\\
  \\
 \:\:\:\:\:\:\:\: a_{ (1,3)(2,8)(4,5)(6,7)(9,10)(11,12)}+a_{ 
  (1,3)(2,7)(4,6)(5,8)(9,10)(11,12)} +\\
  \\ 
  \:\:\:\:\:\:\:\:a_{(1,6)(2,5)(3,4)(7,8)(9,10)(11,12)}+a_{ (1,5)(2,6)(3,4)(7,8)(9,10)(11,12)} ]\\
  \\
 
 + \frac{5}{64}[ u_{(1,3,4)(5,6,8)}+u_{ (1,2,3)(5,8,7)}+u_{ (1,7,8)(3,6,5)}+u_{ (1,8,2)(3,5,4)}+ u_{(1,2,4)(5,7,8)}+ \\
 \\
 \:\:\:\:\:\:\:\:u_{(1,4,2)(6,7,8)}+u_{ (1,3,2)(6,8,7)}+u_{ (1,4,3)(5,6,7)} +
  u_{(2,3,4)(5,8,6)}+u_{ (2,4,3)(5,7,6)}+\\
  \\
  \:\:\:\:\:\:\:\:u_{ (1,7,8)(4,5,6)} +
  u_{(1,2,7)(3,5,4)}+u_{ (2,8,7)(3,6,5)}+u_{ (1,2,8)(3,6,4)}+u_{ (1,7,2)(3,6,4)}+\\
  \\
  \:\:\:\:\:\:\:\:u_{ (2,7,8)(4,6,5)} ]\\
  \\
- \frac{5}{64}[ u_{(1,3,2)(5,8,6)}+u_{ (1,2,4)(5,6,7)}+u_{ (1,2,8)(3,5,6)}+u_{(1,7,2)(4,6,5)}+u_{ (1,4,2)(5,6,8)}+ \\
 \\
 \:\:\:\:\:\:\:\:u_{(1,3,2)(5,6,7)}+u_{ (1,5,2)(3,8,4)}+u_{ (1,4,3)(5,7,8)}+u_{ 
  (1,6,2)(3,7,4)}+u_{ (1,2,6)(3,8,4)}+\\
  \\
  \:\:\:\:\:\:\:\:u_{ (1,3,4)(6,7,8)} +u_{  (2,3,4)(5,7,8)}+u_{ (2,6,5)(3,8,7)}+u_{(1,5,2)(3,4,7)}+u_{ (1,2,8)(4,6,5)}+\\
  \\
  \:\:\:\:\:\:\:\:u_{ (1,2,7)(3,6,5)} +u_{ (2,3,4)(6,8,7)} +u_{ (2,5,6)(4,8,7)}+u_{ (1,6,5)(4,8,7)}+u_{ (1,5,6)(3,8,7)} +\\
  \\
  \:\:\:\:\:\:\:\:u_{(1,7,8)(3,5,4),}+u_{(2,7,8)(3,6,4)}+u_{ (2,7,8)(3,4,5)}+u_{ (1,8,7)(3,6,4)} ]\\
       \end{array}
  \end{equation*}
 \begin{equation*}
 \begin{array}{l}
  \\
  +\frac{5}{32}  [ u_{(1,3,5)(2,4,6)}+u_{ (1,7,3)(4,8,5)}+u_{ (1,6,3)(2,8,5)}+u_{ (1,3,8)(2,4,7)}+ u_{(1,4,7)(2,3,8)}+ \\
  \\
 \:\:\:\:\:\:\:\: u_{(1,5,8)(2,6,7)}+u_{ (1,5,3)(2,7,6)}+u_{ (1,8,6)(2,5,3)}+u_{ (1,4,8)(3,5,7)}+u_{ (1,7,5)(2,8,6)}+\\
 \\
 \:\:\:\:\:\:\:\:u_{ (3,6,8)(4,5,7)} +u_{  (2,3,7)(4,6,8)}+u_{ (3,5,8)(4,6,7)}+u_{ (1,5,4)(2,6,3)}+u_{(1,4,6)(2,3,5)}+\\
 \\
 \:\:\:\:\:\:\:\:u_{ (1,6,8)(2,5,7)}+u_{(1,8,4)(2,7,3)}+u_{ (3,7,5)(4,8,6)}+u_{ (1,3,6)(2,4,5)}+u_{ (1,5,8)(2,4,6)}+\\
 \\
 \:\:\:\:\:\:\:\:u_{ (1,8,3)(4,7,6)}+u_{ (2,7,4)(3,8,5)}+u_{ (1,6,4)(2,7,5)} +u_{   (1,7,6)(2,5,4)}+ 
  u_{(2,8,4)(3,7,6)}+\\
  \\
  \:\:\:\:\:\:\:\:u_{ (1,7,5)(2,6,3)}+u_{ (1,4,5)(2,6,8)}+u_{ (2,8,3)(4,7,5)}+u_{ (1,7,3)(2,8,4)}+u_{  (1,4,7)(3,6,8)}+\\
  \\
  \:\:\:\:\:\:\:\: u_{(3,6,7)(4,5,8)}+u_{ (1,6,7)(2,5,8)} ]\\
  \\
 - \frac{5}{32} [ u_{(1,3,8)(2,6,4)}+u_{ (1,5,7)(3,8,6)}+u_{ (1,3,5)(2,7,4)}+u_{ (1,4,5)(3,8,6)}+ u_{(1,7,4)(2,6,8)}+ \\
  \\
 \:\:\:\:\:\:\:\:u_{ (1,3,7)(2,8,5)}+u_{ (1,6,3)(4,5,8)}+u_{ (1,8,5)(4,6,7)}+ 
 u_{ (1,5,4)(2,3,8)}+u_{ (1,6,4)(2,3,7)}+\\
 \\
 \:\:\:\:\:\:\:\:u_{ (2,6,8)(4,7,5)} +u_{  (2,6,7)(3,8,5)}+u_{ (2,5,7)(4,8,6)}+u_{ (1,3,5)(4,7,6)}+u_{(1,5,8)(2,7,4)}+\\
 \\
 \:\:\:\:\:\:\:\:u_{ (2,4,6)(3,8,5)}+ 
 u_{ (1,8,3)(2,6,7)}+u_{(1,4,7)(2,6,3)}+u_{ (2,3,6)(4,7,5)}+u_{ (1,5,7)(2,8,3)}+\\
 \\
 \:\:\:\:\:\:\:\:u_{ (1,6,8)(3,7,5)}+u_{ (1,8,6)(2,3,7)}+u_{ (1,6,3)(2,4,8)} +  
   u_{(2,5,3)(4,6,8)}+
   u_{(2,8,5)(3,6,7)}+\\
   \\
   \:\:\:\:\:\:\:\:u_{ (1,7,3)(2,4,5)}+u_{ (1,6,7)(4,8,5)}+u_{ (1,6,4)(3,5,7)}+u_{ (1,4,8)(2,7,5)} +u_{(2,4,5)(3,7,6)}+\\
   \\
   \:\:\:\:\:\:\:\:u_{ (1,7,6)(2,4,8)}+u_{ (1,8,4)(2,3,5)} ]\\
  \\
  +\frac{25}{64} [ u_{(1,5,6)(3,4,8)}+u_{ (2,5,6)(3,4,7)}+u_{ (1,2,5)(3,8,7)}+u_{ (1,2,6)(4,8,7)}+ u_{(1,6,2)(3,8,7)}+\\
  \\
\:\:\:\:\:\:\:\:   u_{(2,6,5)(3,4,8)}+u_{ (1,5,6)(3,7,4)}+u_{ (1,2,5)(4,7,8)} ]. 
 \end{array}
\end{equation*}
} 



\begin{thebibliography}{99}

\bibitem{BI} Bannai E., Ito T., {\it Algebraic Combinatorics I. Association Schemes}, Benjamin-Cummings Lect. Notes, Menlo Park 1984.

\bibitem{Alonso} Castillo-Ramirez, A., Ivanov, A. A., The Axes of a Majorana Representation of $A_{12}$, in: {\it  Groups of Exceptional Type, Coxeter Groups and Related Geometries}, {\it Springer Proceedings in Mathematics \& Statistics}, {\bf 82}, New Delhi, 2014, 159-188.

\bibitem{chien} Chien, L., {\it $3A$-axis in Majorana algebras}, Ph.D. Thesis, Department of Mathematics, Imperial College, London, 2017.

\bibitem{Co} Conway, J. H., A simple construction of the Fischer-Griess Monster group, {\it Invent. Math.}, {\bf 79}, (1985), 513-540.

\bibitem{ATLAS} Conway, J. H., Curtis, R. T., Norton, S. P., Parker, R. A., Wilson, R. A., {\it Atlas of Finite Groups}, Clarendon Press, Oxford, 1985.

\bibitem{FIM1} Franchi, C., Ivanov, A.A., Mainardis, M., The $2A$-Majorana representation of the Harada-Norton group, {\it Ars Math. Contemp.}, {\bf 11}, (2016), 175-187.

\bibitem{FIM2} Franchi, C., Ivanov, A.A., Mainardis, M., Standard Majorana representation of the symmetric groups, {\it J. Algebraic Combin.}, {\bf 44}, (2016), 265-292.

\bibitem{FIM3} Franchi, C., Ivanov, A.A., Mainardis, M., Permutation modules for the symmetric group, {\it Proc. Amer. Math. Soc.}, {\bf 145},  Number 8 (2017), 3249-3262. 

\bibitem{FIM4} Franchi, C., Ivanov, A.A., Mainardis, M., Radicals of $S_n$-invariant positive definite bilinear forms, {\it Algebraic Combinatorics}, {\bf 1},  (2018), 425-440.

\bibitem{FIM5} Franchi, C., Ivanov, A.A., Mainardis, M., Majorana representations of the finite groups, {\it Alg. Coll.}, {\bf 27}, (2020), 31-50.

\bibitem{GAP} The GAP Group, GAP -- Groups, Algorithms, and Programming, Version 4.10.2; 2019. (https://www.gap-system.org)

\bibitem{Griess} Griess, R., The friendly giant. {\it Invent. Math.} , {\bf 69}, (1982), 1-102.


\bibitem{Hi} Higman, D. G., Coherent Configurations, Part I: Ordinary Representation Theory, {\it Geom. Dedicata},   {\bf 4},  (1975), 1-32.

\bibitem{I} Isaacs, I. M.:  Character Theory of finite Groups. Dover Publications Inc., New York (1994)

\bibitem{Iva} Ivanov, A. A., {\it The Monster group and Majorana involutions},  Cambridge Tracts in Mathematics, {\bf 176}, Cambridge Univ. Press, Cambridge, 2009.

\bibitem{IvCon} Ivanov, A. A.,  Majorana representation of the Monster group in: {\it Finite Simple Groups: Thirty Years of the Atlas and Beyond}, {Contemp. Math.}, {\bf 694}, Amer. Math. Soc., Providence, RI,  (2017), 11-17.  

\bibitem{IvInd}  Ivanov, A. A., The Future of Majorana Theory, in: {\it Group Theory and Computation},  {Indian Statistical Institute Series},  Springer, Singapore, 2018, 107-118.  

\bibitem{IPSS} Ivanov,  A. A., Pasechnik, D. V., Seress, \'A., Shpectorov, S., Majorana representations of the symmetric group of degree $4$,  {\it  J. Algebra}, {\bf 324}, (2010), 2432-2463.

\bibitem{A67} Ivanov, A. A., On Majorana representations of $A_6$ and $A_7$,   {\it Comm. Math. Phys.} {\bf 306},  (2011), 1-16.

\bibitem{Iv11b}Ivanov, A. A., Majorana representations of $A_6$ involving $3C$-algebras,   {\it Bull. Math. Sci.} {\bf 1}, (2011), 356-378 .

\bibitem{IS12}Ivanov, A. A., Seress, \'A., Majorana representations of $A_5$,  {\it  Math. Z.}, {\bf 272}, (2012), 269-295.

\bibitem{J} James, G. D., {\it The Representation Theory of the Symmetric Groups},  
Lecture Notes in Mathematics, {\bf 682},  Springer, Berlin-Heidelberg, 1978.

\bibitem{La} Lang, S., {\it Algebra}. Graduate Texts in Mathematics 211, Springer, New York (2002)

\bibitem{Miya} Miyamoto, M.,  Griess Algebras and Conformal Vectors in Vertex Operator Algebras,
 {\it J. Algebra}, {\bf 179},  (1996), 523-548.

\bibitem{Norton} Norton, S. P., The Monster algebra: some new formulae, in: {\it Moonshine, the Monster and related topics},  Contemp. Math., {\bf 193}, Amer. Math. Soc.,  Providence, RI, 1996, 297-306

\bibitem{MM}  Pfeiffer, M. and Whybrow, M., MajoranaAlgebras, A package for constructing Majorana algebras and representations, Version 1.4 (2018)
(GAP package), https://MWhybrow92.github.io/MajoranaAlgebras/.

\bibitem{Sakuma} Sakuma, S., $6$-transposition property of $\tau$-involutions of vertex operator algebras, {\it Int. Math. Res. Not.}, {\bf 9}, (2007),  19 pages.


\bibitem{Saxl} J. Saxl, On multiplicity-free permutation representations, in: Finite geometries and designs (Proc. Conf., Chelwood Gate, 1980), in: London Math. Soc. Lecture Note Ser., vol. 49, Cambridge University Press, 1981, pp. 337-353. 

\bibitem{Ser} Seress, A., Construction of $2$-closed $M$-representations, in: {\it  Proceedings of the 37th International Symposium on Symbolic and Algebraic Computation (ISSAC 2012)}, ACM, New York, 2012, 311-318.

\bibitem{Serre} Serre, J. P., Letter to Donna Testerman,  in: {\it Finite Simple Groups: Thirty Years of the Atlas and Beyond}, 
Contemp. Math., {\bf 694}, Amer. Math. Soc., 
Providence, RI,  (2017),   
 19-20.
 
 \bibitem{Thrall} Thrall, R. M., On symmetrized Kronecker powers and the structure of the free Lie ring. {\it Am. J. Math.}, {\bf 64}, (1942), 371Ð388. 

\bibitem{Why} Whybrow, M. A., Majorana algebras generated by a $2A$ algebra and a further axis,  {\it J. Group Theory}, {\bf 21},  (2018), 417-437.

\end{thebibliography}
\end{document}